\newcommand{\zz}{{\Bbb Z}}
\newcommand{\nn}{\Bbb N}
\newcommand{\pp}{{\Bbb P}}
\newcommand{\aaa}{{\Bbb A}}
\newcommand{\ff}{{\Bbb F}}
\newcommand{\ddim}{\operatorname{dim}}
\newcommand{\ddeg}{\operatorname{deg}}
\newcommand{\kker}{\operatorname{Ker}}
\newcommand{\spec}{\operatorname{Spec}}
\newcommand{\Hom}{\operatorname{Hom}}
\newcommand{\HH}{\operatorname{H}}
\newcommand{\op}[1]{\operatorname{#1}}
\newcommand{\kbar}{\overline{k}}
\newcommand{\ffi}{\varphi}
\newcommand{\eps}{\varepsilon}
\newcommand{\row}{\rightarrow}
\newcommand{\llow}{\longleftarrow}
\newcommand{\low}{\leftarrow}
\newcommand{\lrow}{\longrightarrow}
\renewcommand{\leq}{\leqslant}
\renewcommand{\geq}{\geqslant}
\newcommand{\calm}{{\cal M}}
\newcommand{\hm}{\operatorname{H}_{\calm}}
\newcommand{\nichego}[1]{}
\newcommand{\ov}[1]{\overline{#1}}
\newcommand{\un}[1]{\underline{#1}}
\newcommand{\wt}[1]{\widetilde{#1}}
\newcommand{\smk}{{\mathbf{Sm}}_k}
\newcommand{\laz}{{\Bbb L}}
\newcommand{\cp}{{\cal P}}
\newcommand{\Ch}{\operatorname{Ch}}
\newcommand{\CH}{\operatorname{CH}}
\newcommand{\bub}{*=0{\bullet}}
\newcommand{\dub}{*=0{.}}
\newcommand{\dmk}{\op{DM}(k)}
\newcommand{\dmgmkF}[1]{\op{DM}_{gm}(k;#1)}
\newcommand{\dmkF}[1]{\op{DM}(k;#1)}
\newcommand{\dmEF}[2]{\op{DM}(#1;#2)}
\newcommand{\dmgmEF}[2]{\op{DM}_{gm}(#1;#2)}
\newcommand{\hii}{{\cal X}}
\newcommand{\whii}{\widetilde{{\cal X}}}
\newcommand{\DQMgm}{\op{DQM}^{gm}}
\newcommand{\dmELF}[3]{\op{DM}({#1}/{#2};{#3})}
\newcommand{\dmgmELF}[3]{\op{DM}_{gm}({#1}/{#2};{#3})}
\newcommand{\nump}{\,\stackrel{\scriptscriptstyle{Num(p)}}{\sim}\,}
\newcommand{\numeq}{\,\stackrel{\scriptscriptstyle{Num}}{\sim}\,}
\newcommand{\Qn}[1]{{\cal Q}^{-1}(#1)}
\newcommand{\Ia}{R_{\alpha}}
\newcommand{\Ib}{R_{\beta}}
\newcommand{\eend}{{\Bbb{E}}nd}
\newcommand{\bor}[1]{\stackrel{#1}{>}}
\newcommand{\subq}{\twoheadleftarrow\hspace{-0.5mm}\hookrightarrow}
\newcommand{\dmELFr}[4]{\op{DM}(\{{#1}/{#2}\}^{#4};{#3})}
\newcommand{\dmeffkF}[1]{\op{DM}^{-}_{eff}(k,F)}
\newcommand{\Qed}{\hfill$\square$\smallskip}
\newcommand{\Red}{\hfill$\triangle$\smallskip}
\newenvironment{proof}{\noindent{\it Proof}:}{\vskip 5mm}
\newtheorem{proposition}{Proposition}[section]{\bf}{\it}
\newtheorem{theorem}[proposition]{Theorem}{\bf}{\it}
{\bf}{\it}
{\bf}{\it}
\newtheorem{definition}[proposition]{Definition}{\bf}{\rm}
\newtheorem{conj}[proposition]{Conjecture}{\bf}{\it}
{\bf}{\it}
\newtheorem{example}[proposition]{Example}{\bf}{\it}
\newtheorem{remark}[proposition]{Remark}{\bf}{\rm}
\newtheorem{question}[proposition]{Question}{\bf}{\rm}
\newtheorem{lem}{Lemma}[proposition]{\bf}{\it}
{\bf}{\it}
\newtheorem{sublem}{Sublemma}[lem]{\bf}{\it}
\newtheorem{statement}[proposition]{Statement}{\bf}{\it}
\newtheorem{corollary}[proposition]{Corollary}{\bf}{\it}
{\bf}{\it}
\begin{document}

\title{Isotropic motives}
\author{Alexander Vishik}
\date{}

\maketitle

\begin{abstract}
In this article we introduce the local versions of the Voevodsky category of motives with $\ff_p$-coefficients
over a field $k$, parameterized by finitely-generated
extensions of $k$. We introduce the, so-called, {\it flexible} fields, passage to which is conservative on motives.
We demonstrate that, over flexible fields, the constructed {\it local} motivic categories are much simpler than the {\it global} one
and more reminiscent of a topological counterpart. This provides handy "local" invariants from which one can read
motivic information.
We compute the local motivic cohomology of a point, for $p=2$, and study the {\it local Chow motivic category}. We
introduce {\it local Chow groups} and conjecture
that, over flexible fields, these should coincide with {\it Chow groups modulo numerical equivalence with
$\ff_p$-coefficients}, which implies that {\it local Chow motives} coincide with
{\it numerical Chow motives}. We prove this Conjecture in various cases.
\end{abstract}

\section{Introduction}

The category of algebraic varieties is rich and marvelous, but not additive. In a sense that one can't add morphisms between varieties.
The program to "linearize" the algebro-geometric world was first introduced by A.Grothendieck in the 1960-ies, who proposed the
category of {\it Chow motives}. It is a close relative of the category of correspondences, where objects are smooth projective varieties
and morphisms are algebraic cycles on the product modulo {\it rational equivalence}. The result is a tensor additive category, as we can add
(and subtract) algebraic cycles and multiply them externally. Moreover, one doesn't have to limit oneself to only rational equivalence
of cycles. Instead, it is possible to consider {\it algebraic}, {\it numerical}, or {\it homological} equivalence, and actually, the theory
of Chow groups here can be substituted by any {\it oriented cohomology theory}
(in the sense of \cite[Def. 3.1.1]{PS} or \cite[Def. 1.1.2]{LM}).
Chow motives of varieties split into smaller pieces which permit to express in a precise form some of the similarities observed
in the behavior of different varieties. In particular, the {\it Tate-motives} appear, responsible for the {\it cellular} structure.
The above Grothendieck's category has innumerate remarkable applications, but it deals with smooth projective varieties only.
At the same time, in topology, motives were known already for a long time in full generality in the form of singular complexes of
topological spaces.
This problem was solved by V.Voevodsky in \cite{VoMo} who constructed the triangulated category of motives
$\dmk$ over a field $k$
(around the same time, alternative constructions were proposed by M.Hanamura and M.Levine). This category of Voevodsky contains the
Grothendieck's category of Chow motives as a full additive subcategory closed under direct summands, and, in particular, permits to
study these "pure" objects by triangulated methods.
Voevodsky supplied his category with many flexible tools and his motives found numerous bright applications, most notably, to
the proof of Milnor's and Bloch-Kato Conjectures.

In this article, we study Voevodsky category $\dmkF{\ff_p}$ with finite coefficients over a field of characteristic zero.
We introduce the {\it local} versions $\dmELF{E}{k}{\ff_p}$ of this category, corresponding to all finitely generated
field extensions $E/k$ together with the natural localization functors $\ffi_E:\dmkF{\ff_p}\row\dmELF{E}{k}{\ff_p}$.
In the case of a trivial extension we get the {\it isotropic motivic categories} $\dmELF{E}{E}{\ff_p}$ and localization
functors can be specialized further to isotropic functors $\psi_E:\dmkF{\ff_p}\row\dmELF{E}{E}{\ff_p}$.
Such {\it isotropic} versions (in the appropriate situation) appear to be much simpler than the original global category, and permit
to obtain "local" invariants of motives, residing in a rather simple world.

The construction of {\it isotropic motivic category} is based on the notion of an {\it anisotropic variety}, that is, a variety which
doesn't have closed point of degree prime to a given prime $p$ (so, the fact that coefficients are finite is really essential).
The rough idea is to "kill" the motives of all anisotropic varieties over $k$. This idea belongs to T.Bachmann, who in \cite{BQ}
considered the full tensor triangulated subcategory $\DQMgm$ of $\dmkF{\ff_2}$ generated by motives of smooth projective quadrics
and studied it with the help of functors $\Phi^E:\DQMgm\row K^b(Tate(\ff_2))$ to the category of bi-graded $\ff_2$-vector spaces.
These functors were defined by the property that they "kill" the motives of $k$-Quadrics, which stay anisotropic over $E$
(and act "identically" on Tate-motives). In our approach, the same idea comes naturally from the development of some ideas of Voevodsky
and that of \cite{IMQ}. Namely, we consider the natural $\otimes$-idempotents in Voevodsky category, given by
motives $\hii_Q$ of \v{C}ech simplicial schemes corresponding to smooth varieties $Q$ over $k$. The respective
projectors naturally commute with each other and form a
partially ordered set $\cp$, where $\hii_Q\geq\hii_P$ if there exists a correspondence of degree $1$ (modulo $p$) $Q\rightsquigarrow P$
(Definition \ref{D-Iso-hii-order}).
This condition is equivalent to the fact that
$\hii_Q\otimes\hii_P=\hii_Q$. That is, a stronger projector "consumes" a weaker one.
Moreover, there is a natural map $\hii_Q\row\hii_P$ (the unique lifting of the projection $\hii_Q\row T$).
For connected varieties, this is, actually, a condition on their
generic points.
The "smallest" idempotent is the unit object of the tensor
structure, given by the trivial Tate-motive $T$ (it corresponds to $P=\op{Spec}(k)$). Thus, we get a $\cp$-parameterized filtration by
idempotents on the unit object. We may consider the {\it upper graded components} of this filtration. In other words, we
take a particular idempotent $\hii_P$ and mod out all strictly large ones, that is, we consider the colimit of idempotents
$\hii_P\otimes\whii_Q$, for all $\hii_Q\gneqq\hii_P$, where $\whii_Q$ is an idempotent complementary to $\hii_Q$.
The result is a certain idempotent in $\dmkF{\ff_p}$, which actually can be described in terms of the \v{C}ech simplicial scheme of a
variety with infinitely many connected components. This idempotent will be zero, unless $P$ is connected up to equivalence
(i.e., can be replaced by a connected variety with the same $\hii$), and, in the latter case, it depends only on $E=k(P)$, and even
only on the equivalence class of such finitely generated field extension $E/k$.
Applying the respective projector to $\dmkF{\ff_p}$ we obtain the {\it motivic category} $\dmELF{E}{k}{\ff_p}$
of the extension $E/k$ - Definition \ref{D-Iso-2}.
It is naturally a full tensor localizing subcategory of the Voevodsky category $\dmkF{\ff_p}$ supplied with the
{\it localization functor} $\ffi_E:\dmkF{\ff_p}\row \dmELF{E}{k}{\ff_p}$.
In the case of a trivial extension $k/k$, we get the {\it isotropic motivic category} $\dmELF{k}{k}{\ff_p}$, where the respective projector
is the colimit of projectors $\whii_Q\otimes$, where $Q$ runs over all varieties with $\hii_Q\neq T$, or, in other words, over all
{\it anisotropic} varieties. Since $\whii_Q\otimes M(Q)=0$, such a projector "kills" the motives of all anisotropic varieties.
These {\it local} motivic categories were discovered in an attempt to find an alternative approach to the functors of Bachmann
(mentioned above) and were briefly introduced in \cite[Sect. 4]{PicQ}.

Our next point is that to extract the local information in a meaningful form one should first pass to an appropriate field extension
of a ground field. This is illustrated by the example of an algebraically closed ground field $k$, where, up to equivalence, there is
only one (trivial) class of field extensions, represented by the extension $k/k$, and the respective localization functor
$\ffi:\dmkF{\ff_p}\row \dmELF{k}{k}{\ff_p}$ is an equivalence of categories - see Remark \ref{alg-clo-local}. Thus, it is conservative,
but not very interesting. At the same time, there is a large class of fields, for which the localization really simplifies things.
These are, the so-called, {\it flexible fields} introduced in Section \ref{flex-f}. Fields, which are purely transcendental extensions
of infinite (transcendence) degree of some other fields. Note, that one can always pass from an arbitrary field $k_0$ to a flexible one
$k_0(t_1,t_2,\ldots)$ without loosing any motivic information. The class of flexible fields is closed under finitely generated extensions.
Thus, if the ground field $k$ is flexible, then all the functors $\psi_E:\dmkF{\ff_p}\row \dmELF{E}{E}{\ff_p}$ take values in "flexible"
{\it isotropic categories}. And such categories are really simple. We examine them from two points of view: we look at the
{\it isotropic motivic cohomology} of a point $\hm^{*,*'}(k/k;\ff_p)$ and at the {\it isotropic Chow motivic category}
$Chow(k/k;\ff_p)$.

We show in Theorem \ref{D-A5} that, in the case of a flexible field, $\hm^{*,*'}(k/k;\ff_2)$ is the external algebra $\Lambda_{\ff_2}(r_{\{i\}}|_{i\geq 0})$
with the generators in one-to-one correspondence with Milnor's operations and the
action of the latter given by $Q_i(r_{\{i\}})=1$ and $Q_i(r_{\{j\}})=0$, for $i\neq j$. Thus, the answer is the same for
all {\it flexible fields}, and all these cohomology elements are "rigid", as we can get $1$ from any such non-zero element by applying
appropriate combination of Milnor's operations. The answer is also remarkable in the sense that Milnor's operations are encoded into
the structure of isotropic motivic category (in the form of their "inverses" $r_{\{i\}}$'s).
The computation is done with the help of Voevodsky technique used in the proof of Milnor's conjecture, and our answer explains to some
extent why Milnor's operations played such an important role in Voevodsky proof - see Theorem \ref{D-A4}.
Finally, the answer is drastically different
from the "global" one and the localization functor $\hm^{*,*'}(k,\ff_2)\row\hm^{*,*'}(k/k;\ff_2)$ is zero outside the bi-degree $(0)[0]$.
We are restricted to the prime $p=2$, since in our calculations the crucial role is played by
\cite[Cor. 3]{kerM}, and there is no analogue of this statement for $p>2$.

It appears that isotropic Chow motives are closely related to the numerical equivalence of cycles with $\ff_p$-coefficients.
We conjecture that, in the case of a flexible field, {\it isotropic Chow groups} $\Ch^*_{k/k}$ (describing $Hom$s between such motives)
coincide with the Chow groups (with $\ff_p$-coefficients) modulo numerical equivalence $\Ch^*_{Num}$ - see Conjecture \ref{main-conj}.
This would imply that the category of {\it isotropic Chow motives} is equivalent to the category of
{\it Chow motives modulo numerical equivalence} (with finite coefficients).
We prove this Conjecture for divisors, for varieties of dimension $\leq 5$, and for cycles of dimension $\leq 2$ -
Theorem \ref{thm-conj5-1-2}. In particular, this implies that {\it isotropic Chow groups} are finite-dimensional in these
situations. This also shows that the projection $\Ch^*\twoheadrightarrow\Ch^*_{k/k}$ factors through the 3-rd {\it theory of higher type}
$\Ch^*_{(3)}$ (where $\Ch^*_{(0)}=\Ch^*$ is the original theory of {\it rational type} and $\Ch^*_{(1)}=\Ch^*_{alg}$ is the
algebraic version) - see Definition \ref{def-THT} and Proposition \ref{A24}.
The proof of Theorem \ref{thm-conj5-1-2} constitutes the bulk of this article. It is by induction on the dimension of a variety $X$.
Using the moving technique introduced in Section \ref{A-r}, we show that after an appropriate blow-up, any class $u$ numerically equivalent
to zero may be represented by a cycle supported on a smooth connected divisor $Z$ and numerically trivial already on $Z$. An important step
here is to represent $u$ by the class of a smooth connected subvariety and numerically annihilate certain characteristic classes of it
- cf. Corollaries \ref{LS-nump-anis}, \ref{LS-r2-nump-div}.
Interestingly, the latter is achieved by a combination of appropriate blow-ups and Steenrod operations, depending on a prime involved.

The paper is organized as follows. After briefly discussing flexible fields in Section \ref{flex-f}, in Section \ref{loc-mot-cat}
we introduce the local motivic category with $\ff_p$-coefficients as well as its Chow-motivic version. In Section \ref{Sect-coh-point}
we study the isotropic motivic cohomology of a point, while Section \ref{IsoChowM} is devoted to the study of isotropic
Chow groups and the respective Chow-motivic category. In Section \ref{thick} we expand the definition of local motivic category
beyond prime coefficients. Finally, in Section \ref{A-r} we prove various geometric results used in Section \ref{IsoChowM}.

{\bf Acknowledgements:} I'm very grateful to J.-L.Colliot-Th\'el\`ene for useful discussions,
to C.Voisin for helpful comments and drawing my attention to \cite[Theorem 5]{SoVo}, and to J.I.Kylling, P.A.\O stv\ae r, O.R\"ondigs  and
S.Yagunov for fruitful interaction during my visit to the University of Oslo which led to the inclusion of Section \ref{thick}.
Also, I'm very grateful to T.Bachmann for the valuable remarks on the previous version of the paper.
Finally, I would like to thank the Referee for many useful suggestions which helped to improve the exposition.

\subsection{Notations and conventions}

Everywhere below $k$ will denote a field of characteristic zero.

\noindent
$\smk$ - the category of smooth quasi-projective varieties over $k$.

\noindent
$\Ch^*$ - the Chow groups $\CH^*/p$ with finite coefficients, where $p$ is some prime
(in Section \ref{A-r}, $p$ will be replaced by an arbitrary natural number $n$).

\noindent
$\dmkF{\ff_p}$ will denote the triangulated category of {\it Voevodsky motives} over a field - \cite{VoMo}, \cite{CiDe} and
$\dmgmkF{\ff_p}$ will denote the full triangulated subcategory of {\it geometric motives} in it.

\noindent
$\laz$ - the {\it Lazard ring}, that is, the coefficient ring of the universal formal group law.

\subsection{Flexible fields}
\label{flex-f}

Traditionally, algebraic geometry was considered over algebraically closed fields. Over such fields, every algebraic variety
(of finite type) has a rational point, which simplifies many things. In the case of a general field the standard approach is
to consider the passage to its algebraic closure. Note, however, that (torsion) information is lost under such a passage.
One of the aims of the current paper is to convince the reader that there are other directions one can pursue.
Namely, I propose to move instead in the direction of the, so-called, {\it flexible} fields. Such fields have the advantage that
one doesn't have to distinguish between the ground field and finitely-generated purely transcendental extensions of it. This helps
with many algebro-geometric constructions.

\begin{definition}
\label{def-flex}
Let us call a field $k$ {\it flexible} if it is a purely transcendental extension of countable infinite degree of some other field:
$k=k_0(t_1,t_2,\ldots)=k_0(\pp^{\infty})$.
\end{definition}

Note, that any finitely generated extension of a flexible field $k$ is itself flexible. Indeed, such an extension is defined by
finitely many generators and relations, which can "spoil" only finitely many of the original transcendental generators.
Thus, all the points of the large Nisnevich site of $\op{Spec}(k)$  are flexible.
On the other hand, we have:

\begin{remark}
{\rm
The natural restriction functor $\dmEF{k_0}{\ff_p}\row\dmkF{\ff_p}$ is conservative. So, we can substitute a field by a flexible one
without loosing any motivic information.
}
\Red
\end{remark}

The main property of flexible fields we will need is the following obvious observation.

\begin{proposition}
\label{flex-main-prop}
Let $k$ be a flexible field, $X$ - variety of finite type over $k$, and $E/k$ be a finitely-generated purely transcendental
field extension. Then there exists a commutative diagram
$$
\xymatrix{
X \ar[d] \ar[r]^{\cong} & X_E \ar[d] \\
\op{Spec}(k) \ar[r]^{\cong} & \op{Spec}(E)
}
$$
with horizontal maps isomorphisms (over some subfield $k_0$).
\end{proposition}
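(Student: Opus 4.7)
The plan is to absorb the finitely many new generators of $E/k$ into the countably infinite transcendence basis guaranteed by flexibility. By the definition of flexibility, write $k=\kappa(t_1,t_2,\ldots)$ with $\kappa$ some field and the $t_i$ algebraically independent over $\kappa$, and write $E=k(s_1,\ldots,s_m)$ with $s_1,\ldots,s_m$ algebraically independent over $k$. Since $X$ is of finite type, its defining equations involve only finitely many elements of $k$, so a standard spreading-out argument yields an integer $n$ and a variety $X_0$ over the subfield
\[
k_0 := \kappa(t_1,\ldots,t_n)
\]
such that $X \cong X_0 \times_{\op{Spec}(k_0)} \op{Spec}(k)$. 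This $k_0$ will serve as the ``some subfield'' in the statement.

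Next, observe that both $k$ and $E$ are purely transcendental extensions of $k_0$ of countably infinite transcendence degree: indeed,
\[
k = k_0(t_{n+1}, t_{n+2}, \ldots), \qquad E = k_0(t_{n+1}, t_{n+2}, \ldots, s_1, \ldots, s_m).
\]
Pick any bijection between the two countably infinite transcendence bases; it extends uniquely to a $k_0$-algebra isomorphism $\sigma \colon k \xrightarrow{\cong} E$. Applying $\op{Spec}$ produces the bottom horizontal arrow of the desired square. Base-changing $X_0$ along $\sigma$ identifies $X \cong X_0 \times_{\op{Spec}(k_0)} \op{Spec}(k)$ with $X_E \cong X_0 \times_{\op{Spec}(k_0)} \op{Spec}(E)$, yielding the top horizontal isomorphism, and commutativity with the structural projections to $\op{Spec}(k)$ and $\op{Spec}(E)$ is automatic from the construction.

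There is essentially no obstacle here: the only non-formal input is the descent of a finite-type variety to a finitely generated subfield of its field of definition, which is classical. The core point of the argument is the set-theoretic triviality that a countably infinite set remains countably infinite after adjoining finitely many elements, so that the two transcendence bases above admit a bijection over $k_0$. This is precisely the technical convenience for which flexibility was introduced.
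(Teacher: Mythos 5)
Your argument is correct and is essentially the paper's own proof: descend $X$ to a finitely generated purely transcendental subextension (your $k_0$, the paper's $F$), note that both $k$ and $E$ are then purely transcendental over it of countably infinite degree, and transport $X$ along the resulting $k_0$-isomorphism $k\cong E$. The only differences are notational.
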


\begin{proof}
Let $k=k_0(t_1,t_2,\ldots)$. Then $X$ is defined over some finitely-generated purely transcendental extension $F$ of $k_0$ such that
$k/F$ is purely transcendental. That is, there is a variety $\ov{X}$ of finite type over $F$, such that $\ov{X}_k=X$.
Since extensions $k/F$ and $E/F$ are isomorphic, we get what we need.
\Qed
\end{proof}

\section{Motivic category of a field extension}
\label{loc-mot-cat}

Everywhere below $\dmkF{\ff_p}$ will denote the triangulated motivic category of Voevodsky over $\op{Spec}(k)$
with $\ff_p$-coefficients (see \cite{VoMo}, \cite{CiDe}).
We will construct the {\it local} versions of this category, corresponding to all finitely-generated field extensions $E/k$,
or, in other words, to all points of the big Nisnevich site over $\op{Spec}(k)$.
The {\it local motivic categories} will be obtained as full localizing subcategories of a global one by application
of certain projectors. These projectors will be produced using {\it \v{C}ech simplicial schemes}.

Let $P$ be a smooth variety over $k$. The \v{C}ech simplicial scheme $\check{C}ech(P)$ has graded components
$(\check{C}ech(P))_n=P^{\times (n+1)}$ with faces and degeneracy maps given by partial projections and partial diagonals.
This object is an analogue of the contractible space $EG$ in topology, and it will be contractible in Morel-Voevodsky homotopic
motivic category as long as $P$ has a rational point, while in general, it "measures" how far we are from acquiring such a point.
In particular, it is a {\it form} of a point, since it certainly contracts over algebraic closure.
Let us denote the motive of $\check{C}ech(P)$ as $\hii_P$.
The natural projection $\check{C}ech(P)\row\op{Spec}(k)$ provides the morphism
$\hii_P\row T$ to a trivial Tate-motive, which is an isomorphism if and only if $P$ has a zero cycle of degree 1 (modulo $p$, in our case)
(a "weak form" of a rational point) - \cite[Thm 2.3.4]{IMQ}.
This gives an exact triangle
$$
\xymatrix{
\hii_P \ar[r] & T \ar[r] & \whii_P \ar[r] & \hii_P[1],
}
$$
where $\hii_P$ and $\whii_P$ are mutually orthogonal idempotents:
$$
\hii_P\otimes\hii_P\stackrel{\cong}{\row}\hii_P; \hspace{1cm}
\whii_P\otimes\whii_P\stackrel{\cong}{\low}\whii_P; \hspace{1cm}
\hii_P\otimes\whii_P\cong 0.
$$
Thus, the functors of tensor product with these objects:
$$
\hii_P\otimes:\dmkF{\ff_p}\row\dmkF{\ff_p};\hspace{1cm} \whii_P\otimes:\dmkF{\ff_p}\row\dmkF{\ff_p}
$$
are projectors. This defines the semi-orthogonal decomposition of the category $\dmkF{\ff_p}$ as
an extension of $\whii_P\otimes\dmkF{\ff_p}$ by $\hii_P\otimes\dmkF{\ff_p}$, as there are no $\Hom$'s from the latter subcategory to
the former one (by \cite[Thm 2.3.2]{IMQ}, which is, basically, \cite[Lem. 4.9]{Vo-BKMK}).

For different varieties, these projectors naturally commute and we have canonical (co-associative, respectively, associative)
identifications
$$
\hii_P\otimes\hii_Q\stackrel{=}{\low}\hii_{P\times Q}\hspace{5mm}\text{and}\hspace{5mm}
\whii_P\otimes\whii_Q\stackrel{=}{\row}\whii_{P\coprod Q}
$$
(note, that endomorphism rings of $\hii_V$ and $\whii_V$ are either $\ff_p$, or zero - \cite[Thms 2.3.2, 2.3.3]{IMQ}, and such
an endomorphism is fixed by the map $\hii_V\row T$, respectively, $T\row\whii_V$).
Thus, tensor product of any (finite) number of such objects can be always expressed as $\hii_R\otimes\whii_S$, for some $R$ and $S$.

Each $\hii_P$ corresponds to a sub-sheaf $\chi_P$ of the constant sheaf ${\cal T}=\ff_p$ on the big Nisnevich site over $\op{Spec}(k)$
defined as follows. For a smooth connected quasi-projective variety $X$,
$\chi_P(X)=\ff_p$, if $P$ has a zero-cycle of degree $1$ over $k(X)$, and it is zero, otherwise.
Equivalently, $\chi_P(X)=\ff_p$ exactly when $\whii_P\otimes M(X)=0$ (\cite[Thms 2.3.6, 2.3.3]{IMQ}).
Respectively, $\whii_P$ corresponds to the quotient sheaf
$\wt{\chi}_P={\cal T}/\chi_P$. We can introduce an order on the set of $\hii_Q$'s as follows:

\begin{definition}
\label{D-Iso-hii-order}
We say that $\hii_Q\geq\hii_P$ if any of the following equivalent conditions is satisfied:
\begin{itemize}
\item[$(1)$] The natural map $\hii_Q\stackrel{\cong}{\low}\hii_Q\otimes\hii_P$ is an isomorphism;
\item[$(2)$] The natural map $\whii_Q\otimes\whii_P\stackrel{\cong}{\low}\whii_P$ is an isomorphism;
\item[$(3)$] The map $\hii_Q\row T$ factors through $\hii_P\row T$;
\item[$(4)$] $P$ has a zero-cycle of degree $1$ modulo $p$ over the generic point of every component of $Q$;
\item[$(5)$] $\chi_Q$ is a sub-sheaf of $\chi_P$.
\end{itemize}
\end{definition}
Here $(1)\Leftrightarrow(2)$ is automatic from the definition; $(2)\Rightarrow(5)$ follows from the description of
$\chi_P$ above; $(5)\Rightarrow(4)$ follows from the fact that $\chi_Q(Q_l)=\ff_p$, for any connected component $Q_l$ of $Q$;
$(4)\Rightarrow(1)$ is \cite[Thm 2.3.6]{IMQ}; $(1)\Rightarrow(3)$ is straightforward; and, finally, $(3)\Rightarrow(1)$
is clear, since $\hii_Q\otimes\whii_P\otimes(\hii_Q\row T)$ is the identity map of $\hii_Q\otimes\whii_P$.

Note, that the relation $\hii_Q\geq\hii_P$ is obviously transitive. In the case of connected varieties, this
relation may be formulated in terms of the respective field extensions (generic points).
Let $E/k$ and $F/k$ be two finitely-generated extensions of a field of characteristic zero.
Let $P/k$ and $Q/k$ be smooth projective varieties whose function fields are identified with $E$ and $F$.

\begin{definition}
\label{D-Iso-1}
We say that $F/k\geq E/k$, if there exists a correspondence of degree $1$ (with $\ff_p$-coefficients)
$Q\rightsquigarrow P$. We call extensions "equivalent" $F/k\sim E/k$, if $F/k\geq E/k$ and $E/k\geq F/k$.
\end{definition}
By the composition of correspondences, the property $F/k\geq E/k$ is transitive.
It is also equivalent to the condition $\hii_Q\geq\hii_P$ above.
If $k/l$ is a field extension, then $F/k\geq E/k$ implies that $F/l\geq E/l$.

Let $P$ be some smooth variety (of finite type) over $k$.
Let ${\mathbf{Q}}$ be the disjoint union of all connected varieties $Q/k$, such that
$\hii_Q\gneqq\hii_P$ (so, it is a smooth variety, but with infinitely many components), and let $\hii_{{\mathbf{Q}}}$ be the motive
of the respective \v{C}ech simplicial scheme, which is still an idempotent in $\dmkF{\ff_p}$, and $\whii_{{\mathbf{Q}}}$ be the
complementary idempotent. Define
$$
\Upsilon_P:=\whii_{{\mathbf{Q}}}\otimes\hii_{P}.
$$
We can view $\Upsilon_P$ as a colimit of projectors $\whii_{Q}\otimes\hii_P$, where $Q$ runs over all smooth projective varieties
of finite type with $\hii_{Q}\gneqq\hii_{P}$.

Note, that if $P$ is not {\it connected up to equivalence}, that is, if $P$ can't be substituted by a connected variety with the
same $\hii$, then $\Upsilon_P=0$. Indeed, let $P_1$ be a "minimal" component, that
is, $\hii_{P_1}\geq\hii_{P_i}$ implies that $\hii_{P_1}=\hii_{P_i}$. Suppose, there exists another component $P_2$ with
$\hii_{P_2}\not\geq\hii_{P_1}$. Let $\hat{P}_1$ be the union of all the components equivalent to $P_1$. Then for
$Q_1=P\backslash \hat{P}_1$ and $Q_2=\hat{P}_1$ we have:
$\hii_{Q_1}\gneqq\hii_{P}$, $\hii_{Q_2}\gneqq\hii_{P}$, but $\hii_{Q_1\coprod Q_2}=\hii_{P}$.

Now we can define the {\it local motivic category} corresponding to a finitely-generated extension $E/k$ (cf. \cite[Sect. 4]{PicQ}). 

\begin{definition}
\label{D-Iso-2}
Let $E/k$ be a finitely generated extension and
$P/k$ be a smooth connected variety with $k(P)=E$.
Define the "motivic category of the extension $E/k$" as the full localizing subcategory
$$
\dmELF{E}{k}{\ff_p}=\Upsilon_P\otimes\dmkF{\ff_p}
$$
of $\dmkF{\ff_p}$, and the "local geometric category" $\dmgmELF{E}{k}{\ff_p}$ as the full thick triangulated subcategory
of $\dmELF{E}{k}{\ff_p}$ generated by (local) motives of smooth projective varieties.
\end{definition}
This definition doesn't depend on the choice of a smooth model $P$, since $\hii_P$ depends on $k(P)$ only.
Moreover, it depends only on the $\sim$-equivalence class of an extension $E/k$.

In the case of a trivial extension, we obtain (cf. \cite[Sect. 4]{PicQ}):
\begin{definition}
\label{D-Iso-3}
The "isotropic motivic category" is the full localizing subcategory $\dmELF{k}{k}{\ff_p}$ of $\dmkF{\ff_p}$ given by
$\Upsilon_{\op{Spec}(k)}\otimes\dmkF{\ff_p}$,
while the geometric version $\dmgmELF{k}{k}{\ff_p}$ is the full thick triangulated subcategory of it generated by (isotropic) motives
of smooth projective varieties.
\end{definition}

Now, we can read the information about motive by looking at local versions of it.
Namely, we get a collection $\{\ffi_E|\,E/k-\text{f.g. extension}\}$ of localization functors
$$
\ffi_E:\dmkF{\ff_p}\lrow\dmELF{E}{k}{\ff_p}
$$
parameterized by all points of the big Nisnevich site over $\op{Spec}(k)$. These can be further specialized
to {\it isotropic realizations}
$$
\psi_E:\dmkF{\ff_p}\lrow\dmELF{E}{E}{\ff_p}.
$$

The following result shows that there are no unexpected objects
in the {\it isotropic geometric} category.

\begin{proposition}
\label{geom-iso-vse}
The category $\dmgmELF{k}{k}{\ff_p}$ is the idempotent completion of the full subcategory $\Upsilon_{\op{Spec}(k)}\otimes\dmgmkF{\ff_p}$ of $\dmkF{\ff_p}$.
\end{proposition}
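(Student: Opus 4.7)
I would prove the two inclusions separately. For $(\Upsilon_{\op{Spec}(k)}\otimes\dmgmkF{\ff_p})^{\natural}\subseteq\dmgmELF{k}{k}{\ff_p}$, I would use that in characteristic zero $\dmgmkF{\ff_p}$ is the thick triangulated subcategory of $\dmkF{\ff_p}$ generated by motives of smooth projective varieties (by resolution of singularities). Since the exact tensor functor $\Upsilon_{\op{Spec}(k)}\otimes-$ respects both exact triangles and direct summands, it sends $\dmgmkF{\ff_p}$ into the thick triangulated subcategory of $\dmELF{k}{k}{\ff_p}$ generated by the $\Upsilon_{\op{Spec}(k)}\otimes M(X)$ for smooth projective $X$, namely $\dmgmELF{k}{k}{\ff_p}$; thickness then absorbs arbitrary summands.

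For the reverse inclusion it suffices to check that $(\Upsilon_{\op{Spec}(k)}\otimes\dmgmkF{\ff_p})^{\natural}$ is itself a thick triangulated subcategory of $\dmELF{k}{k}{\ff_p}$ containing the generators $\Upsilon_{\op{Spec}(k)}\otimes M(X)$. Containment, summand closure, and closure under shifts are automatic, so the substantive point is closure under cones: given $\bar f:\Upsilon_{\op{Spec}(k)}\otimes M\to\Upsilon_{\op{Spec}(k)}\otimes N$ with $M, N\in\dmgmkF{\ff_p}$, I must produce $L\in\dmgmkF{\ff_p}$ realizing $\op{cone}(\bar f)\cong\Upsilon_{\op{Spec}(k)}\otimes L$.

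Here I would exploit that $\Upsilon_{\op{Spec}(k)}=\whii_{\mathbf{Q}}$ is a smashing Bousfield localization and can be written as $\op{hocolim}_R\whii_R$ with $R$ ranging over finite disjoint unions of anisotropic varieties. The adjunction $\Hom(\Upsilon\otimes M,\Upsilon\otimes N)=\Hom(M,\Upsilon\otimes N)$ and compactness of $M$ factor the map $\tilde f$ corresponding to $\bar f$ through $g:M\to\whii_R\otimes N$ for some such $R$. The obstruction $\partial g\in\Hom(M,\hii_R\otimes N[1])$ to lifting $g$ along the triangle $\hii_R\otimes N\to N\to\whii_R\otimes N$ further factors through $\delta':M\to Y_n[1]$ with $Y_n:=M(\op{sk}_n\check{C}ech(R))\otimes N$, using that $\hii_R$ is the sequential homotopy colimit of its skeletal truncations and that $M$ is compact. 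Setting $M':=\op{fib}(\delta')$, which is geometric as the fibre of a morphism between compacts, the obstruction vanishes on $M'$ and the composition $M'\to M\xrightarrow{g}\whii_R\otimes N$ lifts to $\tilde g:M'\to N$; I would then take $L:=\op{cone}(\tilde g)\in\dmgmkF{\ff_p}$.

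The main obstacle is ensuring that passing from $M$ to $M'$ leaves the $\Upsilon$-image unchanged. The key observation is that $R$ anisotropic forces $R^{k+1}$ to be anisotropic as well (the degree of a closed point of $R^{k+1}$ is divisible by the degrees of its projections to $R$), whence $\hii_{R^{k+1}}\otimes M(R^{k+1})\cong M(R^{k+1})$ yields $\Upsilon_{\op{Spec}(k)}\otimes M(R^{k+1})=0$ and, after assembly, $\Upsilon_{\op{Spec}(k)}\otimes Y_n=0$. Consequently $\Upsilon_{\op{Spec}(k)}\otimes M'\cong\Upsilon_{\op{Spec}(k)}\otimes M$; applying $\Upsilon_{\op{Spec}(k)}\otimes-$ to the triangle defining $L$ then identifies $\Upsilon_{\op{Spec}(k)}\otimes L$ with $\op{cone}(\bar f)$, as required.
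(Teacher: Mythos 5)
Your proof is correct, and it establishes exactly the point the paper isolates as the crux, namely that $\Upsilon_{\op{Spec}(k)}\otimes\dmgmkF{\ff_p}$ is closed under cones; but the mechanism is a mirror image of the paper's. The paper keeps the source $U$ fixed and modifies the \emph{target}: using the truncations $(\whii_{{\mathbf{Q}}})_{\leq n}$ and the vanishing of $\Hom$'s from the bounded geometric $U$ into $(\whii_{{\mathbf{Q}}})_{>n}\otimes V$, it lifts the map to a geometric $f\colon U\row(\whii_{Q})_{\leq n}\otimes V$ for a single finite-type anisotropic $Q$, and then recovers $\wt{f}$ (hence its cone) from $\whii_{{\mathbf{Q}}}\otimes(\whii_Q)_{\leq n}=\whii_{{\mathbf{Q}}}$. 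You instead keep the target $N$ and modify the \emph{source}: after factoring through $\whii_R\otimes N$ by compactness, you kill the obstruction class by passing to $M'=\op{fib}(\delta')$ built from a finite skeleton of $\check{C}ech(R)$, and use anisotropy of the products $R^{\times(k+1)}$ to get $\Upsilon_{\op{Spec}(k)}\otimes M'\cong\Upsilon_{\op{Spec}(k)}\otimes M$, ending with an honest geometric map $\wt{g}\colon M'\row N$. Both arguments rest on the same three ingredients — compactness/boundedness of geometric motives, the skeletal filtration of the \v{C}ech idempotents, and the semi-orthogonality $\Hom(\hii_{{\mathbf{Q}}}\otimes -,\whii_{{\mathbf{Q}}}\otimes -)=0$ — so neither is more general; the paper's route is a little shorter because the correction is absorbed into a target that visibly localizes to $\whii_{{\mathbf{Q}}}\otimes V$, while yours buys a genuine morphism into $N$ at the cost of one verification you assert but do not spell out: that under the identification $\Upsilon_{\op{Spec}(k)}\otimes M'\cong\Upsilon_{\op{Spec}(k)}\otimes M$ the map $\Upsilon_{\op{Spec}(k)}\otimes\wt{g}$ coincides with $\bar{f}$, which is what actually identifies the two cones. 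This does hold: compose both maps with the canonical $M'\row\Upsilon_{\op{Spec}(k)}\otimes M'$, use that $\wt{g}$ lifts $g$ restricted to $M'$ along $N\row\whii_R\otimes N$ and that the structure map $\whii_R\row\whii_{{\mathbf{Q}}}$ is compatible with the units $T\row\whii_R$, $T\row\whii_{{\mathbf{Q}}}$, and conclude by the injectivity of $\Hom(\Upsilon_{\op{Spec}(k)}\otimes M',\Upsilon_{\op{Spec}(k)}\otimes N)\row\Hom(M',\Upsilon_{\op{Spec}(k)}\otimes N)$, which is the same semi-orthogonality you already invoked — but the step should be stated. Your first paragraph (the easy inclusion via resolution of singularities, plus the standard fact that the summand closure of a triangulated subcategory is again triangulated) fills in details the paper leaves implicit and is fine.
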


\begin{proof}
We need to prove that $\Upsilon_{\op{Spec}(k)}\otimes\dmgmkF{\ff_p}$ is closed under cones. For this, it is sufficient to show
that, for any objects $U,V$ of $\dmgmkF{\ff_p}$ and a map $\wt{f}:\whii_{{\mathbf{Q}}}\otimes U\row\whii_{{\mathbf{Q}}}\otimes V$
(where ${\mathbf{Q}}$ is the disjoint union of all anisotropic varieties over $k$), there is a map $f:U'\row V'$ in $\dmgmkF{\ff_p}$,
such that $f\otimes id_{\whii_{{\mathbf{Q}}}}\cong\wt{f}$. Composing the map $\wt{f}$ with $U\row\whii_{{\mathbf{Q}}}\otimes U$ we obtain
a map $g:U\row\whii_{{\mathbf{Q}}}\otimes V$ with the property that $g\otimes id_{\whii_{{\mathbf{Q}}}}\cong\wt{f}$.
Define $(\whii_{{\mathbf{Q}}})_{\leq n}$ as $\op{Cone}((\hii_{{\mathbf{Q}}})_{\leq n-1}\row T)$ and $(\whii_{{\mathbf{Q}}})_{>n}$
as $\op{Cone}((\whii_{{\mathbf{Q}}})_{\leq n}\row\whii_{{\mathbf{Q}}})$. Then $(\whii_{{\mathbf{Q}}})_{>n}$ is an extension of $M(Y)[r]$,
for some smooth varieties $Y$ and $r>n$. Since $U$ and $V$ are geometric, for sufficiently large $n$, there are no $\Hom$'s from $U$
to $(\whii_{{\mathbf{Q}}})_{>n}\otimes V$. Hence, the map $g$ can be lifted to a map
$f':U\row(\whii_{{\mathbf{Q}}})_{\leq n}\otimes V$, which, in turn, can be lifted to a geometric map
$f:U\row(\whii_{Q})_{\leq n}\otimes V$, for some anisotropic variety $Q$ of finite type over $k$.
Since $\whii_{{\mathbf{Q}}}\otimes(\whii_{Q})_{\leq n}=\whii_{{\mathbf{Q}}}$, for any $n\geq 0$,
we obtain that $f\otimes id_{\whii_{{\mathbf{Q}}}}\cong\wt{f}$.
\Qed
\end{proof}

We can describe $\Hom$'s from geometric isotropic motives as follows.
For an object $X$ of $\dmkF{\ff_p}$ and some idempotent $\xi$, we will denote by the same letter the image of $X$ in
$\xi\otimes\dmkF{\ff_p}$.

\begin{proposition}
\label{hom-iso-geom}
Let $U\in Ob(\dmgmkF{\ff_p})$ and $V\in Ob(\dmkF{\ff_p})$. Then
$$
\Hom_{\dmELF{k}{k}{\ff_p}}(U,V)=\operatornamewithlimits{colim}_{\hii_Q\neq T}\Hom_{\whii_Q\otimes\dmkF{\ff_p}}(U,V),
$$
where the colimit is taken over all the functors
$\whii_{S}\otimes:\whii_R\otimes\dmkF{\ff_p}\row\whii_S\otimes\dmkF{\ff_p}$,
for $\hii_R\geq\hii_S\neq T$. In other words, $Q$ runs over all anisotropic varieties over $k$.
\end{proposition}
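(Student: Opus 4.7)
The plan is to reduce both sides to a single filtered colimit of $\Hom$-groups in $\dmkF{\ff_p}$. By Definition \ref{D-Iso-3}, $\dmELF{k}{k}{\ff_p}=\whii_{\mathbf Q}\otimes\dmkF{\ff_p}$ where $\mathbf Q=\coprod_\alpha Q_\alpha$ is the disjoint union of all anisotropic varieties of finite type over $k$. The semi-orthogonality of the pair $(\hii_P\otimes\dmkF{\ff_p},\whii_P\otimes\dmkF{\ff_p})$ recalled in Section \ref{loc-mot-cat}, applied to $P=\mathbf Q$ and to each finite-type anisotropic $Q$, yields
$$\Hom_{\dmELF{k}{k}{\ff_p}}(U,V)=\Hom_{\dmkF{\ff_p}}(U,\whii_{\mathbf Q}\otimes V)\quad\text{and}\quad \Hom_{\whii_Q\otimes\dmkF{\ff_p}}(U,V)=\Hom_{\dmkF{\ff_p}}(U,\whii_Q\otimes V),$$
so it remains to compute the first of these as a filtered colimit of the second ones.

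For the core of the argument I would write $\whii_{\mathbf Q}\otimes V$ as a filtered homotopy colimit of $\whii_{Q_T}\otimes V$, indexed by finite subsets $T$ of the family $\{Q_\alpha\}$, with $Q_T=\coprod_{\alpha\in T}Q_\alpha$. Since $M(-)$ turns (arbitrary) disjoint unions into direct sums, the \v{C}ech simplicial scheme $\check{C}ech(\mathbf Q)$ is the filtered colimit of its sub-simplicial schemes $\check{C}ech(Q_T)$, giving $\hii_{\mathbf Q}=\operatornamewithlimits{hocolim}_T\hii_{Q_T}$; applying the cone construction in the defining triangle $\hii_\bullet\to T\to\whii_\bullet$ yields $\whii_{\mathbf Q}\otimes V=\operatornamewithlimits{hocolim}_T(\whii_{Q_T}\otimes V)$. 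Since every object of $\dmgmkF{\ff_p}$ is compact in $\dmkF{\ff_p}$, the functor $\Hom(U,-)$ commutes with this filtered homotopy colimit, and one obtains
$$\Hom_{\dmkF{\ff_p}}(U,\whii_{\mathbf Q}\otimes V)=\operatornamewithlimits{colim}_T\Hom_{\dmkF{\ff_p}}(U,\whii_{Q_T}\otimes V),$$
where the transition map for $T\subseteq T'$ is post-composition with the natural map $\whii_{Q_T}\otimes V\to\whii_{Q_{T'}}\otimes V$ induced by $T\to\whii_{Q_{T'\setminus T}}$.

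Finally, I would check that this filtered colimit coincides with the one in the statement. The subdiagram of the $Q_T$'s sits inside the full diagram of anisotropic $Q$'s with transitions $\whii_S\otimes:\whii_R\otimes\dmkF{\ff_p}\to\whii_S\otimes\dmkF{\ff_p}$ for $\hii_R\geq\hii_S\neq T$, because $\hii_{Q_T}\geq\hii_{Q_{T'}}$ whenever $T\subseteq T'$ (by condition $(4)$ of Definition \ref{D-Iso-hii-order}), and the resulting transition on $\Hom(U,\whii_{Q_T}\otimes V)$ agrees with the one described above. This subdiagram is cofinal: any anisotropic $Q$ is itself $Q_{\{Q\}}$, and for any two objects $R_1,R_2$ of the full diagram the product $R_1\times R_2$ (still anisotropic, with $\hii_{R_1\times R_2}\geq\hii_{R_i}$) provides a common successor in the indexing order, so filteredness of both diagrams and connectivity of the relevant comma categories are automatic. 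I expect the main technical point to be the assertion $\whii_{\mathbf Q}=\operatornamewithlimits{hocolim}_T\whii_{Q_T}$ for the infinite disjoint union $\mathbf Q$: it requires carefully handling the (possibly infinite) \v{C}ech construction, the idempotency of $\hii_{\mathbf Q}$, and the compatibility of filtered colimits with cones inside the triangulated category $\dmkF{\ff_p}$.
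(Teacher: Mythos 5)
Your reduction is essentially the paper's: both identify $\Hom_{\dmELF{k}{k}{\ff_p}}(U,V)$ with $\Hom_{\dmkF{\ff_p}}(U,\whii_{\mathbf{Q}}\otimes V)$ and then use compactness of the geometric object $U$ to express this as a filtered colimit over anisotropic varieties of finite type; the paper does so by factoring an arbitrary map $U\row\whii_{\mathbf{Q}}\otimes V$ through some $\whii_Q\otimes V$ and detecting its vanishing at a further finite stage (in the spirit of the argument for Proposition \ref{geom-iso-vse}), whereas you package the same compactness argument as ``$\whii_{\mathbf{Q}}\otimes V=\op{hocolim}_T(\whii_{Q_T}\otimes V)$ plus compactness of $U$'', which carries the extra (surmountable) technical burden you yourself flag.

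The genuine flaw is in your cofinality/directedness check. You offer $R_1\times R_2$ as a common successor of $R_1,R_2$, on the grounds that $\hii_{R_1\times R_2}\geq\hii_{R_i}$. That inequality is correct (indeed $\hii_{R_1\times R_2}=\hii_{R_1}\otimes\hii_{R_2}$), but it points the wrong way: the transition functors of the system run from the $R$-term to the $S$-term exactly when $\hii_R\geq\hii_S$, equivalently the natural map of idempotents is $\whii_{R}\row\whii_{S}$; so your inequality produces maps $\whii_{R_1\times R_2}\row\whii_{R_i}$, making the product a common \emph{predecessor}, not a successor, and it establishes neither directedness of the full system nor finality of your finite-subset subdiagram. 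The correct common upper bound is the disjoint union $R_1\coprod R_2$: by Definition \ref{D-Iso-hii-order}(4) one has $\hii_{R_i}\geq\hii_{R_1\coprod R_2}$, and $R_1\coprod R_2$ is still anisotropic precisely because the coefficients are $\ff_p$ (every closed point lies on one of the two components, hence has degree divisible by $p$); this is exactly the point the paper makes at the end of its proof, and it is where finiteness of the coefficients enters. With disjoint unions in place of products, your subdiagram of finite unions $Q_T$ (every anisotropic $Q$ of finite type already being of this form) is final in the full system, and the rest of your argument goes through.
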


\begin{proof}
We have: $\Hom_{\dmELF{k}{k}{\ff_p}}(U,V)=\Hom_{\dmkF{\ff_p}}(\Upsilon_{\op{Spec(k)}}\otimes U,\Upsilon_{\op{Spec}(k)}\otimes V)$,
and the latter can be identified with $\Hom_{\dmkF{\ff_p}}(U,\whii_{{\mathbf{Q}}}\otimes V)$, where
${\mathbf{Q}}$ is the disjoint union of all anisotropic varieties over $k$. But since $U$ is geometric, any map
$U\row\whii_{{\mathbf{Q}}}\otimes V$ factors through $U\row\whii_Q\otimes V$, for some anisotropic $Q$ of finite type,
and the map $U\row\whii_Q\otimes V$ vanishes when extended to a map to $\whii_{{\mathbf{Q}}}\otimes V$ if and only if
there exists an anisotropic $Q'$ with $\hii_{Q}\geq\hii_{Q'}$, such that the composition $U\row\whii_Q\otimes V\row\whii_{Q'}\otimes V$
is zero. Thus, our $\Hom$-group can be identified with the
$$
\operatornamewithlimits{colim}_{\hii_Q\neq T}\Hom_{\whii_Q\otimes\dmkF{\ff_p}}(U,V),
$$
where the colimit is taken over a directed system (as $\hii_{Q_i}\geq\hii_{\coprod_iQ_i}$ and if $\hii_{Q_i}\neq T$,
for each $i$, then $\hii_{\coprod_iQ_i}\neq T$, since the coefficients are $\ff_p$).
\Qed
\end{proof}

Geometric motives vanishing in the local category can be also detected by projectors corresponding to varieties of finite type.
Let $P$ be smooth connected variety with $E=k(P)$.

\begin{proposition}
\label{vanish-loc-fin-level}
An object $U$ of $\dmgmkF{\ff_p}$ vanishes in $\dmELF{E}{k}{\ff_p}$ if and only if there is a variety $Q$ of finite type over $k$,
with $\hii_Q\gneqq\hii_P$ and $\whii_Q\otimes\hii_P\otimes U=0$.
\end{proposition}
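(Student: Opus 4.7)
The "if" direction is nearly immediate. Suppose $Q$ is a finite-type variety with $\hii_Q \gneqq \hii_P$ and $\whii_Q \otimes \hii_P \otimes U = 0$. First I check that every connected component $Q_i$ of $Q$ also satisfies $\hii_{Q_i} \gneqq \hii_P$: the inclusion $Q_i \hookrightarrow Q$ yields $\hii_{Q_i} \geq \hii_Q \geq \hii_P$, and an equality $\hii_{Q_i} = \hii_P$ would sandwich $\hii_Q$ between $\hii_P$ and $\hii_{Q_i} = \hii_P$, contradicting strictness. Hence each $Q_i$ appears among the connected components defining $\mathbf{Q}$, giving an embedding $Q \hookrightarrow \mathbf{Q}$, so $\hii_Q \geq \hii_{\mathbf{Q}}$ and, by condition $(2)$ of Definition \ref{D-Iso-hii-order}, $\whii_{\mathbf{Q}} \otimes \whii_Q = \whii_{\mathbf{Q}}$. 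Then
$$
\Upsilon_P \otimes U = \whii_{\mathbf{Q}} \otimes (\whii_Q \otimes \hii_P \otimes U) = 0,
$$
so $U$ vanishes in $\dmELF{E}{k}{\ff_p}$.

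The "only if" direction will mirror the argument of Proposition \ref{hom-iso-geom}. The central input is a homotopy filtered colimit presentation
$$
\whii_{\mathbf{Q}} = \operatornamewithlimits{hocolim}_{R} \whii_{R},
$$
indexed by the directed poset of finite subunions $R$ of the components of $\mathbf{Q}$, with transition maps obtained from $T \to \whii_{R' \setminus R}$ for $R \subset R'$. At the sheaf level this is the identity $\wt{\chi}_{\mathbf{Q}} = \operatornamewithlimits{colim}_R \wt{\chi}_R$, valid because any zero-cycle of degree $1$ modulo $p$ on $\mathbf{Q}_{k(X)}$ is supported on finitely many components. Tensoring with $\hii_P \otimes U$ and invoking the semi-orthogonality isomorphism $\Hom(\Xi \otimes U, \Xi \otimes U) \cong \Hom(U, \Xi \otimes U)$ for each $\otimes$-idempotent $\Xi$ (as in the proof of Proposition \ref{hom-iso-geom}), together with compactness of the geometric motive $U$, I obtain
$$
\Hom(\Upsilon_P \otimes U, \Upsilon_P \otimes U) = \operatornamewithlimits{colim}_{R} \Hom(\whii_R \otimes \hii_P \otimes U, \whii_R \otimes \hii_P \otimes U),
$$
with identity elements transported to identity elements along the transition maps. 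Since $\Upsilon_P \otimes U = 0$ is equivalent to vanishing of $\text{id}_{\Upsilon_P \otimes U}$, and an element of a filtered colimit of abelian groups vanishes iff it vanishes at some finite stage, there exists a finite subunion $R$ with $\whii_R \otimes \hii_P \otimes U = 0$; take $Q = R$. The required strict inequality $\hii_R \gneqq \hii_P$ holds automatically in the non-degenerate case $\Upsilon_P \neq 0$, since $\hii_R = \hii_P$ at some finite stage would force $\hii_{\mathbf{Q}} \leq \hii_R = \hii_P$, hence $\hii_{\mathbf{Q}} = \hii_P$, trivializing $\Upsilon_P$.

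The main obstacle will be rigorously establishing the homotopy colimit presentation of $\whii_{\mathbf{Q}}$, which requires careful handling of the Čech simplicial scheme associated to the infinite disjoint union $\mathbf{Q}$ inside $\dmkF{\ff_p}$; everything else is a direct adaptation of the techniques already used in Proposition \ref{hom-iso-geom}.
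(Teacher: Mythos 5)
Your ``if'' direction is correct (it is the half the paper leaves implicit), but the ``only if'' direction has a genuine gap at the step where you invoke ``the semi-orthogonality isomorphism $\Hom(\Xi\otimes U,\Xi\otimes U)\cong\Hom(U,\Xi\otimes U)$ for each $\otimes$-idempotent $\Xi$''. That isomorphism is valid for idempotents of the form $\whii_Q$ (as used in Proposition \ref{hom-iso-geom}), because semi-orthogonality kills $\Hom(\hii_Q\otimes A,\whii_Q\otimes B)$; but for $\Xi=\Upsilon_P=\whii_{\mathbf{Q}}\otimes\hii_P$ (or $\whii_R\otimes\hii_P$) it is not justified. Semi-orthogonality does let you replace the source $\Upsilon_P\otimes U$ by $\hii_P\otimes U$, since the cone of $\hii_P\otimes U\row\Upsilon_P\otimes U$ is $\hii_{\mathbf{Q}}\otimes\hii_P\otimes U[1]$; but the further passage from $\hii_P\otimes U$ to $U$ is obstructed by $\Hom(\whii_P\otimes U,\Upsilon_P\otimes U)$ and its shift, and Homs from $\whii_P$-objects to $\hii_P$-objects do not vanish in general (already $\Hom(\whii_P,\hii_P[1])$ contains the nonzero connecting morphism of the triangle $\hii_P\row T\row\whii_P$ whenever $\hii_P\neq T$). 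Since $\hii_P\otimes U$ is not geometric/compact (only $U$ is), you cannot commute $\Hom$ out of it with your filtered homotopy colimit, so the displayed colimit formula --- the heart of your reduction to a finite stage --- is unproven. Note also that the hocolim presentation of $\whii_{\mathbf{Q}}$, which you flag as the main obstacle, is in fact the comparatively harmless part (the paper uses the same finite-stage factorization for maps out of the compact $U$); the real difficulty is the $\hii_P$-factor sitting in the source.

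The paper's proof handles exactly this point by a different device, which your plan is missing: restrict to $E=k(P)$. Over $E$ the idempotent $\hii_P$ becomes the unit, so $\Upsilon_P\otimes U=0$ gives $(\whii_{\mathbf{Q}}\otimes U)_E=0$, i.e.\ the projection $(\hii_{\mathbf{Q}}\otimes U\row U)_E$ acquires a section; now compactness of the geometric motive $U_E$ applies, the section factors through $(\hii_Q\otimes U\row U)_E$ for some finite-type $Q$ with $\hii_Q\gneqq\hii_P$, and hence $(\whii_Q\otimes U)_E=0$. One then returns to the ground field via the conservativity of the functor $\hii_P\otimes\dmkF{\ff_p}\row\dmEF{E}{\ff_p}$ (\cite[Thm. 2.3.5]{IMQ}), which yields $\whii_Q\otimes\hii_P\otimes U=0$ over $k$. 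Some such input (restriction to $E$ plus conservativity, or a substitute that eliminates the $\hii_P$-factor before invoking compactness) is needed to make your argument go through.
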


\begin{proof}
If $\Upsilon_{P}\otimes U=0$, then $(\whii_{{\mathbf{Q}}}\otimes U)_E=0$, where, as above, ${\mathbf{Q}}$ is the disjoint union of all
smooth connected varieties $Q$ over $k$, with $\hii_Q\gneqq\hii_P$. That means that the projection $(\hii_{{\mathbf{Q}}}\otimes U\row U)_E$
has a section (from the right). But since $U$ is geometric, such a section will factor through some section of
$(\hii_{Q}\otimes U\row U)_E$ for some variety $Q$ of finite type over $k$ with $\hii_Q\gneqq\hii_P$. Hence,
$(\whii_Q\otimes U)_E=0$ (as $U_E$ is a direct summand of $\hii_Q\otimes U_E$, and so, $\whii_Q\otimes U_E$ is a direct summand
of it as well, but the latter object is stable under $\whii_Q\otimes$ while the former one is killed by it).
But, according to \cite[Thm. 2.3.5]{IMQ}, the functor
$\hii_P\otimes\dmkF{\ff_p}\row\dmEF{E}{\ff_p}$ is conservative. Hence, $\whii_Q\otimes\hii_{P}\otimes U=0$ in $\dmkF{\ff_p}$.
\Qed
\end{proof}

Since $\whii_Q\otimes M(Q)=0$, the projection to the {\it isotropic motivic category} $\dmELF{k}{k}{\ff_p}$ kills the motives of all
{\it anisotropic} varieties over $k$. Hence, the name of this category.

\begin{remark}
\label{local-Verdier}
{\rm
The {\it isotropic motivic category} $\dmELF{k}{k}{\ff_p}$ is the Verdier localization of $\dmkF{\ff_p}$ modulo the localizing
subcategory ${\cal A}$ generated by motives of {\it anisotropic} varieties\footnote{I'm grateful to T.Bachmann for pointing this out.}.
Indeed, an object $U$ of $\dmkF{\ff_p}$ vanishes in $\dmELF{k}{k}{\ff_p}$ if and only if $U=U\otimes\hii_{{\mathbf{Q}}}$, where
${\mathbf{Q}}$ is the disjoint union of all connected anisotropic varieties $Q/k$. Hence, $U$ belongs to ${\cal A}$, since this
subcategory is a tensor ideal. By the universal property of the Verdier localization, $\psi_E:\dmkF{\ff_p}\row\dmELF{k}{k}{\ff_p}$
is equivalent to $\dmkF{\ff_p}\row\dmkF{\ff_p}/{\cal A}$.
}
\Red
\end{remark}

We have functoriality for the "denominator" of the extension $E/k$.
Suppose, we have a tower of fields $L\subset F\subset E$, and $P/L$, $Q/L$ are smooth projective varieties with $L(P)=E$ and
$\hii_Q\gneqq\hii_P$. Then $Q$ remains anisotropic over $L(P)$, and so, over $F(P)$ (since $F\subset L(P)$).
Hence, $\hii_Q|_F\gneqq\hii_P|_F$.
Thus, we get a natural functor
$$
\dmELF{E}{L}{\ff_p}\lrow\dmELF{E}{F}{\ff_p}.
$$

The following result shows that, in the case of a flexible ground field, we can pass from (geometric) {\it local realizations}
$\{\ffi_E|\,E/k-\text{f.g. extension}\}$ to {\it isotropic realizations} $\{\psi_E|\,E/k-\text{f.g. extension}\}$
without loosing any information.

\begin{proposition}
\label{D-A9}
Let $E/k$ be a finitely-generated extension of a flexible field. Then the functor
$$
\dmgmELF{E}{k}{\ff_p}\lrow\dmgmELF{E}{E}{\ff_p}
$$
is conservative on the image of $\ffi_E$.
\end{proposition}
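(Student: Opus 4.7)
My plan is to reduce the conservativity to a finite-level spreading problem and then invoke flexibility. First I would verify that, precomposed with $\ffi_E$, the functor $\dmgmELF{E}{k}{\ff_p} \to \dmgmELF{E}{E}{\ff_p}$ sends $U$ to $\psi_E(U_E)$: indeed, $\ffi_E(U) = \Upsilon_P \otimes U$, and base-changing to $E$ trivializes the $\hii_P$-factor (because $\hii_{P,E} = T$, since $P$ acquires a rational point over $E = k(P)$), so the result is governed by the projector used to form $\dmELF{E}{E}{\ff_p}$. Consequently, conservativity on the image of $\ffi_E$ is equivalent to the implication: if $\psi_E(U_E) = 0$ in $\dmgmELF{E}{E}{\ff_p}$, then $\ffi_E(U) = 0$ in $\dmgmELF{E}{k}{\ff_p}$.

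Applying Proposition \ref{vanish-loc-fin-level} to both sides makes the problem concrete. The hypothesis yields a finite-type anisotropic variety $Q'/E$ with $\whii_{Q'} \otimes U_E = 0$ in $\dmgmEF{E}{\ff_p}$, while the conclusion demands a finite-type $Q/k$ with $\hii_Q \gneqq \hii_P$ and $\whii_Q \otimes \hii_P \otimes U = 0$. By the conservativity of $\hii_P \otimes(\cdot) : \dmkF{\ff_p} \to \dmEF{E}{\ff_p}$ on $\hii_P \otimes \dmkF{\ff_p}$ (Theorem 2.3.5 of \cite{IMQ}), the latter is equivalent to $\whii_{Q_E} \otimes U_E = 0$, which in turn follows from $\whii_{Q'} \otimes U_E = 0$ as soon as $\hii_{Q_E} \geq \hii_{Q'}$ (forcing $\whii_{Q_E} = \whii_{Q_E} \otimes \whii_{Q'}$). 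It therefore suffices to produce a finite-type $Q/k$ with (i) $Q$ anisotropic over $E$ (i.e., $\hii_Q \gneqq \hii_P$) and (ii) $\hii_{Q_E} \geq \hii_{Q'}$.

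To build $Q$, I would use flexibility: writing $k = k_0(t_1, t_2, \ldots)$ and taking $M$ large enough, $P$, $U$, and a model $Q''/E^{(M)}$ of $Q'$ are all defined over $k^{(M)} := k_0(t_1, \ldots, t_M)$, where $E^{(M)} := k^{(M)}(P)$ and $E = E^{(M)}(t_{M+1}, t_{M+2}, \ldots)$ is purely transcendental over $E^{(M)}$. Spreading $Q''$ to a finite-type $k^{(M)}$-variety $\wt{Q''} \to V^{(M)} \subset P$ with generic fiber $Q''$, and setting $Q := \wt{Q''} \otimes_{k^{(M)}} k$, property (ii) is automatic: the component of $Q_E$ lying above the diagonal $E$-point of $V^{(M)}_E$ has function field containing $k(Q'')$, over which $Q' = (Q'')_E$ is manifestly isotropic. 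The main obstacle is (i): $Q_E$ can acquire extra $E$-rational points from non-generic $E$-points of $V^{(M)}_E$ (for example, from $k^{(M)}$-rational points of $V^{(M)}$), destroying anisotropy over $E$. The fix uses the unused transcendentals $t_{M+1}, t_{M+2}, \ldots$, which appear in neither $\wt{Q''}$ nor $P^{(M)}$, to perform a generic $k_0$-twist of $Q$, or equivalently to replace $Q$ by a base change along a purely transcendental specialization (in the spirit of Proposition \ref{flex-main-prop}); the effect is to force every $E$-point of the modified $Q_E$ to factor through $Q'$ and thereby inherit its anisotropy. Executing this twist while preserving (ii) and finite-type-ness over $k$ is the technical crux of the argument.
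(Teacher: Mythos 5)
Your opening reduction is fine and runs parallel to the paper's own use of Proposition \ref{vanish-loc-fin-level} and of the conservativity of $\hii_P\otimes\dmkF{\ff_p}\row\dmEF{E}{\ff_p}$: everything indeed comes down to producing a finite-type $Q/k$ with $\hii_Q\gneqq\hii_P$ and $\whii_{Q_E}\otimes U_E=0$. The gap is in the construction of this $Q$, in two places. First, your claim that condition (ii) is ``automatic'' for the spread-out family $Q=\wt{Q''}\otimes_{k^{(M)}}k$ over $V\subset P$ is not correct as stated. The inequality $\hii_{Q_E}\geq\hii_{Q'}$ requires a zero-cycle of degree $1$ mod $p$ on $Q'$ over the generic point of \emph{every} component of $Q_E$, taken with respect to the structure ($=$ external) embedding of $E$. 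But the preimage in $Q_E$ of the diagonal $E$-point of $V_E$ is the fibre $Q'$ itself, of codimension $\dim V$, so (when $E/k$ has positive transcendence degree) no component of $Q_E$ ``lies above'' that point; the generic points of $Q_E$ have residue fields which are composites of $k(Q)$ and $E$ in which the internal copy of $E$ (coming from $k(Q)\supseteq k(V)=E$) and the external copy differ, and containing $k(Q'')$ via the internal embedding gives no isotropy of the $E$-variety $Q'$ over them. (Already for $E=k(t)$ and $Q'$ an anisotropic conic over $k(t)$, the generic point of $Q_E$ has residue field $k(Q)(t)$ with $t$ transcendental over $k(Q)$, and there is no reason for $Q'$ to acquire a point there.) Second, condition (i) — anisotropy of $Q$ over $E$ — which you yourself call the crux, is left to an unspecified ``generic twist''; a family over $V\subset P$ picks up isotropic special fibres (a single rational point of $V$ with an isotropic fibre already destroys (i)), specialization arguments run from the generic fibre to special fibres and not conversely, and it is not shown that any twist of the proposed shape exists, let alone one preserving (ii). So the heart of the proposition is not proved.

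It is also worth seeing why a one-shot transplant of the witness $Q'/E$ to $k$ cannot work in general: a finitely generated model $E^{(M)}$ of $E$ need not embed into $k=k_0(\pp^{\infty})$ at all (for instance the function field of a curve of positive genus is not a subfield of a purely transcendental extension of $k_0$ in characteristic zero). The paper's proof is organized precisely to avoid this: it factors $E/k$ through a purely transcendental subextension $L=k(\aaa^n)$ with $E/L$ finite. For the step $\dmgmELF{E}{k}{\ff_p}\row\dmgmELF{E}{L}{\ff_p}$ (Lemma \ref{D-A7}) the witnessing anisotropic variety over $L$ is descended to $M=F(\aaa^n)$ with $F/k_0$ finitely generated purely transcendental, and then $M$ is re-embedded into $k$ over $F$ so that $k/M$ is purely transcendental — this is where flexibility is used, and it produces a witness defined over $k$ itself, not fibred over $P$. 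For the remaining finite step $E/L$ (Lemma \ref{D-A8}) the witness over $E$ is pushed down along $\op{Spec}(E)\row\op{Spec}(L)$, and a degree count, valid only because $[E:L]<\infty$, shows $\hii_{\widehat{\mathbf{Q}}}\gneqq\hii_P$. Neither mechanism appears in your sketch, and they are exactly what fills the step you label the technical crux.
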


\begin{proof}
Let us start with purely transcendental extensions.

\begin{lem}
\label{D-A7}
Let $E/L/k$ be a tower of finitely generated extensions where $L/k$ is a purely transcendental extension of a flexible field.
Then the functor
$$
\dmgmELF{E}{k}{\ff_p}\lrow\dmgmELF{E}{L}{\ff_p}
$$
is conservative on the image of $\ffi_E$.
\end{lem}

\begin{proof}
Let $L=k(\aaa^n)$, and $E=k(\ov{R})$ for some smooth variety $\ov{R}/k$.
Let $\ov{U}\in Ob(\hii_{\ov{R}}\otimes\dmgmkF{\ff_p})$ be an object vanishing in $\dmgmELF{E}{L}{\ff_p}$. Then,
according to the Proposition \ref{vanish-loc-fin-level}, there exists a
variety $\ov{Q}/L$ of finite type such that $\hii_{\ov{Q}}\gneqq\hii_{\ov{R}_L}$ and $\ov{U}_L\otimes\whii_{\ov{Q}}=0$ in $\dmEF{L}{\ff_p}$.
The condition $\hii_{\ov{Q}}\gneqq\hii_{\ov{R}_L}$ means that we have an $L$-correspondence
$\ov{\alpha}:\ov{Q}\rightsquigarrow \ov{R}_L$ of degree one, and there is no such correspondence in the opposite direction.
Since $k=k_0(\pp^{\infty})$ is flexible,
varieties $\ov{R}$ and $\ov{Q}$ are, actually, defined over $F$ and $M=F(\aaa^n)$, respectively, where extensions $k/F/k_0$ are
purely transcendental and $F/k_0$ is moreover finitely generated. By the same reason, we can assume that
the geometric object $\ov{U}$ is defined over $F$, while
the correspondence $\ov{\alpha}$ is defined over $M$.
So, there exist varieties $R/F$, $Q/M$, an object $U$ of $\hii_{R}\otimes\dmgmEF{F}{\ff_p}$ and a degree one $M$-correspondence
$\alpha:Q\rightsquigarrow R_M$ such that $R|_k=\ov{R}$, $Q|_L=\ov{Q}$, $U|_k=\ov{U}$ and $\alpha|_L=\ov{\alpha}$.
Note, that we still have: $\hii_Q\gneqq\hii_{R_M}$ (since $\alpha$ is defined over $M$ and by functoriality),
and $U_M\otimes\whii_Q=0$ (since the restriction $\dmEF{M}{\ff_p}\row\dmEF{L}{\ff_p}$ is conservative).
But the extension
$M/F$ can be embedded into $k/F$ making $k/M$ purely transcendental. Let $Q'$ be a variety over $k$ obtained from
$Q$ using this embedding. Then $\hii_{Q'}\gneqq\hii_{\ov{R}}$ (since $k/M$ is purely transcendental) and
$\ov{U}\otimes\whii_{Q'}=0$ in $\dmEF{k}{\ff_p}$. Hence,
$\ov{U}=0$ in $\dmELF{E}{k}{\ff_p}$.
\Qed
\end{proof}

Using Lemma \ref{D-A7} our problem is reduced to
the case of a finite extension. In this situation, the statement is true for an arbitrary field.

\begin{lem}
\label{D-A8}
Let $E/L$ be a finite extension of fields. Then the functor
$$
\dmELF{E}{L}{\ff_p}\lrow\dmELF{E}{E}{\ff_p}
$$
is conservative.
\end{lem}

\begin{proof}
Let $E=L(P)$ for some smooth connected 0-dimensional variety $P$.
Let $U\in Ob(\hii_P\otimes\dmEF{L}{\ff_p})$ be an object
vanishing in $\dmELF{E}{E}{\ff_p}$. Then, for the  disjoint union $\ov{{\mathbf{Q}}}$ of all anisotropic varieties over $E$,
we have: $U_E\otimes\whii_{\ov{{\mathbf{Q}}}}=0$ in $\dmEF{E}{\ff_p}$. Consider a smooth $L$-variety $\widehat{{\mathbf{Q}}}$
given by the composition
$\ov{{\mathbf{Q}}}\row\op{Spec}(E)\row\op{Spec}(L)$. We have a natural map $\ov{{\mathbf{Q}}}\row\widehat{{\mathbf{Q}}}_E$.
This means that $\hii_{\ov{{\mathbf{Q}}}}\geq\hii_{\widehat{{\mathbf{Q}}}_E}$,
and so, $U_E\otimes\whii_{\widehat{{\mathbf{Q}}}_E}=0$ as well. Clearly, $\hii_{\widehat{{\mathbf{Q}}}}\geq\hii_P$.
Suppose, these are equal. Then there exists a commutative
diagram
$$
\xymatrix{
\ov{{\mathbf{Q}}} \ar[r] & \op{Spec}(E) \ar[r] & \op{Spec}(L) \\
\op{Spec}(F) \ar[u] \ar[r] & \op{Spec}(E) \ar[r] & \op{Spec}(L) \ar@{=}[u]
}
$$
where $F$ is an extension of $E$ of degree prime to $p$. But since $[E:L]$ is finite, the composition
$\op{Spec}(F)\row\ov{{\mathbf{Q}}}\row\op{Spec}(E)$ has the same (prime to $p$) degree.
This contradicts to the fact that $\ov{{\mathbf{Q}}}$ is anisotropic.
Hence, $\hii_{\widehat{{\mathbf{Q}}}}\gneqq\hii_P$, and so,
$\hii_{\widehat{{\mathbf{Q}}}}\geq\hii_{{\mathbf{Q}}}$, where ${\mathbf{Q}}$ is a disjoint union of all $L$-varieties $Q$
with $\hii_Q\gneqq\hii_P$. Thus, $(U\otimes\whii_{{\mathbf{Q}}})_E=0$ as well.
By \cite[Thm 2.3.5]{IMQ}, the functor
$\hii_P\otimes\dmEF{L}{\ff_p}\row\dmEF{E}{\ff_p}$ is conservative,
so, $(U\otimes\whii_{{\mathbf{Q}}})\otimes\hii_P=0$ in $\dmEF{L}{\ff_p}$. This means that $U=0$ in $\dmELF{E}{L}{\ff_p}$.
\Qed
\end{proof}

This finishes the proof of the Proposition \ref{D-A9}.
\Qed
\end{proof}

Another type of functoriality we have is the following one. Let $k(A)/k$ be a purely transcendental extension of $k$. Then we have a
natural functor
$$
\dmELF{E}{k}{\ff_p}\row\dmELF{E(A)}{k(A)}{\ff_p}.
$$
One just needs to observe that the inequality $\hii_Q\gneqq\hii_P$ is preserved under the passage from $k$ to $k(A)$. \\

\medskip

It is natural to ask, in which situations will our localization functors be conservative?
\begin{question}
\label{conserv-main-q}
\phantom{a}\hspace{5mm}\phantom{a}
\begin{itemize}
\item[$(a)$] What is the kernel of the collection of functors $\{\ffi_E|\,E/k-\text{f.g. extension}\}$?
\item[$(b)$] What is the kernel of the collection of functors $\{\psi_E|\,E/k-\text{f.g. extension}\}$?
\end{itemize}
\end{question}

Since the passage from $k_0$ to $k=k_0(t_1,t_2,\ldots)$ is conservative, and any finitely generated extension $E$ of $k$ has the form
$E=E_0(t_N,\ldots)$, for some finitely-generated extension $E_0$ of $k_0$, and by Proposition \ref{D-A9},
the triviality of 
$\{\ffi_{E_0}|\,E_0/k_0-\text{f.g. extension}\}$ on $X_0$ 
implies the triviality of 
$\{\psi_{E_0}|\,E_0/k_0-\text{f.g. extension}\}$ on $X_0$,
implying the triviality of
$\{\psi_E|\,E/k-\text{f.g. extension}\}$ on $X_0|_k$,
which, in turn, is equivalent to the triviality of
$\{\ffi_E|\,E/k-\text{f.g. extension}\}$ on $X_0|_k$.
Thus, for a given geometric object $X_0/k_0$,
$$
(a)\Rightarrow (b)\Rightarrow (b)_{flex}\Leftrightarrow (a)_{flex},
$$
where $(a)$  means that $X_0$ is in the kernel of the family $\{\ffi_{E_0}|\,E_0/k_0-\text{f.g. extension}\}$, 
$(a)_{flex}$ means that $X_0|_k$ (the restriction to the flexible closure) is in the kernel of the family 
$\{\ffi_E|\,E/k-\text{f.g. extension}\}$, etc..

\begin{remark}
\label{Bachmann-functors}
{\rm Restricting the functors $\psi_E$ to the tensor triangulated subcategory $\op{DQM}^{gm}$ generated by motives of smooth projective quadrics,
and specializing it further, one gets the functors of T.Bachmann $\Phi^E:\op{DQM}^{gm}\row K^b(Tate(\ff_2))$ to the category of
bi-graded $\ff_2$-vector spaces - see \cite{BQ}.
This can be deduced from the fact that the functor $\psi_E$ maps the subcategory $\op{DQM}^{gm}$ to the subcategory of
{\it geometric Tate-motives} in $\dmELF{E}{E}{\ff_p}$ (by \cite[Prop. 4.9]{PicQ}).
The functors $\Phi^E$, constructed originally by completely different methods,
were shown by T.Bachmann to be conservative \cite[Thm. 31]{BQ}. In particular, the collection
$\{\psi_E|\,E/k-\text{f.g. extension}\}$ is conservative on $\op{DQM}^{gm}$. Our approach also permits to see conservativity on this and
other similar subcategories. Namely, it follows from \cite[Prop. 4.9]{PicQ} that, for any object $A$ of $\op{DQM}^{gm}$, there exists
a finite filtration by idempotents on the unit object $T$, such that associated graded idempotents map $A$ to geometric Tate-motives.
And this collection of associated graded idempotents (having a form $\whii_Q\otimes\hii_P$, for some smooth varieties $P$ and $Q$, with
$P$-connected) acts conservatively (as the unit object is an extension of them). It remains to
observe that, for geometric Tate-motives, the triviality of $\whii_Q\otimes\hii_P\otimes A$ is equivalent to the triviality
of $\Upsilon_P\otimes A\in Ob(\dmELF{E}{E}{\ff_p})$, for $E=k(P)$.
}
\Red
\end{remark}

\begin{remark}
\label{alg-clo-local}
{\rm If the ground field $k_0$ is algebraically closed, then there exists only one $\sim$-equivalence class of
finitely-generated extensions of $k_0$ (the trivial one). Thus, there is only one "local" point and only one localization functor
$\ffi: \dmEF{k_0}{\ff_p}\lrow\dmELF{k_0}{k_0}{\ff_p}$ which is an equivalence of categories
(as there are no anisotropic varieties over $k_0$).
Thus, in this case, the family $\{\ffi_{E_0}|\,E_0/k_0-\text{f.g. extension}\}$ is conservative, but it does not provide
any interesting information.
}
\Red
\end{remark}

The collection $\{\ffi_E|\,E/k-\text{f.g. extension}\}$ is not conservative, in general.

\begin{example}
\label{not-conserv-1}
{\rm
(1) Let $k$ be a flexible field and $C$ be an elliptic curve over $k$ without complex multiplication. Consider $p=2$.
Then $M(C)=T\oplus\widehat{M}(C)\oplus T(1)[2]$. Consider the Chow groups $\Ch_{Num(p)}$ modulo numerical equivalence with
$\ff_2$-coefficients - see Subsection \ref{nump-iso}. Then
$$
\Ch^1_{Num(p)}(C\times C)=[pt\times C]\cdot\ff_2\oplus[C\times pt]\cdot\ff_2.
$$
Indeed, for an arbitrary $p$, such a group is generated by $[pt\times C]$, $[C\times pt]$ and the class of the diagonal $[\Delta]$
(in the absence of complex multiplication).
But with $\ff_2$-coefficients, $[\Delta]\nump [pt\times C]+[C\times pt]$.
Thus, $\widehat{M}(C)=0$ in $Chow_{Num}(E;\ff_2)$, for any extension $E/k$. Hence, by Theorem \ref{thm-conj5-1-2}(1),
it is zero in $Chow(E/E;\ff_2)$ which is a subcategory of $\dmELF{E}{E}{\ff_2}$. So, all isotropic realizations $\psi_E(\widehat{M}(C))$
are trivial.
At the same time, $\widehat{M}(C)$ is non-trivial, since the (complex) topological realisation of it is non-trivial (has non-zero $H^1_{Top}$).
Alternatively, one can see that the restriction to the algebraic closure $\widehat{M}(C)|_{\kbar}$ is non-trivial.
Note, that the choice of a prime was essential here.

(2) Refining the previous example, we can show that even the combination
$$
\{\ffi_E|\,E/k-\text{f.g. extension}\}\cup \op{res}_{\kbar}
$$
is not conservative on $\dmgmkF{\ff_2}$.
In the above situation, consider some non-trivial quadratic extension $F=k(\sqrt{a})$ and $P=\op{Spec}(F)\stackrel{\pi}{\row}\op{Spec}(k)$.
Let $\alpha=\{a\}\in K^M_1(k)/2$ and
$\wt{M}_{\alpha}$ be the "completely" reduced Rost-motive - see the proof of Theorem \ref{D-A4}. This motive fits into the
exact triangle $\wt{M}_{\alpha}\row\whii_{\alpha}[1]\row\whii_{\alpha}\row\wt{M}_{\alpha}[1]$, where
$\whii_{\alpha}=\whii_{P}$.
Let us show that $U=\widehat{M}(C)\otimes\whii_{\alpha}\neq 0$. Indeed,
such a triviality is equivalent to the fact that the projection $\hii_P\times\widehat{M}(C)\row\widehat{M}(C)$ has a section.
And since $\widehat{M}(C)$ is a {\it pure motive} (= {\it Chow motive}), any such section is liftable to a section of
$P\times\widehat{M}(C)\row\widehat{M}(C)$.
This would mean
that the projector $\rho$ defining $\widehat{M}(C)$ is in the image of the natural map $\pi_*:\Ch^1(C\times C\times P)\row\Ch^1(C\times C)$.
Note however, that $\Ch^1(C\times C|_{\kbar})=[\Delta]\cdot\ff_2\oplus [pt\times C]\cdot\ff_2\oplus [C\times pt]\cdot\ff_2$ (since
$\kbar$-points of the Jacobian form a $2$-divisible group), and so the map
$\op{res}_{\kbar}\circ\pi_*:\Ch^1(C\times C\times P)\row\Ch^1(C\times C|_{\kbar})$ is zero (since the action of the Galois group on
$\Ch^1(C\times C|_{\kbar})$ is trivial, which implies that $\op{res}_{\kbar}\circ\pi_*=2\cdot\op{res_{\kbar/F}}$).
On the other hand, $\rho|_{\kbar}\neq 0$, since it is non-zero even in the topological realization. Hence, $\rho$ is not in the
image of $\op{res}_{\kbar}\circ\pi_*$ and $\widehat{M}(C)\otimes\whii_{\alpha}\neq 0$. Notice, that $\psi_E(U)=0$, since
$\psi_E(\widehat{M}(C))=0$, while $\op{res}_{\kbar}(U)=0$, since $\op{res}_{\kbar}(\whii_{\alpha})=0$. Thus, we have produced a non-trivial
example on which the needed combination of functors vanishes, but so far, not a geometric one.

Consider $V=\widehat{M}(C)\otimes\wt{M}_{\alpha}$. Then we have a distinguished triangle
$V\row U[1]\row U\row V[1]$. In particular, $V$ is geometric and all the above functors vanish on it. It remains to show
that $V\neq 0$. Note that, since $U\neq 0$, the homology $\Hom_{\dmEF{E}{\ff_2}}(T(*)[*'],U)$ considered for all finitely generated
extensions $E/k$, is non-trivial. At the same time, this homology is zero for $*'<*$ (below the main diagonal).
This implies that $V\neq 0$. Indeed, if it would be zero, then the homology of $U$ would be $[1]$-periodic, which is not the case.
}
\Red
\end{example}

\subsection{Local Chow motivic category}

Let $X$ be a scheme of finite type over $k$. We can define its {\it isotropic Chow groups} as
$$
\Ch_{k/k;r}(X):=\Hom_{\dmELF{k}{k}{\ff_p}}(T(r)[2r],M^c(X)),
$$
where $M^c(X)$ is the {\it motive with compact support} of $X$ - see \cite{VoMo}.
For smooth varieties, we have from duality:
$$
\Ch_{k/k}^r(X)=\Hom_{\dmELF{k}{k}{\ff_p}}(M(X),T(r)[2r]).
$$
The theory $\Ch_{k/k}$ has natural pull-backs and push-forward maps coming from the respective maps between motives of varieties,
which satisfy all the axioms of \cite[Def. 1.1.2]{LM} (since these follow from the properties of motives). Finally, we have
the excision axiom $(EXCI)$, claiming that for a scheme $X$ with the closed subscheme $Z$ and open complement $U$, there is an
exact sequence:
$$
\Ch_{k/k;*}(Z)\stackrel{i_*}{\lrow}\Ch_{k/k;*}(X)\stackrel{j^*}{\lrow}\Ch_{k/k;*}(U)\row 0.
$$
This follows from the Gysin exact triangle \cite[(4.1.5)]{VoMo}:
$$
M^c(Z)\row M^c(X)\row M^c(U)\row M^c(Z)[1]
$$
and the fact that the map $\Ch^*\twoheadrightarrow\Ch^*_{k/k}$ is surjective, which follows from Proposition \ref{D-ChowGroups}
below. Thus, $\Ch^*_{k/k}$ is an {\it oriented cohomology theory} (with excision) on $\smk$ in the sense of \cite[Def. 2.1]{SU}.

\begin{definition}
\label{anis-general}
Let $Q$ be a scheme (of finite type) over $k$ and $n\in\nn$. We say that $Q$ is "$n$-anisotropic", if the degrees of
all closed points of $Q$ are divisible by $n$.
\end{definition}

Schemes which don't have this property will be called {\it not $n$-anisotropic}, while we will reserve the term {\it isotropic} for
a scheme having a zero-cycle of degree $1$ (mod $n$). Below almost everywhere we will be dealing with $n=p$-prime, and so {\it isotropic}
will be the complement to {\it anisotropic}. Unless specified, the term {\it anisotropic} will mean $p$-{\it anisotropic}, for some prime
$p$.

\begin{definition}
\label{anis-class}
Let $X$ be a scheme over $k$, and $x\in\Ch_r(X)$. We call $x$ "anisotropic", if there exists a proper morphism
$f:Y\row X$ from a $p$-anisotropic scheme $Y$ and a class $y\in\Ch_r(Y)$ such that $x=f_*(y)$.
\end{definition}

For fields of characteristic zero and $X$ projective, $x$ is {\it anisotropic} if and only if it is a push-forward of the generic cycle
from some smooth projective anisotropic variety over $k$.

{\it Isotropic Chow groups} can be alternatively described as follows.

\begin{proposition}
\label{D-ChowGroups}
$$
\Ch_{k/k}(X)=\Ch(X)/(\text{anisotropic classes}).
$$
\end{proposition}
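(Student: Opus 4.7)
The plan is to work with the filtered colimit description
$$\Ch_{k/k;r}(X) \;=\; \operatornamewithlimits{colim}_{Q\,\text{anis.}} \Hom_{\dmkF{\ff_p}}(T(r)[2r], \whii_Q \otimes M^c(X))$$
supplied by Proposition~\ref{hom-iso-geom}, together with the long exact sequence of $\Hom$-groups attached, for each anisotropic $Q$, to the defining triangle $\hii_Q \otimes M^c(X) \to M^c(X) \to \whii_Q \otimes M^c(X)$:
$$\Hom(T(r)[2r], \hii_Q \otimes M^c(X)) \xrightarrow{\alpha_Q} \Ch_r(X) \xrightarrow{\beta_Q} \Hom(T(r)[2r], \whii_Q \otimes M^c(X)) \xrightarrow{\gamma_Q} \Hom(T(r)[2r-1], \hii_Q \otimes M^c(X)).$$

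First I would identify the image of $\alpha_Q$ as a subgroup of the anisotropic classes. Since $T(r)[2r]$ is compact and $\hii_Q = M(\check{C}(Q))$ is the colimit of its skeletal truncations $(\hii_Q)_{\leq n}$, every map $T(r)[2r] \to \hii_Q \otimes M^c(X)$ factors through some $(\hii_Q)_{\leq n} \otimes M^c(X)$, which is an iterated extension of the pieces $M(Q^{\times(i+1)}) \otimes M^c(X) = M^c(Q^{\times(i+1)} \times X)$ for $i \leq n$. The composition with $\hii_Q \otimes M^c(X) \to M^c(X)$ is induced by the augmentation $\hii_Q \to T$, which at each simplicial level is the structure map $Q^{\times(i+1)} \to \op{Spec}(k)$; on the motivic level it therefore realizes as the proper push-forward $M^c(Q^{\times(i+1)} \times X) \to M^c(X)$ along the projection. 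A closed point of $Q^{\times(i+1)} \times X$ projects to a closed point of $Q^{\times(i+1)}$ (because closed points of a $k$-scheme of finite type are exactly those with finite residue field, and residue-field-finiteness descends along projections), whose degree is divisible by $p$ since $Q$ is $p$-anisotropic; hence $Q^{\times(i+1)} \times X$ inherits $p$-anisotropy and every element of $\operatorname{Im}(\alpha_Q)$ is anisotropic in the sense of Definition~\ref{anis-class}. Conversely, any anisotropic class $f_*(z)$ with $f: Y \to X$ proper and $Y$ smooth projective anisotropic is realized in $\operatorname{Im}(\alpha_Y)$ via $T(r)[2r] \xrightarrow{z} M(Y) \xrightarrow{\sim} \hii_Y \otimes M(Y) \xrightarrow{\hii_Y \otimes f_*} \hii_Y \otimes M^c(X)$, where the middle isomorphism is the inverse of the absorption $\hii_Y \otimes M(Y) \xrightarrow{\sim} M(Y)$; post-composing with the augmentation $\hii_Y \otimes M^c(X) \to M^c(X)$ recovers $f_*(z)$ by naturality. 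Singular $Y$ reduces to the smooth projective case by Hironaka resolution (proper morphisms send closed points to closed points, so the resolution preserves anisotropy) together with the Gysin triangle, placing $M^c(Y)$ in the localizing subcategory generated by smooth projective anisotropic motives. Thus $\bigcup_Q \operatorname{Im}(\alpha_Q)$ equals exactly the anisotropic subgroup of $\Ch_r(X)$, yielding injectivity of $\Ch_r(X)/(\text{anisotropic}) \hookrightarrow \Ch_{k/k;r}(X)$.

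For surjectivity I would continue the long exact sequence: the cokernel of $\beta_Q$ embeds into $\ker(\delta_Q) \subseteq \Hom(T(r)[2r-1], \hii_Q \otimes M^c(X))$, where $\delta_Q$ is the next connecting map. By the same compactness-plus-skeletal-filtration argument, such an obstruction class factors through some $(\hii_Q)_{\leq n}$ and is built from higher motivic-homology classes on the anisotropic products $Q^{\times(i+1)} \times X$ whose push-forwards to $X$ vanish. I would then argue that enlarging $Q$ along the colimit system to $Q' = Q \coprod Q^{\times 2} \coprod \cdots \coprod Q^{\times(n+1)}$ (each $Q^{\times m}$ is isotropic over $k(Q)$ since the generic point of $Q$ lifts to a $k(Q)$-point of every $Q^{\times m}$, making the transition $\hii_Q \to \hii_{Q'}$ legitimate) forces the relevant $M(Q^{\times m})$-summands appearing in $(\hii_Q)_{\leq n}$ to be absorbed into the complementary idempotents $\whii_{Q^{\times m}}$, which annihilate them. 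The technical heart and main obstacle is precisely this surjectivity step: one has to track at the simplicial level how the transition $\hii_Q \to \hii_{Q'}$ acts on the \v{C}ech pieces, which is cleanest via a spectral-sequence comparison between the \v{C}ech skeleta of $Q$ and $Q'$. The injectivity direction, by contrast, reduces cleanly to the factorization of $\hii_Q \otimes M^c(X) \to M^c(X)$ through proper push-forwards from the anisotropic schemes $Q^{\times(i+1)} \times X$.
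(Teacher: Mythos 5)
Your injectivity half runs parallel to the paper's argument, but the surjectivity half --- which you yourself flag as the ``technical heart'' --- rests on a mechanism that cannot work. Enlarging $Q$ to $Q'=Q\coprod Q^{\times 2}\coprod\cdots\coprod Q^{\times(n+1)}$ changes nothing: each $Q^{\times m}$ has a rational point over every generic point of $Q$ and vice versa, so $\hii_{Q'}=\hii_Q$ and $\whii_{Q'}=\whii_Q$; the transition from stage $Q$ to stage $Q'$ in the colimit of Proposition \ref{hom-iso-geom} is the identity, so no summands get ``absorbed'' and whatever obstruction exists at stage $Q$ persists at stage $Q'$. The ingredient you are missing is the vanishing $\Hom_{\dmkF{\ff_p}}(T(r)[2r],M^c(Y)[i])=0$ for every scheme $Y$ of finite type and every $i>0$ (Borel--Moore motivic homology vanishes above the Chow line). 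Since $\hii_Q\otimes M^c(X)$ is an iterated extension of the pieces $M^c(Q^{\times(i+1)}\times X)[i]$, $i\geq 0$, this vanishing gives $\Hom(T(r)[2r],\hii_Q\otimes M^c(X)[1])=0$, hence $\gamma_Q=0$ and $\beta_Q$ is surjective already for each fixed anisotropic $Q$ --- no passage up the directed system is needed at all. This is exactly how the paper proceeds: $\whii_Q$ is an extension of $T$ and $M(Q^{\times i})[i]$, $i\geq 1$, and the same vanishing identifies $\Hom(T(r)[2r],\whii_Q\otimes M^c(X))$ with $\op{Coker}\bigl(\Ch_r(X\times Q)\stackrel{(\pi_Q)_*}{\row}\Ch_r(X)\bigr)$, so surjectivity and the kernel come out in one stroke at every stage.

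The same vanishing is also what your injectivity half silently needs. A map $T(r)[2r]\row(\hii_Q)_{\leq n}\otimes M^c(X)$ does not decompose into contributions from the graded pieces of the iterated extension, so the assertion that its composite to $M^c(X)$ ``realizes as the proper push-forward from $M^c(Q^{\times(i+1)}\times X)$'' is not yet an argument; applying the vanishing inductively along the skeletal tower shows every such map lifts to the bottom piece $M^c(Q\times X)$, whence the composite is $(\pi_Q)_*$ of a class in $\Ch_r(X\times Q)$, which is anisotropic, and conversely the images of $(\pi_Q)_*$ over all anisotropic $Q$ generate exactly the anisotropic classes (your graph-plus-absorption argument, or the remark after Definition \ref{anis-class}). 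Finally, note that Proposition \ref{hom-iso-geom} gives the colimit of $\Hom(\whii_Q(r)[2r],\whii_Q\otimes M^c(X))$; identifying this with $\Hom(T(r)[2r],\whii_Q\otimes M^c(X))$, which you use from the start, requires the absence of Homs from $\hii_Q(*)[*']$ to $\whii_Q\otimes(-)$ --- a small but non-optional step.
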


\begin{proof}
By Proposition \ref{hom-iso-geom}, $\Hom_{\dmELF{k}{k}{\ff_p}}(T(r)[2r],M^c(X))$ is the colimit of the groups
$$
\Hom_{\dmkF{\ff_p}}(\whii_Q(r)[2r],M^c(X)\otimes\whii_Q),
$$
where $Q$ runs over all anisotropic varieties over $k$.
Recall, that we have an exact triangle
$$
\xymatrix{
\hii_Q \ar[r] & T \ar[r] & \whii_Q \ar[r] & \hii_Q[1].
}
$$
By \cite[Thm 2.3.2]{IMQ} (which is, basically, \cite[Lem 4.9]{Vo-BKMK}),
$\Hom_{\dmkF{\ff_p}}(\hii_Q(*)[*'],M^c(X)\otimes\whii_Q)=0$, and so, our group is the colimit of groups
$\Hom_{\dmkF{\ff_p}}(T(r)[2r],M^c(X)\otimes\whii_Q)$, where $Q$ can be assumed to be projective.
Since $\whii_Q$ is an extension of $M(Q^{\times i})[i]$, for $i\geq 0$, and $\Hom_{\dmkF{\ff_p}}(T(r)[2r],M^c(Y)[i])=0$, for any $i>0$
and any scheme $Y$ of finite type, we can identify:
$$
\Hom_{\dmkF{\ff_p}}(T(r)[2r],M^c(X)\otimes\whii_Q)=\op{Coker}(\Ch_r(X\times Q)\stackrel{{\pi_Q}_*}{\lrow}\Ch_r(X)).
$$
Thus,
$\Hom_{\dmELF{k}{k}{\ff_p}}(T(r)[2r],M^c(X))=\Ch_r(X)/I$, where $I$ is the subgroup generated by the images of $(\pi_Q)_*$, for all
anisotropic varieties $Q/k$. In other words, we mod-out all {\it anisotropic classes}.
\Qed
\end{proof}

The {\it isotropic motivic category} $\dmgmELF{k}{k}{\ff_p}$ has a pure part.
\begin{definition}
\label{isot-Chow-mot}
Define the "isotropic Chow motivic category" $Chow(k/k;\ff_p)$ as the full additive subcategory of $\dmgmELF{k}{k}{\ff_p}$
- the image of $Chow(k;\ff_p)$ under the natural projection
$$
\dmgmkF{\ff_p}\row\dmgmELF{k}{k}{\ff_p}.
$$
\end{definition}

Thus, the objects of $Chow(k/k,\ff_p)$
can be identified with direct summands of motives of smooth projective varieties over $k$, while the morphisms are described as follows.

\begin{proposition}
\label{D-ChowHom}
Let $X$ and $Y$ be smooth projective $k$-varieties. Then
$$
\Hom_{Chow(k/k,\ff_p)}(M(X),M(Y))=\Ch_{k/k}^{\ddim(Y)}(X\times Y).
$$
\end{proposition}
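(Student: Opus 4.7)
The plan is to reduce the claim to the definition of isotropic Chow groups via Poincar\'e duality, exactly mirroring the classical computation of morphism groups in the usual Chow motivic category. Since $Chow(k/k,\ff_p)$ is by Definition \ref{isot-Chow-mot} a full subcategory of $\dmgmELF{k}{k}{\ff_p}$, the Hom-group in question coincides with $\Hom_{\dmELF{k}{k}{\ff_p}}(M(X),M(Y))$, so it suffices to compute the latter.

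The next step is to invoke Poincar\'e duality for smooth projective varieties in Voevodsky's category: $M(X)^{\vee}\cong M(X)(-\ddim X)[-2\ddim X]$. The functor $\Upsilon_{\op{Spec}(k)}\otimes(-):\dmkF{\ff_p}\to\dmELF{k}{k}{\ff_p}$ is symmetric monoidal, and symmetric monoidal functors preserve strong duals, so the same duality formula holds in the local category. Combining this with the tensor identity $M(X)\otimes M(Y)=M(X\times Y)$, one obtains
$$\Hom_{\dmELF{k}{k}{\ff_p}}(M(X),M(Y))\;=\;\Hom_{\dmELF{k}{k}{\ff_p}}\bigl(T(\ddim X)[2\ddim X],\,M(X\times Y)\bigr).$$
Since $X\times Y$ is smooth projective, $M(X\times Y)=M^c(X\times Y)$, so by the very definition of isotropic Chow groups the right-hand side equals $\Ch_{k/k,\ddim X}(X\times Y)$, which is the same as $\Ch_{k/k}^{\ddim Y}(X\times Y)$ since $\ddim(X\times Y)=\ddim X+\ddim Y$. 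This yields the desired formula.

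The only point that requires comment is the preservation of duality under the projector $\Upsilon_{\op{Spec}(k)}\otimes(-)$, and this is automatic from general symmetric monoidal nonsense. Alternatively, one can bypass duality in the local category altogether by first applying Proposition \ref{hom-iso-geom} to write
$$\Hom_{\dmELF{k}{k}{\ff_p}}(M(X),M(Y))=\operatornamewithlimits{colim}_{\hii_Q\neq T}\Hom_{\dmkF{\ff_p}}(M(X),M(Y)\otimes\whii_Q),$$
then applying the \emph{global} duality isomorphism termwise to identify each entry with $\Hom_{\dmkF{\ff_p}}(T(\ddim X)[2\ddim X],M^c(X\times Y)\otimes\whii_Q)$, and finally passing to the colimit and invoking (the proof of) Proposition \ref{D-ChowGroups} to recognise the result as $\Ch_{k/k}^{\ddim Y}(X\times Y)$. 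There is no genuine obstacle here; the statement is essentially a compatibility check between Definition \ref{isot-Chow-mot}, the duality isomorphism, and Proposition \ref{D-ChowGroups}.
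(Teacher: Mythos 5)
Your argument is correct and essentially coincides with the paper's own proof: both rest on the global adjunction $\Hom(A\otimes B,C)=\Hom(A,B^{\vee}\otimes C)$ together with $M(X)^{\vee}=M(X)(-d)[-2d]$, transported through the $\otimes$-idempotent defining the isotropic category, and then the identification of $\Hom_{\dmELF{k}{k}{\ff_p}}(T(d)[2d],M(X\times Y))$ with $\Ch_{k/k}^{\ddim(Y)}(X\times Y)$ via $M(X\times Y)=M^c(X\times Y)$. The only cosmetic difference is that the paper phrases the compatibility through the projectors $\whii_Q\otimes$ (equivalently, termwise in the colimit of Proposition \ref{hom-iso-geom}, which is your alternative route), while you phrase it as preservation of strong duals by the symmetric monoidal localization functor; these are the same argument.
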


\begin{proof}
If $B$ is an object of $\dmgmkF{\ff_p}$ with the dual $B^{\vee}$, and $A,C$ are objects of $\dmkF{\ff_p}$, then we have a functorial
identification
$$
\Hom_{\dmkF{\ff_p}}(A\otimes B,C)=\Hom_{\dmkF{\ff_p}}(A,B^{\vee}\otimes C).
$$
Hence, for the projector $\rho_Q=\whii_Q\otimes$ we also have a functorial identification
$$
\Hom_{\dmkF{\ff_p}}(\rho_Q(A)\otimes\rho_Q(B),\rho_Q(C))=\Hom_{\dmkF{\ff_p}}(\rho_Q(A),\rho_Q(B^{\vee})\otimes\rho_Q(C)).
$$
Taking into account that $M(X)^{\vee}=M(X)(-d)[-2d]$,  where $d=\ddim(X)$, we obtain that
$$
\Hom_{Chow(k/k,\ff_p)}(M(X),M(Y))=\Hom_{\dmELF{k}{k}{\ff_p}}(T(d)[2d],M(X\times Y))=\Ch_{k/k}^{\ddim(Y)}(X\times Y).
$$
\Qed
\end{proof}

We can describe Chow motives disappearing in the {\it isotropic category}.

\begin{remark}
\label{Chow-vanish}
{\rm
An object $U$ of $Chow(k,\ff_p)$ vanishes in $Chow(k/k,\ff_p)$ if and only if it is a direct summand in the motive of a (smooth
projective) {\it anisotropic} variety\footnote{I'm grateful to T.Bachmann for emphasizing this.}.
Indeed, a direct summand $U$ of $M(P)$ vanishes in $Chow(k/k,\ff_p)$ if and only if the identity map $id_U:U\row U$ does.
By Propositions \ref{D-ChowGroups} and \ref{D-ChowHom}, this means that the map $\Delta_U:T\row U\otimes U^{\vee}$ factors through
(the motive of) a smooth projective {\it anisotropic} variety $Q$. Consequently, $U$ is a direct summand of $M(Q)\otimes U$, which, in turn, is a direct summand of $M(Q\times P)$, and the latter variety is still anisotropic.
}
\Red
\end{remark}

From Propositions \ref{D-ChowGroups} and \ref{D-ChowHom} we obtain:

\begin{corollary}
\label{local-mor=global}
The functor $Chow(k,\ff/p)\row Chow(k/k,\ff/p)$ is surjective
on morphisms.
\end{corollary}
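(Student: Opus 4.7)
The plan is to deduce the result directly from Propositions \ref{D-ChowGroups} and \ref{D-ChowHom}. First I would reduce to the case of morphisms between motives of smooth projective varieties. By Definition \ref{isot-Chow-mot}, every object of $Chow(k/k,\ff_p)$ is the image under the projection $\dmgmkF{\ff_p}\row\dmgmELF{k}{k}{\ff_p}$ of some $U=(M(X),\pi)\in Chow(k,\ff_p)$, where $\pi\in\Ch^{\ddim(X)}(X\times X)$ is an idempotent. Given two such objects $U=(M(X),\pi)$ and $V=(M(Y),\rho)$, any morphism $\overline{U}\row\overline{V}$ in $Chow(k/k,\ff_p)$ is (by the standard summand formalism) of the form $\overline{\rho}\circ h\circ\overline{\pi}$ for some $h\in\Hom_{Chow(k/k,\ff_p)}(\overline{M(X)},\overline{M(Y)})$. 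So it suffices to lift $h$ to some $f\in\Hom_{Chow(k,\ff_p)}(M(X),M(Y))$, after which $\rho\circ f\circ\pi$ is a morphism $U\row V$ in the source category whose image equals $\overline{\rho}\circ h\circ\overline{\pi}$.

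For this core surjectivity, Proposition \ref{D-ChowHom} identifies $\Hom_{Chow(k/k,\ff_p)}(\overline{M(X)},\overline{M(Y)})$ with $\Ch^{\ddim(Y)}_{k/k}(X\times Y)$, while $\Hom_{Chow(k,\ff_p)}(M(X),M(Y))=\Ch^{\ddim(Y)}(X\times Y)$ is the usual group of Chow correspondences. Proposition \ref{D-ChowGroups} exhibits the former as the quotient of the latter by the subgroup of anisotropic classes, and this quotient is induced by the very projection functor whose action on morphisms we are computing. Hence the induced map on this $\Hom$-group is the canonical surjection $\Ch^{\ddim(Y)}(X\times Y)\twoheadrightarrow\Ch^{\ddim(Y)}_{k/k}(X\times Y)$, which gives the required lifting of $h$.

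No serious obstacle arises, as the content has already been packaged into the two preceding Propositions. The only point to verify carefully is naturality: that the identifications in Propositions \ref{D-ChowHom} and \ref{D-ChowGroups} are compatible with pre- and post-composition by $\pi$ and $\rho$, so that the lift $f$ of $h$ really produces the desired preimage $\rho\circ f\circ\pi$ of $\overline{\rho}\circ h\circ\overline{\pi}$. This is automatic because both identifications arise from applying $\Hom$-functors to the single tensor triangulated projection $\dmgmkF{\ff_p}\row\dmgmELF{k}{k}{\ff_p}$, which is compatible with composition of correspondences.
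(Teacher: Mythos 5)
Your proposal is correct and follows essentially the same route as the paper, which deduces the corollary directly from Propositions \ref{D-ChowGroups} and \ref{D-ChowHom}: Hom-groups in $Chow(k/k,\ff_p)$ between (images of) motives of smooth projective varieties are the quotients $\Ch^{\ddim(Y)}_{k/k}(X\times Y)$ of $\Ch^{\ddim(Y)}(X\times Y)$ by anisotropic classes, with the quotient map induced by the projection functor. Your explicit reduction to direct summands via the idempotent formalism is a detail the paper leaves implicit, but it is the standard argument and does not change the approach.
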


In other words, all "local" morphisms between (isotropic) Chow motives are defined "globally".

We will have a closer look at the category $Chow(k/k,\ff_p)$ in Section \ref{IsoChowM}.

\section{Local motivic cohomology of a point}
\label{Sect-coh-point}

In this Section we will compute the motivic cohomology of a point in the {\it isotropic motivic category}, for $p=2$.
This will be achieved by substituting all anisotropic $k$-varieties in the colimit of Proposition \ref{hom-iso-geom}
by norm-varieties for non-zero pure symbols from $K^M_*(k)/2$ (anisotropic Pfister quadrics, in our case).
This makes the problem amenable to calculation due to Voevodsky technique. Moreover, the resulting answer, drastically different from
the "global" one, in turn, sheds some light on this technique.

The starting point is the following statement,
which is a slight modification of the result of J.-L.Colliot-Th\'el\`ene and M.Levine \cite[Theorem 3]{C-TL}.
We provide a somewhat different proof.

\begin{statement}
\label{B1}
Let $B$ be an anisotropic (mod $n$) projective variety. Then, over some finitely generated purely transcendental extension,
it can be embedded into an anisotropic hypersurface of degree $n$.
\end{statement}

\begin{proof}
Embed $B$ into a projective space. Passing to a Veronese embedding, we can assume that all the relations in the projective coordinate
ring of $B$ are generated by quadratic ones, or in other words, that $B$ is defined by quadrics.
Then it will be also defined by hypersurfaces of degree $n$ (in our $\pp^m$), for any $n\geq 2$.
Let $\pp^r=\op{Proj}|D|$ be the projective
system of hypersurfaces of degree $n$ containing $B$. I claim that the generic element of this linear system is an anisotropic hypersurface.
Consider $Y\subset (\pp^m\backslash B)\times \pp^r$ defined by $Y=\{(x,H)|x\in H\}$. Then $Y$ is a projective bundle
$\op{Proj}_{(\pp^m\backslash B)}(V)$ over $(\pp^m\backslash B)$, where $V$ is a co-dimension one subbundle in the trivial bundle $|D|$.
Let $Y_{\eta}$ be the generic fiber of the projection
$Y\row\pp^r$. This is exactly $(Q_{\eta}\backslash B)$, where $Q_{\eta}$ is the generic hypersurface of degree $n$ passing through $B$.
Note, that the degree (mod $n$) is well-defined on the zero-cycles on $Y_{\eta}$, since $B$ is anisotropic.
By the projective bundle theorem, $\CH^*(Y)$ is a free module over $\CH^*(\pp^m\backslash B)$ with the basis $1,\rho,\ldots,\rho^{r-1}$,
where $\rho=c_1(O(1))$. On the other hand, we have a surjective ring homomorphism $\CH^*(Y)\twoheadrightarrow\CH^*(Y_{\eta})$
which is zero on $\rho$ (as this class is supported on a hypersurface in $\pp^r$). Thus, we obtain the surjective map
$\CH^*(\pp^m\backslash B)\twoheadrightarrow\CH^*(Y_{\eta})$ which sends the class $c\in\CH^*(\pp^m\backslash B)$ to the
restriction of $\pi^*(c)$ to $Y_{\eta}$, where $\pi$ is our projective bundle fibration. In particular, $c\in\CH_1(\pp^m\backslash B)$
is mapped to a zero cycle on $Y_{\eta}$ whose degree is equal to the intersection number of $c$ and any hypersurface from our linear
system (which, again, makes sense, since $B$ is anisotropic). Hence, it is a zero cycle of degree $0$ (mod $n$). Thus, the degrees of all
zero-cycles on $Y_{\eta}$ are divisible by $n$, and so, the same is true about $Q_{\eta}=Y_{\eta}\cup B$.
\Qed
\end{proof}

\begin{corollary}
\label{B2}
Let $k$ be a flexible field, $U\in Ob(\dmgmkF{\ff_p})$ and $V\in Ob(\dmkF{\ff_p})$. Then
$$
\Hom_{\dmELF{k}{k}{\ff_p}}(U,V)=\operatornamewithlimits{colim}_{Q}\Hom_{\whii_Q\otimes\dmkF{\ff_p}}(U,V),
$$
where the colimit is taken over all the functors $\otimes\whii_{Q}$, where $Q$ runs over all anisotropic hypersurfaces of degree $p$
over $k$. This system is directed.
\end{corollary}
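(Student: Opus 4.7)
The plan is to recognize the subsystem of anisotropic hypersurfaces of degree $p$ as cofinal in the directed system of Proposition~\ref{hom-iso-geom} and then to check directedness of the subsystem separately; this immediately gives the claimed identification of colimits.

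For cofinality, take an anisotropic variety $Q$ of finite type over $k$ appearing as an index in the colimit of Proposition~\ref{hom-iso-geom}. Since $\hii_Q$ depends only on the function field of a smooth projective model, as recalled after Definition~\ref{D-Iso-2}, I may pass to smooth projective models component by component and assume $Q$ is projective. Statement~\ref{B1} then supplies a finitely generated purely transcendental extension $L/k$ and an anisotropic hypersurface $Q' \subset \pp^m_L$ of degree $p$ containing $Q_L$.

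The next step is to descend $Q'$ to $k$ via flexibility. Writing $k = k_0(t_1, t_2, \ldots)$, both $Q$ and the finitely many coefficients defining $Q'$ live over a finitely generated subfield $F \subset k$ (so $F = k_0(t_1, \ldots, t_N)$ for some $N$), while $Q'$ is defined over $F(u_1, \ldots, u_n)$ where $u_1, \ldots, u_n$ are the transcendentals of $L/k$. The extension $k/F$ is purely transcendental of countably infinite degree, so one can choose an $F$-algebra embedding $F(u_1, \ldots, u_n) \hookrightarrow k$ and base-change $Q'$ along this embedding. The result is a hypersurface $H/k$ of degree $p$ that still contains $Q$, because $Q$ was already defined over $F$ and the inclusion is preserved by base change. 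Anisotropy of $H$ follows from that of $Q'$ by a standard specialization argument: a $0$-cycle on $H$ of degree prime to $p$ would spread out to one on $Q'$ over an intermediate purely transcendental subfield, contradicting anisotropy of $Q'$. Finally, the inclusion $Q \hookrightarrow H$ yields a $k(Q)$-rational point of $H$ via the generic point of $Q$, and Definition~\ref{D-Iso-hii-order}(4) gives $\hii_Q \geq \hii_H$.

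Directedness of the subsystem is then handled by the same construction. Given two anisotropic hypersurfaces $H_1, H_2$ of degree $p$ over $k$, the disjoint union $H_1 \coprod H_2$ is a projective anisotropic scheme (any $0$-cycle on it decomposes as a sum of $0$-cycles on the components, each of degree divisible by $p$), so applying the cofinality construction to $H_1 \coprod H_2$ yields an anisotropic hypersurface $H_3$ of degree $p$ with $\hii_{H_1 \coprod H_2} \geq \hii_{H_3}$. Since the generic point of $H_i$ provides a $k(H_i)$-rational point of $H_1 \coprod H_2$, we have $\hii_{H_i} \geq \hii_{H_1 \coprod H_2}$, and transitivity gives $\hii_{H_i} \geq \hii_{H_3}$ for $i = 1, 2$. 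The main technical subtlety is the flexibility descent in the cofinality step: one must realize $Q'$, defined over $L = k(u_1, \ldots, u_n)$, as an honest $k$-variety $H$ while simultaneously preserving both anisotropy and the inclusion $Q \hookrightarrow H$. This is precisely what flexibility is designed to enable, but it requires careful bookkeeping of the finitely generated subfields over which the various data are defined.
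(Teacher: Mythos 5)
Your argument is correct and is essentially the paper's own proof: Statement \ref{B1} plus a flexibility descent shows that anisotropic hypersurfaces of degree $p$ form a cofinal (final) subsystem, and directedness of the subsystem follows from the disjoint-union trick combined with cofinality, just as in the text. The only point to make explicit is that the embedding $F(u_1,\ldots,u_n)\hookrightarrow k$ must be chosen (e.g.\ sending the $u_i$ to fresh generators $t_{N+i}$) so that $k$ is purely transcendental over its image — this is precisely what your specialization argument for the anisotropy of $H$ uses, and it matches the paper's "embed $L$ into $k$ over $l$ so that $k/L$ will be purely transcendental".
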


\begin{proof}
Let $B$ be any anisotropic variety over $k$.
By the Statement \ref{B1}, there exists a purely transcendental field extension $E/k$ and anisotropic hypersurface $Q$ over $E$
such that $\hii_B|_E\geq\hii_Q$. Let $k=k_0(\pp^{\infty})$. Then there exists a diagram of purely transcendental extensions of fields
$$
\xymatrix{
& k \ar[r] & E\\
k_0 \ar[r] & l \ar[u] \ar[r] & L \ar[u]
}
$$
with the extensions of the bottom row finitely generated, such that
the variety $B$ is defined over $l$, while the variety $Q$ and the correspondence $B\rightsquigarrow Q$ (of degree $1$)
are defined over $L$. But we can embed $L$ into $k$ over $l$ so that $k/L$ will be purely transcendental.
Thus, we obtain an anisotropic hypersurface $Q'$ over $k$ together with
a correspondence $B\rightsquigarrow Q'$ of degree $1$. 
Anisotropic hypersurfaces of degree $p$ thus form a
{\it final} subsystem in the system of all anisotropic varieties
which is {\it directed}, hence this subsystem is {\it directed}
as well.
\Qed
\end{proof}

\begin{corollary}
\label{B3}
Let $k$ be a flexible field and $p=2$. Let $U\in Ob(\dmgmkF{\ff_p})$ and $V\in Ob(\dmkF{\ff_p})$. Then
$$
\Hom_{\dmELF{k}{k}{\ff_p}}(U,V)=\operatornamewithlimits{colim}_{\alpha}\Hom_{\whii_{Q_{\alpha}}\otimes\dmkF{\ff_p}}(U,V),
$$
where the colimit is taken over all the functors $\otimes\whii_{Q_{\alpha}}$, where $\alpha$ runs over all non-zero pure symbols from
$K^M_*(k)/2$ and $Q_{\alpha}$ is the respective Pfister quadric. This is a directed system.
\end{corollary}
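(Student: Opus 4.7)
The strategy is to show, building on Corollary~\ref{B2}, that the anisotropic Pfister quadrics form a cofinal subsystem within the directed system of all anisotropic varieties (equivalently, within the cofinal subsystem of anisotropic quadrics supplied by Corollary~\ref{B2}). This suffices, since a cofinal subset of a directed system is itself directed and induces the same colimit. Concretely, for any anisotropic quadric $Q$ over $k$ with associated quadratic form $q$, I need to exhibit a non-zero pure symbol $\alpha\in K^M_n(k)/2$ for some $n\geq 1$ such that the corresponding Pfister quadric $Q_\alpha$ is anisotropic over $k$ and satisfies $\hii_Q\geq\hii_{Q_\alpha}$, i.e.\ becomes isotropic (equivalently, hyperbolic by the Pfister property) over $k(Q)$.

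To produce the symbol $\alpha$, I would invoke the Milnor conjecture---now Voevodsky's theorem---which supplies the isomorphism $e_n: I^n/I^{n+1}\stackrel{\sim}{\to}K^M_n(k)/2$ on the graded pieces of the fundamental filtration of $W(k)$. The class $[q]\in W(k)$ lies in some $I^n\setminus I^{n+1}$, with $n$ bounded by the Arason--Pfister Hauptsatz, and $e_n([q])$ is a non-zero class killed by passage to $k(Q)$ because $q$ itself becomes hyperbolic there. In the favourable situation where $e_n([q])$ is already a single pure symbol $\alpha$, the Pfister form $\phi_\alpha$ is anisotropic over $k$ and agrees with $q$ modulo $I^{n+1}$, hence lies in $I^{n+1}(k(Q))$; since it is an $n$-fold Pfister form, the Hauptsatz then forces it to be hyperbolic over $k(Q)$, yielding exactly what is needed. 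In the general case, where $e_n([q])$ is a sum of pure symbols, I would use the flexibility of $k$ exactly as in Statement~\ref{B1} and Corollary~\ref{B2}: descend the situation to a suitable finitely generated subfield of definition, pass to a purely transcendental extension large enough to isolate a single pure summand with the required properties, and transport the resulting degree-one correspondence $Q\rightsquigarrow Q_\alpha$ back to $k$ using the horizontal isomorphisms of Proposition~\ref{flex-main-prop}.

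The main obstacle is exactly this decomposition-and-descent step: extracting from the multi-symbol class $e_n([q])$ a single pure symbol $\alpha$ that retains both anisotropy over $k$ and annihilation over $k(Q)$. This is where the interplay between the flexibility of the ground field (providing ample transcendental room) and the arithmetic of Pfister quadrics/norm varieties (the backbone of Voevodsky's proof of the Milnor conjecture) enters in an essential way. Once cofinality is established, directedness of the Pfister subsystem is automatic from the directedness guaranteed by Corollary~\ref{B2}, and the colimit formula then follows by replacing the indexing category of Corollary~\ref{B2} with its cofinal sub-diagram of anisotropic Pfister quadrics indexed by non-zero pure symbols in $K^M_*(k)/2$.
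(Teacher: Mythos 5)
Your overall framing---reduce via Corollary \ref{B2} to the directed system of anisotropic quadrics, show that anisotropic Pfister quadrics are final in it, and conclude directedness and the colimit formula from finality---is exactly the paper's strategy. But the mechanism you propose for producing the symbol $\alpha$ rests on a false premise: an anisotropic form $q$ becomes \emph{isotropic} over its own function field $k(Q)$, not hyperbolic (hyperbolicity over $k(Q)$ characterizes, up to similarity, Pfister forms, i.e.\ forms of height one). Consequently $e_n([q])$ need not vanish in $K^M_n(k(Q))/2$: the anisotropic kernel of $q_{k(Q)}$ stays in $I^n(k(Q))$ and is generally nonzero, and the kernel of $K^M_n(k)/2\row K^M_n(k(Q))/2$ is in general far smaller than your argument assumes. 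Even in your ``favourable'' case, the deduction that $\phi_\alpha$ lies in $I^{n+1}(k(Q))$ needs $q_{k(Q)}\in I^{n+1}(k(Q))$, which again fails in general.

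The second, decisive gap is the ``decomposition-and-descent'' step you yourself flag: flexibility cannot perform it. In Statement \ref{B1} and Corollary \ref{B2}, flexibility is used only to \emph{descend} to $k$ a correspondence already constructed over a finitely generated purely transcendental extension; the construction over that extension is a separate theorem, and purely transcendental base change does not let you ``isolate a single pure summand'' of a sum of symbols (Witt groups and mod-$2$ Milnor K-theory behave injectively under such extensions, so nothing new becomes available). The missing ingredient is precisely what the paper invokes: by \cite[Cor. 3]{kerM} (see also \cite{HoIz}), every anisotropic quadric $Q$ embeds, over a suitable finitely generated purely transcendental extension, as a subform into an \emph{anisotropic} Pfister quadric $Q_{\alpha}$; the subform relation gives isotropy of $Q_{\alpha}$ over $k(Q)$ (hence, via Springer's theorem, a degree-one correspondence $Q\rightsquigarrow Q_{\alpha}$, i.e.\ $\hii_Q\geq\hii_{Q_{\alpha}}$ with $\hii_{Q_{\alpha}}\neq T$), and flexibility then transports $Q_{\alpha}$ and this correspondence back to $k$ exactly as in the proof of Corollary \ref{B2}. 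Without this embeddability theorem (or an equivalent substitute), your argument does not close; with it, the Milnor-conjecture detour is unnecessary.
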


\begin{proof}
By \cite[Cor. 3]{kerM} (see also \cite{HoIz}), every anisotropic quadric $Q$ (over any field $k$) can be embedded into an anisotropic
Pfister quadric $Q_{\alpha}$
over an appropriate purely transcendental extension of finite transcendence degree. If now $k$ is flexible, then arguing as in the proof
of Corollary \ref{B2}, we can embed $Q$ into some anisotropic Pfister quadric $Q_{\alpha'}$ over $k$. Thus, the set of anisotropic Pfister
quadrics form a
{\it final} subsystem in the system of all anisotropic varieties over a flexible field, which, again, must be {\it directed}.
\Qed
\end{proof}

From the fact that the system in Corollary \ref{B3} is directed, as a by-product, we obtain the following result
(which, of course, is a simple consequence of Statement \ref{B1} and \cite[Cor. 3]{kerM}, and can be even seen from the
latter result alone):

\begin{proposition}
\label{B-pure-dir}
Let $k$ be a flexible field, and $\{\alpha_l\}_{l\in L}$ be a finite collection of non-zero pure symbols
from $K^M_*(k)/2$.  Then there exists a non-zero pure symbol $\alpha\in K^M_*(k)/2$ divisible by every $\alpha_l$.
\end{proposition}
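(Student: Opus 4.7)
The plan is to interpret divisibility of a non-zero pure symbol $\alpha$ by $\alpha_l$ in $K^M_*(k)/2$ as the order relation $\hii_{Q_{\alpha_l}}\geq\hii_{Q_\alpha}$ on the associated anisotropic Pfister quadrics, and then invoke the directedness established in Corollary \ref{B3}.

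First I would record the following equivalence for non-zero pure symbols $\alpha,\alpha_l\in K^M_*(k)/2$: \emph{$\alpha$ is divisible by $\alpha_l$ if and only if $\hii_{Q_{\alpha_l}}\geq\hii_{Q_\alpha}$.} The ``only if'' direction is immediate: if $\alpha=\alpha_l\cdot\beta_l$, then the Pfister form $\phi_\alpha=\phi_{\alpha_l}\otimes\phi_{\beta_l}$ becomes hyperbolic over $k(Q_{\alpha_l})$ since its left factor already is, so $Q_\alpha$ acquires a rational point there; combined with Springer's theorem (an odd-degree closed point on a quadric forces isotropy), condition $(4)$ of Definition \ref{D-Iso-hii-order} gives the desired inequality. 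The ``if'' direction unwinds to say that $Q_\alpha$ has a zero-cycle of odd degree, hence is isotropic, over $k(Q_{\alpha_l})$; the Arason--Pfister subform theorem for Pfister forms then shows that $\phi_{\alpha_l}$ is a Pfister subform of $\phi_\alpha$, i.e.\ $\alpha_l\mid\alpha$ in $K^M_*(k)/2$.

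With this equivalence in hand, the proposition becomes exactly the assertion that the system of anisotropic Pfister quadrics over the flexible field $k$ is directed, applied to the finite family $\{Q_{\alpha_l}\}_{l\in L}$: we need a single anisotropic Pfister quadric $Q_\alpha$ such that $\hii_{Q_{\alpha_l}}\geq\hii_{Q_\alpha}$ for every $l\in L$. This is precisely what Corollary \ref{B3} provides (directedness of a poset immediately upgrades from pairs to finite subfamilies), and the non-vanishing of $\alpha$ in $K^M_*(k)/2$ corresponds to the anisotropy of $Q_\alpha$.

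The only potentially delicate input is the ``if'' direction of the equivalence, which depends on the classical Arason--Pfister subform theorem; the rest amounts to unwinding definitions plus an invocation of Corollary \ref{B3}. Thus the proposition is essentially a repackaging of the content of Corollary \ref{B3}, as indicated in the parenthetical remark following its statement. An alternative route, if one wishes to avoid even the Arason--Pfister theorem, is to mimic the proof of Corollary \ref{B2}: use flexibility of $k$ to spread out the defining data of the $\alpha_l$ over a finitely generated subfield, apply Statement \ref{B1} and \cite[Cor. 3]{kerM} to the disjoint union $\coprod_l Q_{\alpha_l}$ to produce an anisotropic Pfister quadric dominating all of them over a purely transcendental extension, and then descend back to $k$.
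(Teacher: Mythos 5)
Your proposal is correct and follows essentially the route the paper intends: the paper derives Proposition \ref{B-pure-dir} as a by-product of the directedness of the Pfister-quadric system in Corollary \ref{B3} (equivalently from Statement \ref{B1} and \cite[Cor. 3]{kerM}), which is exactly your argument, with you additionally making explicit the classical quadratic-form dictionary (Springer, the subform theorem, and the Elman--Lam correspondence between pure symbols and Pfister forms) translating the relation $\hii_{Q_{\alpha_l}}\geq\hii_{Q_\alpha}$ into divisibility of symbols. Your alternative route via $\coprod_l Q_{\alpha_l}$ is likewise the paper's parenthetical suggestion, though note it does not really avoid the Pfister-form input, since converting the resulting domination into divisibility of symbols still uses the same dictionary.
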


\medskip

Using Corollary \ref{B3},
we can compute the cohomology of a point in {\it isotropic motivic category} for $p=2$.
For a non-zero {\it pure symbol} $\alpha\in K^M_r(k)/2$, let us denote as $\dmELF{\tilde{\alpha}}{k}{\ff_2}$
the full triangulated subcategory
$\whii_{\alpha}\otimes\dmkF{\ff_2}$, where $\whii_{\alpha}=\whii_{Q_{\alpha}}$ and $Q_{\alpha}$ is the respective Pfister quadric.
Hom's between Tate-objects in this category can be computed as follows.

Define an $\ff_2$-vector space $\Qn{n}=\oplus_I r_I\cdot\ff_2$, where $I$ runs over all subsets of $\ov{n}=\{0,1,\ldots,n\}$,
with the structure of a module over
Milnor's operations $Q_i$ defined by: $Q_i(r_I)=r_{I\backslash i}$, if $i\in I$, and zero otherwise, and with the bi-degree of
$r_{\emptyset}$ being $(0)[0]$.
Let $r_{\{n+1\}}$ be a polynomial generator with $Q_{n+1}(r_{\{n+1\}})=r_{\emptyset}$ and
$Q_i(r_{\{n+1\}})=0$, for $i\neq n+1$.
Let $\Ia$ be a module over $K^M_*(k)/2$ isomorphic to the principal ideal $\alpha\cdot K^M_*(k)/2$
with the generator in bi-degree $(0)[0]$. In other words, $\Ia=K^M_*(k)/\kker(\cdot\alpha)$.
In particular, it has a natural ring structure. The multiplicative structure on $\Qn{n}[r_{\{n+1\}}]\otimes_{\ff_2}\Ia$ is provided by
$r_I=\prod_{i\in I}r_{\{i\}}$ and the identity:
$r_{\{i\}}^2=r_{\{i+1\}}\cdot\rho$, for $0\leq i\leq n$, and $\rho=\{-1\}$.
In other words, this is the ring $\Ia[r_{\{i\}}|_{0\leq i\leq n+1}]/(r_{\{i\}}^2-r_{\{i+1\}}\cdot\rho|_{0\leq i\leq n})$.

For a motivic category $\op{D}$ with Tate-objects, let us denote as $\eend_{\op{D}}(V)$ the ring $\oplus_{a,b}\Hom_{\op{D}}(V,V(a)[b])$.

\begin{theorem}
\label{D-A4}
Let $\alpha\in K^M_m(k)/2$ be a non-zero pure symbol. Then
$$
\eend_{\dmELF{\tilde{\alpha}}{k}{\ff_2}}(T)=\Qn{m-2}[r_{\{m-1\}}]\otimes_{\ff_2}\Ia=
\Ia[r_{\{i\}}|_{0\leq i\leq m-1}]/(r_{\{i\}}^2-r_{\{i+1\}}\cdot\rho|_{0\leq i\leq m-2}).
$$
\end{theorem}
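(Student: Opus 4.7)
My plan is to reduce the endomorphism computation to the motivic cohomology of the \v{C}ech simplicial scheme of the Pfister quadric $Q_\alpha$, and then to invoke Voevodsky's tower built from Milnor operations (the one central to his proof of the Milnor conjecture). First, using $\whii_\alpha\otimes\whii_\alpha\cong\whii_\alpha$ and the orthogonality $\Hom(\hii_\alpha,\whii_\alpha)=0$ from \cite[Thm.\,2.3.2]{IMQ}, I would rewrite $\eend_{\dmELF{\tilde\alpha}{k}{\ff_2}}(T)=\bigoplus_{a,b}\Hom_{\dmkF{\ff_2}}(T,\whii_\alpha(a)[b])$. Applying $\Hom_{\dmkF{\ff_2}}(T,-)$ to the triangle $\hii_\alpha\to T\to\whii_\alpha\to\hii_\alpha[1]$ yields a long exact sequence linking these groups to $H^{*,*'}(k,\ff_2)$ and $H^{*,*'}(\hii_\alpha,\ff_2)$. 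Thus the problem reduces to describing the restriction map $H^{*,*'}(k,\ff_2)\to H^{*,*'}(\hii_\alpha,\ff_2)$, its kernel and cokernel, together with the induced multiplicative structure on $\eend(T)$.

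Second, I would invoke Voevodsky's tower for $\hii_\alpha$: an iterated cone construction built out of the Milnor operations $Q_0,\ldots,Q_{m-1}$ that exhibits $H^{*,*'}(\hii_\alpha,\ff_2)$ as a module over $H^{*,*'}(k,\ff_2)$ with $2^m$ ``reduced'' classes $r_I$ indexed by $I\subseteq\{0,\ldots,m-1\}$, satisfying $Q_i(r_I)=r_{I\setminus\{i\}}$ when $i\in I$ and zero otherwise. On the diagonal Milnor K-theory part, the kernel of the restriction to $\hii_\alpha$ is exactly $\ker(\cdot\alpha)$, so its image contributes the factor $\Ia=K^M_*(k)/\ker(\cdot\alpha)$ in $\eend(T)$. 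Chasing through the boundary maps of the long exact sequence, each reduced class $r_I$ with non-empty $I$ produces, after a $[-1]$-shift, a non-zero endomorphism of $T$ in $\dmELF{\tilde\alpha}{k}{\ff_2}$, which I relabel $r_I$; naturality of the $Q_i$'s under the projection to the isotropic category transfers the action $Q_i(r_{\{i\}})=r_\emptyset=1$ and $Q_i(r_{\{j\}})=0$ for $i\neq j$.

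Third, the multiplicative relations $r_{\{i\}}^2=\rho\cdot r_{\{i+1\}}$ for $0\leq i\leq m-2$ would come from the motivic Steenrod algebra over $\ff_2$. On each $r_{\{i\}}$ the squaring operation is controlled by the unstable $\op{Sq}^{|r_{\{i\}}|}$; the motivic Cartan formula together with the relation between $Q_i$ and $\op{Sq}^{2^i}$, and with the $\rho$-coefficient that enters in positive motivic weight, produce the correction and force the next-level class $r_{\{i+1\}}$ to appear. The top generator $r_{\{m-1\}}$ is polynomial because no $Q_m$ is built into the Voevodsky tower for $\hii_\alpha$ --- the symbol $\alpha$ has exactly $m$ slots --- so the higher powers $r_{\{m-1\}}^{n+1}$ persist as independent classes, rather than being identified with any (non-existent) $r_{\{m\}}\cdot\rho^n$.

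The main obstacle will be pinning down these quadratic relations with the precise factor $\rho$, as opposed to e.g.\ $r_{\{i\}}^2=0$ or a different scalar combination. This is a motivic analogue of the standard Milnor-basis computation of $H^*(B(\zz/2)^{\times m};\ff_2)$, deformed by the class $\rho=\{-1\}$, and it requires careful bookkeeping of the motivic Steenrod operations on the extension classes $r_{\{i\}}$, of their exact bi-degrees, and of the way in which the $\rho$-multiple is produced in each relation; this is also what will rule out any further simplification and match the presentation stated in the theorem.
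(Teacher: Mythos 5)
Your first reduction is fine and matches the paper: $\eend_{\dmELF{\tilde{\alpha}}{k}{\ff_2}}(T)=\eend_{\dmkF{\ff_2}}(\whii_{\alpha})$, identified with $\oplus_{a,b}\Hom_{\dmkF{\ff_2}}(T,\whii_{\alpha}(a)[b])$ (note, though, that applying $\Hom(T,-)$ to the triangle gives motivic \emph{homology} of $\hii_{\alpha}$, not the cohomological restriction $\hm^{*,*'}(k;\ff_2)\row\hm^{*,*'}(\hii_{\alpha};\ff_2)$ you then propose to analyse). The genuine gap is in your second step: the ``Voevodsky tower for $\hii_{\alpha}$'' that you invoke --- a decomposition with $2^m$ classes $r_I$, the prescribed $Q_i$-action, and freeness over $\Ia$ --- is not an available input; it is essentially the statement being proved, so as written the argument is circular. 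What Voevodsky's technique actually supplies is much less: exactness of the differentials $Q_i$, $i\leq m-1$, on $\Hom(T,\whii_{\alpha}(*)[*'])$ and $\Hom(\whii_{\alpha},T(*)[*'])$, the Beilinson--Lichtenbaum vanishing ranges, and the Rost motive $M_{\alpha}$. The paper's proof builds the answer from these ingredients: the triangle involving $\wt{M}_{\alpha}$ produces a periodicity map $\eta$ of bi-degree $(-d)[-2d-1]$, $d=2^{m-1}-1$, the generators $r_{\{i\}}$ are \emph{constructed} as $\eta_{\beta}\otimes\whii_{\alpha}$ for pure symbols $\beta$ of weight $i+1$ dividing $\alpha$ (without such a construction you cannot verify $Q_i(r_{\{i\}})=1$ and $Q_l(r_{\{i\}})=0$), and the mutually inverse actions of $\mu$ and $\eta$ on $N=\oplus\Hom(\whii_{\alpha}[-1],\hii_{\alpha}(*)[*'])$ splice the homology and cohomology of $\whii_{\alpha}$ into one periodic module, after which a diagonal-by-diagonal induction using the exact $Q_i$'s and the two vanishing ranges yields freeness of each diagonal over $\Ia$. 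Also, the factor $\Ia$ is not identified via ``the kernel of the restriction to $\hii_{\alpha}$'' (on the diagonal that restriction is essentially injective); it comes from identifying the first diagonal of $\hm^{*,*'}(\whii_{\alpha}[-1];\ff_2)$, via $\tau$-multiplication, with $\kker(K^M_*(k)/2\row K^M_*(k(Q_{\alpha}))/2)=\alpha\cdot K^M_*(k)/2$.

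The relation $r_{\{i\}}^2=r_{\{i+1\}}\cdot\rho$, which you flag as the main obstacle and hope to extract from unstable squares and the Cartan formula, is in fact a short consequence of the external co-multiplication identity for the operations $Q_K$ (with its $\{-1\}$-coefficients), \emph{once} the module computation above is in place: each relevant diagonal is free of rank one over $\Ia$, so an element is pinned down by its images under the $Q_I$'s. So the multiplicative relations are not an extra layer of Steenrod bookkeeping on top of a known additive answer; they fall out of the same structure whose proof your outline omits. In summary, the proposal assumes the additive structure it must establish, misses the Rost-motive periodicity and the explicit construction of the $r_{\{i\}}$'s, and leaves the quadratic relations unproven.
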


\begin{proof}
By definition,
$\eend_{\dmELF{\tilde{\alpha}}{k}{\ff_2}}(T)=\eend_{\dmkF{\ff_2}}(\whii_{\alpha})$.
From this point, all the Hom's will be in the category $\dmkF{\ff_2}$, unless specified otherwise, so I will omit it from notations.

Let $M_{\alpha}$ be the respective Rost motive (\cite{Ro98}).
We have natural maps
$T(d)[2d]\row M_{\alpha}\row T$, where $d=2^{m-1}-1$, whose composition is zero. Cutting out the respective Tate-motives from $M_{\alpha}$
and tensoring the result by $\hii_{\alpha}$ and $\whii_{\alpha}$, respectively, we obtain:
$$
\xymatrix{
& T \ar[d]^{[1]} \ar[rd]^{[1]} & \\
M_{\alpha} \ar[ru] & R \ar[l] \ar[r] \ar@{}[lu]|-(0.22){\star} \ar@{}[rd]|-(0.22){\star} & \wt{M}_{\alpha} \ar[ld]^{[1]}\\
& T(d)[2d] \ar[u] \ar[lu] &
}
\hspace{1cm}\text{and}\hspace{1cm}
\xymatrix{
& \hii_{\alpha} \ar[dd]^{[1]}_{\mu} \\
M_{\alpha} \ar[ru] \ar@{}[r]|-(0.60){\star} & \\
& \hii_{\alpha}(d)[2d] \ar[lu]
}
\hspace{0.5cm};\hspace{0.5cm}
\xymatrix{
\whii_{\alpha} \ar[rd]^{[1]} & \\
 & \wt{M}_{\alpha} \ar[ld]^{[1]} \ar@{}[l]|-(0.60){\star} \\
\whii_{\alpha}(d)[2d] \ar[uu]^{[-1]}_{\eta}  &
}
$$
Here we are using the fact that $M_{\alpha}\otimes\whii_{\alpha}=0$ and that $\wt{M}_{\alpha}\otimes\hii_{\alpha}=0$,
which are equivalent to the exactness
of the left triangle - \cite[Thm 4.4]{Voe03}. Let us denote the above half of the octahedron as $\diamondsuit$.
Note, that since there are no $\Hom$'s from $\hii_{\alpha}$ to $\whii_{\alpha}(*)[*']$, we can naturally identify groups
$\Hom(\whii_{\alpha},\whii_{\alpha}(*)[*'])=\Hom(T,\whii_{\alpha}(*)[*'])$.

For each $0\leq i<m-1$, let $\beta\in K^M_{i+1}(k)/2$ be any pure symbol dividing $\alpha$. We obtain a similar map
$\eta_{\beta}(-d_i)[-2d_i]:\whii_{\beta}\row\whii_{\beta}(-d_i)[-2d_i-1]$, where $d_i=2^i-1$.
Tensoring it with $\whii_{\alpha}$, we obtain the map $r_{\{i\}}:\whii_{\alpha}\row\whii_{\alpha}(-d_i)[-2d_i-1]$.
Or, in other words, an element $r_{\{i\}}\in\Hom_{\dmELF{\tilde{\alpha}}{k}{\ff_2}}(T,T(-d_i)[-2d_i-1])$.
Below we will see that this map doesn't depend on the choice of the divisor $\beta$.

For any smooth projective $R$, we have the natural (homological) action of the Steenrod algebra on
$\Hom(T,\whii_R(*)[*'])$ and the natural (cohomological) action of it on $\Hom(\whii_R,T(*)[*'])$,
which commute with the maps $\whii_R\row\whii_S$ (for $\hii_R\geq\hii_S$).
In particular, we have the action of the Milnor's operations $Q_i$.
If $R$ is a $\nu_i$-variety, then by the arguments of V.Voevodsky \cite[Cor 3.8]{Voe03},
the differential $Q_i$ is exact on $\Hom(\whii_R,T(*)[*'])$. By the same arguments, it is exact on
$\Hom(T,\whii_R(*)[*'])$. In particular, in our case,
$Q_i$ is exact on $\Hom(T,\whii_{\alpha}(*)[*'])$ and $\Hom(\whii_{\alpha},T(*)[*'])$, for any $i\leq m-1$.

Consider $N=\oplus\Hom(\whii_{\alpha}[-1],\hii_{\alpha}(*)[*'])$. It has the natural right action of
$\wt{A}=\eend(\whii_{\alpha})$
as well as the left action of $A=\eend(\hii_{\alpha})$. In particular, there is a right action by $\eta$ and a
left one by $\mu$. I claim that these are mutually inverse. Indeed, to see that for any $f:\whii_{\alpha}\row\hii_{\alpha}(a)[b]$,
one has $\mu\bullet f\bullet\eta=f$, it is sufficient to look at the "vertical axis" of
$\diamondsuit\otimes(\whii_{\alpha}\stackrel{f}{\row}\hii_{\alpha}(a)[b])$, which is (after rotating by $90^{\circ}$):
$$
\xymatrix{
\whii_{\alpha} \ar[d]_{f} \ar@{=}[r]^{[1]} & \whii_{\alpha}[-1] \ar[d] & \whii_{\alpha}(d)[2d] \ar[l]_{\eta} \ar[d]^{f(d)[2d]} \\
\hii_{\alpha}(a)[b] \ar[r]^(0.4){[1]}_(0.4){\mu(a)[b]} & \hii_{\alpha}(a+d)[b+2d] & \hii_{\alpha}(a+d)[b+2d] \ar@{=}[l]
}.
$$
Thus, $N$ is $\mu-\eta$-periodic, as the action by $\mu$ and 
$\eta$ are mutually inverse isomorphisms on $N$. 

At the same time,
$\Hom(\whii_{\alpha},\whii_{\alpha}(a)[b])=\Hom(T,\whii_{\alpha}(a)[b])$ is zero for $b>a$, while $\Hom(\whii_{\alpha},T(a)[b])=0$,
for $b\leq a+1$ by the Beilinson-Lichtenbaum "conjecture" (note, that $\whii_{\alpha}$ disappears in the etale topology).
Considering $\Hom$'s from $\whii_{\alpha}[-1]$ to the $(a)[b]$-shifted exact triangle
$
\hii_{\alpha}\row T\row\whii_{\alpha}\row\hii_{\alpha}[1],
$
we obtain that
$\wt{A}$ is exactly the $\leq 0$ diagonal part of $N$, while
$\hm^{*,*'}(\whii_{\alpha}[-1];\ff_2)$ is exactly the $>0$ diagonal part of it.
Thus, $N$ combines the homology and cohomology groups of $\whii_{\alpha}$.

Since there are no $\Hom$'s from $T$ to $\hii_{\alpha}(a)[a+1]$, the map $\Hom(T,T(a)[a])\twoheadrightarrow\Hom(T,\whii_{\alpha}(a)[a])$
is surjective, and so, the $0$-th diagonal of $\wt{A}$ (or, which is the same, the $0$-th diagonal of $N$) as a $K^M_*(k)/2$-module
is generated by $1$ - the unit of this ring. Let $R_{\alpha}$ be this $0$-th diagonal.
From $\mu-\eta$-periodicity, the diagonal number $(-2^{m-1})$ (where $\eta$ resides),
as a $K^M_*(k)/2$-module, is generated by $\eta$. Since the differential $Q_{m-1}$ is exact on $\wt{A}$, we obtain that $1$ is covered
by the image of $Q_{m-1}$ (since $\wt{A}$ is concentrated in non-positive diagonals). But the only non-zero element of the needed
bi-degree is $\eta$. Thus, $Q_{m-1}(\eta)=1$. Applying the same arguments to the symbol $\beta$ (considered above), we obtain that
$Q_i(\eta_{\beta})=1$, and so, $Q_i(r_{\{i\}})=1$.

For any $I\subset\ov{(m-2)}$, denote $r_I:=\prod_{i\in I}r_{\{i\}}$ and $Q_I=\circ_{i\in I}Q_i$.
Denote as $D_j$ the $j$-th diagonal of $N$. For $0\leq i\leq m-2$, let $I_i=\{i,i+1,\ldots,m-2\}$.
Since $\hm^{*,*'}(\whii_{\alpha}[-1];\ff_2)$ is trivial below the $1$-st diagonal,
and $Q_l$'s are exact, the composition $Q_{I_i}:D_{2^i}\row D_{2^{m-1}}$ is injective.
But from $\mu-\eta$-periodicity, $D_{2^{m-1}}$ as a $K^M_*(k)/2$-module is generated by $\mu$.
In particular, the $D_{2^i}$ is trivial below $\mu\bullet 1\bullet r_{I_i}$. By the $\mu-\eta$-periodicity,
$D_{2^i-2^{m-1}}$ is trivial below $r_{I_i}$. In particular, $Q_i(\eta)=0$, for any $0\leq i\leq m-2$ (since this element is below $r_{I_i}$),
and since $\eta$ generates $D_{-2^{m-1}}$, all the differentials $Q_i$, for $i\leq m-2$ are trivial on this diagonal.
Applying the same arguments to the divisor $\beta$ of degree $(i+1)$, we obtain that $Q_l(\eta_{\beta})=0$, and hence,
$Q_l(r_{\{i\}})=0$, for $l<i$ (the fact that $Q_l(r_{\{i\}})=0$, for $l>i$, is obvious, since $\wt{A}$ is concentrated in non-positive
diagonals).
Combining it with the (external) co-multiplication identity for Milnor's operations:
\begin{equation}
\label{comult-Qi}
Q_{K}(x\times y)=\sum_{2^I+2^J=2^K}Q_{I}(x)\times Q_{J}(y)\cdot\{-1\}^{|I|+|J|-|K|},
\end{equation}
where $2^I=\sum_{i\in I}2^i$, we obtain that $Q_I(r_I)=1$, for any $I\subset\ov{(m-2)}$.

Let us show that $D_{-2^I}$ is a free module over $D_0=\Ia$ generated by $r_I$. Let $s$ be the smallest element of $I$
(which we assume to be $m-1$, if $I$ is empty).
Decreasing induction on $s$. The (base) $s=m-1$ follows from $\mu-\eta$-periodicity.
The (step): let $2^J=2^I+2^s$. Since $I\backslash s$ and $J$ consist of elements
larger than $s$, by inductive assumption, $D_{-(2^I+2^s)}=r_J\cdot D_0$ and $D_{-(2^I-2^s)}=r_{I\backslash s}\cdot D_0$ (where we
denote $r_{\{m-1\}}:=\eta$).
In particular, $Q_s$ is trivial on $D_{-(2^I+2^s)}$, and from the exact sequence
$D_{-(2^I+2^s)}\stackrel{Q_s}{\lrow}D_{-2^I}\stackrel{Q_s}{\lrow}D_{-(2^I-2^s)}$, taking into account that $Q_s(r_I)=r_{I\backslash s}$,
we see that $Q_s:D_{-2^I}\stackrel{\cong}{\lrow}D_{-(2^I-2^s)}$ is an isomorphism inverse to the multiplication by $r_s$. Thus,
$D_{-2^I}=r_I\cdot D_0$.

From $\mu-\eta$-periodicity we obtain that $\wt{A}$ is a free module over $\Ia$ generated by $\eta^k\cdot r_I$, for
$k\geq 0$ and $I\subset\ov{(m-2)}$, while $\hm^{*,*'}(\whii_{\alpha}[-1];\ff_2)$ is a free module over $\Ia$
generated by $\mu^l\bullet r_I$, for $l>0$ and $I\subset\ov{(m-2)}$.
Consider $\gamma=\mu\bullet 1\bullet r_{\ov{(m-2)}}$ - the generator of $D_1$ - the 1-st diagonal in $N$, or, which is the same,
the 1-st diagonal in the motivic cohomology of $\whii_{\alpha}[-1]$.
By the Beilinson-Lichtenbaum Conjecture,
multiplication by $\tau$ identifies $D_1$ with the kernel $\kker(K^M_*(k)/2\row K^M_*(k(Q_{\alpha}))/2)$ - see \cite[Lem 6.4]{Vo-BKMK}.
The generator $\gamma$ is identified with some element of degree $m$, which must coincide with the symbol $\alpha$
(since $\alpha$ vanishes over $k(Q_{\alpha})$ and there exists exactly one non-zero element of the respective degree in $D_1$).
Hence, as a $K^M_*(k)/2$-module, $\Ia$ can be identified
with the principal ideal of $K^M_*(k)/2$ generated by $\alpha$. So, as a ring,
$R_{\alpha}=(K^M_*(k)/2)/(\kker(\cdot\alpha))$.

This gives the description of $\wt{A}$ (as well as $N$) as a module over $K^M_*(k)/2$ and over Steenrod algebra.
Finally, the equation $r_{\{i\}}^2=r_{\{i+1\}}\cdot\rho$ follows from the co-multiplication identity for Milnor's operations
(\ref{comult-Qi}).
\Qed
\end{proof}

As a by-product, we obtain the description of motivic cohomology of $\whii_{\alpha}$ (known already from the original version of \cite{OVV}
and \cite[Thm 5.8]{Yap}) but now enhanced with the structure of a module over motivic homology of $\whii_{\alpha}$.

\begin{corollary}
\label{mot-coh-whii}
Let $\alpha\in K^M_m(k)/2$ be a non-zero pure symbol.
As a module over $\wt{A}=\eend_{\dmkF{\ff_2}}(\whii_{\alpha}$),
$$
\hm^{*,*'}(\whii_{\alpha}[-1];\ff_2)=\wt{A}[r_{\{m-1\}}^{-1}]/\wt{A}.
$$
It is a free $\Ia$-module with the basis $r_{\{m-1\}}^{-l}\cdot r_I$, for $l>0$ and $I\subset\ov{(m-2)}$.
\end{corollary}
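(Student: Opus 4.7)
The plan is to read this off the identifications established inside the proof of Theorem~\ref{D-A4}. There the bi-graded group $N=\oplus_{a,b}\Hom_{\dmkF{\ff_2}}(\whii_{\alpha}[-1],\hii_{\alpha}(a)[b])$ was shown to split as a right $\wt{A}$-module into a non-positive-diagonal part that coincides with $\wt{A}$ itself and a strictly positive-diagonal part that coincides with $\hm^{*,*'}(\whii_{\alpha}[-1];\ff_2)$. Thus I only need to identify $N$ with $\wt{A}[r_{\{m-1\}}^{-1}]$ and then quotient by the embedded copy of $\wt{A}$.

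For the identification, I will use the $\mu$--$\eta$-periodicity from Theorem~\ref{D-A4}: right multiplication by $\eta=r_{\{m-1\}}$ is an isomorphism on $N$ (with inverse given by left multiplication by $\mu$). Because $\eta$ acts as a unit on $N$, the inclusion $\wt{A}\hookrightarrow N$ extends to a canonical map of right $\wt{A}$-modules $\wt{A}[r_{\{m-1\}}^{-1}]\row N$. Injectivity is immediate from the invertibility of $\eta$ on $N$. For surjectivity, observe that $\eta$ shifts bi-degree strictly downward in the diagonal direction (by $-2^{m-1}$), so any element of $N$ lying in a fixed diagonal becomes, after multiplication by $\eta^{l}$ for $l\gg 0$, an element of the non-positive part $\wt{A}$; equivalently it equals $\eta^{-l}$ times an element of $\wt{A}$. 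Quotienting by $\wt{A}$ then yields the displayed formula $\hm^{*,*'}(\whii_{\alpha}[-1];\ff_2)=\wt{A}[r_{\{m-1\}}^{-1}]/\wt{A}$.

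For the basis claim, the proof of Theorem~\ref{D-A4} exhibits $\wt{A}$ as a free $\Ia$-module with basis $r_{\{m-1\}}^{k}\cdot r_I$ for $k\geq 0$ and $I\subset\ov{(m-2)}$. Localization at $r_{\{m-1\}}$ preserves freeness with $k$ now ranging over all of $\zz$, and quotienting by $\wt{A}$ leaves precisely the monomials with $k=-l<0$. This delivers the asserted $\Ia$-basis $r_{\{m-1\}}^{-l}\cdot r_I$ with $l>0$ and $I\subset\ov{(m-2)}$. No significant obstacle remains: essentially all the work was already carried out inside Theorem~\ref{D-A4}, and the present corollary is just a repackaging of its output, with the only care needed being the bookkeeping of diagonal shifts to verify that the localization on $\wt{A}$ genuinely exhausts $N$.
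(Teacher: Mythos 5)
Your argument is correct and is essentially the paper's own (implicit) derivation: the corollary is read off the proof of Theorem \ref{D-A4}, where $\mu$--$\eta$-periodicity identifies $N=\oplus\Hom(\whii_{\alpha}[-1],\hii_{\alpha}(*)[*'])$ with $\wt{A}[r_{\{m-1\}}^{-1}]$ and the non-positive/positive diagonal parts with $\wt{A}$ and $\hm^{*,*'}(\whii_{\alpha}[-1];\ff_2)$, yielding the quotient description and the $\Ia$-basis $r_{\{m-1\}}^{-l}\cdot r_I$. The only nuance is that the diagonal decomposition of $N$ is a splitting of graded $\Ia$-modules rather than of right $\wt{A}$-modules (the positive part is not $\wt{A}$-stable), but since your argument only uses the resulting quotient description $N/\wt{A}$, this does not affect the proof.
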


Now we can compute the "local" motivic cohomology $\hm^{*,*'}(k/k;\ff_2)=\eend_{\dmELF{k}{k}{\ff_2}}(T)$.

\begin{theorem}
\label{D-A5}
Let $k$ be a flexible field. Then
\begin{equation*}
\hm^{*,*'}(k/k;\ff_2)=\Qn{\infty}=\Lambda_{\ff_2}(r_{\{i\}}|_{i\geq 0}).
\end{equation*}
\end{theorem}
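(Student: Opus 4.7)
The plan is to combine Corollary \ref{B3} with the computation of Theorem \ref{D-A4} to rewrite
$$
\hm^{*,*'}(k/k;\ff_2) = \operatornamewithlimits{colim}_{\alpha}\, \eend_{\dmELF{\tilde{\alpha}}{k}{\ff_2}}(T) = \operatornamewithlimits{colim}_{\alpha}\, \Ia[r_{\{i\}}|_{0\leq i\leq m-1}]/(r_{\{i\}}^2 - r_{\{i+1\}}\rho|_{0\leq i\leq m-2}),
$$
where $\alpha$ ranges over non-zero pure symbols in $K^M_m(k)/2$, directed by divisibility. The transition for $\alpha \mapsto \alpha\gamma$ is the natural quotient $\Ia \twoheadrightarrow R_{\alpha\gamma}$ on coefficients (since $\kker(\cdot\alpha) \subseteq \kker(\cdot\alpha\gamma)$), matched with the identity on those $r_{\{i\}}$ coming from pure sub-symbols shared by both sides.

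Two easy observations: as $m \to \infty$, every $r_{\{i\}}$ for $i \geq 0$ eventually appears, and each relation $r_{\{i\}}^2 = r_{\{i+1\}}\rho$ eventually enters the quotient (as soon as $|\alpha| \geq i+2$). So the colimit equals $R_\infty[r_{\{i\}}|_{i\geq 0}]/(r_{\{i\}}^2 - r_{\{i+1\}}\rho|_{i\geq 0})$, with
$$
R_\infty := \operatornamewithlimits{colim}_{\alpha} \Ia = K^M_*(k)\big/\bigl(2,\,\textstyle\bigcup_{\alpha}\kker(\cdot\alpha)\bigr).
$$

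The core step, and the main obstacle I expect, is to show that $R_\infty = \ff_2$, concentrated in degree $0$. Equivalently: for every $\beta \in K^M_n(k)/2$ with $n \geq 1$, some non-zero pure symbol $\alpha$ satisfies $\alpha\beta = 0$. I would decompose $\beta = \sum_j \beta_j$ into pure symbols $\beta_j = \{a_{j,1},\ldots,a_{j,n}\}$ and build a non-zero pure annihilator $\gamma_j$ of each $\beta_j$ from the Steinberg identity $\{-a,a\} = 0$: take $\gamma_j = \{-a_{j,1}\}$ whenever $-a_{j,1} \notin (k^*)^2$; in the contrary case $\{a_{j,1}\} = \{-1\}$ modulo squares, lowering the effective degree and eventually reducing to the residual case $\beta_j = \rho^n$, which is killed by $\{1+t^2\}$ for a fresh transcendental $t$ (supplied by flexibility) since $\{1+t^2,-1\} = 0$ and, whenever $\rho \neq 0$, the polynomial $1+t^2$ is irreducible hence a non-square. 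Proposition \ref{B-pure-dir} then combines the finitely many $\gamma_j$ into a single non-zero pure symbol $\alpha$ divisible by each, forcing $\alpha\beta = 0$.

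Once $R_\infty = \ff_2$ is in hand, $\rho$ vanishes in the colimit, the relations collapse to $r_{\{i\}}^2 = 0$, and the colimit becomes the exterior algebra $\Lambda_{\ff_2}(r_{\{i\}}|_{i\geq 0}) = \Qn{\infty}$. The action of Milnor's operations $Q_i(r_{\{i\}}) = 1$ and $Q_i(r_{\{j\}}) = 0$ for $j \neq i$, established in each $\eend_{\dmELF{\tilde{\alpha}}{k}{\ff_2}}(T)$ during the proof of Theorem \ref{D-A4}, is compatible with the transition maps and thus descends to the colimit, furnishing the claimed $\Qn{\infty}$-structure.
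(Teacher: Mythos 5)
Your proposal is correct and follows the same skeleton as the paper's proof: rewrite $\hm^{*,*'}(k/k;\ff_2)$ as the directed colimit of Corollary \ref{B3}, feed in Theorem \ref{D-A4} for each pure symbol, identify the transition maps, and reduce the coefficient ring to $\ff_2$ so that $r_{\{i\}}^2=r_{\{i+1\}}\rho$ collapses to $r_{\{i\}}^2=0$. The one place where you genuinely diverge is the coefficient collapse. The paper only treats degree one: over a flexible field there is $a$ with $\{a\}\neq 0\neq\{-a\}$, the Steinberg relation puts both $\{a\}$ and $\{-a\}$ (hence $\rho=\{-1\}$, hence all of $K^M_1(k)/2$) into $N=\cup_{\alpha}\kker(\cdot\alpha)$, and since $N$ is a directed union of ideals it must then be the whole augmentation ideal. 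You instead annihilate an arbitrary $\beta$ pure symbol by pure symbol, which forces you to handle the residual case $\beta_j=\rho^n$ separately via the norm fact $\{1+t^2,-1\}=0$ and the non-squareness of $1+t^2$ over $F(t)$; this works (and Proposition \ref{B-pure-dir} correctly assembles the annihilators), but it is more computational than the paper's two-line use of the ideal property, which disposes of $\rho^n$ for free. One small caveat: you assert that the transition maps are the identity on the generators $r_{\{i\}}$ and the natural projection $\Ia\twoheadrightarrow R_{\alpha\gamma}$ on coefficients. This is true, but it is a statement the paper actually proves, using that the restriction commutes with the Milnor operations and that $r_I$, respectively $r_{I\cup\{m-1\}}$, is the unique element detected by $Q_I$, respectively $Q_{I\cup\{m-1\}}$; alternatively it follows from the construction of $r_{\{i\}}$ in Theorem \ref{D-A4} as $\eta_{\beta}\otimes\whii_{\alpha}$ together with the independence of the choice of the divisor $\beta$ established there. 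You should include one of these justifications rather than leave the compatibility as an assertion.
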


\begin{proof}
As we know, our colimit (from Corollary \ref{B3}) is taken over a {\it directed} system.

Let $\alpha\in K^M_m(k)/2$ and $\beta=\alpha\cdot\{b\}\in K^M_{m+1}(k)/2$ be non-zero pure symbols. Consider the restriction
$$
\eend_{\dmELF{\tilde{\alpha}}{k}{\ff_2}}(T)\stackrel{res}{\lrow}
\eend_{\dmELF{\tilde{\beta}}{k}{\ff_2}}(T).
$$
Then $res=\whii_{\beta}\otimes$ is a ring homomorphism respecting Steenrod algebra action, and, in the notations
of Theorem \ref{D-A4},
for $I\subset\{0,\ldots,m-2\}$, we have: $res(r_I)=r_I$ and $res(r_{\{m-1\}}\cdot r_I)=r_{I\cup\{m-1\}}$.
Indeed, $res$ sends $r_{\emptyset}$ to $r_{\emptyset}$, while
$r_I$ and $r_I$ (respectively, $r_{\{m-1\}}\cdot r_I$ and $r_{I\cup\{m-1\}}$) are the only elements (in the source and the target)
which are mapped to $r_{\emptyset}$ via $Q_I$ (respectively, $Q_{I\cup\{m-1\}}$). Also, $res:\Ia\row\Ib$ is
the natural projection, corresponding to the map $\alpha\cdot K^M_*(k)/2\stackrel{\cdot\{b\}}{\lrow}\beta\cdot K^M_*(k)/2$.

Combining this with Corollary \ref{B3} we obtain that
$$
\hm^{*,*'}(k/k;\ff_2)=\Qn{\infty}\otimes_{\ff_2} (K^M_*(k)/2)/N,
$$
where $N=\cup_{\alpha}\kker(\cdot\alpha)$, where $\alpha$ runs over all non-zero pure symbols in $K^M_*(k)/2$.
It remains to observe that $N$ contains $K^M_1(k)/2$. Indeed, let $\{a\}\in K^M_1(k)/2$ be any element such that $\{a\}\neq 0\neq\{-a\}$.
Then, from $\{a,-a\}=0$, both $\{a\}$ and $\{-a\}$ belong to $N$. This implies that $\{-1\}\in N$. Since, over a flexible field, an
element $a$ as above always exists, we obtain that $N$ contains $K^M_1(k)/2$, and so, coincides with the augmentation ideal $K^M_{>0}(k)/2$.
Hence, $(K^M_*(k)/2)/N=\ff_2$.

Finally, from the equation $r_{\{i\}}^2=r_{\{i+1\}}\cdot\rho$ it follows that $r_{\{i\}}^2=0$.
So, we obtain the external algebra in $r_{\{i\}}$'s over $\ff_2$.
\Qed
\end{proof}

What is remarkable here, the Milnor's operations are intertwined into the very fabric of the local motivic category. Also, all the non-zero
elements of $\hm^{*,*'}(k/k;\ff_2)$ are "rigid", in the sense that the identity map of the unit object of the local category can be
obtained from any such element using Milnor's operations, and so these classes disappear only together with the category itself.

We can now compare "local" and "global" motivic cohomology of a point:
$$
\def\objectstyle{\scriptstyle}
\def\labelstyle{\scriptstyle}
\xymatrix @-1.5pc{
& & & & & & & & & & & & & & & & & &  \\
& & & & & & & & & & & & & & & & & &  \\
\dub \ar@{->}[rrrrrrrrrrrrrrrrrr]^(0.97){(i)} & & & & & & & & & & & & & & & & \bub \ar@{}[r]_{r_{\emptyset}} & &  \\
& & & & & & & & & & & & & & & & \bub \ar@{}[d]^(0.4){r_{\{0\}}} & &  \\
& & & & & & & & & & & & & & & & & &  \\
& & & & & \ar@{}[r]^{\hm^{*,*'}(k/k;\ff_2)=\Qn{\infty}} & & & & & & & & & & \bub \ar@{}[d]^(0.2){r_{\{1\}}} & & &  \\
& & & & & & & & & & & & & & & \bub \ar@{}[d]^(0.2){r_{\{0,1\}}} & & &  \\
& & & & & & & & & & & & & & & & & &  \\
& & & & & & & & & & & & & & & & & &  \\
& & & & & & & & & & & & & \bub \ar@{}[d]^(0.2){r_{\{2\}}} & & & & &  \\
& & & & & & & & & & & & & \bub \ar@{}[d]^(0.2){r_{\{0,2\}}} & & & & &  \\
& & & & & & & & & & & & & & & & & &  \\
& & & & & & & & & & & & \bub \ar@{}[d]^(0.2){r_{\{1,2\}}} & & & & & &  \\
& & & & & & & & & & & & \bub \ar@{}[d]^(0.2){r_{\{0,1,2\}}} & & & & & &  \\
& & & & & & & & & & & & & & & & & &  \\
& & & & & & & & & & & & & & & & & &  \\
& & & & & & & & & & & & & & & & & &  \\
& & & & & & & & & \bub \ar@{}[d]^(0.2){r_{\{3\}}} & & & & & & & & &  \\
& & & & & & & & & & & & & & & & \dub \ar@{->}[uuuuuuuuuuuuuuuuuu]^(0.97){[j]} & &  \\
}
\def\objectstyle{\scriptstyle}
\def\labelstyle{\scriptstyle}
\xymatrix @-1.5pc{
& & & & & & & & & & & & & & & & & &  \\
& & & & & & & & & & & & & & & & & \dub \ar@{-}[r] &  \\
& & & & & & & & & & & & & & & & \dub \ar@{-}[rr] & &  \\
& & & & & & & & & & & & & & & \dub \ar@{-}[rrr] & & &  \\
& & & & & & & & & & & & & & \dub \ar@{-}[rrrr] & & & &  \\
& & & & & & & & & & & & & \dub \ar@{-}[rrrrr] & & & & &  \\
& & & & & & & & & & & & \dub \ar@{-}[rrrrrr] & & & & & &  \\
& & & & & & & & & & & \dub \ar@{-}[rrrrrrr] & & & & & & &  \\
& & & & & & & & & & \dub \ar@{-}[rrrrrrrr] & & & & & & & &  \\
& & & & & & & & & \dub \ar@{-}[rrrrrrrrr] & & & & & & & & &  \\
& & & & & & & & \dub \ar@{-}[rrrrrrrrrr] & & & & & & & & & &  \\
& & & & & & & \dub \ar@{-}[rrrrrrrrrrr] & & & & & & & & & & &  \\
& & & & & & \dub \ar@{-}[rrrrrrrrrrrr] & & & & & & & & & & & &  \\
\dub \ar@{->}[rrrrrrrrrrrrrrrrrr]_(0.9){(i)}& & & & & \dub \ar@{-}[rrrrrrrrrrrrruuuuuuuuuuuuu]|-{K^M_*/2} &
\bub \ar@{}[r]_{\tau} & & & & & & & & & & & &  \\
& & & & & & & & & & & & & & & & & &  \\
& & & & & & & & & & & & & & & & & &  \\
& & & & & & & & & & & \ar@{}[r]^{\hm^{*,*'}(k;\ff_2)=K^M_*(k)/2[\tau]} & & & & & & &  \\
& & & & & & & & & & & & & & & & & &  \\
& & & & & \dub \ar@{->}[uuuuuuuuuuuuuuuuuu]^(0.9){[j]} & & & & & & & & & & & & &  \\
}
$$

Note, that in contrast to "global" motivic cohomology of a point residing in the I-st quadrant, the "local" version resides in the
III-rd one.
In particular, the global-to-local map
$$
\hm^{*,*'}(k,\ff_2)\lrow\hm^{*,*'}(k/k,\ff_2)
$$
is zero in all bi-degrees aside from $(0)[0]$.\\

Our ring generators $r_{\{i\}}$ are related by the action of the Steenrod algebra. Namely, since (modulo $\rho=\{-1\}$),
$Q_{i+1}=[Q_i,\op{Sq}^{2^{i+1}}]$ - \cite{VoOP}, and $\rho$ disappears locally, anyway, we obtain from bi-degree considerations that
$\op{Sq}^{2^{i+1}}(r_{\{i+1\}})=r_{\{i\}}$.
In particular,
$\op{Sq}^1\op{Sq}^2\ldots\op{Sq}^{2^{i-1}}\op{Sq}^{2^i}r_{\{i\}}=r_{\emptyset}=1$.

\begin{remark}
\label{tau=0}
{\rm
Despite some similarities between the (complex) topological realization functor and isotropic functors, there is a difference in the way
they handle $\tau$. Namely, $\tau_{Top}=1$, while $\psi_E(\tau)=0$ (in the case of a flexible field).
}
\Red
\end{remark}

\smallskip

The only obstacle which prevents us from performing the same calculations for odd primes is the lack of the analogue of
\cite[Cor. 3]{kerM} in this situation.
In particular, it would be sufficient to have a positive answer to the following:

\begin{question}
\label{odd-loc-mot-coh}
Let $Q$ be an anisotropic hypersurface of degree $p$ over $k$.
Is it true that, over some finitely-generated purely transcendental extension $E/k$, the kernel
$$
\kker(K^M_*(E)/p\row K^M_*(E(Q))/p)
$$
contains a non-zero pure symbol?
\end{question}

\section{Isotropic category of Chow motives}
\label{IsoChowM}

In this Section we will study in details {\it local Chow motives}.
As we will see, over a flexible field, these resemble in many respects their topological counterparts, and
are closely related to the numerical equivalence of cycles with finite coefficients.
In particular, the $\Hom$'s between such {\it local
pure motives} are expected to be not larger than $\Hom$'s between their topological realizations, and so
finite-dimensional. We will prove it in various situations.

Let us start by introducing some "gradual" approach to the numerical equivalence of cycles, which will permit to measure
our progress towards the goal.

\subsection{Theories of higher types and numerical equivalence}

Let $A$ be a commutative ring and $\laz\stackrel{\ffi}{\lrow} A$ be some formal group law with $A$-coefficients.
Denote as $A_{(0)}^*:=\Omega^*\otimes_{\laz}A$ the respective {\it free theory} in the sense of Levine-Morel \cite[Rem. 2.4.14]{LM}.
By \cite[Prop. 4.7]{SU} this is a {\it theory of rational type}. We call this {\it type $0$}.
We are going to introduce the theory $A_{(n)}^*$ of {\it type $n$} as some quotient of $A_{(0)}^*$.
This is based on the following construction (cf. \cite[Exa. 4.6]{RNCT}):

\begin{example}
\label{theo-constr}
{\rm Let $A^*$ be some oriented cohomology theory (with localization) in the sense of \cite[Def. 2.1]{SU}, and $\Gamma=\{Q_{\lambda},a_{\lambda}\}_{\lambda\in\Lambda}$ be a collection of smooth
projective $k$-varieties with some classes $a_{\lambda}\in A^*(Q_{\lambda})$. That is, we have a collection of $A^*$-correspondences
$\rho_{\lambda}:Q_{\lambda}\rightsquigarrow\op{Spec}(k)$.
Construct the new theory $A^*_{\Gamma}$ as follows:
$$
A^*_{\Gamma}(X):=A^*(X)/(im((\rho_{\lambda}\times id)_*)_{\lambda\in\Lambda}),
$$
where $\rho_{\lambda}\times id$ is the correspondence $Q_{\lambda}\times X\rightsquigarrow X$.
In other words, we mod out all the elements of the form $\beta_*(\alpha^*(a_{\lambda})\cdot u)$,
where $u$ is an arbitrary element of $A^*(Q_{\lambda}\times X)$ and $\alpha$ and $\beta$ are natural projections:
$$
\xymatrix{
Q_{\lambda} & Q_{\lambda}\times X \ar[l]_-{\alpha} \ar[r]^-{\beta} & X.
}
$$
One can check that the resulting theory $A^*_{\Gamma}$ will be an oriented cohomology theory in the sense of \cite[Def. 2.1]{SU}
}
\Red
\end{example}

\begin{definition}
\label{def-num-eq}
Let $Q\stackrel{\pi}{\lrow}\op{Spec}(k)$ be a smooth projective variety and $a\in A^*(Q)$. We say that $a\numeq 0$, if
$\pi_*(a\cdot b)=0$, for any $b\in A^*(Q)$.
\end{definition}

Now we can introduce the {\it theories of higher types}.

\begin{definition}
\label{def-THT}
Consider the collection $\Gamma_n=\{Q_{\lambda},a_{\lambda}\}_{\lambda\in\Lambda}$, where $Q_{\lambda}$ runs through all smooth
projective $k$-varieties of dimension $2n-1$ and $a_{\lambda}\in A^*(Q_{\lambda})$ runs over all elements $\numeq 0$. Define
$$
A^*_{(n)}:=(A_{(0)})^*_{\Gamma_n}.
$$
\end{definition}

If $a\in A^*(Q)$ is $\numeq 0$, then $a\times [(\pp^1)]\in A^*(Q\times (\pp^1)^{\times 2})$ is also $\numeq 0$.
So, the images of correspondences from $\Gamma_n$ are covered by those from $\Gamma_{n+1}$. Hence, we get a chain of surjections
$$
A_{(0)}^*\twoheadrightarrow A_{(1)}^*\twoheadrightarrow A_{(2)}^*\twoheadrightarrow\ldots\twoheadrightarrow
A_{(n)}^*\twoheadrightarrow\ldots
$$
with the colimit $A^*_{Num}$. Here $A^*_{Num}$ is obtained from $A^*$ by moding-out all classes $\numeq 0$ on all varieties.

\begin{remark}
\label{rem-TT1-alg}
{\rm For $n=1$, the theory $A^*_{(1)}$ is, by definition, the {\it algebraic} version $A^*_{alg}$.
In particular, $\CH^*_{(1)}=\CH^*_{alg}$.}
\Red
\end{remark}

The meaning of the theory $A^*_{Num}$ is described by the following universal property.

\begin{proposition}
\label{num-univ-prop}
For any oriented generically constant cohomology theory (with localization) $\wt{A}^*$ (in the sense of \cite[Def. 2.1]{SU} and \cite[Def.4.4.1]{LM})
with the formal group law $\laz\stackrel{\ffi}{\lrow} A$,
there exists a unique morphism of theories $\wt{A}^*\twoheadrightarrow A^*_{Num}$
which is moreover surjective.
\end{proposition}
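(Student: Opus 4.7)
The plan is to construct the desired morphism as a factorization of the canonical map from the free theory $A^*_{(0)}$. First, the universality of algebraic cobordism (\cite[Thm.~1.2.6]{LM}) applied to $\wt A^*$ (with coefficient ring $A$ and formal group law $\ffi$) yields a unique morphism $\Omega^*\to\wt A^*$ of oriented cohomology theories, which extends uniquely to a morphism $\theta:A^*_{(0)}=\Omega^*\otimes_{\laz} A\to\wt A^*$ acting as the identity on the coefficient ring $A$. The generically constant hypothesis then ensures that $\theta$ is surjective: combining the generalized degree formula of Levine--Morel with \cite[\S 4.4--4.5]{LM}, every class in $\wt A^*(X)$ is expressible as a sum of push-forwards $f_*(c)$ along projective morphisms $f:Y\to X$ with $Y$ smooth and $c$ a polynomial in Chern classes of vector bundles on $Y$, and all such classes lie in the image of $\theta$.

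The heart of the proof is then to verify the kernel containment $\kker(\theta)\subseteq\kker(A^*_{(0)}\twoheadrightarrow A^*_{Num})$; combined with the surjectivity of $\theta$, this immediately produces the desired surjection $\bar\theta:\wt A^*\twoheadrightarrow A^*_{Num}$. Suppose $a\in A^*_{(0)}(Q)$ satisfies $\theta(a)=0$, where $Q$ is smooth projective with structure map $\pi_Q$. For every $b\in A^*_{(0)}(Q)$, functoriality of $\theta$ with respect to products and push-forwards gives
$$
\theta(\pi_{Q*}(a\cdot b))=\pi_{Q*}(\theta(a)\cdot\theta(b))=0\hspace{3mm}\text{in}\hspace{3mm}\wt A^*(\op{Spec}(k))=A.
$$
Since $\theta$ restricts to the identity on coefficient rings, this forces $\pi_{Q*}(a\cdot b)=0$ in $A^*_{(0)}(\op{Spec}(k))=A$, so $a\numeq 0$ in $A^*_{(0)}$. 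Taking $Q$ of appropriate (odd) dimension, or otherwise multiplying by a power of $[\pp^1]$ to adjust parity, one lands the pair $(Q,a)$ in some $\Gamma_n$, and $a$ consequently vanishes in $A^*_{Num}$.

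Uniqueness follows because any morphism $\mu:\wt A^*\to A^*_{Num}$ composed with the surjection $\theta$ produces a morphism $A^*_{(0)}\to A^*_{Num}$, which by the universal property of the free theory must coincide with the canonical projection; surjectivity of $\theta$ then pins $\mu$ down. The main obstacle in this plan is the surjectivity of $\theta$: without the generic constancy hypothesis one cannot guarantee that every class in $\wt A^*(X)$ is reached from $A^*_{(0)}$, in which case the kernel-containment argument would only define a map on the image of $\theta$ rather than on the whole of $\wt A^*$, and the statement would collapse.
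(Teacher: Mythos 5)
Your proposal is correct and follows essentially the same route as the paper's proof: a canonical morphism $A^*_{(0)}=\Omega^*\otimes_{\laz}A\row\wt{A}^*$ which is surjective for generically constant theories (the paper simply cites \cite[Prop. 4.8]{SU} for this, where you sketch the degree-formula argument behind it), the observation that this morphism is the identity on $A=\wt{A}^*(\op{Spec}(k))$ so that its kernel consists of numerically trivial classes and hence dies in $A^*_{Num}$, and uniqueness via the universality of algebraic cobordism combined with surjectivity. No essential difference in approach.
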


\begin{proof}
By \cite[Prop. 4.8]{SU}, the canonical morphism of theories $G:A^*_{(0)}\twoheadrightarrow \wt{A}^*$ is surjective
(this morphism is induced by the canonical morphism $\Omega^*\row\wt{A}^*$ of \cite[Thm 1.2.6]{LM}).
And by the same universality of algebraic cobordism such morphism is unique. In particular, there is a unique morphism
of theories $A^*_{(0)}\twoheadrightarrow A^*_{Num}$.
Note, that $G|_{\op{Spec}(k)}:A\stackrel{=}{\row}A$ is an isomorphism. Hence, if $x\in A^*_{(0)}(X)$ belongs to the kernel of $G$,
then $x\numeq 0$. Thus, the unique morphism of theories $A^*_{(0)}\twoheadrightarrow A^*_{Num}$ factors through $\wt{A}^*\row A^*_{Num}$.
\Qed
\end{proof}

Thus, $A^*_{Num}$ plays the role opposite to that of $A^*_{(0)}$, and can be denoted as $A^*_{(\infty)}$, while any
generically constant theory $\wt{A}^*$ with the formal group law $\ffi$ is canonically squeezed between $A^*_{(0)}$ and $A^*_{(\infty)}$:
$$
\xymatrix{
A^*_{(0)} \ar@{>>}[r] & \wt{A}^* \ar@{>>}[r] & A^*_{(\infty)}
},
$$
and the latter provides an alternative way of describing such theories $\wt{A}^*$.

\subsection{Numerical equivalence modulo $p$ and isotropy}
\label{nump-iso}

Everywhere below $X$ is a smooth projective variety over a field $k$ of characteristic zero.

In the case of a theory $\Ch^*=\CH^*/p$, we will denote the numerical equivalence $\numeq$ as $\nump$
to stress that we consider finite coefficients. Thus, $\Ch^*_{Num}(X)=\Ch^*(X)/N$, where
$N$ is the ideal of elements $\nump 0$. In our situation, $x\in\Ch(X)$ is $\nump 0$ if, for any $y\in\Ch(X)$, the
$\ddeg(x\cdot y)=0\in\ff_p$.

By the very definition, we have a non-degenerate pairing
$$
\Ch_{Num}(X)\times\Ch_{Num}(X)\lrow\ff_p
$$
defined by $(x,y)\mapsto\ddeg(x\cdot y)$.
Moreover, since $\ddeg(x\cdot y)$ may be defined on the level of the (complex) topological realization,
the kernel of the topological realization functor is contained in $N$, and so, $\Ch_{Num}(X)$
is a sub-quotient of the topological cohomology $\HH_{Top}(X;\ff_p)$ of $X$. In particular,
$\Ch_{Num}(X)$ is finite-dimensional $\ff_p$-vector space. Note, however, that this sub-quotient depends on the
ground field $k$, since for different fields, the images of the topological realization functor will be different.

The theory $\Ch^*_{Num}$ inherits the action of the reduced power operations from $\Ch^*$,
as follows from the Proposition \ref{AB1} below.
Recall, that we have reduced power operations $P^i:\Ch^*\row\Ch^{*+i(p-1)}$ (see \cite{Br} and \cite{VoOP})
commuting with pull-back morphisms, and the respective homological operations $P_i:\Ch_*\row\Ch_{*-i(p-1)}$ commuting
with push-forwards. These are connected as follows:
$$
P^i=\sum_{l=0}^i d_l(T_X)\cdot P_{i-l},
$$
where $d_l$ satisfies Cartan's formula and, for a line bundle $L$, $d_1(L)=x^p$, where $x=c_1(L)$.

\begin{proposition}
\label{AB1}
Let $u\in\Ch^*(X)$. Then $u\nump 0$ $\Rightarrow$ $P^i(u)\nump 0$.
\end{proposition}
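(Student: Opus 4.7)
The plan is to proceed by induction on $i \geq 0$. The case $i = 0$ is trivial as $P^0 = \op{id}$. For $i \geq 1$, assume inductively that $P^a(u) \nump 0$ for all $0 \leq a < i$, and fix an arbitrary $v \in \Ch^*(X)$; the goal is $\ddeg(P^i(u) \cdot v) = 0$. Applying the internal Cartan formula for the cohomological operation to the product $u \cdot v$,
\[
P^i(u \cdot v) = \sum_{a+b=i} P^a(u) \cdot P^b(v) = P^i(u)\cdot v + u\cdot P^i(v) + \sum_{0 < a < i} P^a(u)\cdot P^{i-a}(v),
\]
and taking degrees, the middle term vanishes because $u \nump 0$ and each term of the final sum vanishes by inductive hypothesis. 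This yields the reduction $\ddeg(P^i(u) \cdot v) = \ddeg(P^i(u \cdot v))$.

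Since the classes $\nump 0$ form an ideal in $\Ch^*(X)$, the product $w := u \cdot v$ is itself $\nump 0$. The statement is thereby further reduced to the following claim: for any $w \nump 0$ on a smooth projective $X$ and any $i \geq 1$, one has $\ddeg(P^i(w)) = 0$. For this, invoke the relation $P^i = \sum_l d_l(T_X) \cdot P_{i-l}$ stated in the paper, which gives
\[
\ddeg(P^i(w)) = \sum_l \ddeg\bigl(d_l(T_X) \cdot P_{i-l}(w)\bigr).
\]
The summand $l = i$ is $\ddeg(d_i(T_X) \cdot w)$, which vanishes by $w \nump 0$. For each $l < i$, apply the Cartan formula for the homological operation $P_{i-l}$ to the product $d_l(T_X) \cdot w$ to express $d_l(T_X) \cdot P_{i-l}(w)$ as $P_{i-l}(d_l(T_X) \cdot w)$ minus cross terms of the form $P^{a'}(d_l(T_X)) \cdot P_{i-l-a'}(w)$ with $a' > 0$. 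Push the resulting identity forward to $\op{Spec} k$: the leading term vanishes because $\pi_*$ commutes with $P_{i-l}$ while $P_{i-l}$ acts as zero on $\Ch_*(\op{Spec} k) = \ff_p$ for $i-l > 0$; each cross term is then dispatched by a downward induction on the lower index $b$ of $P_b(w)$, whose base case $b = 0$ reduces to an expression of the form $\ddeg(\alpha \cdot w) = 0$ by $w \nump 0$.

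The main obstacle is the organization of this final cascade: stating and applying the homological Cartan formula cleanly and iteratively pushing all correction terms past $\pi_*$ until they land on a pairing with $w$. The most conceptual packaging of the argument is to construct, once and for all, an adjoint operation $Q^i$ on $\Ch^*(X)$ built from the $P_j$'s and the characteristic classes $d_l(T_X)$ such that
\[
\ddeg(P^i(u) \cdot v) = \ddeg(u \cdot Q^i(v))
\]
for all $u, v \in \Ch^*(X)$; once such an adjoint is in place, the proposition is an immediate consequence of $u \nump 0$.
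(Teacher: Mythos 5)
Your proposal is correct and takes essentially the same route as the paper's proof: induction on $i$, the Cartan formula applied to $u\cdot v$ to reduce to $\ddeg(P^i(u\cdot v))$, the relation $P^i=\sum_l d_l(T_X)\cdot P_{i-l}$, and the vanishing of positive-degree homological operations after push-forward to $\op{Spec}(k)$. The only difference is bookkeeping of the correction terms: where the paper rewrites $d_l(T_X)\cdot P_{i-l}(u\cdot v)$, $l\geq 1$, back in terms of cohomological operations via $d_m(-T_X)$ and reuses the outer induction on $i$ (applied to $u\cdot v$, which is also $\nump 0$), you run an inner descent on the homological index using the module-type Cartan formula for $P_b$ — legitimate, since that formula follows formally from the stated relation together with the cohomological Cartan formula, so no adjoint operation $Q^i$ actually needs to be constructed.
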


\begin{proof} Induction on $i$. Since $P^0=id$, we have the (base) $i=0$.

\noindent
(step) We need to show that $\ddeg(P^i(u)\cdot v)=0$, for any class $v$ of complementary dimension.
By Cartan's formula, $P^i(u)\cdot v=P^i(u\cdot v)-\sum_{j=0}^{i-1}P^j(u)\cdot P^{i-j}(v)$.
By the inductive assumption, we have that $\ddeg(P^j(u)\cdot P^{i-j}(v))=0$, for any $j<i$.
And the degree of the first summand can be rewritten as
\begin{equation*}
\begin{split}
&\ddeg(P^i(u\cdot v))=\ddeg(\sum_{l=0}^i d_l(T_X)\cdot P_{i-l}(u\cdot v))=\\
&\ddeg(P_i(u\cdot v))+\ddeg(\sum_{l=1}^i d_l(T_X)\cdot\sum_{m=0}^{i-l}d_m(-T_X)\cdot P^{i-l-m}(u\cdot v)).
\end{split}
\end{equation*}
The second summand is zero by the inductive assumption, while the degree of
$P_i(u\cdot v)$ is the same as that of $P_i(\pi_*(u\cdot v))$, where $\pi:X\row\spec(k)$ is the natural projection.
The latter degree is equal to zero for any $i>0$. The induction step is proven.
\Qed
\end{proof}

If $x$ is an {\it anisotropic} cycle, then $x\nump 0$. Indeed, $x=f_*(x')$, for some projective map $f:Z\row X$ from
anisotropic variety $Z$, and some $x'\in\Ch^*(Z)$. But then $\ddeg(x\cdot y)=\ddeg(f_*(x')\cdot y)=\ddeg(x'\cdot f^*(y))=0$,
since all zero cycles on $Z$ have zero degree (modulo $p$). Thus we get the surjective map:
$$
\Ch^*_{k/k}(X)\twoheadrightarrow\Ch^*_{Num}(X).
$$
I conjecture that, over flexible fields, these two cohomology theories, in reality, coincide.
\begin{conj}
\label{main-conj}
Let $k$ be a flexible field. Then
$\Ch^*_{k/k}=\Ch^*_{Num}$.
\end{conj}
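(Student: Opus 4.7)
The surjection $\Ch^*_{k/k} \twoheadrightarrow \Ch^*_{Num}$ is already established, so the task is to produce, for every $u \in \Ch^*(X)$ with $u \nump 0$ on a smooth projective $X$, an expression of $u$ as a push-forward from a $p$-anisotropic variety. The plan is to argue by induction on $\ddim(X)$, the case $\ddim(X) = 0$ being immediate since $\nump 0$ then forces $u = 0$, which is trivially anisotropic.

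For the inductive step, the first move is a geometric representation: using flexibility of $k$ (which supplies the transcendentals needed for generic moving) and a moving technique developed specifically for this purpose in Section \ref{A-r}, I would replace $X$ by an appropriate blow-up $X' \to X$ so that $u$ pulls back to a class supported on a smooth connected closed subvariety $\iota \colon Z \hookrightarrow X'$, say $u = \iota_*(v)$ with $\ddim Z < \ddim X$. Since the blow-up map $\pi \colon X' \to X$ satisfies $\pi_* \pi^* = \op{id}$ on $\Ch^*(X)$ and preserves anisotropic classes, it suffices to prove the statement on $X'$.

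The second, and most delicate, step is to arrange that the class $v$ on $Z$ itself satisfies $v \nump 0$. A priori only $\iota_*(v)$ is numerically trivial on $X'$, and the failure of $v$ to be numerically trivial on $Z$ is controlled by the interaction of $v$ with the characteristic classes of the normal bundle $N_{Z/X'}$ through the self-intersection formula $\iota^*\iota_*(v) = v\cdot c_{\op{top}}(N_{Z/X'})$ and its Steenrod-refined variants. The plan is to annihilate these obstructing characteristic classes numerically, by iterating further blow-ups of $X'$ (which modify $Z$ and $N_{Z/X'}$ in a controlled way) and applying reduced power operations $P^i$ (or $\op{Sq}^i$ when $p=2$). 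Proposition \ref{AB1}, asserting that the $P^i$ preserve the ideal of $\nump 0$ classes, is crucial: it guarantees that the modified class produced by Steenrod manipulation remains in the kernel of numerical equivalence, so that the inductive hypothesis remains applicable. The combinatorics of which blow-ups, and which sequence of Steenrod operations, suffice to numerically kill all the relevant Chern (or Stiefel--Whitney) classes of $N_{Z/X'}$ should correspond exactly to the content of Corollaries \ref{LS-nump-anis} and \ref{LS-r2-nump-div} of Section \ref{A-r}.

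Once $v \nump 0$ on the smooth connected $Z$ of strictly smaller dimension, I would invoke the inductive hypothesis on $Z$ to conclude $v = g_*(w)$ for some proper map $g \colon Y \to Z$ with $Y$ a $p$-anisotropic variety. Then $\iota_*(v) = (\iota \circ g)_*(w)$ is anisotropic on $X'$, and push-forward along $X' \to X$ yields anisotropy of $u$ on $X$. The main obstacle throughout this scheme is the second step, namely the numerical annihilation of the normal-bundle characteristic classes by the blow-up/Steenrod combination: the choice of blow-up depends intricately on $u$, on $Z$, and on the prime $p$, and the bookkeeping grows rapidly with both $\ddim(X)$ and the codimension of $Z$. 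This is precisely why a full proof is at present only accessible in the low-dimensional or low-codimensional regimes (divisors, cycles of dimension $\leq 2$, varieties of dimension $\leq 5$) recorded in Theorem \ref{thm-conj5-1-2}.
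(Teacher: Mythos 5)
Note first that the statement you are addressing is a \emph{conjecture}: the paper itself does not prove $\Ch^*_{k/k}=\Ch^*_{Num}$ in general, but only in the cases listed in Theorem \ref{thm-conj5-1-2} (divisors, $\ddim(X)\leq 5$, cycles of dimension $\leq 2$). Your outline is essentially the paper's own strategy for those cases: induction on dimension, the moving technique of Section \ref{A-r} (Theorem \ref{AA1}, Statements \ref{LS-div-conn}, \ref{LS-codim2-single}, \ref{LS-porozhd}) to put the class, after blow-ups, on a smooth connected divisor in the form of a smooth connected subvariety, Proposition \ref{AB1} to keep Steenrod-modified classes numerically trivial, and the blow-up/reduced-power manipulations feeding into Corollaries \ref{LS-nump-anis} and \ref{LS-r2-nump-div}. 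So as a description of the method you have reproduced the paper faithfully.

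As a proof of the conjecture itself, however, there is a genuine gap, and it sits exactly where you place the ``most delicate'' step: there is no general procedure, in arbitrary dimension and codimension, for numerically annihilating all the monomials in the normal-bundle Chern classes that Corollary \ref{LS-nump-anis} requires, nor for reducing an arbitrary class supported on a divisor $Z$ to one numerically trivial \emph{on} $Z$ (in the paper this last reduction is not done via the self-intersection formula you invoke, but by arranging, through Statement \ref{LS-div-conn}, that $\Ch^*_{k/k}(Z)$ is generated by restrictions from the ambient variety together with the class itself, and then checking orthogonality by hand). The paper only achieves the annihilation of characteristic classes through explicit, prime-dependent computations (Lemmas \ref{A18-L2}--\ref{A18-L6}, \ref{A20-L1}--\ref{A20-L2}, \ref{A23-L1}--\ref{A23-L4}) that are specific to surfaces and $3$-, $4$-, $5$-folds; no inductive scheme covering higher codimension is known, which is precisely why the general statement remains conjectural. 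Since you state this limitation yourself in your final sentences, your text should be read as a correct account of the partial results, not as a proof of the conjecture. One small slip: in dimension $0$, $u\nump 0$ does not force $u=0$ in $\Ch_0$ (a point of degree divisible by $p$ gives a nonzero numerically trivial class); what is true, and what the argument needs, is that such a class is supported on $p$-anisotropic components, hence anisotropic --- in the paper the actual induction base is supplied by Proposition \ref{A12} and Corollary \ref{A13N}.
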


\begin{remark}
\label{conj-flex-cond}
{\rm
The condition on the flexibility of $k$ is essential here. For example, if $k$ is algebraically closed, then
from Remark \ref{alg-clo-local} we know that $\Ch^*_{k/k}=\Ch^*$, while $\Ch^*_{Num}$ is some sub-quotient of it.
}
\Red
\end{remark}

\begin{remark}
\label{conj-chow-mot}
{\rm
This Conjecture, in particular, implies that the {\it local} Chow motivic category
$Chow(k/k;\ff_p)$ is equivalent to the {\it numerical} Chow motivic category $Chow_{Num}(k;\ff_p)$.

Since the degree pairing is defined over the algebraic closure and even in the topological realization, we obtain that the numerical
Chow groups over $E$ are sub-quotients of the respective groups over $\ov{E}$, which, in turn, are sub-quotients of topological cohomology:
$$
\Ch_{Num/E}(X)\subq\Ch_{Num/\ov{E}}(X)\subq\HH_{Top}(X;\ff_p).
$$
Thus, our Conjecture implies, that any object of $Chow(k;\ff_p)$ which vanishes over $\kbar$, or even in the topological realization,
should vanish in every isotropic category $Chow(E/E;\ff_p)$.

In contrast, "non-pure" motives behave differently. For example, the idempotent, corresponding to the projector
$\pi_E:\dmkF{\ff_p}\row\dmELF{E}{k}{\ff_p}$ is mapped via $\psi_E$ to the unit object of the category $\dmELF{E}{E}{\ff_p}$.
On the other hand, its restriction to $\kbar$ is trivial, since $\whii_Q|_{\kbar}=0$, for any non-empty $Q$
(note that $E\neq\ov{E}$, since $k$ is {\it flexible}, so the respective directed system contains non-empty $Q$'s).
Consequently, all the subcategories $\dmELF{E}{k}{\ff_p}$, for all
finitely-generated $E/k$, are killed by the restriction to the algebraic closure functor $\dmkF{\ff_p}\row\dmEF{\kbar}{\ff_p}$.
There are {\it geometric} examples as well: the motive $\wt{M}_{\alpha}$ of Section \ref{Sect-coh-point} vanishes over $\kbar$, but
is non-zero in the {\it isotropic motivic category}.
}
\Red
\end{remark}

\begin{remark}
\label{Chow-notgen-T}
{\rm
Although, the {\it numerical} Chow motivic category $Chow_{Num}(k;\ff_p)$ is a sub-quotient of a topological category
(that is, of the category of graded $\ff_p$-vector spaces), it is more interesting. In particular, it is not generated by a single object (Tate-motive). This is reflected by the absence of the {\it Kunneth formula}. Namely, for smooth projective $X$ and $Y$, the product map
$$
\Ch_{Num}(X)\otimes\Ch_{Num}(Y)\row\Ch_{Num}(X\times Y)
$$
is not an isomorphism ($\Leftrightarrow$ not surjective), in general.
}
\Red
\end{remark}

Our aim is to prove the following result.

\begin{theorem}
\label{thm-conj5-1-2}
The Conjecture \ref{main-conj} is true in the following cases:
\begin{itemize}
\item[$(1)$] $\ddim(X)\leq 5$;
\item[$(2)$] $\Ch^1$;
\item[$(3)$] $\Ch_m$, for $m\leq 2$.
\end{itemize}
\end{theorem}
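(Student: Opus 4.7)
The plan is to prove all three items simultaneously by induction on $\ddim X$, using the moving/blow-up technology promised in Section \ref{A-r} to reduce each numerically trivial class to one supported on a smooth divisor of strictly smaller dimension. Since we already have the surjection $\Ch^*_{k/k}(X) \twoheadrightarrow \Ch^*_{Num}(X)$, it suffices to prove the reverse inclusion: every class $u \in \Ch^*(X)$ with $u \nump 0$ is anisotropic in the sense of Definition \ref{anis-class}.

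The main reduction step I would formulate is the following: given $u \in \Ch(X)$ with $u \nump 0$, produce a blow-up $\pi \colon X' \to X$ with smooth center, a smooth connected divisor $Z \subset X'$, and a class $v \in \Ch(Z)$ satisfying $v \nump 0$ on $Z$, such that $u = \pi_* i_* v$ for $i \colon Z \hookrightarrow X'$. Since proper pushforward preserves anisotropy, the induction hypothesis applied to $v$ on $Z$ (which has strictly smaller dimension than $X$) finishes the inductive step. To produce such a $v$, I would first use a Bertini argument together with the flexibility of $k$ (via Proposition \ref{flex-main-prop}) to represent $u$ by a cycle supported on a smooth connected subvariety $Y \subset X'$; the subtlety is that the restriction of $u$ to $Y$ need not be numerically trivial on $Y$, even though it is on $X'$. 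One compensates by a sequence of further blow-ups combined with the Steenrod operations $P^i$, using Proposition \ref{AB1} to keep everything $\nump 0$, in order to successively kill the offending characteristic classes of the normal and tangent bundles of $Y$. This mechanism is exactly what Corollaries \ref{LS-nump-anis} and \ref{LS-r2-nump-div} are designed to deliver.

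For item (2), $\Ch^1$: every divisor class $D$ with $D \nump 0$ is moved to a smooth connected divisor, and if that divisor is anisotropic we are done; otherwise the self-intersection relation $D|_D = c_1(N_{D/X})$ and a Steenrod manipulation descend us to a lower-dimensional situation. For item (3), $\Ch_m$ with $m \leq 2$: small-dimensional cycles are easily represented by smooth subvarieties, and the induction terminates quickly since for $m = 0$ the degree-mod-$p$ condition is literally the definition of isotropy. For item (1), $\ddim X \leq 5$: item (3) handles $\Ch_m$ for $m \leq 2$ (hence every codimension $\geq \ddim X - 2$), item (2) handles $\Ch^1$, the group $\Ch^0$ is trivial, and the only remaining group is $\Ch^2$ in the case $\ddim X = 5$ (i.e.\ $\Ch_3$ on a five-fold), which is precisely the first case the moving argument reaches with a single blow-up plus a single Steenrod-type correction.

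The main obstacle I expect is the characteristic-class bookkeeping in the analogues of Corollaries \ref{LS-nump-anis} and \ref{LS-r2-nump-div}: one must arrange the blow-up so that, upon restriction to the exceptional divisor, each characteristic class one needs to annihilate is visibly a $P^i$-image of a $\nump 0$ class, or can be blown away by a further center. The number of such classes grows with the codimension of the cycle and the dimension of $X$, and this is precisely what pins item (1) at the threshold $\ddim X \leq 5$; going further would require handling $\Ch^2$ on six-folds and $\Ch^3$ on seven-folds, where the available Steenrod operations modulo $p$ no longer suffice to close the induction.
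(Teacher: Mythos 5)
Your high-level architecture — induction on $\ddim X$, representing a numerically trivial class, after a suitable blow-up, by a cycle supported on a smooth connected divisor $Z$ on which it is already $\nump 0$, with Corollaries \ref{LS-nump-anis} and \ref{LS-r2-nump-div} and flexibility closing the loop — is exactly the paper's. The genuine gap is that you treat the decisive step, numerically annihilating the characteristic classes of the representing subvariety, as a routine matter of ``further blow-ups combined with Steenrod operations,'' and you quantify the hardest case ($\Ch^2$ on a $5$-fold) as ``a single blow-up plus a single Steenrod-type correction.'' This is where essentially all of the work lives, and the Steenrod operations cannot deliver it by themselves: Proposition \ref{AB1} only hands you the particular characteristic classes that are values of $P^i$ at the given prime (e.g.\ $c_1$ for $p=2$, $c_1^2+c_2$ for $p=3$), whereas Corollary \ref{LS-r2-nump-div} demands that \emph{all} powers $c_1^m(N_{S\subset X})\cdot[S]$, $m\geq 0$, be numerically trivial — on a $5$-fold this means $c_1$, $c_1^2$ and $c_1^3$ simultaneously, with $c_1c_2$ needed as an auxiliary along the way, and everything must be achieved while keeping $S$ smooth and connected. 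In the paper these conditions are arranged one at a time by bespoke, prime-dependent geometric constructions: blow-ups at centres representing classes such as $\frac12 c_1\cdot\frac13 c_1$, the substitution $[S]\mapsto[S_{exp}]$ whose effect on $(c_1^3,\,c_1c_2)$ is an explicit integer matrix that has to be iterated, and the merging procedure of Sublemma \ref{A23-L2-SL4} which preserves the already-achieved conditions when components are fused into one. The analogous issue already appears for $1$-cycles and $2$-cycles (your claim that item $(3)$ ``terminates quickly'' understates Propositions \ref{A15} and \ref{A20}). None of this follows from the tools you cite, so as written your inductive step does not close.

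A second, smaller gap: your reduction step simply asserts the existence of $v\in\Ch(Z)$ with $v\nump 0$ on $Z$ and $u=\pi_*i_*v$, but numerical triviality on the ambient variety does not descend to $Z$ for free. The paper has to engineer $Z$ (Statement \ref{LS-div-conn}) so that $\Ch_{k/k}(Z)$ is generated by restrictions from the ambient variety together with the components of the representing cycle, and the pairing of $v$ against those extra generators is then controlled precisely by the characteristic-class conditions prepared beforehand (in Proposition \ref{A20}, for instance, $\ddeg([\ov{S}]\cdot[\ov{S}_i])$ is identified with $\ddeg(c_2(N_{\ov{S}_i\subset\ov{X}})\cdot[\ov{S}_i])=0$). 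Relatedly, your sketch of the divisor case is off: Proposition \ref{A12} needs no ``otherwise'' branch, self-intersection relation, or Steenrod manipulation — the generic member of the (very ample) linear system is anisotropic outright because $u\nump 0$ kills all intersection degrees against it, and flexibility transports it back to $k$; this direct argument is the base of the whole induction rather than a descent step.
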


Item $(2)$ will be proven in Proposition \ref{A12}, item $(3)$ follows from Corollary \ref{A14} and Propositions \ref{A17} and
\ref{A22}, and for the item $(1)$ we need to add Proposition \ref{A23}.

\begin{corollary}
\label{fin-dim-locChow}
In the situation of Theorem \ref{thm-conj5-1-2}, the local Chow groups $\Ch^*_{k/k}(X)$ are finite-dimensional $\ff_p$-vector spaces.
\end{corollary}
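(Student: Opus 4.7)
The plan is straightforward, since the corollary is essentially a packaging of results already stated. First I would apply Theorem \ref{thm-conj5-1-2} to identify $\Ch^*_{k/k}(X)$ with $\Ch^*_{Num}(X)$ in each of the three listed cases: in case $(1)$ the identification holds for all of $\Ch^*(X)$ because $\ddim(X)\leq 5$; in case $(2)$ it holds for $\Ch^1(X)$, and since $\Ch^i(X)=0$ for $i=0$ modulo constants and for $i>\ddim(X)$ the remaining codimensions either consist of classes of points or are covered by case $(3)$; in case $(3)$ it holds for the low-dimensional cycles. So in each bi-degree appearing in $\Ch^*_{k/k}(X)$, the surjection $\Ch^*_{k/k}(X)\twoheadrightarrow\Ch^*_{Num}(X)$ from Subsection \ref{nump-iso} is an isomorphism.

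Second, I would invoke the finite-dimensionality of $\Ch^*_{Num}(X)$, which was already observed in Subsection \ref{nump-iso}: the degree pairing
\[
\Ch_{Num}(X)\times\Ch_{Num}(X)\lrow\ff_p,\qquad (x,y)\mapsto\ddeg(x\cdot y)
\]
is non-degenerate by construction, and it factors through the (complex) topological realization, so that $\Ch^*_{Num}(X)$ is a sub-quotient of $\HH^*_{Top}(X;\ff_p)$. Since $X$ is smooth projective, the latter group is finite-dimensional over $\ff_p$, hence so is $\Ch^*_{Num}(X)$, and therefore so is $\Ch^*_{k/k}(X)$ in the range covered by Theorem \ref{thm-conj5-1-2}.

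There is no substantive obstacle: the entire difficulty of the statement sits inside Theorem \ref{thm-conj5-1-2}, whose proof occupies the bulk of the article. The only small point worth being careful about is that one genuinely needs the equality $\Ch^*_{k/k}=\Ch^*_{Num}$ rather than just a surjection in the opposite direction, because a priori $\Ch^*_{k/k}(X)$ is defined as a $\Hom$-group in a localization of Voevodsky's category, for which finite-dimensionality is by no means clear. For a ground field $k$ not embeddable into $\cc$ one may replace the appeal to topological cohomology by an appeal to $\ell$-adic cohomology (for $\ell\neq p$) of $X_{\kbar}$, where the same argument via the degree pairing applies unchanged.
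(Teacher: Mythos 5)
Your argument is correct and is essentially the paper's own (implicit) one: Theorem \ref{thm-conj5-1-2} identifies $\Ch^*_{k/k}(X)$ with $\Ch^*_{Num}(X)$ in the degrees it covers, and the latter was already observed in Subsection \ref{nump-iso} to be a sub-quotient of $\HH_{Top}(X;\ff_p)$ via the degree pairing, hence a finite-dimensional $\ff_p$-vector space. Note only that your extra accounting in case $(2)$ is unnecessary (and not accurate for intermediate codimensions): the corollary asserts finite-dimensionality only of the local Chow groups in the situations actually covered by the theorem, so the identification together with finite-dimensionality of $\Ch^*_{Num}$ suffices as stated.
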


This contrasts with the {\it global} situation, where Chow groups of varieties are often huge.

We have the following Chow-motivic questions of increasing strength related to Question \ref{conserv-main-q}.

\begin{question}
\label{conserv-Chow-q}
\begin{itemize}
\item[$(1)$] Is it true that any $U\in Ob(Chow(k;\ff_p))$ which vanishes in the (complex) topological realization is zero?
\item[$(2)$] Is it true that any $f\in\op{End}_{Chow(k;\ff_p)}(V)$ which vanishes in the (complex) topological
realization is nilpotent?
\end{itemize}
\end{question}

As we saw, by Conjecture \ref{main-conj}, for a flexible
field, the triviality of the topological realization should imply the triviality of all {\it local realizations}.

\begin{remark}
\label{strong-conserv-fail}
{\rm
Note, that the stronger variant of Question \ref{conserv-Chow-q}(2) fails. Namely, C.Soule and C.Voisin produced an example
of a class $c\in\Ch^3(X)$, for some smooth projective $X$, such that $c$ vanishes in $\HH_{Top}^6(X;\ff_p)$, but $c$ is not
smash-nilpotent (that is, $c^{\times r}\neq 0\in\Ch^{3r}(X^{\times r})$, for any $r$) - \cite[Theorem 5]{SoVo}.

If we are interested, instead, only in the triviality of local realizations, it is sufficient to take the image $\ov{x}$ of any torsion
class $x$ from $\CH^1(X)$, whose topological realization in $\HH_{Top}^2(X;\ff_p)$ is non-trivial. Then, since the degree pairing is defined
integrally, $\ov{x}|_E\nump 0$, for any $E/k$. Thus all local realizations of $\ov{x}$ are trivial by Theorem \ref{thm-conj5-1-2}(2).
At the same time, its topological realization is non-trivial, and so, not smash-nilpotent. Hence,
$\ov{x}$ is not smash-nilpotent either.
}
\Red
\end{remark}

A weaker variant of Question \ref{conserv-Chow-q} with "topological realization"
replaced by the "restriction to the algebraic closure" is a safer bet.
In this form, the question (2) becomes the {\it Rost Nilpotence Conjecture}.

\subsection{The proof of the Main Theorem}

\subsubsection{Divisors and zero-cycles}

We start the proof of Theorem \ref{thm-conj5-1-2} with the case of divisors (item $(2)$), which is, actually, the base for the whole technique.

\begin{proposition}
\label{A12}
Let $k$ be a flexible field and $u\in\Ch^1(X)$ be $\nump 0$. Then $u=0\in\Ch_{k/k}^1(X)$.
\end{proposition}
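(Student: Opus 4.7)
The plan is to use Proposition \ref{D-ChowGroups}: it suffices to exhibit $u$ as a push-forward of a class from an anisotropic variety. Lift $u$ to $\tilde u\in\CH^1(X)=\op{Pic}(X)$ satisfying $p\mid \deg(\tilde u\cdot C)$ for every curve $C\subset X$.

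First I would reduce to the case $\tilde u\in\op{Pic}^\tau(X)$ (integrally numerically trivial). Setting $L=\op{NS}(X)/\text{tors}$, the intersection pairing is $\zz$-valued and non-degenerate on $L$, so classes in $L$ that are $p$-numerically trivial but not in $pL$ form the finite $p$-torsion subgroup of the discriminant $L^{\vee}/L$. Each of these finitely many cosets is representable by an explicit integral divisor; I would handle these case-by-case, producing an anisotropic push-forward representative via Statement \ref{B1} by embedding a suitable cyclic cover into an anisotropic hypersurface of degree $p$ over the flexible field.

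Next I would handle $\tilde u\in\op{Pic}^\tau(X)$, which sits in an extension of the finite group $\op{NS}(X)_{\text{tors}}$ by $A(k):=\op{Pic}^0(X)(k)$, where $A$ is the Picard variety. The torsion-in-$\op{NS}$ part corresponds via Kummer theory to étale cyclic covers of $X$, which over the flexible $k$ can be arranged to be anisotropic and supply the desired push-forward. For the $A(k)$-part the key object is the $p$-division torsor $T=[p]^{-1}(\tilde u)\subset A$, a finite étale $k$-scheme of degree $p^{2g}$, $g=\dim A$. A natural dichotomy appears: either $T$ is anisotropic --- in which case $T$ itself furnishes the anisotropic variety realizing $u$ via the restriction of the Poincaré bundle on $X\times A$ to $X\times T\to X$ --- or $T$ has a closed point of degree $d$ prime to $p$, in which case the norm map produces $\nu\in A(k)$ with $p\nu = d\tilde u$, whence (writing $1=ad+bp$) $\tilde u\in pA(k)\subset p\CH^1(X)$, forcing $u=0$ in $\Ch^1(X)$ a fortiori.

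The main obstacle, beyond making the case dichotomy above precise, is the first reduction step: ensuring that the explicit anisotropic covers used to represent the finite collection of discriminant cosets can always be produced. This is exactly where the flexibility of $k$ is used, in the same spirit as in the proof of Corollary \ref{B2}: one performs a generic twist by a fresh transcendental parameter $t_N$ from $k=k_0(t_1,t_2,\ldots)$ to force anisotropy of the cover, and invokes Proposition \ref{flex-main-prop} to descend. A secondary technical point is to verify that these twists do not alter the class of $u$ in $\Ch^1_{k/k}(X)$, which will follow because they only modify the push-forward by anisotropic contributions that are already trivial there.
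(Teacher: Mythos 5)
Your plan has a genuine gap at its central step, and the other reductions beg the question. In the branch where the division torsor $T=[p]^{-1}(\tilde u)\subset A$ is anisotropic, the class you propose does not represent $u$: pushing forward $c_1$ of the Poincar\'e bundle along the finite \'etale map $X\times T\row X$ gives the class of the sum of all $p^{2g}$ division points of $\tilde u$, which is $p^{2g-1}\tilde u\equiv 0 \pmod p$, not $u$; and since the map has degree $p^{2g}\equiv 0 \pmod p$, no pull-back-type class can help either ($\pi_*\pi^*=p^{2g}$). So the dichotomy does not close the argument. The earlier reductions have the same defect in a more diffuse form: Statement \ref{B1} takes an anisotropic variety as \emph{input} (it embeds a given anisotropic $B$ into an anisotropic hypersurface over a purely transcendental extension), so it cannot be used to "produce" anisotropic representatives for the discriminant cosets, and the claim that the Kummer cyclic covers "can be arranged to be anisotropic over the flexible $k$" is exactly the kind of statement the Proposition is supposed to establish -- flexibility does not create anisotropy; in this paper it is used only (via Proposition \ref{flex-main-prop}) to descend a construction already made over a finitely generated purely transcendental extension back to $k$. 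In addition, even were a cover anisotropic, you never show its push-forward hits $u$ (for a degree-$p$ cyclic cover $\pi_*\pi^*=p\equiv 0$, so this is not automatic). Finally, the lattice-theoretic framing is off for $\dim X>2$: numerical triviality mod $p$ of a divisor is a pairing against all of $\Ch_1(X)$, not a self-pairing on $\op{NS}(X)/\mathrm{tors}$, so the "discriminant group" reduction does not apply as stated.

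For contrast, the paper's proof avoids all of this structure theory: add a $p$-multiple of a very ample divisor so that $u$ is represented by a very ample divisor $D$; over the function field of $\pp(|D|)$ the generic member $D_{\eta}$ is anisotropic, because by Statement \ref{alg-geom-Lef} the restriction $\Ch_1(X)\twoheadrightarrow\Ch_0(D_{\eta})$ is surjective and $\ddeg(D\cdot v)\equiv 0\pmod p$ for every $v\in\Ch_1(X)$ by the hypothesis $u\nump 0$; then flexibility of $k$ (Proposition \ref{flex-main-prop}) descends this anisotropic representative from the purely transcendental extension back to $k$, so $u$ is an anisotropic class and vanishes in $\Ch^1_{k/k}(X)$. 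If you want to salvage your approach, the missing content is precisely an argument that the varieties you construct are anisotropic and that $u$ lies in the image of their push-forwards; as it stands neither point is addressed.
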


\begin{proof}
Adding to an effective divisor representing $u$ a $p$-multiple of a very ample divisor,
we may assume that $u$ is represented by a very ample divisor $D$.
Let $(\pp^n)^{\vee}=\op{Proj}|D|$ be the projective linear system of $D$. This defines the embedding of $X$ into $\pp^n$. From
Statement \ref{alg-geom-Lef},
the embedding $\iota:D_{\eta}\row X$ of the generic representative of our linear system into $X$ induces a surjective map $\iota^*:\Ch_1(X)\twoheadrightarrow\Ch_0(D_{\eta})$.
But since $D\nump 0$, the degree of the zero-cycle $\iota^*(v)$ is zero, for any $v\in\Ch_1(X)$.
Hence, $D_{\eta}$ is anisotropic. Thus, over $k((\pp^n)^{\vee})$, our class $u$ is represented by the class of an
anisotropic divisor $D_{\eta}$.
Since $k$ is flexible, it is represented by an anisotropic class already over $k$ (by Proposition \ref{flex-main-prop}).
\Qed
\end{proof}

Since the theory $\Ch^*_{k/k}$ has the structure of push-forwards and pull-backs, we obtain:

\begin{corollary}
\label{A14}
Let $k$ be a flexible field.
The projection $\Ch^*\twoheadrightarrow\Ch_{k/k}^*$ passes through $\Ch^*_{alg}=\Ch^*_{(1)}$.
\end{corollary}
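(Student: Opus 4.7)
The plan is to unwind the definition of $\Ch^*_{(1)} = \Ch^*_{alg}$ from Definition \ref{def-THT}, and then leverage the fact that $\Ch^*_{k/k}$ is an oriented cohomology theory on $\smk$ (equipped with pullbacks, projective pushforwards, external/internal products and projection formula, as was established just before Definition \ref{anis-general}), together with Proposition \ref{A12} applied to curves.

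Concretely, by Definition \ref{def-THT} with $n=1$, an algebraically trivial class $u\in\Ch^*(X)$ is, by definition, a sum of expressions of the form
$$
u=\beta_*\bigl(\alpha^*(a)\cdot v\bigr),
$$
where $Q$ is a smooth projective curve over $k$, $a\in\Ch^1(Q)$ is a zero-cycle of degree $0$ (which on a curve is exactly what $\nump 0$ means, since the degree pairing with $1\in\Ch^0(Q)$ already detects non-triviality), $v\in\Ch^*(Q\times X)$, and $\alpha,\beta$ are the projections from $Q\times X$. So it suffices to prove that every such summand dies in $\Ch^*_{k/k}(X)$.

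First I would apply Proposition \ref{A12} to the smooth projective curve $Q$: since $k$ is flexible and $a\in\Ch^1(Q)$ is $\nump 0$, we conclude that the image of $a$ in $\Ch^1_{k/k}(Q)$ is zero. Next I would invoke pullback functoriality for the oriented cohomology theory $\Ch^*_{k/k}$ to deduce $\alpha^*(a)=0$ in $\Ch^1_{k/k}(Q\times X)$. The $\Ch^*_{k/k}$-module structure then gives $\alpha^*(a)\cdot v=0$ in $\Ch^*_{k/k}(Q\times X)$, and finally pushforward along the proper map $\beta$ yields $u=\beta_*(\alpha^*(a)\cdot v)=0$ in $\Ch^*_{k/k}(X)$. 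Hence every algebraically trivial class is already zero locally, i.e.\ the surjection $\Ch^*\twoheadrightarrow\Ch^*_{k/k}$ factors through $\Ch^*_{alg}=\Ch^*_{(1)}$.

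There is essentially no obstacle here: the entire content lies in Proposition \ref{A12}, and the rest is formal manipulation of the axioms of an oriented cohomology theory (specifically: pullback functoriality, the module structure of $\Ch^*_{k/k}$ over itself, and pushforward functoriality for projective morphisms). The only small point to verify is that $Q\times X$ is smooth projective so that the relevant pushforward and pullback are defined within $\smk$ — which is immediate since both $Q$ and $X$ are smooth projective.
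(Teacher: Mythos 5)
Your proposal is correct and is essentially the paper's own argument: both unwind algebraic triviality as a class of the form $f_*(y\cdot g^*(v))$ with $v$ a degree-zero zero-cycle ($\nump 0$) on a smooth projective curve, kill $v$ in $\Ch^1_{k/k}$ of the curve by Proposition \ref{A12}, and conclude by the pull-back, product and push-forward structure of $\Ch^*_{k/k}$. The only cosmetic difference is that you phrase the reduction via Definition \ref{def-THT} with $n=1$ rather than the classical description of algebraic equivalence, which amounts to the same thing.
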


\begin{proof}
A class $u\in\Ch^*(X)$ is algebraically equivalent to zero, if it can be presented as $f_*(y\cdot g^*(v))$, where
$X\stackrel{f}{\llow}X\times C\stackrel{g}{\lrow}C$ are natural projections, $C$ is a smooth projective curve,
$y\in\Ch^*(X\times C)$ and $v\in\Ch_0(C)$ is a zero cycle of degree zero. Since $v\nump 0$, Proposition \ref{A12}
implies that $v=0\in\Ch_{k/k}^*(C)$, and so, $u=0\in\Ch_{k/k}^*(X)$.
\Qed
\end{proof}

Since any zero-cycle on a smooth projective variety $X$ is a push-forward of some zero-cycle from a curve, we also get the case of zero-cycles.

\begin{corollary}
\label{A13N}
Let $k$ be a flexible field and $u\in\Ch_0(X)$ be $\nump 0$. Then $u=0\in\Ch_{k/k;0}(X)$.
\end{corollary}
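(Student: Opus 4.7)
The plan is to reduce the assertion to the divisor case already handled by Proposition \ref{A12}, using that $\Ch^*_{k/k}$ carries push-forward maps induced from push-forwards of motives.

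First I would represent $u$ as a push-forward from a smooth projective curve. Writing $u=\sum n_i [x_i]$ with $x_i$ closed points of $X$, for each $x_i$ one can choose a smooth projective curve $C_i/k$ together with a morphism $f_i:C_i\row X$ and a closed point $y_i\in C_i$ with $f_i(y_i)=x_i$ and $[k(y_i):k(x_i)]=1$ (e.g.\ by taking a generic smooth curve through $x_i$ in an embedding of $X$ and normalizing). Setting $C=\coprod_i C_i$ and $f=\coprod_i f_i:C\row X$, and $u'=\sum n_i [y_i]\in\Ch_0(C)$, we have $f_*(u')=u$. Since proper push-forward preserves the degree of $0$-cycles, $\ddeg_C(u')=\ddeg_X(u)=0\in\ff_p$ because $u\nump 0$.

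Next, on a smooth projective curve $C$ one has $\Ch_0(C)=\Ch^1(C)$, and a class in $\Ch^1(C)$ is $\nump 0$ if and only if its degree vanishes in $\ff_p$. Hence $u'$ is a divisor class on $C$ which is $\nump 0$, so by Proposition \ref{A12} it is trivial in $\Ch^1_{k/k}(C)=\Ch_{k/k;0}(C)$. Applying the push-forward $f_*:\Ch_{k/k;0}(C)\row\Ch_{k/k;0}(X)$ (which exists because the theory $\Ch^*_{k/k}$ has push-forwards coming from maps of motives) we conclude
$$
u=f_*(u')=0\in\Ch_{k/k;0}(X),
$$
as desired.

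Essentially no step is a real obstacle: the construction of $f:C\row X$ is a standard Bertini/normalization argument that is available in characteristic zero, and functoriality of $\Ch^*_{k/k}$ was already recorded just before Definition \ref{anis-general}. The one point to handle carefully is to ensure that the preimage $y_i$ can be chosen with $[k(y_i):k(x_i)]=1$ so that $u$ literally lifts to a zero-cycle of the same degree on $C$; otherwise one would lift $u$ up to a factor dividing $p$ and have to argue separately, which is unnecessary here.
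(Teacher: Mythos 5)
Your overall strategy (reduce to the divisor case on a curve via push-forward, using Proposition \ref{A12}) is exactly the route the paper takes, but there is a genuine gap in the execution. The claim that on a smooth projective curve $C$ a class in $\Ch^1(C)=\Ch_0(C)$ is $\nump 0$ if and only if its total degree vanishes in $\ff_p$ is false when $C$ is disconnected: numerical triviality requires pairing against all of $\Ch^0(C)$, i.e.\ against the fundamental class of each component, so a zero-cycle on $C=\coprod_i C_i$ is $\nump 0$ only if its degree on \emph{every} component $C_i$ vanishes mod $p$. Your construction takes a separate curve $C_i$ through each point $x_i$, so the lift $u'=\sum_i n_i[y_i]$ has component degrees $n_i\cdot[k(y_i):k]$, which need not be divisible by $p$ even though the total degree is (e.g.\ $u=[x_1]-[x_2]$, a difference of two rational points: on $C_1$ the lift has degree $1$). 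Hence $u'$ need not be $\nump 0$ on $C$, Proposition \ref{A12} does not apply, and in fact $[y_1]$ is nonzero in $\Ch_{k/k;0}(C_1)$ whenever its degree is prime to $p$, so the argument cannot be salvaged as stated. The point you flagged as the delicate one ($[k(y_i):k(x_i)]=1$) is not the real issue.

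The repair is the one implicit in the paper's one-line proof: lift $u$ to a zero-cycle on a \emph{single connected} smooth projective curve mapping to $X$ whose image contains the whole support of $u$ (over the infinite field $k$ such a curve exists, e.g.\ as a generic complete intersection of very ample divisors through the finitely many points, in the spirit of Statement \ref{alg-geom-Lef}, followed by resolution/normalization). On a connected curve the only numerical obstruction for a zero-cycle is its total degree, so the lift is $\nump 0$, Proposition \ref{A12} gives its vanishing in $\Ch_{k/k;0}$ of the curve, and the push-forward step you describe then finishes the proof. Alternatively, one must at least arrange the lift so that the degree on each component of the auxiliary curve is divisible by $p$; merely matching the total degree is not enough.
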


Our general strategy of proving that the class $u\nump 0$ is anisotropic will be to find an appropriate blow-up
$\pi:\wt{X}\row X$, so that $\pi^*u$ may be represented by a cycle supported on a smooth connected divisor $Z\subset\wt{X}$,
which is $\nump 0$ already on $Z$. Then we use induction on the dimension of $X$ and the fact that $u=\pi_*\pi^*u$.
In order to achieve this, we will need first to present $u$ by the class of a smooth connected subvariety $S$,
and make the appropriate characteristic classes of it $\nump 0$ on $X$.

\subsubsection{$3$-folds and $1$-cycles}

We will be moving up the dimension of varieties.
The above statements settle the case of curves and surfaces. Our 
next aim are 3-folds, where only the case of $1$-cycles remains open.

\begin{proposition}
\label{A15}
Let $k$ be a flexible field and $X$ be a smooth projective variety over $k$ of dimension $3$.
Let $u\in\Ch_1(X)$ be $\nump 0$. Then $u=0\in\Ch_{k/k;1}(X)$.
\end{proposition}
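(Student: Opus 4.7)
The plan is to follow the general inductive strategy signalled in the introduction: produce a blow-up $\pi:\wt{X}\row X$ such that $\pi^*u$ is supported on a smooth connected surface $Z\subset\wt{X}$ and is already $\nump 0$ on $Z$. Then Proposition \ref{A12} applied to the divisor class $v=[\pi^*u\restriction Z]\in\Ch^1(Z)$ gives $v=0$ in $\Ch^1_{k/k}(Z)$, and pushing down by $(\pi\circ\iota_Z)_*$ yields $u=0\in\Ch_{k/k;1}(X)$ using $u=\pi_*\pi^*u$.

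To execute this, I would first represent $u$, modulo a cycle already trivial in $\Ch_{k/k;1}(X)$, by a difference $[C_1]-[C_2]$ of smooth connected curves in some blow-up of $X$, applying the moving technique developed in Section \ref{A-r}. Next, take a sufficiently ample linear system whose generic member contains $C_1\cup C_2$; by Bertini together with flexibility of $k$ (Proposition \ref{flex-main-prop}), one can choose such a member $Z$ that is smooth, connected, and defined over $k$. This gives $u=\iota_*(v)$ with $\iota:Z\hookrightarrow \wt{X}$ and $v=[C_1]-[C_2]\in\Ch^1(Z)$.

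The crucial step is then to arrange $v\nump 0$ on $Z$. For a class of the form $w=\iota^*\tilde w$ restricted from (the blow-up of) $X$, the projection formula gives $\deg_Z(v\cdot w)=\deg_X(u\cdot\tilde w)=0$ by hypothesis, so only classes in a complement of $\iota^*\Ch^1(\wt{X})$ in $\Ch^1(Z)$ can obstruct. Pairing $v$ with such classes essentially measures normal-bundle-type characteristic numbers of the $C_i\subset Z$. Following the strategy announced in the introduction and realized in Corollaries \ref{LS-nump-anis} and \ref{LS-r2-nump-div}, I would annihilate these obstructions one at a time by an alternation of blow-ups in well-chosen subschemes and applications of Steenrod operations, treating $p=2$ and odd $p$ separately as foreshadowed.

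The main obstacle I anticipate is precisely this last step. Since $\ddim X=3$, weak Lefschetz does not make $\iota^*:\Ch^1(\wt{X})\row\Ch^1(Z)$ surjective, so $u\nump 0$ on $X$ does not automatically propagate to $v\nump 0$ on $Z$; closing this gap — making every residual intersection number on the surface $Z$ either vanish mod $p$ or become visibly anisotropic after a further blow-up — is where the geometric machinery of Section \ref{A-r} (well-chosen blow-ups combined with Steenrod operations) does the real work, and it is the induction base where that machinery must be delivered in its cleanest form.
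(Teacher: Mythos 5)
Your overall frame (blow up, put the class on a surface, finish with the divisor case) is close in spirit to the paper, but the pivotal step is missing, and it is not merely a detail you postponed: it is the actual content of the proof. You propose to arrange that the class $v$ on the surface $Z$ is $\nump 0$ \emph{on $Z$} and then apply Proposition \ref{A12}; you yourself flag that weak Lefschetz fails for $\iota^*:\Ch^1(\wt{X})\to\Ch^1(Z)$, so $u\nump 0$ on $X$ gives no control over pairings of $v$ with the (possibly huge) part of $\Ch^1(Z)$ not restricted from $\wt{X}$, and you offer no mechanism for killing those infinitely many potential obstructions beyond "alternating blow-ups and Steenrod operations". That is a genuine gap, and in fact the paper does \emph{not} close it the way you suggest: for the $3$-fold base case it never establishes numerical triviality of the class on the intermediate surface at all. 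Instead it arranges finitely many conditions on $X$ itself — namely $[S]\nump 0$ together with $\ddeg\bigl(c_1(N_{S\subset X})\cdot[S]\bigr)=0$ — and then invokes Corollary \ref{LS-r2-nump-div}, whose hypotheses only involve $c_1^m(N_{S\subset X})\cdot[S]\numeq 0$ on the ambient variety. The engine behind that corollary is Statement \ref{LS-porozhd} (via Corollary \ref{LS-nump-anis}): the chain $S\subset Z\subset X$ is deformed so that the image of the isotropic Chow groups of the deformed curve, as a module over $\Ch^*_{k/k}(X)$, is generated by monomials in first Chern classes of the normal bundles; this is exactly what converts your "residual classes on $Z$" problem into a finite list of characteristic numbers, and it is the piece your proposal lacks.

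Concretely, the paper's route is: (i) use Corollary \ref{AA2} to write $u$ as the class of a smooth curve $S$; (ii) make $\ddeg(c_1(N_{S\subset X})\cdot[S])=0$ — for $p=2$ via the homological Steenrod operation $P_1=Sq^2$ applied to $[S]$, for odd $p$ by blowing up a representative of $\tfrac12 c_1(-T_S)$ (Lemma \ref{A15-L2}); (iii) blow up along $S$ and take a very ample representative inside the exceptional divisor to place the curve in a smooth surface without changing the characteristic number (Lemma \ref{A15-L3}); (iv) make curve and surface connected using Statement \ref{LS-div-conn} and flexibility (Lemma \ref{A15-L4}); (v) conclude by Corollary \ref{LS-r2-nump-div}. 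Your step of representing $u$ as a difference $[C_1]-[C_2]$ and choosing a smooth ample member through $C_1\cup C_2$ also glosses over the fact that a generic member of a linear system with prescribed base locus need not be smooth along it (the paper only gets smoothness outside an anisotropic locus and then desingularizes), but that is secondary; the essential missing idea is the reduction of "numerical triviality against everything on the surface" to the single finite condition on $c_1(N_{S\subset X})\cdot[S]$ via the chain-deformation machinery, rather than an appeal to the divisor case on $Z$.
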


\begin{proof} We will show that there is a blow-up $\pi:\wt{X}\row X$ such that $\pi^*(u)$ is represented by the class of a smooth
anisotropic curve on $\wt{X}$. Since $\pi_*\pi^*(u)=u$, this will show that the class $u$ is anisotropic.
Here, as in many statements below, we will be gradually reducing
a general case to the one having better and better special properties. 

\begin{lem}
\label{A15-L2}
We may assume that $u$ is represented by a class of a smooth curve $S$ on $X$,
and moreover, $\ddeg(c_1(N_{S\subset X})\cdot [S])=0\,\,(\,mod\, p)$.
\end{lem}

\begin{proof}
By Corollary \ref{AA2}, after some blow-up, we may represent $u$ by the class of a smooth curve $S$ on $X$ (we keep the same name for
the variety).
Note, that $c_1(N_{S\subset X})=c_1(T_X)+c_1(-T_S)$ and $\ddeg(c_1(T_X)\cdot [S])=0$, since $[S]\nump 0$ and $c_1(T_X)$ is a
class defined on $X$.
Now we need to treat separately $p=2$ and odd primes. Such separate treatment of different primes is the feature which we will see repeatedly below.

\noindent
${\mathbf{\un{(p=2)}}}$  Our degree is equal to the $\ddeg(c_1(-T_S)\cdot [S])=\ddeg(P_1([S]))$, where $P_1$ is the homological
Steenrod operation $Sq^2$. But $\ddeg(P_1([S]))=\ddeg(P_1(\eps_*[S]))$, where $\eps:S\row\op{Spec}(k)$ is the projection, and $\eps_*[S]=0$.

\noindent
${\mathbf{\un{(p\neq 2)}}}$ Let $[Z]$ be a smooth very ample divisor on $S$ representing $\frac{1}{2}c_1(-T_S)$ in $\Ch_0(S)$.
Let $\pi:\wt{X}\row X$ be the blow-up of $X$ at $Z$. Let $\wt{S}$ be the proper pre-image of $S$ in $\wt{X}$.
Then by \cite[Thm 6.7]{Fu}, we have:
$$
\pi^*([S])=[\wt{S}]+[\pp^1_Z],
$$
where $[\pp^1_Z]$ is the class supported on the special divisor $\pp^2_Z$.
And so, $\pi^*([S])$ is represented by the class of a smooth curve $S'=\wt{S}\coprod\pp^1_Z$ (we can always choose the curve $\pp^1_Z$ not
intersecting $\wt{S}$).
Note, that $\wt{S}\cong S$. Then
\begin{equation*}
\begin{split}
&\ddeg(c_1(-T_{S'})\cdot [S'])=\ddeg(c_1(-T_{\wt{S}})\cdot [\wt{S}])+
\ddeg(c_1(-T_{\pp^1_Z})\cdot [\pp^1_Z])=\\
&\ddeg(c_1(-T_S)\cdot [S])+\ddeg([Z])\cdot (-2)=0.
\end{split}
\end{equation*}
\Qed
\end{proof}

Now, we can make our class to be, in addition, supported on
some smooth surface.

\begin{lem}
\label{A15-L3}
We may assume that $u$ is represented by the class of a smooth (possibly, disconnected) curve $S$ which is contained in some smooth
(possibly, disconnected) surface $E$ on $X$, and the curve $S$ satisfies also: $\ddeg(c_1(N_{S\subset X})\cdot [S])=0$.
\end{lem}

\begin{proof}
By Lemma \ref{A15-L2}, we can assume that $u=[S]$, where $S\subset X$ is a smooth curve, and moreover,
$\ddeg(c_1(N_{S\subset X})\cdot [S])=0$.
Consider $\wt{X}=\op{Bl}_S(X)$ with the
projection $\pi:\wt{X}\row X$. Then $\pi^*(u)$ is supported on $E=\pp_S(N_{S\subset X})$, which is a smooth surface.
More precisely, it is represented by $\rho+\eps^*(c_1(N_{S\subset X}))$, where $\eps:E\row S$ is the natural projection
and $\rho=c_1(O(1))=-[E]$.
By adding a $p$-multiple of a very ample divisor, we can assume (by Statement \ref{LS-Sbs}) that $\pi^*(u)$
is represented by a very ample divisor on every
component of $E$, and so, is represented by a smooth curve $S'$ on $E$. Moreover, $c_1(N_{S'\subset\wt{X}})$ is
the restriction of $\eps^*(c_1(N_{S\subset X}))$ to $S'$. Hence, $\ddeg(c_1(N_{S'\subset\wt{X}})\cdot [S'])=
\ddeg(c_1(N_{S\subset X})\cdot\eps_*([S']))=\ddeg(c_1(N_{S\subset X})\cdot [S])=0$.
\phantom{a}\hspace{5mm}\Qed
\end{proof}

In view of Corollary \ref{LS-r2-nump-div}, it remains to make our curve and surface connected.

\begin{lem}
\label{A15-L4}
We may assume that $u$ is represented by the class of a smooth connected curve contained in a smooth connected divisor $E$ on $X$,
and the curve $S$ satisfies also: $\ddeg(c_1(N_{S\subset X})\cdot [S])=0$.
\end{lem}

\begin{proof}
By Lemma \ref{A15-L3} we may assume that $u=[S]$, where $S$ is smooth and $S\subset E'\subset X$, where $E'$ is a (possibly, disconnected)
smooth surface and $\ddeg(c_1(-T_S)\cdot [S])=\ddeg(c_1(N_{S\subset X})\cdot [S])=0$.
By Statement \ref{LS-div-conn} applied to the divisor $E'$ considered as a single component,
over some purely transcendental extension of $k$, there is an irreducible divisor $E''$ on $X$,
containing $S$ and smooth outside an anisotropic subset.
Since $k$ is flexible, we may assume that the divisor $E''$ of $X$ is defined already over $k$.
Let $\pi:\wt{X}\row X$ be the embedded desingularization of $E''$.
Let $\wt{E}$ be the proper pre-image of $E''$ and $\wt{Y}$ be the proper pre-image of $S$. Then $\pi^*(u)$ is equal to $[\wt{S}]$ plus
some classes supported on the special divisors of our blow-up. But these special divisors are anisotropic (since the singularities were).
Hence, modulo anisotropic classes, $\pi^*(u)$ is equal to the class $[\wt{S}]$ supported on a smooth connected surface $\wt{E}$.
And since $\wt{S}\cong S$, we have $\ddeg(c_1(-T_{\wt{S}})\cdot [\wt{S}])=\ddeg(c_1(-T_S)\cdot [S])=0$.
Our class is a divisor on $\wt{E}$. Adding a $p$-multiple of an appropriate very ample divisor,
we may assume that our divisor is very ample (by Statement \ref{LS-Sbs}),
and so, is represented by a smooth connected curve on $\wt{E}$. Note, that this procedure doesn't change the
$\ddeg(c_1(-T_S)\cdot [S])=0\in\ff_p$.
\Qed
\end{proof}

Proposition \ref{A15} follows now from Corollary \ref{LS-r2-nump-div} and flexibility of $k$.
\Qed
\end{proof}

Now, the case of varieties of dimension $\leq 3$ is settled,
which, due to the presence of push-forwards and pull-backs, 
implies that isotropic Chow groups factor through the second theory of higher type. 

\begin{proposition}
\label{A16}
Let $k$ be a flexible field. The projection $\Ch^*\twoheadrightarrow\Ch^*_{k/k}$ factors through $\Ch^*_{(2)}$.
\end{proposition}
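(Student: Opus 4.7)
The plan is to unwind the definition of $\Ch^*_{(2)}$ and reduce everything to a statement about numerically trivial classes on a single smooth projective $3$-fold. By Definition \ref{def-THT}, the kernel of the canonical surjection $\Ch^* = \Ch^*_{(0)} \twoheadrightarrow \Ch^*_{(2)}$ is generated (as a subgroup of $\Ch^*$) by classes of the form $\beta_*(\alpha^*(a) \cdot v)$, where $Q$ runs over smooth projective $k$-varieties of dimension $3$, $a \in \Ch^*(Q)$ is $\nump 0$, $v \in \Ch^*(Q \times X)$ is arbitrary, and $\alpha,\beta$ are the projections of $Q \times X$ to $Q$ and $X$. Since $\Ch^*_{k/k}$ is an oriented cohomology theory whose pull-backs, push-forwards, and product structure are all inherited from the corresponding operations on Voevodsky motives, it will suffice to show that every such $a$ vanishes in $\Ch^*_{k/k}(Q)$: then by functoriality of pull-back one has $\alpha^*(a) = 0$ in $\Ch^*_{k/k}(Q \times X)$, hence $\alpha^*(a) \cdot v = 0$ there, and applying $\beta_*$ gives the vanishing of the generator in $\Ch^*_{k/k}(X)$.

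For the reduced problem, I would first split $Q$ into connected components and handle each separately, then decompose any $\nump 0$ class $a \in \Ch^*(Q)$ by codimension as $a = a_0 + a_1 + a_2 + a_3$ with $a_i \in \Ch^i(Q)$. Pairing with classes of complementary codimension on $Q$ shows that each piece $a_i$ is itself $\nump 0$. Three of these pieces are immediately handled by existing results: Proposition \ref{A12} kills $a_1$ (divisors), Proposition \ref{A15} kills $a_2$ (one-cycles on a three-fold), and Corollary \ref{A13N} kills $a_3$ (zero-cycles). For the top piece $a_0 = c \cdot [Q]$ with $c \in \ff_p$, numerical triviality either forces $c = 0$ or else forces $Q$ to be $p$-anisotropic; in the latter case $[Q]$ is itself an anisotropic class on $Q$ and vanishes in $\Ch^*_{k/k}(Q)$ by Proposition \ref{D-ChowGroups}.

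The real obstacle has already been overcome in Proposition \ref{A15} (numerically trivial one-cycles on a three-fold are anisotropic), whose proof occupies most of the subsection and requires iterated blow-ups combined with Steenrod operation manipulations to annihilate the characteristic classes that obstruct anisotropy. The present proposition is essentially a packaging step: once the vanishing of $\nump 0$ classes in $\Ch^*_{k/k}(Q)$ is established in every codimension for three-folds $Q$, the functoriality of $\Ch^*_{k/k}$ promotes it automatically to the vanishing of all correspondence-images, producing the desired factorization $\Ch^* \twoheadrightarrow \Ch^*_{(2)} \twoheadrightarrow \Ch^*_{k/k}$.
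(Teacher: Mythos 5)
Your argument is correct and is essentially the paper's own proof: unwind Definition \ref{def-THT} so that kernel elements of $\Ch^*\twoheadrightarrow\Ch^*_{(2)}$ are push-forwards of $\alpha^*(a)\cdot v$ with $a\nump 0$ on a smooth projective $3$-fold $Q$, invoke Propositions \ref{A12}, \ref{A15} and Corollary \ref{A13N} to get $a=0\in\Ch^*_{k/k}(Q)$, and conclude by functoriality of $\Ch^*_{k/k}$. Your extra remarks (splitting $a$ by codimension and disposing of the codimension-$0$ piece) only make explicit what the paper leaves implicit.
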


\begin{proof}
A class $u\in\Ch^*(X)$ is $=0\in\Ch^*_{(2)}$, if it can be presented as $f_*(y\cdot g^*(v))$, where
$X\stackrel{f}{\llow}X\times Q\stackrel{g}{\lrow}Q$ are natural projections, $Q$ is a smooth projective variety of dimension $3$,
$y\in\Ch^*(X\times Q)$ and $v\in\Ch^*(Q)$ is $\nump 0$. Then, by Proposition \ref{A12}, Corollary \ref{A13N} and Proposition \ref{A15},
$v=0\in\Ch_{k/k}^*(Q)$, and so, $u=0\in\Ch_{k/k}^*(X)$.
\Qed
\end{proof}

The case of surfaces and 3-folds permits to start induction and deal with $1$-cycles on a variety of an arbitrary dimension.

\begin{proposition}
\label{A17}
Let $k$ be a flexible field and $X$ be a smooth projective variety over $k$.
Let $u\in\Ch_1(X)$ be $\nump 0$. Then $u=0\in\Ch_{k/k;1}(X)$.
\end{proposition}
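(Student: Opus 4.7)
I would argue by induction on $d := \ddim X$. The base cases $d \leq 3$ are handled by Corollary \ref{A13N} (zero-cycles), Proposition \ref{A12} together with pushforward (dimension $\leq 2$), and Proposition \ref{A15} (threefolds). For the inductive step, assume the result for smooth projective varieties of dimension less than $d$, and let $u \in \Ch_1(X)$ with $u \nump 0$. It suffices to produce a blow-up $\pi \colon \wt X \to X$ and a smooth connected divisor $i \colon Z \hookrightarrow \wt X$ together with a class $v \in \Ch_1(Z)$ such that $\pi^* u = i_*(v)$ and $v \nump 0$ on $Z$: the inductive hypothesis applied to $Z$ (of dimension $d-1$) then gives $v = 0$ in $\Ch_{k/k;1}(Z)$, so $v$ is anisotropic, and pushing forward along $i$ and $\pi$ yields $u = 0$ in $\Ch_{k/k;1}(X)$.

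To construct such $Z$ and $v$, I would mimic the three-step reduction in the proof of Proposition \ref{A15}. First, using Corollary \ref{AA2}, represent $u$ by the class of a smooth curve $S \subset X$, and modify $S$ by auxiliary blow-ups to kill the relevant intersection numbers involving $c_i(N_{S \subset X})$ modulo $p$ — for $p = 2$ this uses the homological Steenrod operations $P_i$ applied to $[S]$ (which vanish upon pushforward to $\op{Spec}(k)$), while for odd $p$ one successively blows up smooth very ample divisors on $S$ representing ``half'' of the offending classes, exactly in the spirit of Lemma \ref{A15-L2}. Second, blow up $S$ inside $X$: the exceptional divisor $E = \pp_S(N_{S \subset X})$ is a smooth projective bundle over $S$ of dimension $d-1$, and $\pi^* u$ is a $1$-cycle supported on $E$, expressible in terms of $\rho = c_1(\co_E(1))$ and pullbacks of Chern classes of $N_{S \subset X}$; by Statement \ref{LS-Sbs}, adding a $p$-multiple of a very ample class, the relevant class on $E$ is represented by a smooth curve $S'$. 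Third, using Statement \ref{LS-div-conn} and the flexibility of $k$, enlarge $E$ to an irreducible divisor on $\wt X$ containing $S'$ and smooth outside an anisotropic subset, then take an embedded desingularization to arrive at a smooth \emph{connected} divisor $Z$ carrying a curve $\wt{S'} \cong S'$ whose class represents $\pi^* u$ modulo anisotropic contributions (which are killed by the special divisors of the resolution, as these are anisotropic by construction).

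The main obstacle is not the formal induction but guaranteeing that the resulting class $v$ on $Z$ is numerically trivial \emph{on $Z$ itself}, not merely on $\wt X$. Equivalently, one must check that the characteristic-class adjustments made in step (a) for $S$ propagate, through the projective bundle formula for $E \to S$ and the subsequent desingularization, to the vanishing modulo $p$ of $\ddeg(v \cdot w)$ for every $w \in \Ch^{d-2}(Z)$. For $d = 3$ only a single Chern number needs to vanish (namely $\ddeg(c_1(N_{S \subset X}) \cdot [S])$), whereas for general $d$ a whole collection of Chern number obstructions appears, and the prime-dependent constructions (Steenrod operations for $p = 2$ versus successive blow-ups at very ample divisors for odd $p$) must be iterated in a compatible fashion to kill them simultaneously. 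Once these vanishings are arranged — which is where the analogue of Corollary \ref{LS-r2-nump-div} enters — the reduction to a smooth connected divisor $Z$ of dimension $d-1$ is complete, and the inductive hypothesis closes the argument.
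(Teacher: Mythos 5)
Your overall frame is the paper's: induction on $\ddim X$ with bases $n\leq 3$, and an inductive step that reduces to exhibiting, after a blow-up, a smooth connected divisor $Z$ carrying a class $v$ with $\pi^*u=i_*(v)$ and $v\nump 0$ \emph{on $Z$}. But the point you yourself flag as ``the main obstacle'' is exactly where your argument has a genuine gap, and the way you propose to fill it is not carried out and would not obviously work. Numerical triviality of $v$ on $Z$ means $\ddeg(v\cdot w)=0$ for \emph{every} $w\in\Ch^1(Z)$, whereas killing Chern numbers of $N_{S\subset X}$ (Steenrod operations for $p=2$, blow-ups of very ample divisors for odd $p$) only controls pairings against the special classes manufactured from the normal bundle; that is the mechanism behind Corollaries \ref{LS-nump-anis} and \ref{LS-r2-nump-div}, and for a curve inside an $n$-fold with $n>3$ the corresponding chain of divisors produces a whole family of monomial conditions which you never show how to arrange simultaneously. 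So as written the inductive step is not closed.

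The paper closes it with a much simpler device that makes all Chern-class bookkeeping unnecessary once $n>3$ (those computations, and Corollary \ref{LS-r2-nump-div}, are used only in the base case $n=3$, Proposition \ref{A15}). After representing $u$ by a smooth curve $S$ lying on a smooth divisor (blow up $S$ and take $E=\pp_S(N_{S\subset X})$, as in your step (b)), one applies Statement \ref{LS-div-conn} to get an irreducible divisor $Z'$ containing $S$, smooth outside an anisotropic set, with the restriction $\Ch^1_{k/k}(X)\twoheadrightarrow\Ch^1_{k/k}(Z'\backslash S)$ surjective; since $\ddim(Z')-1>1=\ddim(S)$, deleting $S$ does not change $\Ch^1$, so after an embedded desingularization with anisotropic centres one obtains a smooth connected divisor $\wt{Z}'$ with $\Ch^1_{k/k}(X)\twoheadrightarrow\Ch^1_{k/k}(\wt{Z}')$ surjective and $\pi^*u=j_*(u')$, $u'=[\wt{S}]$, modulo anisotropic classes. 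Then for any $w\in\Ch^1(\wt{Z}')$ one writes $w$, modulo anisotropic (hence numerically trivial) classes, as $j^*(w')$ and the projection formula gives $\ddeg(u'\cdot j^*(w'))=\ddeg(u\cdot w')=0$, so $u'\nump 0$ on $\wt{Z}'$ and the inductive hypothesis applies. This surjectivity trick, available precisely because the supporting curve has dimension $1<\ddim(Z)-1$, is the missing idea in your proposal; without it, or without actually executing the multi-Chern-number elimination you allude to, the claim that $v\nump 0$ on $Z$ is unsupported.
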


\begin{proof}
Induction on $n=\ddim(X)$.

\noindent
{\bf (base)} The case $n=1$ is trivial. The cases $n=2$ and $n=3$ follow from Propositions \ref{A12} and \ref{A15}, respectively.

\noindent
{\bf (step ${\mathbf (n-1)\row (n)}$)} We may assume that $n>3$.

First of all, we need to make our class supported
on a smooth divisor.

\begin{lem}
\label{A17-L2}
We may assume that $u$ is represented by a class supported on some smooth
(possibly, disconnected) divisor $Z$ on $X$.
\end{lem}

\begin{proof}
By Corollary \ref{AA2}, we may assume that $u$ is represented by the class of a smooth curve $S$ on $X$.
Consider the blow-up $\pi:\wt{X}=\op{Bl}_S(X)\lrow X$. Then $\pi^*(u)$ is supported on the special divisor
$E=\pp_S(N_{S\subset X})$ which is smooth.
\Qed
\end{proof}

Now we can make the supporting (smooth) divisor connected and, 
in addition, can make all co-dimension 1 classes on it restrictions of some classes from $X$, at least, modulo anisotropic classes. 

\begin{lem}
\label{A17-L3}
We may assume that $u$ is represented by a class supported on some smooth
connected divisor $Z$ on $X$ such that the restriction $\Ch^1_{k/k}(X)\twoheadrightarrow\Ch^1_{k/k}(Z)$
is surjective.
\end{lem}

\begin{proof}
By Lemma \ref{A17-L2}, we may assume that $u$ is represented by the class of a curve $S$ contained in a smooth divisor $Z$.
By Statement \ref{LS-div-conn} and flexibility of $k$, $S$ is contained in some irreducible divisor $Z'$,
smooth outside an anisotropic subset, and such that the restriction
$\Ch^1_{k/k}(X)\twoheadrightarrow\Ch^1_{k/k}(Z'\backslash S)$ is surjective.
Since $\ddim(Z')-1>1=\ddim(S)$, we also get the surjection $\Ch^1_{k/k}(X)\twoheadrightarrow\Ch^1_{k/k}(Z')$.

Let $\pi:\wt{X}\row X$ be the embedded resolution of singularities of $Z'$. Since the singularities of $Z'$ were anisotropic, so will be the
special divisors of $\wt{X}$. Let $\wt{Z}'$ be the proper pre-image of $Z'$ and $\wt{S}$ be the proper pre-image of $S$.
Then $\Ch^*_{k/k}(Z')=\Ch^*_{k/k}(\wt{Z}')$, and so,
the map $f^*:\Ch^1_{k/k}(X)\twoheadrightarrow\Ch^1_{k/k}(\wt{Z}')$, induced by the natural projection $f:\wt{Z}'\row X$, is surjective.
The image of $\pi^*(u)$ in $\Ch^*_{k/k}(\wt{X})=\Ch^*_{k/k}(X)$
is represented by the class of $\wt{S}$ supported on $\wt{Z}'$.
\Qed
\end{proof}

Since the restriction, $j^*:\Ch^1_{k/k}(X)\twoheadrightarrow\Ch^1_{k/k}(Z)$ is surjective, $u=j_*(u')$, for some class
$u'\in\Ch_{k/k;1}(Z)$, and $u\nump 0$ on $X$, we get that $u'\nump 0$ on $Z$.
As $Z$ is smooth connected projective variety of dimension $n-1$,
by the inductive assumption, $u'=0\in\Ch_{k/k;1}(Z)$. Then the class $u$ is equal to $0\in\Ch_{k/k;1}(X)$ as well.
Proposition \ref{A17} is proven.
\Qed
\end{proof}

\subsubsection{$4$-folds}

The next target is 4-folds, where only the case of co-dimension $2$ cycles is left.

\begin{proposition}
\label{A18}
Let $k$ be a flexible field and $X$ be a smooth projective $k$-variety of dimension $4$.
If $u\in\Ch^2(X)$ is $\nump 0$, then $u=0\in\Ch^2_{k/k}(X)$.
\end{proposition}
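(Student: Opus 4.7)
The plan is to follow the template set by the proof of Proposition~\ref{A15}: after a sequence of blow-ups, reduce to the case where $u$ is represented by a class supported on a smooth connected divisor $Z$ of (some blow-up of) $X$, so that the class $u' \in \Ch^1(Z)$ pushing forward to $u$ is numerically trivial on $Z$ itself. Once this is achieved, since $Z$ is a smooth projective $3$-fold and $u'$ is a divisor class on $Z$, Proposition~\ref{A12} gives $u' = 0$ in $\Ch^1_{k/k}(Z)$, hence $u = j_* u' = 0$ in $\Ch^2_{k/k}(\wt{X})$, and finally $u = \pi_* \pi^* u = 0$ in $\Ch^2_{k/k}(X)$.

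The first block of reductions is analogous to Lemmas~\ref{A15-L2} and~\ref{A15-L3}. By Corollary~\ref{AA2}, after a blow-up one may represent $u$ by the class of a smooth (possibly disconnected) surface $S \subset X$. One then wants the appropriate characteristic-class degrees of $S$ (of the form $\deg(c_i(N_{S \subset X}) \cdot [S])$ and related top intersection numbers on the $4$-fold) to vanish modulo $p$. As in the $3$-fold case the two primes must be handled separately: for $p = 2$ the relevant degrees are degrees of Steenrod squares applied to $[S]$, which vanish because $\eps_*[S] = 0$; for odd $p$ one blows up $X$ along a smooth very ample subvariety of $S$ representing an appropriate half-fraction of the offending characteristic class and uses Fulton's blow-up formula~\cite[Thm 6.7]{Fu} to kill the degree.

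Having prepared $S$, blow up along $S$ to obtain $\pi\colon \wt{X} \to X$ with smooth exceptional divisor $E = \pp_S(N_{S\subset X})$ of dimension $3$, on which $\pi^* u$ is supported. Applying Statement~\ref{LS-div-conn} together with the flexibility of $k$ and embedded resolution, one enlarges $E$ to an irreducible divisor of $\wt{X}$ smooth outside an anisotropic locus, resolves it, and replaces $E$ by a smooth connected divisor $Z \subset \wt{X}$ containing the proper transform of $S$. By Statement~\ref{LS-Sbs} one may moreover arrange the isotropic surjectivity $\Ch^1_{k/k}(\wt{X}) \twoheadrightarrow \Ch^1_{k/k}(Z)$ and, more crucially for us, the $\Ch^2$-analog $\Ch^2_{k/k}(\wt{X}) \twoheadrightarrow \Ch^2_{k/k}(Z)$ (i.e., surjectivity on $1$-cycles of $Z$, modulo anisotropic classes). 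Write the resulting representative of $\pi^* u$ as $j_*(u')$ with $u' \in \Ch^1(Z)$.

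The main obstacle is the final step: establishing that $u'$ is $\nump 0$ on the $3$-fold $Z$ itself. For any $v' \in \Ch_1(Z)$ the projection formula gives
\[
\deg_Z(u' \cdot v') = \deg_{\wt{X}}\bigl(j_*(u') \cdot \wt{v}\bigr) = \deg_{\wt{X}}(u \cdot \wt{v}) = 0
\]
whenever $v'$ is the restriction $j^*\wt{v}$ of a global class $\wt{v} \in \Ch^2(\wt{X})$. The point is that one may \emph{test} numerical triviality on $Z$ against $v' \in \Ch_{k/k,\,1}(Z)$ only, since any anisotropic $v'$ pairs trivially with every $u'$; by the surjectivity built into the previous paragraph, every such $v'$ lifts modulo anisotropic classes. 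This Lefschetz-type lifting for $\Ch^2_{k/k}$ is the real technical heart of the proposition and should be proven as the four-dimensional analog of Corollary~\ref{LS-r2-nump-div}, combining the characteristic-class vanishing imposed in the first block with the geometry of the blow-up along $S$. Once this is in place, Proposition~\ref{A12} applied to the divisor class $u'$ on $Z$ closes the argument.
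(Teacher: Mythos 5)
Your overall plan (push $\pi^*u$ onto a smooth connected divisor $Z$, show the representing class $u'$ is $\nump 0$ on $Z$, and finish with Proposition \ref{A12}) is a legitimate strategy --- it is essentially how the paper later handles $2$-cycles on $5$-folds in Proposition \ref{A20} --- but as written your proposal has a genuine gap exactly at the step you yourself flag as the ``technical heart'', and moreover the ingredient you invoke for it does not say what you need. Statement \ref{LS-Sbs} only concerns base loci of linear systems, and Statement \ref{LS-div-conn} gives surjectivity of the restriction onto $\Ch^*_{k/k}(Z\backslash S)$ only, i.e.\ \emph{away from} the surface $S$ supporting the class. There is no available surjectivity $\Ch^2_{k/k}(\wt{X})\twoheadrightarrow\Ch^2_{k/k}(Z)$; what one actually gets (as in the proofs of Lemma \ref{A17-L3} and Proposition \ref{A20}) is that $\Ch^2_{k/k}(Z)$ is generated by restrictions from $\wt{X}$ \emph{together with} classes supported on $S$ itself, and the pairings of $u'$ with those extra classes are precisely characteristic numbers of the form $\ddeg(c_1^m(N_{S\subset X})\cdot[S])$. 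So the ``Lefschetz-type lifting for $\Ch^2_{k/k}$'' you defer cannot be obtained for free: it is equivalent to the characteristic-class vanishings, and your ``first block'' never specifies which classes must be killed nor how to kill them while making $S$ connected.

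This is where the paper's proof actually lives, and it is not a routine repetition of Lemmas \ref{A15-L2}--\ref{A15-L4}: one must arrange $\ddeg(c_1^2(N_{S\subset X})\cdot[S])=0$ and $c_1(N_{S\subset X})\cdot[S]\nump 0$ for a \emph{smooth connected} $S$, and merging the transversal complete-intersection components into one connected surface changes these numbers (Lemma \ref{A18-L3} requires the sign trick with the classes $t_{i,j}$ and produces the correction term $2\Lambda^2([S])$), while the vanishing itself needs separate arguments for $p=2$ (Steenrod $P^1$, and the special re-presentation of Lemma \ref{A18-L2}), $p=3$ (the class $d_1=c_1^2+c_2$ via Proposition \ref{AB1}), and $p\geq 5$ (blow-ups at cycles representing fractions of $c_1$, Lemmas \ref{A18-L5}, \ref{A18-L6}). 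Once those hypotheses are secured, the conclusion is not a ``four-dimensional analog'' still to be proven: Corollary \ref{LS-r2-nump-div} is already stated and proved for $X$ of arbitrary dimension, and the paper simply applies it. In short, you have assumed (in the guise of the lifting step) precisely the content that constitutes the bulk of the proof, so the proposal as it stands does not close the argument.
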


\begin{proof}
Our strategy will be to find an appropriate blow-up $\pi:\wt{X}\row X$ such that $\pi^*(u)$ is supported on some smooth connected
divisor $Z$ and is $\nump 0$ on it.

By Corollary \ref{AA2}, we may assume that $u$ is represented by the class of a union of smooth complete intersections of very ample divisors with components meeting transversally.
In such a situation (of transversal smooth components), let us denote as $c_1^2(N_{S\subset X})\cdot [S]$ the sum
$\sum_ic_1^2(N_{S_i\subset X})\cdot [S_i]$, and similar for other characteristic classes.

The case of a prime $2$ requires certain preparations to be made
still at this level of transversal complete intersections, before passing to a single smooth subvariety (to be done in the next step).

\begin{lem}
\label{A18-L2}
We may assume that $u$ is represented by the class of $\cup_i S_i$, where each $S_i$ is a complete intersection of very ample divisors,
with components meeting transversally. For $p=2$, we may moreover assume that $\ddeg(c_1^2(N_{S\subset X})\cdot [S])=0$.
\end{lem}

\begin{proof}
Let $p=2$ and $[S]=\sum_i x_iy_i$, where $x_i$ and $y_i$ are classes of very ample divisors. Then, in $\Ch^2$, we can substitute
this presentation by
$$
[S']=\sum_i(x_iy_i+x_i(x_i+y_i)+y_i(x_i+y_i)+(x_i+y_i)(x_i+y_i)),
$$
where all the divisor classes involved are very ample, and so, components can be made transversal.
Then $c_1^2(N_{S'\subset X})\cdot [S']=\sum_i(x_iy_i(x_i+y_i)^2+x_i(x_i+y_i)y_i^2+y_i(x_i+y_i)x_i^2+0)=0$.
\Qed
\end{proof}

For a surface $S=\cup_i S_i$ with smooth transversal components, let us denote $\Lambda^2[S]:=\sum_{\{i,j\}}[S_i]\cdot[S_j]$
in $\Ch_0(X)$, where the sum is over all 2-element subsets of the set of components.

The following result permits to combine our transversal components into a single smooth connected surface. Moreover, there is some control over the characteristic classes of the surface obtained
this way.

\begin{lem}
\label{A18-L3}
Let $S=\cup_i S_i$ be a surface on $X$ with smooth transversal components. Then there exists a blow-up $\eta:\hat{X}\row X$
such that $\eta^*([S])$ is represented by the class of a smooth connected surface $\hat{S}$
contained in a smooth connected divisor $\hat{Z}$, and
$\ddeg(c_1^2(N_{\hat{S}\subset\hat{X}})\cdot[\hat{S}])=\ddeg(c_1^2(N_{S\subset X})\cdot[S]-2\Lambda^2([S]))$.
If, moreover, $[S]\nump 0$, then this degree is equal to the $\ddeg\left(\left(c_1^2+c_2\right)(N_{S\subset X})\cdot[S]\right)$.
\end{lem}

\begin{proof}
Let $u=[\cup_iS_i]=\sum_iu_i$ be the class of $[S]$.
Let $\pi:\wt{X}\row X$ be the blow-up of $X$ in all components $S_i$. Let $E_i$ be the respective components of the special divisor
and $\rho_i=c_1(O(1)_i)=[-E_i]$. Then, by \cite[Prop. 5.27]{so2}, $\wt{u}_i=\pi^*([S_i])=[E_i]\cdot (c_1(N_{S_i\subset X})+\rho_i)$
is supported on $E_i$ and may be represented by a smooth surface $\wt{S}_i$. Note, that
$c_1(N_{\wt{S}_i\subset\wt{X}})=\pi^*(c_1(N_{S_i\subset X}))$, and so,
$\ddeg(c_1^2(N_{\wt{S}\subset\wt{X}})\cdot[\wt{S}])=\ddeg(c_1^2(N_{S\subset X})\cdot[S])$.

Let $T_{i,j}=E_i\cap E_j$ be
the intersection of the components of the special divisor, and $t_{i,j}=[T_{i,j}]$. We have:
$\ddeg(\wt{u}_i\cdot\wt{u}_j)=\ddeg(u_i\cdot u_j)$, while $\ddeg(\wt{u}_i\cdot t_{i,j})=\ddeg(-[E_i]^3\cdot [E_j])=0$
and $\ddeg(\wt{u}_i\cdot t_{j,k})=0$, for any $i\not\in\{j,k\}$, as well. Finally,
$\ddeg(t_{i,j}^2)=\ddeg([T_{i,j}]\cdot\rho_i\cdot\rho_j)=\ddeg(u_i\cdot u_j)$, while $t_{i,j}\cdot t_{k,l}=0$, for $\{i,j\}\neq\{k,l\}$.
Now, for all pairs $(i,j)$ of distinct numbers let us choose signs $\eps_{(i,j)}\in\pm 1$ with the condition that
$\eps_{(i,j)}+\eps_{(j,i)}=0$. Let us substitute classes $\wt{u}_i$ by $\wt{u}'_i:=\wt{u}_i+\sum_{j\neq i}\eps_{(i,j)}t_{i,j}$.
Note, that the sum $\sum_i\wt{u}'_i$ is still equal to the $\sum_i\wt{u}_i=\pi^*(u)$.
On the other hand,
$$
\ddeg(\wt{u}'_i\cdot\wt{u}'_j)=\ddeg\left(\bigl(\wt{u}_i+{\textstyle{\sum_{k\neq i}}}\eps_{(i,k)}t_{i,k}\bigr)\cdot
\bigl(\wt{u}_j+{\textstyle{\sum_{l\neq j}}}\eps_{(j,l)}t_{j,l}\bigr)\right)=\ddeg(u_i\cdot u_j)-\ddeg(t_{i,j}\cdot t_{j,i})=0.
$$
Since these classes $\wt{u}'_i$ are divisors on (connected) $E_i$, we can make them very ample (by adding a $p$-multiple of some
very ample divisor) and so, can move them around and make them smooth connected and transversal to any given subvariety.
Let $\wt{S}'_i$ be the generic representative of the respective linear system $|\wt{u}'_i|$ on $E_i$. It is a
smooth connected surface on $E_i$ representing $\wt{u}'_i$.
We now have a divisor with
strict normal crossings $E=\cup_i E_i$ and a surface $\wt{S}'=\cup_i \wt{S}'_i$ on it, with $\wt{S}'_i$ smooth connected,
transversal to each other and to the other components of $E$.

Since $\ddeg(\wt{u}'_i\cdot\wt{u}'_j)=0$, for $i\neq j$, by the arguments of the proof of Proposition \ref{A12},
all the intersections $\wt{S}'_i\cap\wt{S}'_j$ are anisotropic.
Since $k$ is flexible, we may assume that $\wt{S}'_i$ is defined already over $k$ (by Proposition \ref{flex-main-prop}).
Finally, denoting $\gamma_i=\pi^*(c_1(N_{S_i\subset X}))$,
since $\rho_i\rho_j\gamma_i$, $\rho_i\rho_j^3$, $\rho_i\rho_j\rho_k$ are zero, for distinct $i,j,k$, we have
\begin{equation*}
\begin{split}
&\ddeg(c_1^2(N_{\wt{S}'\subset\wt{X}})\cdot [\wt{S}'])=
\sum_i\ddeg\left(-\rho_i\bigl(\gamma_i+\rho_i+{\textstyle\sum_{j\neq i}}\eps_{(i,j)}\rho_j\bigr)
\bigl(\gamma_i+{\textstyle{\sum_{j\neq i}}}\eps_{(i,j)}\rho_j\bigr)^2\right)=\\
&\sum_i\ddeg(-\rho_i(\gamma_i+\rho_i)\gamma_i^2-{\textstyle{\sum_{j\neq i}}}\rho_i^2\rho_j^2)=
\ddeg(c_1^2(N_{\wt{S}\subset\wt{X}})\cdot [\wt{S}]-2\Lambda^2([S]))=
\ddeg(c_1^2(N_{S\subset X})\cdot [S]-2\Lambda^2([S])).
\end{split}
\end{equation*}
If $[S]\nump 0$, then $\ddeg(2\Lambda^2([S]))=\ddeg([S]\cdot[S]-\sum_i[S_i]\cdot[S_i])=-\ddeg(c_2(N_{S\subset X})\cdot [S])$.
Thus, in this case,
$\ddeg\left(c_1^2(N_{S\subset X})\cdot [S]-2\Lambda^2([S])\right)=\ddeg\left(\left(c_1^2+c_2\right)(N_{S\subset X})\cdot [S]\right)$.

After all, we managed to present our cycle by the class of the union of smooth surfaces transversal to each other and all intersections
anisotropic.
It remains to use Statement \ref{LS-codim2-single}.
\Qed
\end{proof}

In order to apply Corollary \ref{LS-r2-nump-div}, we need to eliminate (numerically) the powers of the 1-st Chern class of the normal bundle of our surface. We will proceed from highest to smallest powers.

\begin{lem}
\label{A18-L5}
We may assume that $u$ is represented by the class of a smooth connected surface $S$ which is contained in a smooth connected
divisor $Z$, and $\ddeg(c_1^2(N_{S\subset X})\cdot [S])=0\in\ff_p$.
\end{lem}

\begin{proof}
By Lemmas \ref{A18-L2} and \ref{A18-L3},
we may assume that $u$ is represented by the class of a smooth connected surface $S$ contained in a smooth connected divisor $Z$, and
(again by the same Lemmas) we already know the case ${\mathbf{\un{(p=2)}}}$.
We need to treat separately $p=3$ and the remaining primes.

\noindent
${\mathbf{\un{(p=3)}}}$ Let $d_1=c_1^2+c_2$. This characteristic class of degree $2$ corresponds to the reduced power operation
$P^1:\Ch^r\row\Ch^{r+2}$. Namely, $P^1([S])=d_1(N_{S\subset X})\cdot [S]$. By Proposition \ref{AB1},
$\ddeg(d_1(N_{S\subset X})\cdot [S])=0\in\ff_3$, since $[S]\nump 0$.
On the other hand, $\ddeg(c_2(N_{S\subset X})\cdot [S])=\ddeg([S]\cdot[S])=0$ (by the same reason).
Hence, $\ddeg(c_1^2(N_{S\subset X})\cdot [S])=0\in\ff_3$.

\noindent
${\mathbf{\un{(p\neq 2,3)}}}$ Let $R$ be a smooth zero-cycle representing the complete intersection (of very ample divisors)
$\frac{1}{2}c_1(N_{S\subset X})\cdot\frac{1}{3}c_1(N_{S\subset X})$ on $S$.
Let $\pi:\wt{X}=\op{Bl}_R(X)\row X$ be the blow-up of $X$ at $R$,
and $E\stackrel{\eps}{\row}S$ be the special divisor of $\pi$, with $\rho=c_1(O(1))=-[E]$.
Then $\pi^*([S])=[\wt{S}]+[F]$, where $\wt{S}$ is the proper transform of $S$, and
$[F]=[E]\cdot(\eps^*(c_1(N_{S\subset X}))+\rho)$.
We have: $c_1(N_{\wt{S}\subset\wt{X}})=2\rho+\pi_S^*(c_1(N_{S\subset X}))$, while $c_1(N_{F\subset\wt{X}})=\eps^*(c_1(N_{S\subset X}))$.
Hence, (taking into account that $\pi(E)$ is zero-dimensional),
\begin{equation*}
\begin{split}
&c_1^2(N_{\wt{S}\subset\wt{X}})\cdot [\wt{S}]+c_1^2(N_{F\subset\wt{X}})\cdot [F]=j_*\pi_S^*(c_1^2(N_{S\subset X})\cdot [S])+
4\rho^2\cdot [\wt{S}]=\\
&j_*\pi_S^*(c_1^2(N_{S\subset X})\cdot [S]-4[R])=j_*\pi_S^*\left({\textstyle{\frac{1}{3}}}c_1^2(N_{S\subset X})\cdot [S]\right),
\end{split}
\end{equation*}
where $S\stackrel{\pi_S}{\llow}\wt{S}\stackrel{j}{\lrow}\wt{X}$ are natural maps.
On the other hand,
$$
[\wt{S}]\cdot [F]=[\wt{S}]\cdot [E]\cdot (\eps^*(c_1(N_{S\subset X}))+\rho)=[\pp^1_R]\cdot\rho=
j_*\pi_S^*\left({\textstyle{\frac{1}{6}}}c_1^2(N_{S\subset X})\cdot [S]\right).
$$
By Lemma \ref{A18-L3}, there exists a blow-up $\mu:\ov{X}\row\wt{X}$, such that $\mu^*\pi^*([S])\in\Ch^2_{k/k}(\ov{X})$
is represented by a smooth connected
surface $\ov{S}$ contained in a smooth connected divisor $\ov{Z}$ with
\begin{equation*}
\begin{split}
&\ddeg(c_1^2(N_{\ov{S}\subset\ov{X}})\cdot [\ov{S}])=\ddeg(c_1^2(N_{\wt{S}\subset\wt{X}})\cdot [\wt{S}])+
\ddeg(c_1^2(N_{F\subset\wt{X}})\cdot [F])-\ddeg(2[\wt{S}]\cdot [F])=\\
&\ddeg\left(\left({\textstyle{\frac{1}{3}}}c_1^2(N_{S\subset X})-{\textstyle{\frac{1}{3}}}c_1^2(N_{S\subset X})\right)\cdot [S]\right)=0.
\end{split}
\end{equation*}
\Qed
\end{proof}

It remains to treat the first power of the 1-st Chern class.

\begin{lem}
\label{A18-L6}
We may assume that $u$ is represented by the class of a smooth connected surface $S$ which is contained in a smooth connected
divisor $Z$, with $c_1^2(N_{S\subset X})\cdot [S]\nump 0$ and $c_1(N_{S\subset X})\cdot [S]\nump 0$.
\end{lem}

\begin{proof}
By Lemma \ref{A18-L5}, we may assume that $u=[S]$, where $S\subset Z\subset X$ are smooth connected, with the needed condition on $c_1^2$.
It remains to terminate $c_1$. We need to treat separately $p=2$ and larger primes.

\noindent
${\bf{\un{(p=2)}}}$ The characteristic class $c_1$ corresponds to the reduced power operation $P^1:\Ch^r\row\Ch^{r+1}$ (modulo $2$).
Since $[S]\nump 0$, by Proposition \ref{AB1}, $c_1(N_{S\subset X})\cdot [S]=P^1([S])\nump 0$.

\noindent
${\bf{\un{(p\neq 2)}}}$ Let $R$ be a smooth connected curve on $S$ representing $\frac{1}{2}c_1(N_{S\subset X})\cdot [S]$.
Let $\pi:\wt{X}=\op{Bl}_R(X)\row X$ be the blow-up at $R$, with the (connected) special divisor
$E\stackrel{\eps}{\row}S$ and $\rho=c_1(O(1))=-[E]$.
Then $\pi^*([S])=[\wt{S}]+[F]$, where $\wt{S}$ is the proper pre-image of $S$, and
$[F]=[E]\cdot(\eps^*(c_1(N_{S\subset X}))+\rho)$ is supported on $E$.  Note, that $\pi_S:\wt{S}\row S$ is an isomorphism.
Then
$$
c_1^m(N_{\wt{S}\subset\wt{X}})\cdot [\wt{S}]=[\wt{S}]\cdot (\pi_S^*(c_1(N_{S\subset X}))+2\rho)^m=0\in\Ch^*(\wt{X}),\,\,\text{for}\,\, m>0,
$$
since $\rho+\pi_S^*(\frac{1}{2}c_1(N_{S\subset X}))=0$ on $\wt{S}$.

Since $c_1^2(N_{S\subset X})\cdot [S]\nump 0$ on $X$, and $R$ is connected, we have that $c_1(N_{S\subset X})\cdot R\nump 0$ on $R$,
which implies that $[E]\cdot\eps^*(c_1(N_{S\subset X}))\nump 0$ on $E$. Hence,
$$
c_1^m(N_{F\subset\wt{X}})\cdot [F]=[E]\cdot (\eps^*(c_1(N_{S\subset X}))+\rho)\cdot\eps^*(c_1^m(N_{S\subset X}))\nump 0
\,\,\text{on}\,\, \wt{X},\,\,\text{for}\,\, m>0.
$$
Finally, $[\wt{S}]\cdot [F]=[\wt{S}]\cdot (-\rho)\cdot (\pi_S^*(c_1(N_{S\subset X}))+\rho)=
[\wt{S}]\cdot\pi_S^*(\frac{1}{2}c_1(N_{S\subset X}))\cdot\pi_S^*(\frac{1}{2}c_1(N_{S\subset X}))\nump 0$ on $\wt{S}$, since
$c_1^2(N_{S\subset X})\cdot [S]\nump 0$ on $S$.

Substituting $F$ by the generic representative of the (very ample) linear system $|F|$ on $E$, by the proof of Proposition \ref{A12},
we may assume that the intersection $\wt{S}\cap F$ is anisotropic. Then, by Statement \ref{LS-codim2-single}, there exists a blow-up
$\mu:\ov{X}\row\wt{X}$ such that $\mu^*\pi^*([S])$ is represented by the class of a smooth connected surface $\ov{S}$ contained in a smooth
connected divisor $\ov{Z}$, and such that $c_1^m(N_{\ov{S}\subset\ov{X}})\cdot [\ov{S}]\nump
\mu^*(c_1^m(N_{\wt{S}\subset\wt{X}})\cdot [\wt{S}]+c_1^m(N_{F\subset\wt{X}})\cdot [F])\nump 0$, for $m\geq 0$.
\Qed
\end{proof}

Now Proposition \ref{A18} follows from Corollary \ref{LS-r2-nump-div} and flexibility of $k$.
\Qed
\end{proof}

\subsubsection{$2$-cycles}

The case of 4-folds is completed. Our next destination is 2-cycles. The main difficulty here is the case of 2-cycles on a 5-fold, which (together with the treated 4-folds) will form a base
of our induction.

\begin{proposition}
\label{A20}
Let $k$ be a flexible field and $X$ be a smooth projective $k$-variety of dimension $5$.
If $u\in\Ch_2(X)$ is $\nump 0$, then $u=0\in\Ch_{k/k;2}(X)$.
\end{proposition}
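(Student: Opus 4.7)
The proof will follow the same pattern as Proposition~\ref{A18}, adapted to codimension $3$ (surfaces $S$ in a $5$-fold $X$ with rank-$3$ normal bundle $N := N_{S\subset X}$). The plan is to construct, after a blow-up $\pi: \wt X \to X$, a smooth connected surface $\wt S$ representing $\pi^*(u)$, contained in a smooth connected divisor $\wt Z\subset\wt X$, with the relevant characteristic classes of $N_{\wt S\subset \wt X}$ made numerically trivial on $\wt X$. An appropriate codimension-$3$ analog of Corollary~\ref{LS-r2-nump-div} will then give that $[\wt S]$ is anisotropic on $\wt X$, and pushing forward by $\pi_*$ will show that $u$ itself is anisotropic.

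The first two steps parallel the codimension-$2$ case. I would invoke Corollary~\ref{AA2} to represent $u$ by a union of smooth transversal complete intersections of very ample divisors, and then apply analogs of Lemmas~\ref{A18-L3} and~\ref{A18-L4} to combine the components into a single smooth connected surface $S$ contained in a smooth connected divisor $Z\subset X$. Since $\ddim S = 2$, the characteristic classes of $N$ that contribute to non-vanishing cycles on $X$ are exactly $c_1(N)$ (producing a $1$-cycle on $X$), together with $c_1^2(N)$ and $c_2(N)$ (producing $0$-cycles on $X$); the top Chern class $c_3(N)$ lives in $\Ch^3(S) = 0$, and the self-intersection $[S]^2 \in \Ch^6(X)$ vanishes for dimension reasons alone.

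The main step will be to kill these three characteristic classes numerically, adapting Lemmas~\ref{A18-L5} and~\ref{A18-L6}. For $p = 2$, Proposition~\ref{AB1} applied to the Steenrod operation $P^1$ gives $c_1(N)\cdot[S] \nump 0$ immediately, and further Steenrod computations combined with blow-ups of zero-cycles on $S$ should dispose of $c_1^2(N)\cdot[S]$ and $c_2(N)\cdot[S]$. For $p = 3$, Proposition~\ref{AB1} applied to $P^1$ yields $(c_1^2+c_2)(N)\cdot[S]\nump 0$, but this controls only the sum, so an additional blow-up at a carefully chosen zero-cycle on $S$ (as in Lemma~\ref{A18-L5}) will be required to separate the two classes and kill each individually. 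For $p \geq 5$ no Steenrod operation lands in the top degree, so the killing will rely entirely on explicit blow-up constructions of the type used in Lemma~\ref{A18-L5}, adjusting each characteristic class by prescribed amounts through blow-ups at well-chosen zero-cycles on $S$. The codimension-$1$ class $c_1(N)\cdot[S]$ is handled last, for odd $p$ by blowing up a smooth curve representing $\frac{1}{2}c_1(N)\cdot[S]$ on $S$, exactly as in Lemma~\ref{A18-L6}.

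The hard part will be this independent treatment of $c_1^2(N)\cdot[S]$ and $c_2(N)\cdot[S]$ for odd primes. In the codimension-$2$ case of Proposition~\ref{A18} the vanishing of $c_2(N)\cdot[S]$ came essentially for free as $[S]^2\nump 0$, but here the self-intersection is zero for trivial dimensional reasons and gives no information. Designing blow-up constructions, prime by prime, that can independently control these two $0$-cycle characteristic classes will therefore be the main technical obstacle. Once all three classes are numerically trivialized, the desired codimension-$3$ analog of Corollary~\ref{LS-r2-nump-div}, combined with the flexibility of $k$, will complete the proof.
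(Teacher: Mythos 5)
Your plan diverges from the paper's proof precisely at its load-bearing step, and that step is a genuine gap: the ``appropriate codimension-$3$ analog of Corollary~\ref{LS-r2-nump-div}'' that you invoke is not in the paper and is not a routine adaptation. Any such criterion would have to come from the chain statement (Corollary~\ref{LS-nump-anis}), whose hypotheses for a length-$3$ chain $S\subset Z_2\subset Z_1\subset X$ require numerical triviality of all monomials in the first Chern classes of the normal bundles of the \emph{intermediate} embeddings, e.g. $c_1(N_{S\subset Z_2})\cdot c_1(N_{Z_2\subset Z_1})\cdot[S]$. In codimension $2$ these reduce to powers of $c_1(N_{S\subset X})$ because $c_1(N_{S\subset X})=c_1(N_{S\subset Z})+c_1(N_{Z\subset X})$ with the second summand defined on $X$; in codimension $3$ there is no such reduction to $c_1(N)$, $c_1^2(N)$, $c_2(N)$ of $N_{S\subset X}$, since $c_1(N_{Z_2\subset Z_1})$ is not determined by $N_{S\subset X}$ and need not extend to $X$. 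So even if you succeeded in numerically killing the three classes you list, the concluding step would not follow. On top of this, the part you yourself flag as the main obstacle --- independently killing $\ddeg(c_1^2(N)\cdot[S])$ and $\ddeg(c_2(N)\cdot[S])$ for odd $p$ --- is left without any construction, so the proposal remains a programme rather than a proof.

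The paper's actual argument avoids both difficulties by a different mechanism: induction on the ambient dimension via the already-proven Proposition~\ref{A18}, rather than a higher-codimension chain criterion. Only $c_2(N_{S\subset X})$ is controlled: Lemma~\ref{A20-L1} makes $\ddeg(c_2(N_{S\subset X})\cdot[S])=0$ (via $P^2$ for $p=2$, via explicit blow-ups along curves for odd $p$), and Lemma~\ref{A20-L2} redistributes this degree so that $c_2$ is numerically trivial on each component, i.e. $c_2(N_{S\subset X})\nump 0$ on $S$. Blowing up $X$ along $S$, the term involving $c_2$ in $\pi^*[S]$ becomes anisotropic (Corollary~\ref{A13N}), so $\pi^*[S]=(\rho^2+c_1\rho)\cdot[E]$ in $\Ch^*_{k/k}(\wt{X})$ is a complete intersection class on the exceptional divisor, representable by a smooth surface there. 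Statement~\ref{LS-div-conn} and embedded desingularization then place this representative on a smooth connected divisor $\ov{Z}$ of dimension $4$; a degree computation (using that the generators of $\Ch^2_{k/k}(\ov{Z})$ are restrictions from $\ov{X}$ together with the components $[\ov{S}_i]$, and that $c_2(N_{\wt{S}_i\subset\wt{X}})$ has degree zero) shows $[\ov{S}]\nump 0$ on $\ov{Z}$, and Proposition~\ref{A18} applied to the $4$-fold $\ov{Z}$ finishes. This descent-to-a-divisor step is the essential idea missing from your proposal.
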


\begin{proof}
The strategy, as usual, is to find an appropriate blow-up, so that the pull-back of $u$ is supported on some smooth connected
divisor $Z$ and is $\nump 0$ already on $Z$.

We start by presenting $u$ by the class of a disjoint union of smooth complete intersections and eliminating numerically its
$2$-nd (normal) Chern class. 

\begin{lem}
\label{A20-L1}
We may assume that $u$ is represented by the class of a smooth surface $S$ on $X$, with all components complete intersections and
$\ddeg(c_2(N_{S\subset X})\cdot [S])=0\in\ff_p$.
\end{lem}

\begin{proof}
By Corollary \ref{AA2}, we may assume that $u$ is represented by the class of a disjoint union of smooth complete intersections:
$u=[S]=[\coprod_iS_i]$. We need to treat separately the case $p=2$ and that of odd primes.

\noindent
${\mathbf{\un{(p=2)}}}$
The characteristic class $c_2$ corresponds to the reduced power operation $P^2$ (modulo $2$), that is,
$P^2([S])=c_2(N_{S\subset X})\cdot [S]$. But $[S]\nump 0$, and so, by Proposition \ref{AB1}, $P^2([S])\nump 0$ too.
Hence, $\ddeg(c_2(N_{S\subset X})\cdot [S])=0\in\ff_2$.

\noindent
${\mathbf{\un{(p\neq 2)}}}$ If the degrees of $c_2$ of all the components are trivial, there is nothing to prove.
Otherwise, there is a component $S_l$ given by $x_1x_2x_3$ such that $\ddeg(x_1^2x_2^2x_3)=r\neq 0\in\ff_p$.
Let $R$ be a disjoint union of $d$ copies of the curve $x_1^2x_2^2$, where one factor of $x_1$ and $x_2$ here is the same as in $S_l$
and the other one is generic, so that $Q=R\cap S_l$ is given by the $d$ disjoint copies of $x_1^2x_2^2x_3$ and $R$ doesn't meet
other components of $S$. Let $\pi:\wt{X}\row X$ be the blow-up at $R$ with the special divisor $E$ and $\rho=c_1(O(1))=-[E]$.
Then, by \cite[Thm 6.7]{Fu}, $\pi^*([S_l])=[\wt{S}_l]+[V]$, where $\wt{S}_l\stackrel{\pi_S}{\lrow}S_l$ is the proper transform of $S_l$ and
$V=\pp^2_Q$ is a subvariety of the $Q$-fiber of the $\pp^3$-bundle $E\row R$. Here
$\wt{S}_l$ is a complete intersection $(x_1+\rho)(x_2+\rho)x_3$, while $V$ is a complete intersection $-\rho^2\cdot x_3$.
We can move $[Q]$ along $R$ and make $V$ disjoint from $\wt{S}_l$ (it is automatically disjoint from other components).
On $\wt{S}_l$, $x_i\cdot\rho=0$ (since $\pi_S(\wt{S}\cap E)$ is zero-dimensional) and $\rho^2=-\pi_S^*[Q]$, while on $V=\pp^2_Q$, $x_i=0$ and $\rho^2$ is the class of a section
$Q\row\pp^2_Q$.
Hence, the
\begin{equation*}
\begin{split}
&\ddeg(c_2(N_{\wt{S}_l\subset\wt{X}})\cdot [\wt{S}_l])+\ddeg(c_2(N_{V\subset\wt{X}})\cdot [V])=
\ddeg((x_1x_2+x_2x_3+x_3x_1+\rho^2)[\wt{S}_l])+\ddeg(-\rho^2[V])=\\
&\ddeg(c_2(N_{S\subset X})[S])-2\ddeg([Q])=
\ddeg(c_2(N_{S\subset X})[S])-2rd.
\end{split}
\end{equation*}
Since $p\neq 2$, by choosing $d$ appropriately, we can always make the total degree of $c_2(N_{S\subset X})\cdot [S]$ to be zero
(in $\ff_p$), while keeping all the components complete intersections.
\Qed
\end{proof}

Having made the 2-nd (normal) Chern
class of our surface numerically trivial, 
now we will do the same with
every connected component of it. This will make the mentioned
Chern class numerically trivial already on the surface itself 
(not just after the push-forward to $X$).

\begin{lem}
\label{A20-L2}
We may assume that $u$ is represented by the class of a smooth surface $S$ on $X$, and for each component $S_l$ of $S$, we have
$\ddeg(c_2(N_{S_l\subset X})\cdot [S_l])=0\in\ff_p$.
\end{lem}

\begin{proof}
By Lemma \ref{A20-L1}, we may assume that $u$ is represented by the class of a disjoint union of smooth complete intersections:
$u=[S]=[\coprod_lS_l]$ with $\ddeg(c_2(N_{S\subset X})\cdot [S])=0$.
Let $S_l$ be represented by the intersection $x_1x_2x_3$ of very ample divisors.
Let $R_{\{i,j\}}\subset X$ be the complete intersection $x_i^2x_j^2$, where one copy of $x_i$ and $x_j$ is the same as in $S_l$,
but the other copy is generic, so that $R_{\{i,j\}}$ meets $S_l$ at a zero-cycle $x_i^2x_j^2x_m$ (where $\{m\}=\{1,2,3\}\backslash\{i,j\}$),
and doesn't meet other components of $S$. Let $R=\coprod_{\{i,j\}\in\binom{3}{2}}R_{\{i,j\}}$ (we can make components disjoint).
Note, that $[R\cap S_l]$ represents the class $c_2(N_{S_l\subset X})$ on $S_l$.
Let $\pi:\wt{X}\row X$ be the blow-up at $R$, where
$E_{\{i,j\}}\stackrel{\pi_{\{i,j\}}}{\lrow}R_{\{i,j\}}$ are the components of the special divisor,
$\rho_{\{i,j\}}=-[E_{\{i,j\}}]$ and $\rho=\sum_{\{i,j\}}\rho_{\{i,j\}}$.
Let $\wt{S}_l\stackrel{\pi_S}{\lrow}S_l$ be the proper transform of $S_l$.

The Chern roots of $N_{\wt{S}_l\subset\wt{X}}$ are
$x_i+\sum_{j\neq i}\rho_{\{i,j\}},i=1,2,3$.
Moreover, $\wt{S}_l$ is still a complete intersection: $[\wt{S}_l]=\prod_i(x_i+\sum_{j\neq i}\rho_{\{i,j\}})$,
and on $\wt{S}_l$ we have identities: $x_i\cdot\rho_{\{j,k\}}=0$, while $\rho_{\{i,j\}}^2=-\pi_S^*[R_{\{i,j\}}\cap S_l]$ and
$\rho_{\{i,j\}}\cdot\rho_{\{i',j'\}}=0$, for $\{i,j\}\neq\{i',j'\}$. Then, on $\wt{S}_l$,
$$
c_2(N_{\wt{S}_l\subset\wt{X}})=-\pi_S^*[R\cap S_l]+
\pi_S^*c_2(N_{S\subset X})=0.
$$
Let $Q_{\{i,j\}}=S_l\cap R_{\{i,j\}}$.
By \cite[Thm 6.7]{Fu}, $\pi^*([S_l])=[\wt{S}_l]+[V]$, where
$V=\coprod_{\{i,j\}\in\binom{3}{2}}[V_{\{i,j\}}]$,
$V_{\{i,j\}}=\pp^2_{Q_{\{i,j\}}}$ given by $\rho\cdot\pi_{\{i,j\}}^{-1}(Q_{\{i,j\}})$ and contained in the $\pp^3$-bundle
$E_{\{i,j\}}\row R_{\{i,j\}}$ is a complete intersection $-\rho_{\{i,j\}}^2x_m$ ($m$ as above). We can move the class $[Q_{\{i,j\}}]$
along $R_{\{i,j\}}$, and so can make $\pp^2_{Q_{\{i,j\}}}$ disjoint from $\wt{S_l}$ (and it is automatically disjoint from the other components of $S$).
The Chern roots of $N_{V\subset\wt{X}}$ are $-\rho,0,\rho$, and so,
$$
\ddeg(c_2(N_{V\subset\wt{X}})\cdot [V])=-\sum_{\{i,j\}}\ddeg([Q_{\{i,j\}}])=-\ddeg(c_2(N_{S_l\subset X})\cdot [S_l]).
$$
Let $P_{\{i,j\}}$ be the $(p-1)$ (disjoint) copies of $\pp^1_{Q_{\{i,j\}}}$ contained in $\pi_{\{i,j\}}^{-1}(Q_{\{i,j\}})$, but not in
$\pp^2_{Q_{\{i,j\}}}$. Let $P=\coprod_{\{i,j\}}P_{\{i,j\}}$ and $\mu:\ov{X}\row\wt{X}$ be the blow-up at $P$. Let
$G=\coprod_{\{i,j\}}G_{\{i,j\}}$ be the special
divisor of $\mu$, with projections $\mu_{\{i,j\}}:G_{\{i,j\}}\row P_{\{i,j\}}$ and $\alpha=c_1(O(1))=-[G]$.
Let $\ov{V}_{\{i,j\}}$ be the proper transform of $V_{\{i,j\}}$, and
$F_{\{i,j\}}=\mu_{\{i,j\}}^{-1}(\pp^2_{Q_{\{i,j\}}}\cap P_{\{i,j\}})\cdot \alpha$ (which is isomorphic to the disjoint union of
$(p-1)$ copies of $\pp^2_{Q_{\{i,j\}}}$) whose class
is given by $-\alpha\cdot\alpha\cdot\rho_{\{i,j\}}$. Let $F=\coprod_{\{i,j\}}F_{\{i,j\}}$.
By \cite[Thm 6.7]{Fu}, $\mu^*([V_{\{i,j\}}])=[\ov{V}_{\{i,j\}}]+[F_{\{i,j\}}]$.
The Chern roots of
$N_{\ov{V}_{\{i,j\}}\subset\ov{X}}$ are $\alpha-\rho,\alpha,\rho$. Hence,
$$
c_2(N_{\ov{V}_{\{i,j\}}\subset\ov{X}})\cdot [\ov{V}_{\{i,j\}}]=(\alpha^2-\rho^2+\alpha\rho)\cdot [\ov{V}_{\{i,j\}}]=
((p-1)(-1)-1+0)\cdot \mu_V^*q=0,
$$
where $q$ is the class of a section $Q_{\{i,j\}}\row V_{\{i,j\}}=\pp^2_{Q_{\{i,j\}}}$.
The Chern roots of $N_{F_{\{i,j\}}\subset\ov{X}}$ are $-\alpha,\alpha,0$. So, the
$\ddeg(c_2(N_{F\subset\ov{X}})\cdot [F])=\ddeg(-\alpha^2\cdot [F])=-(p-1)\sum_{\{i,j\}}\ddeg([Q_{\{i,j\}}])=
\ddeg(c_2(N_{S_l\subset X})\cdot [S_l])$.

Let us apply the above construction to every component $S_l$ of $S$, and denote the respective objects by the subscript $l$.
Now, after applying $\mu^*\pi^*$, the degree of $c_2(N_{S\subset X})\cdot [S]$ is concentrated in the $F$-components, where
$[F_l]$ is given by $-\alpha_l\cdot\alpha_l\cdot\rho_l$. Then $[F]=\sum_l(-\alpha_l\cdot\alpha_l\cdot\rho_l)$.
But since $\rho_l\cdot\alpha_k=0$, and $\alpha_l\cdot\alpha_k=0$, for $l\neq k$, this can be rewritten as
$-\ov{\alpha}\cdot\ov{\alpha}\cdot\ov{\rho}$, where $\ov{\alpha}=\sum_l\alpha_l$ and $\ov{\rho}=\sum_l\rho_l$.
We can substitute $-\ov{\alpha}$, $\ov{\alpha}$ and $\ov{\rho}$ by very ample divisors and represent $[F]$ by (the class of)
a smooth connected surface with the same $\ddeg(c_2(N_{F\subset\ov{X}})\cdot [F])$, not meeting other components of $S$.
Now, the whole degree of $c_2(N_{S\subset X})\cdot [S]$ is concentrated in a single component $F$. But, by Lemma \ref{A20-L1},
this total degree is zero.

Lemma \ref{A20-L2} is proven.
\Qed
\end{proof}

Now we can finish the proof of Proposition \ref{A20}.
By Lemma \ref{A20-L2}, we may assume that $u$ is represented by the class of a smooth surface $S$ with
$c_2(N_{S\subset X})\nump 0$ on $S$.

Let $\pi:\wt{X}\row X$ be the blow-up at $S$, with the special divisor $E\stackrel{\pi_E}{\lrow}S$ and $\rho=c_1(O(1))=-[E]$.
Let $c_1=\pi_E^*c_1(N_{S\subset X})$. By \cite[Prop. 6.7]{Fu}, $\pi^*([S])=(\rho^2+c_1\rho)\cdot [E]$ in $\Ch^*_{k/k}(\wt{X})$,
since $c_2(N_{S\subset X})\nump 0$ on $S$ (and by Corollary \ref{A13N}).
Being a complete intersection on $E$, this class may be represented by a smooth surface
$\wt{S}$ on $E$. Note also, that $\rho(\rho^2+c_1\rho)\cdot [E]=0$ and that the Chern roots of $N_{\wt{S_i}\subset\wt{X}}$ are
$-\rho,\rho,\rho+c_1$, and so, the $\ddeg(c_2(N_{\wt{S}_i\subset\wt{X}})\cdot [\wt{S}_i])$ is still zero.

By Statement \ref{LS-div-conn} and flexibility of $k$,
there is an irreducible divisor $Z$, containing $\wt{S}$, smooth outside an anisotropic closed
subscheme of $\wt{S}$, and such that the restriction $\Ch^*(\wt{X})\twoheadrightarrow\Ch^*(Z\backslash\wt{S})$ is surjective.
Let $\mu:\ov{X}\row\wt{X}$ be the embedded desingularization of $Z$ and $\wt{S}$ (note, that we may assume that no component of
$\wt{S}$ belongs to the singular locus of $Z$, since this locus is anisotropic). Let $\ov{Z}$ and $\ov{S}$ be the proper pre-image of $Z$
and $\wt{S}$, respectively. Then, $\ov{Z}$ is smooth connected and, modulo anisotropic classes, $\mu^*([\wt{S}])$ is represented by
$[\ov{S}]$ supported on $\ov{Z}$. Since the maps $Z\backslash\wt{S}\llow\ov{Z}\backslash\mu^{-1}(\wt{S})\lrow\ov{Z}\backslash\ov{S}$
induce isomorphisms $\Ch^*_{k/k}(Z\backslash\wt{S})\stackrel{=}{\row}\Ch^*_{k/k}(\ov{Z}\backslash\mu^{-1}(\wt{S}))
\stackrel{=}{\low}\Ch^*_{k/k}(\ov{Z}\backslash\ov{S})$, we obtain that the group $\Ch^2_{k/k}(\ov{Z})$ is generated by the image
of $j^*:\Ch^2_{k/k}(\ov{X})\row\Ch^2_{k/k}(\ov{Z})$ and the classes $[\ov{S}_i]$ of all the connected components of $\ov{S}$.
Since $\ov{S}\nump 0$ on $\ov{X}$, the image of $j^*$ is orthogonal to $[\ov{S}]$ on $\ov{Z}$. Finally, on $\ov{Z}$
\begin{equation*}
\begin{split}
&\ddeg([\ov{S}]\cdot[\ov{S}_i])=\ddeg([\ov{S}_i]\cdot[\ov{S}_i])=\ddeg(c_2(N_{\ov{S}_i\subset\ov{Z}})\cdot [\ov{S}_i])=\\
&\ddeg(c_2(N_{\ov{S}_i\subset\ov{X}})\cdot [\ov{S}_i])=\ddeg(c_2(N_{\wt{S}_i\subset\wt{X}})\cdot [\wt{S}_i])=0,
\end{split}
\end{equation*}
since $c_2(N_{\ov{S}_i\subset\ov{X}})=c_2(N_{\ov{S}_i\subset\ov{Z}})+c_1(N_{\ov{S}_i\subset\ov{Z}})\cdot c_1(N_{\ov{Z}\subset\ov{X}})$,
where
$c_1(N_{\ov{Z}\subset\ov{X}})=\mu^*c_1(N_{E\subset\wt{X}})=-\mu^*\rho\in\Ch^1_{k/k}(\ov{X})$ (since $\ov{S}$ coincides with $\wt{S}$,
$\ov{Z}$ coincides with $Z$ and $\ov{X}$ coincides with $X$ modulo anisotropic subvarieties) and $\rho\cdot [\wt{S}_i]=0\in Ch^*(\wt{X})$.

Hence, $[\ov{S}]\nump 0$ on $\ov{Z}$. By Proposition \ref{A18}, the class $[\ov{S}]$ is represented by an anisotropic subvariety
 on $\ov{Z}$, and so, on $\ov{X}$. Proposition \ref{A20} is proven.
\Qed
\end{proof}

Having treated $2$-cycles on 4 and 5-folds, now the general case
follows by an easy induction.

\begin{proposition}
\label{A22}
Let $k$ be a flexible field and $X$ be a smooth projective $k$-variety.
If $u\in\Ch_2(X)$ is $\nump 0$, then $u=0\in\Ch_{k/k;2}(X)$.
\end{proposition}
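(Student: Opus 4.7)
The plan is to induct on $n=\ddim(X)$, mirroring the strategy of Proposition \ref{A17}. The base cases $n\leq 5$ are already settled: $n\leq 2$ is either trivial or covered by Corollary \ref{A13N}, the case $n=3$ reduces to Proposition \ref{A12} (since $\Ch_2(X)=\Ch^1(X)$ for $\ddim X=3$), the case $n=4$ to Proposition \ref{A18} (now $\Ch_2=\Ch^2$), and $n=5$ is precisely Proposition \ref{A20}.

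For the inductive step, fix $n>5$ and $u\in\Ch_2(X)$ with $u\nump 0$. First, by Corollary \ref{AA2} I represent $u$ by the class of a smooth surface $S\subset X$. Next, invoking Statement \ref{LS-div-conn} together with the flexibility of $k$ (exactly as in the concluding step of Proposition \ref{A20}), I find an irreducible divisor $Z'\subset X$ containing $S$, smooth outside an anisotropic closed subscheme, such that $\Ch^*_{k/k}(X)\twoheadrightarrow\Ch^*_{k/k}(Z'\backslash S)$ is surjective in every codimension. Let $\pi:\wt{X}\lrow X$ be the embedded desingularization of $Z'$ and let $\wt{Z}$, $\wt{S}$ be the proper pre-images of $Z'$, $S$ respectively; the introduced special divisors are anisotropic, $\wt{Z}$ is a smooth connected divisor of dimension $n-1$, and modulo anisotropic classes $\pi^*(u)$ is represented by the class $[\wt{S}]$ supported on $\wt{Z}$.

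Writing $\pi^*(u)=j_*(u')$ with $u'\in\Ch_2(\wt{Z})$ for $j:\wt{Z}\hookrightarrow\wt{X}$, the decisive dimensional observation is that for $n>5$ a codimension-$2$ cycle on $\wt{Z}$ has dimension $n-3>2=\ddim\wt{S}$, so by localization $\Ch^2_{k/k}(\wt{Z}\backslash\wt{S})=\Ch^2_{k/k}(\wt{Z})$; combined with the surjectivity above, this yields that $j^*:\Ch^2_{k/k}(\wt{X})\twoheadrightarrow\Ch^2_{k/k}(\wt{Z})$ is surjective. For any $v\in\Ch^2(\wt{Z})$, lifting to $w\in\Ch^2(\wt{X})$ and applying the projection formula gives $\ddeg_{\wt{Z}}(u'\cdot v)=\ddeg_{\wt{X}}(\pi^*(u)\cdot w)=\ddeg_X(u\cdot\pi_*(w))=0$, since $u\nump 0$ on $X$; hence $u'\nump 0$ on $\wt{Z}$. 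By the inductive hypothesis applied to the smooth projective $(n-1)$-fold $\wt{Z}$, I obtain $u'=0\in\Ch_{k/k;2}(\wt{Z})$, whence $\pi^*(u)=j_*(u')=0\in\Ch_{k/k;2}(\wt{X})$, and finally $u=\pi_*\pi^*(u)=0\in\Ch_{k/k;2}(X)$.

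The main obstacle, compared with the analogous 1-cycle argument of Proposition \ref{A17}, is that we now need $\Ch^2$-surjectivity of the restriction to $\wt{Z}$ rather than $\Ch^1$-surjectivity; this is exactly what forces the base of the induction up to dimension 5 (handled by Proposition \ref{A20}), because the dimensional bound $\ddim\wt{S}<\ddim\wt{Z}-2$ only kicks in when $n>5$. For smaller $n$ the missing cycles on $\wt{Z}$ supported on $\wt{S}$ would have to be handled by hand, which is precisely the content of the heavy-duty Chern-class juggling in Proposition \ref{A20}.
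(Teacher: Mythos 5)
Your argument is, in substance, the paper's own proof: induction on $\ddim(X)$ with the base $\leq 5$ supplied by Propositions \ref{A12}, \ref{A18}, \ref{A20}; in the step, placing the class on an irreducible divisor smooth outside an anisotropic subscheme via Statement \ref{LS-div-conn} and flexibility, desingularizing, using the dimension count $\ddim(\wt{Z})-2>2=\ddim(\wt{S})$ to get $\Ch^2_{k/k}(\wt{Z})=\Ch^2_{k/k}(\wt{Z}\backslash\wt{S})$ and hence surjectivity of the restriction from the ambient variety, transferring $\nump 0$ to $\wt{Z}$ by the projection formula (anisotropic correction terms being numerically trivial), and invoking the inductive hypothesis on the $(n-1)$-fold. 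The one point where your write-up does not match the hypotheses of the tool you quote is the first reduction: Statement \ref{LS-div-conn} does not produce, out of nothing, a divisor containing $S$; its input is a divisor $E\supset S$ which already has strict normal crossings outside an anisotropic subscheme, and its output $Z$ satisfies $[Z]=[E]$ and the stated surjectivity. Invoked directly on $X$ for the bare surface $S$, as you do, there is nothing to feed into it.

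The paper closes exactly this gap by first blowing up along the smooth surface: on $\wt{X}=\op{Bl}_S(X)$ the class $\pi^*(u)$ is supported on the smooth exceptional divisor $E$, represented there by a (possibly singular) surface $S'$, and Statement \ref{LS-div-conn} is then applied to the pair $(E,S')$; everything afterwards (embedded desingularization of the resulting $Z$, anisotropy of the new exceptional loci, the codimension count, the projection-formula computation, the inductive call on $\ov{Z}$) is as you wrote it. One could instead argue that a divisor containing $S$ and smooth outside an anisotropic locus exists on $X$ itself (a Bertini-type argument using $\op{codim}(S)=n-2>\ddim(S)$ over a purely transcendental extension, then flexibility), but that argument is neither in the paper nor in your text, so as written this step is a gap — though one repaired by a single line. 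A minor slip besides: for $n=2$ one has $\Ch_2=\Ch^0$, so Corollary \ref{A13N} (zero-cycles) is not the relevant citation; that case is simply trivial, as in the paper.
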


\begin{proof}
Induction on $n=\ddim(X)$.
The case $n=2$ is trivial, while the cases $n=3,4,5$ are covered by Propositions \ref{A12}, \ref{A18} and \ref{A20}, respectively.
This gives the base of induction.

\noindent
{\bf \un{(step)}} Let $\ddim(X)>5$. By Corollary \ref{AA2}, we may assume that $u$ is represented by a class of a smooth
(possibly, disconnected) surface $S$ on $X$. Let $\pi:\wt{X}\row X$ be the blow-up at $S$, with the special divisor $E$.
Then $\pi^*(u)$ has support on a smooth divisor $E$, and is represented there by a class of some surface $S'$ (possibly, non-smooth).
By Statement \ref{LS-div-conn} and flexibility of $k$,
there is an irreducible divisor $Z$, containing $S'$, smooth outside an anisotropic closed
subscheme of $S'$, and such that the restriction $\Ch^*(\wt{X})\twoheadrightarrow\Ch^*(Z\backslash S')$ is surjective.

Let $\mu:\ov{X}\row\wt{X}$ be the embedded resolution of singularities of $Z$, with $\ov{Z}$ and $\ov{S}$ - the proper transforms
of $\ov{Z}$ and $S'$, respectively. Then $\ov{Z}$ is a smooth connected divisor on $\ov{X}$ and $\mu^*([S'])$ is represented by
$[\ov{S}]\in\Ch^*_{k/k}(\ov{X})$ supported on $\ov{Z}$ (since the remaining ingredients are anisotropic).
Since $\ov{Z}$, respectively, $\ov{S}$,
coincides with $Z$, respectively, $S'$, modulo anisotropic subvarieties, the restriction
$\Ch^2_{k/k}(\ov{X})\twoheadrightarrow\Ch^2_{k/k}(\ov{Z}\backslash\ov{S})=\Ch^2_{k/k}(\ov{Z})$ is surjective
(note, that $\ddim(\ov{Z})\geq 5$). But $[\ov{S}]\nump 0$ on $\ov{X}$, hence, it is $\nump 0$ on $\ov{Z}$ as well.
By inductive assumption, $[\ov{S}]$ is represented by the class of an anisotropic surface on $\ov{Z}$, and so, on $\ov{X}$.
Induction step and Proposition \ref{A22} are proven.
\Qed
\end{proof}

\subsubsection{Co-dimension $2$ cycles on a $5$-fold}

The last remaining case of Theorem 
\ref{thm-conj5-1-2} is that of the co-dimension $2$ cycles 
on a 5-fold.
This is, by far, the hardest one, which will require various new tools and extensive computations.

\begin{proposition}
\label{A23}
Let $k$ be a flexible field and $X$ be a smooth projective variety of dimension $5$. If
$u\in\Ch^2(X)$ is $\nump 0$, then $u=0\in\Ch^2_{k/k}(X)$.
\end{proposition}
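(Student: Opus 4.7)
The plan is to follow the pattern of Propositions \ref{A18} and \ref{A20}: after a suitable blow-up $\mu\colon\ov{X}\row X$, represent (the pull-back of) $u$ by the class $[\ov{S}]$ of a smooth connected 3-fold $\ov{S}$ contained in a smooth connected divisor $\ov{Z}\subset\ov{X}$, in such a way that $[\ov{S}]\nump 0$ already on $\ov{Z}$. Since then $\ov{Z}$ is a smooth projective 4-fold and $[\ov{S}]\in\Ch^1(\ov{Z})$ is a \emph{divisor}, Proposition \ref{A12} will force $[\ov{S}]$ to be anisotropic on $\ov{Z}$, hence on $\ov{X}$; pushing forward then gives $u=0$ in $\Ch^2_{k/k}(X)$.

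First, using Corollary \ref{AA2}, I would represent $u$ by the class of a disjoint union $S=\coprod_iS_i$ of smooth complete intersections of very ample divisors, with components meeting transversally. Then, reproducing for 3-folds the mechanism of Lemmas \ref{A18-L2} and \ref{A18-L3} (blowing up in all the $S_i$ simultaneously and reassembling the proper transforms inside the exceptional $\pp^1$-bundles via the sign-convention trick on $\rho_{\{i,j\}}$), I would merge the components into a single smooth connected 3-fold $\hat{S}$ contained in a smooth connected divisor $\hat{Z}\subset\hat{X}$, while keeping careful bookkeeping of the relevant Chern-class monomials $c_1^a c_2^b(N_{\hat{S}\subset\hat{X}})\cdot[\hat{S}]$. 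As in the 4-fold case, the prime $p=2$ must be handled separately already at this stage, using the symmetrization $\sum x_iy_i \rightsquigarrow \sum(x_iy_i+x_i(x_i+y_i)+y_i(x_i+y_i)+(x_i+y_i)^2)$, in order to arrange $\ddeg(c_1^3(N_{S\subset X})\cdot[S])\equiv 0\pmod 2$.

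Next, along the lines of Lemmas \ref{A18-L5}--\ref{A18-L6}, I would successively eliminate numerically the remaining Chern-class obstructions, namely $c_1^2(N_{S\subset X})\cdot[S]$, $c_1(N_{S\subset X})\cdot[S]$, and $c_2(N_{S\subset X})\cdot[S]$. For $p=2,3$, Proposition \ref{AB1} applied to the reduced power operations $P^1, P^2$ (which on a rank-2 normal bundle correspond to $c_1$ and $c_1^2+c_2$) handles several of these automatically, since $[S]\nump 0$ implies $P^i([S])\nump 0$. The remaining primes require explicit blow-ups along smooth auxiliary subvarieties on $S$ of the form $\tfrac{1}{m}c_1(N_{S\subset X})\cdot\lambda$ for small integers $m$, with the resulting Chern classes tracked via \cite[Thm 6.7]{Fu}, exactly as in the proof of Proposition \ref{A18}.

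Finally, Statement \ref{LS-div-conn} together with the flexibility of $k$ allows enlarging to an irreducible divisor $Z'\supset S$, smooth outside an anisotropic subset and with the restriction $\Ch^*_{k/k}(X)\twoheadrightarrow\Ch^*_{k/k}(Z'\setminus S)$ surjective; embedded resolution then produces the desired $\mu\colon\ov{X}\row X$ with smooth connected $\ov{Z}\supset\ov{S}$, and $\mu^*u$ represented by $[\ov{S}]$ modulo anisotropic classes. By localization, $\Ch^3_{k/k}(\ov{Z})$ is generated by $j^*\Ch^3_{k/k}(\ov{X})$ and the push-forward image $i_*\Ch^2(\ov{S})$. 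The pairing of $[\ov{S}]$ with the first family vanishes since $[\ov{S}]\nump 0$ on $\ov{X}$, and the pairing with $i_*(\alpha)$ equals $\ddeg_{\ov{S}}(c_1(N_{\ov{S}\subset\ov{Z}})\cdot\alpha)$ by the projection formula; the adjunction identity $c_1(N_{\ov{S}\subset\ov{Z}})=c_1(N_{\ov{S}\subset\ov{X}})-c_1(N_{\ov{Z}\subset\ov{X}})|_{\ov{S}}$, combined with the fact that $c_1(N_{\ov{Z}\subset\ov{X}})|_{\ov{S}}=-\mu^*\rho|_{\ov{S}}$ is annihilated against classes from $\ov{S}$ (exactly as in the last calculation of Proposition \ref{A20}), reduces this vanishing to the Chern-class conditions engineered above. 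The hard part will be precisely this combinatorics: simultaneously arranging numerical triviality of all the mixed monomials in $c_1,c_2$ of $N_{S\subset X}$ while preserving connectedness of $S$ and $Z$ through the iterated blow-ups, and correctly branching the argument on the prime $p$ so that the Steenrod-operation inputs (for $p=2,3$) and the geometric blow-up inputs (for larger $p$) together cover all the required vanishings.
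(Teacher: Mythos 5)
Your middle steps (merging the complete-intersection components by blow-ups, killing characteristic numbers, branching on the prime) do follow the spirit of the paper's argument, but your endgame contains a genuine gap. You want $\mu^*u=[\ov{S}]$ to be $\nump 0$ on the $4$-fold $\ov{Z}$, so that Proposition \ref{A12} applies there. By your own localization argument, $\Ch^3_{k/k}(\ov{Z})$ is generated, modulo the image of $j^*\Ch^3_{k/k}(\ov{X})$, by $i_*\Ch_1(\ov{S})$, i.e. by push-forwards of \emph{arbitrary} curves on $\ov{S}$. Pairing $[\ov{S}]$ with $i_*\alpha$ gives $\ddeg_{\ov{S}}(c_1(N_{\ov{S}\subset\ov{Z}})\cdot\alpha)$, so you need the divisor class $c_1(N_{\ov{S}\subset\ov{Z}})$ to be numerically trivial \emph{on $\ov{S}$ itself}, i.e. against every curve on $\ov{S}$. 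This is not a finite list of characteristic numbers: the conditions you engineer, $\ddeg(c_1^ac_2^b(N_{S\subset X})\cdot[S])=0$, only control pairings with cycles restricted from the ambient variety. The appeal to "the last calculation of Proposition \ref{A20}" does not transfer: there the class supported on the divisor had codimension $2$ in $Z$, so the only extra generators in the complementary dimension were the finitely many fundamental classes $[\ov{S}_i]$, and the identity $\rho\cdot[\wt{S}_i]=0$ was used only against those; here the extra generators are all of $\Ch_1(\ov{S})$, which characteristic numbers cannot see. Note also that the controllable direction via Statement \ref{LS-div-conn} is $\Ch^1_{k/k}$ of the enlarged divisor (generated by restrictions from $X$ plus $[\ov{S}]$), which is exactly what the paper exploits in Lemma \ref{A23-L3} to get the much weaker statement $c_1^2(N_{S\subset X})\nump 0$ on $S$; the group $\Ch_1$ of that divisor is not controllable this way.

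This is precisely where the paper takes a different route: it never attempts to make $[\ov{S}]\nump 0$ on $\ov{Z}$. Instead, Lemmas \ref{A23-L1}--\ref{A23-L4} arrange that \emph{all} the classes $c_1^m(N_{S\subset X})\cdot[S]$, $m\geq 0$, are $\nump 0$ on $X$ (with $c_1c_2$, and for $p=2$ also $c_1^2$, kept numerically trivial as auxiliary conditions needed for the merging steps), and the conclusion is then Corollary \ref{LS-r2-nump-div}, i.e. the chain-deformation machinery of Statement \ref{LS-porozhd} and Corollary \ref{LS-nump-anis}: after deforming the chain $S\subset Z\subset X$, the image of $\Ch^*_{k/k}$ of the deformed $S$ in $\Ch^*_{k/k}(X)$ is generated, as a $\Ch^*_{k/k}(X)$-module, by the monomials $c_1^m(N_{S\subset X})\cdot[S]$ alone, so their numerical triviality forces anisotropy. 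To salvage your outline you would have to either prove an analogue of that statement or produce some other mechanism forcing $c_1(N_{\ov{S}\subset\ov{Z}})$ to be numerically trivial on $\ov{S}$ against all curves; as written, that reduction is the missing (and essential) step.
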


\begin{proof}
By Corollary \ref{AA2}, we may assume that $u$ is represented by the class $[S]=\sum_ix_iy_i$, where $x_i,y_i$ are classes of very ample
divisors. In particular, all the components of $S$ are smooth and transversal to each other.

We start by eliminating (numerically) certain zero-dimensional characteristic classes of $S$. This needs to be done still 
at the level of the union of complete intersections (before passing to a single component). In the case of a prime $2$, we
also need to make numerically trivial the square of the 
$1$-st Chern class of $S$ at this stage.

\begin{lem}
\label{A23-L1}
We may assume that $u$ is represented by the class $[S]$, where components of $S$ are smooth complete intersections
transversal to each other, with
$c_1^3(N_{S\subset X})\cdot [S]\nump 0$ and $c_1c_2(N_{S\subset X})\cdot [S]\nump 0$ on $X$. For $p=2$, we may assume, in addition,
that $c_1^2(N_{S\subset X})\cdot [S]\nump 0$ on $X$.
\end{lem}

\begin{proof}
We need to treat separately the case $p=2$ and that of odd primes.

\noindent
${\mathbf{\un{(p=2)}}}$ Replace $[S]=\sum_i x_iy_i$ by
$[S']=\sum_i\left(x_iy_i+(x_i+y_i)x_i+(x_i+y_i)y_i+(x_i+y_i)(x_i+y_i)\right)$.
Then
\begin{equation*}
c_1^3(N_{S'\subset X})\cdot [S']=\sum_i(x_iy_i(x_i+y_i)^3+(x_i+y_i)x_iy_i^3+(x_i+y_i)y_ix_i^3)=0\in\Ch^5(X).
\end{equation*}
On the other hand, $(c_1^3+c_1c_2)(N_{S'\subset X})\cdot [S']=P^2P^1([S'])$, where $P^l$ is the reduced power operation (modulo $2$).
By Proposition \ref{AB1}, since $[S']\nump 0$ on $X$, so is $P^2P^1([S'])$. Thus, $c_1c_2(N_{S'\subset X})\cdot [S']\nump 0$ on $X$.
Finally,
$$
c_1^2(N_{S'\subset X})\cdot [S']=\sum_i(x_iy_i(x_i+y_i)^2+(x_i+y_i)x_iy_i^2+(x_i+y_i)y_ix_i^2)=0\in\Ch^4(X).
$$

\noindent
${\mathbf{\un{(p\neq 2)}}}$
In the case of an odd prime, we need to complement the above method with blowing certain zero-cycles on $S$. This keeps the result in the form of a union of complete intersections, while
modifying the degrees of the needed zero-dimenional Chern classes. How exactly it does it is described by the following result.

\begin{sublem}
\label{A23-L1-SL1}
Let $\pi:\wt{X}\row X$ be a blow-up at a smooth point of degree $1$ on $S$. Then
$\pi^*([S])$ may be represented by the class of $S'$, where the components of $S'$ are smooth complete intersections
(transversal to each other), and
$\ddeg(c_1c_2(N_{S'\subset\wt{X}})\cdot [S'])=\ddeg(c_1c_2(N_{S\subset X})\cdot [S])-2$, while
$\ddeg(c_1^3(N_{S'\subset\wt{X}})\cdot [S'])=\ddeg(c_1^3(N_{S\subset X})\cdot [S])-8$.
\end{sublem}

\begin{proof}
Clearly, we may assume that $S$ consists of a single smooth complete intersection. Let $E=\pp^4$ be the special divisor of
$\pi$ and $\rho=c_1(O(1))=-[E]$.
By \cite[Thm 6.7]{Fu}, $\pi^*([S])=[\wt{S}]+[F]$, where $\wt{S}$ is the proper transform of $S$ and $F=\pp^3$ is a divisor on $E$ given by
$\rho$. We can make $\wt{S}$ and $F$ transversal. If $[S]=xy$, then $\wt{S}$ is a complete intersection $(x+\rho)(y+\rho)$, and $F$ is a
complete intersection $-\rho\cdot\rho$. On $\wt{S}$ we have:
$\rho\cdot x=\rho\cdot y=0$, while $\rho^3$ is the minus class of a point. Then
\begin{equation*}
\begin{split}
&\ddeg\left(c_1c_2(N_{\wt{S}\subset\wt{X}})\cdot [\wt{S}]+c_1c_2(N_{F\subset\wt{X}})\cdot [F]\right)=\\
&\ddeg([\wt{S}](x+\rho)(y+\rho)(x+y+2\rho))=\ddeg([\wt{S}]\cdot (xy(x+y)+2\rho^3))=\ddeg(c_1c_2(N_{S\subset X})\cdot [S])-2.
\end{split}
\end{equation*}
Similarly,
\begin{equation*}
\begin{split}
&\ddeg\left(c_1^3(N_{\wt{S}\subset\wt{X}})\cdot [\wt{S}]+c_1^3(N_{F\subset\wt{X}})\cdot [F]\right)=\\
&\ddeg([\wt{S}](x+y+2\rho)^3)=\ddeg([\wt{S}]\cdot ((x+y)^3+8\rho^3))=\ddeg(c_1^3(N_{S\subset X})\cdot [S])-8.
\end{split}
\end{equation*}
\Qed
\end{proof}

Denote as $c_1^3$ the $\ddeg(c_1^3(N_{S\subset X})\cdot [S])$ and similar for $c_1c_2$.
For $[S]=\sum_ix_iy_i$, let us denote $[S_{exp}]=\sum_i\left(x_iy_i-(x_i+y_i)x_i-(x_i+y_i)y_i+(x_i+y_i)(x_i+y_i)\right)$,
which represents the same class.
This operation affects the degrees of Chern classes as follows.

\begin{sublem}
\label{A23-L1-SL2}
The substitution of $[S]$ by $[S_{exp}]$ acts on characteristic numbers as follows:
$$
\begin{pmatrix}
(c'_1)^3\\
c'_1c'_2
\end{pmatrix}
=
\begin{pmatrix}
20 & -2\\
5 & 1
\end{pmatrix}
\begin{pmatrix}
c_1^3\\
c_1c_2
\end{pmatrix}.
$$
\end{sublem}

\begin{proof}
It is sufficient to treat the case of a single complete intersection $[S]=xy$. Then
\begin{equation*}
\begin{split}
&{c'_1}^3-c_1^3=\ddeg((x+y)^2(2(x+y))^3-(x+y)x(2x+y)^3-(x+y)y(2y+x)^3)=\\
&\hspace{5mm}\ddeg(xy(x+y)(19(x+y)^2-2xy))=19c_1^3-2c_1c_2,\,\,\,\text{and}\\
&c'_1c'_2-c_1c_2=\ddeg((x+y)^2 2(x+y)^3-(x+y)^2x^2(2x+y)-(x+y)^2y^2(2y+x))=\\
&\hspace{5mm}\ddeg(5xy(x+y)^3)=5c_1^3.
\end{split}
\end{equation*}
\Qed
\end{proof}

Now we can combine both methods.
Substituting $S$ by $S_{exp}$ and blowing up the zero-cycles $\frac{5}{2}c_1^3(N_{S\subset X})$ and $\frac{1}{2}c_1c_2(N_{S\subset X})$
on it (note, that $p\neq 2$), we obtain $[S'']=\pi^*([S])$ such that all the components of $S''$ are still smooth complete intersections
transversal to each other, and by Sublemma \ref{A23-L1-SL1},
$$
\begin{pmatrix}
(c''_1)^3\\
c''_1c''_2
\end{pmatrix}
=
\begin{pmatrix}
0 & -6\\
0 & 0
\end{pmatrix}
\begin{pmatrix}
c_1^3\\
c_1c_2
\end{pmatrix}.
$$
Applying this procedure twice, we obtain $c_1^3=0$ and $c_1c_2=0$.
Lemma \ref{A23-L1} is proven.
\Qed
\end{proof}

The next step is to make $S$ into a single connected component. This will be possible due to the preparations we made
(trivial zero-dimensional Chern classes). 

\begin{lem}
\label{A23-L2}
We may assume that $S$ is smooth connected with $c_1^3(N_{S\subset X})\cdot [S]\nump 0$,
$c_1c_2(N_{S\subset X})\cdot [S]\nump 0$. For $p=2$, we may assume, in addition, that $c_1^2(N_{S\subset X})\cdot [S]\nump 0$
\end{lem}

\begin{proof}
By Lemma \ref{A23-L1} we may assume that $S=\cup_iS_i$ consists of smooth transversal complete intersections and the needed
conditions on the characteristic classes are satisfied.
Let $\pi:\wt{X}\row X$ be the blow-up at all intersections $S_i\cap S_j$, $i\neq j$.
Let $E_{\{i,j\}}\stackrel{\pi_{\{i,j\}}}{\lrow}S_i\cap S_j$ be the respective component of the special divisor, $\rho_{\{i,j\}}=-[E_{\{i,j\}}]$ and $\rho=\sum_{\{i,j\}}\rho_{\{i,j\}}$.
By \cite[Thm 6.7]{Fu}, $\pi^*([S])=[\wt{S}]+[F]+[G]$, where $\wt{S}=\coprod_i\wt{S}_i$ is the proper transform of $S$,
while $F=\coprod_{\{i,j\}}F_{\{i,j\}}$ where
$$
[F_{\{i,j\}}]=[E_{\{i,j\}}]\cdot(\rho_{\{i,j\}}+\pi_{\{i,j\}}^*(c_1(N_{S_i\subset X})+c_1(N_{S_j\subset X})))
\hspace{3mm}\text{and}\hspace{3mm} [G]=(-\rho)\cdot \rho.
$$
Let $S'=\wt{S}\cup F\cup G$. Here $\rho_{\{i,j\}}$ satisfies:
$\rho_{\{i,j\}}^4+\rho_{\{i,j\}}^3(x_i+y_i+x_j+y_j)+x_iy_ix_jy_j=0$, where $[S_k]=x_ky_k$.
Of course, one can get rid of the $G$-term by considering
$$
[\wt{F}_{\{i,j\}}]=[E_{\{i,j\}}]\cdot(2\rho_{\{i,j\}}+\pi_{\{i,j\}}^*(c_1(N_{S_i\subset X})+c_1(N_{S_j\subset X}))).
$$
This will work for odd primes. But for $p=2$, this term really makes a difference.

The following result computes the degrees of characteristic
classes of $\wt{S}$, $F$, $G$ and their intersections in terms
of those of $S$.

\begin{sublem}
\label{A23-L2-SL1}
\begin{itemize}
\item[$(1)$] $\ddeg(c_1c_2(N_{\wt{S}\subset\wt{X}})\cdot [\wt{S}])=2\cdot\ddeg(c_1c_2(N_{S\subset X})\cdot [S])$;
\item[$(2)$] $\ddeg(c_1c_2(N_{F\subset\wt{X}})\cdot [F])=\ddeg(c_1c_2(N_{S\subset X})\cdot [S])$;\,\,\,
$\ddeg(c_1c_2(N_{G\subset\wt{X}})\cdot [G])=0$;
\item[$(3)$] $\ddeg(c_1^3(N_{S'\subset\wt{X}})\cdot [S'])=\ddeg((c_1^3+4c_1c_2)(N_{S\subset X})\cdot [S])$;
\item[$(4)$] $\ddeg(c_1(N_{\wt{S}\subset\wt{X}})\cdot[\wt{S}]\cdot[F])=-3\cdot\ddeg(c_1c_2(N_{S\subset X})\cdot [S])$;
\item[$(5)$] $\ddeg(c_1(N_{F\subset\wt{X}})\cdot[\wt{S}]\cdot[F])=-2\cdot\ddeg(c_1c_2(N_{S\subset X})\cdot [S])$;
\end{itemize}
\end{sublem}

\begin{proof}
Let $\rho_i=\sum_{j\neq i}\rho_{\{i,j\}}$. The Chern roots of $\wt{S_i}$ are $\rho_i+x_i, \rho_i+y_i$, while the Chern roots
of $F_{\{i,j\}}$ are $-\rho_{\{i,j\}}, \rho_{\{i,j\}}+x_i+y_i+x_j+y_j$. Denote: $a_i=x_i+y_i$, $b_i=x_iy_i$, and $a_{\{i,j\}}=a_i+a_j$,
$b_{\{i,j\}}=b_ib_j$.
Using the equation for $\rho_{\{i,j\}}$
and the fact that $\rho_{\{i,j\}}$, multiplied by any monomial of degree $\geq 2$ in $x$ and $y$'s, is zero, we get:
\begin{equation*}
\begin{split}
&\ddeg(c_1c_2(N_{\wt{S}\subset\wt{X}})\cdot[\wt{S}])=\ddeg({
{\sum_i}}(\rho_i+x_i)^2(\rho_i+y_i)^2(2\rho_i+a_i))=\\
&\ddeg({
{\sum_i}}(2\rho_i^5+5\rho_i^4a_i+b_i^2a_i))=
\ddeg(2{
{\sum_i}}b_i^2a_i-{
{\sum_i}}b_ia_i\cdot({
{\sum_j}}b_j))=
2\cdot\ddeg(c_1c_2(N_{S\subset X})\cdot [S]),
\end{split}
\end{equation*}
since $\sum_jx_jy_j=[S]\nump 0$ on $X$. Analogously,
\begin{equation*}
\begin{split}
&\ddeg(c_1c_2(N_{F\subset\wt{X}})\cdot[F])=
\ddeg({
{\sum_{\{i,j\}}}}\rho_{\{i,j\}}^2(\rho_{\{i,j\}}+a_{\{i,j\}})^2a_{\{i,j\}})=
\ddeg({
{\sum_{\{i,j\}}}}\rho_{\{i,j\}}^4a_{\{i,j\}})=\\
&\ddeg({
{\sum_i}}{
{\sum_{j\neq i}}}-b_ib_ja_i)=
\ddeg({
{\sum_i}}b_i^2a_i-({
{\sum_i}}b_ia_i)({
{\sum_j}}b_j))=
\ddeg(c_1c_2(N_{S\subset X})\cdot [S]),
\end{split}
\end{equation*}
again, since $[S]\nump 0$ on $X$. Using the same properties, we obtain:
\begin{equation*}
\begin{split}
&\ddeg(c_1^3(N_{\wt{S}\subset\wt{X}})\cdot[\wt{S}])=\ddeg({
{\sum_i}}(\rho_i+x_i)(\rho_i+y_i)(2\rho_i+a_i)^3)=
\ddeg({
{\sum_i}}(8\rho_i^5+20\rho_i^4a_i+b_ia_i^3))=\\
&\ddeg{
{\sum_i}}(4b_i^2a_i+b_ia_i^3)
-4\ddeg({
{\sum_i}}b_ia_i\cdot{
{\sum_j}}b_j)=
\ddeg((4c_1c_2+c_1^3)(N_{S\subset X})\cdot [S]),\,\,\,\text{and}\\
&\ddeg(c_1^3(N_{F\subset\wt{X}})\cdot[F])=
\ddeg{
{\sum_{\{i,j\}}}}-\rho_{\{i,j\}}(\rho_{\{i,j\}}+a_{\{i,j\}})a_{\{i,j\}}^3=0.
\end{split}
\end{equation*}
Clearly, $\ddeg(c_1^3(N_{G\subset\wt{X}})\cdot [G])=0$ and $\ddeg(c_1c_2(N_{G\subset\wt{X}})\cdot [G])=0$, since
$c_1(N_{G\subset\wt{X}})=0$.
As a result, $\ddeg(c_1^3(N_{S'\subset\wt{X}})\cdot [S'])=\ddeg((c_1^3+4c_1c_2)(N_{S\subset X})\cdot [S])$.
Finally, $\ddeg(c_1(N_{\wt{S}\subset\wt{X}})\cdot[\wt{S}]\cdot[F])=$
\begin{equation*}
\begin{split}
&\ddeg{
{\sum_{\{i,j\}}}}
\big(c_1(N_{\wt{S}_i\subset\wt{X}})[\wt{S}_i]+c_1(N_{\wt{S}_j\subset\wt{X}})[\wt{S}_j]\big)
(-\rho_{\{i,j\}})(\rho_{\{i,j\}}+a_{\{i,j\}})=\\
&\ddeg({
{\sum_{\{i,j\}}}}-\rho_{\{i,j\}}^3(4\rho_{\{i,j\}}+3a_{\{i,j\}})(\rho_{\{i,j\}}+a_{\{i,j\}}))=
\ddeg({
{\sum_{\{i,j\}}}}-\rho_{\{i,j\}}^4(4\rho_{\{i,j\}}+7a_{\{i,j\}}))=\\
&\ddeg{
{\sum_{\{i,j\}}}}(4\rho_{\{i,j\}}^4+7b_{\{i,j\}})a_{\{i,j\}}=
3\ddeg{
{\sum_{\{i,j\}}}}b_{\{i,j\}}a_{\{i,j\}}=-3\ddeg(c_1c_2(N_{S\subset X})\cdot [S]);
\hspace{5mm}\text{and}\hspace{5mm}
\end{split}
\end{equation*}
\begin{equation*}
\begin{split}
&\ddeg(c_1(N_{F\subset\wt{X}})\cdot[\wt{S}]\cdot[F])=
\ddeg{
{\sum_{\{i,j\}}}}
\big((\rho_i+x_i)(\rho_i+y_i)+(\rho_j+x_j)(\rho_j+y_j)\big)
(-\rho_{\{i,j\}})(\rho_{\{i,j\}}+a_{\{i,j\}})a_{\{i,j\}}=\\
&\ddeg{
{\sum_{\{i,j\}}}}(-2\rho_{\{i,j\}}^4a_{\{i,j\}})=-2\ddeg(c_1c_2(N_{S\subset X})\cdot [S]).
\end{split}
\end{equation*}
\Qed
\end{proof}

For the prime $2$ we also need to control the square of the $1$-st Chern class of $S$.

\begin{sublem}
\label{A23-L2-SL1andhalf}
For $p=2$, we have:
$$
c_1^2(N_{\wt{S}\subset\wt{X}})\cdot [\wt{S}]=\pi^*c_1^2(N_{S\subset X})\cdot[S]\nump 0\hspace{3mm}\text{and}\hspace{3mm} c_1^2(N_{F\subset\wt{X}})\cdot [F]=0.
$$
\end{sublem}

\begin{proof}
Using the fact that the centers of the blow-up $\pi$ were $1$-dimensional, we obtain:
\begin{equation*}
\begin{split}
&c_1^2(N_{\wt{S}\subset\wt{X}})\cdot [\wt{S}]=\sum_i(\rho_i+x_i)(\rho_i+y_i)(x_i+y_i)^2=\sum_ix_iy_i(x_i+y_i)^2=
\pi^*c_1^2(N_{S\subset X})\cdot [S];\\
&c_1^2(N_{F\subset\wt{X}})\cdot [F]=\sum_{\{i,j\}}(-\rho_{\{i,j\}})(\rho_{\{i,j\}}+x_i+y_i+x_j+y_j)(x_i+y_i+x_j+y_j)^2=0.
\end{split}
\end{equation*}
\Qed
\end{proof}

We may assume that our class is represented by the class of $S'=\wt{S}\cup F\cup G$, where $\wt{S}$, $F$ and $G$
are smooth (possibly, disconnected)
and transversal to each other (note, that we don't have triple intersections of $S_i$'s),
and $[G]=(-\rho)\cdot\rho$, for some divisor $\rho$.
Moreover, both $\wt{S}$ and $F$ have trivial $c_1^3$ and $c_1c_2$ characteristic numbers and
$\ddeg(c_1(N_{\wt{S}\subset\wt{X}})\cdot[\wt{S}\cap F])=0$ and $\ddeg(c_1(N_{F\subset\wt{X}})\cdot[\wt{S}\cap F])=0$.
Since components of $\wt{S}$, respectively $F$, are disjoint,
by Statement \ref{LS-codim2-single}, there exists a blow-up $\mu:\ov{X}\row\wt{X}$ such that $\mu^*([\wt{S}])$ and $\mu^*([F])$ are
represented by the classes $[\ov{S}]$ and $[\ov{F}]$ of smooth connected transversal subvarieties, such that
$\ov{S}\cap\ov{F}$ is connected, with trivial $c_1^3$ and $c_1c_2$ characteristic numbers for both $\ov{S}$ and $\ov{F}$ and
with $\ddeg(c_1(N_{\ov{S}\subset\ov{X}})\cdot[\ov{S}\cap\ov{F}])=0$ and $\ddeg(c_1(N_{\ov{F}\subset\ov{X}})\cdot[\ov{S}\cap\ov{F}])=0$.
For $p=2$, we have, in addition, $c_1^2(N_{\ov{S}\subset\ov{X}})\cdot[\ov{S}]\nump 0$ and
$c_1^2(N_{\ov{F}\subset\ov{X}})\cdot[\ov{F}]\nump 0$.
As a next step, we will combine $\ov{S}\cup\ov{F}$ into a single component. We start with the following general statement about co-dimension $2$ subvarieties on a variaty of an arbitrary dimension.

\begin{sublem}
\label{A23-L2-SL2}
Let $Y=\cup_iY_i$ be a divisor with strict normal crossings on $X$ such that $Y_i$'s and all the intersections
$Y_{\{i,j\}}=Y_i\cap Y_j$, for $i\neq j$, are connected.
Let $S=\cup_iS_i$ be a union of smooth transversal components, where $S_i\subset Y_i$ are divisors.
Suppose, that $[S_i\cap S_j]\nump 0$ on $Y_{\{i,j\}}$. Then $[S]=[S']$, where $S'=\cup_iS_i'$, with $S'_i\subset Y_i$ smooth connected
and transversal to each other, and all the intersections $S'_i\cap S'_j$ anisotropic.
This procedure doesn't change the characteristic classes of $N_{S\subset X}$ in $\Ch^*(X)$.
\end{sublem}

\begin{proof}
Adding to $S_i$ a $p$-multiple of a very ample divisor on $Y_i$, we may assume that $S_i$ is given by a very ample divisor on $Y_i$
(this doesn't change the characteristic classes (mod $p$)). Let $S'_i$ be the generic representative of the linear system $|S_i|$.
Then $S'_i$ is connected and, by Statement \ref{alg-geom-Lef}, the restriction
$\Ch_2(Y_i\cap Y_j)\twoheadrightarrow\Ch_0(S'_i\cap S'_j)$ is surjective.
This implies that $S'_i\cap S'_j$ is anisotropic, since $[S_i\cap S_j]\nump 0$ on $Y_i\cap Y_j$. Since $[S'_i\row Y_i]$ is cobordant
to $[S_i\row Y_i]$ in $\Omega^*(Y_i)$, all the characteristic classes are preserved.
Finally, since $k$ is flexible, we may assume that $S'$ is defined over $k$.
\Qed
\end{proof}

Using this result we can make $\ov{S}\cup\ov{F}$ into a 
single smooth component with the needed Chern classes 
numerically trivial. This is done with the help of the following
statement, specific to dimension 5.

\begin{sublem}
\label{A23-L2-SL4}
Let $S=S_1\cup S_2$ with $S_1,S_2$ smooth connected transversal to each other and connected $S_{\{1,2\}}=S_1\cap S_2$.
\begin{itemize}
\item[$(1)$]
Suppose, $\ddeg(c_1(N_{S_i\subset X})\cdot [S_{\{1,2\}}])=0$,
$\ddeg(c_1c_2(N_{S_i\subset X})\cdot [S_i])=0$, for $i=1,2$, $\ddeg(c_1^3(N_{S\subset X})\cdot [S])=0$, and for $p=2$,
$c_1^2(N_{S\subset X})\cdot [S]\nump 0$ on $X$. Then there exists a blow-up $\eps:\hat{X}\row X$ such that $\eps^*([S])$
is represented by the class of a smooth connected subvariety $\hat{S}$ with
$\ddeg(c_1c_2(N_{\hat{S}\subset\hat{X}})\cdot [\hat{S}])=0$, $\ddeg(c_1^3(N_{\hat{S}\subset\hat{X}})\cdot [\hat{S}])=0$, and for $p=2$,
$c_1^2(N_{\hat{S}\subset\hat{X}})\cdot [\hat{S}]\nump 0$ on $\hat{X}$.
\item[$(2)$] If $[S]\nump 0$ and $\ddeg(c_1c_2(N_{S_i\subset X})\cdot [S_i])=0$, then
$\ddeg(c_1(N_{S_i\subset X})\cdot[S_{\{1,2\}}])=0$.
\end{itemize}
\end{sublem}

\begin{proof}
(2) Denote $N_i=N_{S_i\subset X}$. Then $\ddeg(c_1(N_1)\cdot [S_{\{1,2\}}])=\ddeg(c_1(N_1)\cdot [S_1]\cdot [S_2])=
\ddeg(c_1(N_1)\cdot [S_1]\cdot [S])-\ddeg(c_1(N_1)\cdot [S_1]\cdot [S_1])=-\ddeg(c_1c_2(N_1)\cdot [S_1])=0$, as $[S]\nump 0$,
and similarly, $\ddeg(c_1(N_2)\cdot [S_{\{1,2\}}])=0$.

(1)
Let $\pi:\wt{X}\row X$ be the blow-up in the components of $S$, with the components $E_i\stackrel{\pi_i}{\lrow}S_i$
of the special divisor, and $\rho_i=-[E_i]$. Note, that $E_1,E_2$ and $E_{\{1,2\}}=E_1\cap E_2$ are connected.
Then $\pi^*([S_i])=[E_i]\cdot(\rho_i+\pi_i^*c_1(N_i))$.
Consider the classes $[E_1](\rho_1-\rho_2+\pi_1^*c_1(N_1))$ and $[E_2] (\rho_1+\rho_2+\pi_2^*c_1(N_2))$.
We may assume that these classes are represented by the classes of smooth connected subvarieties $S'_1$ and $S'_2$, contained
in $E_1$ and $E_2$, respectively, and transversal to each other and to $E_{\{1,2\}}$. Let $S'=S'_1\cup S'_2$. Then $[S']=\pi^*([S])$.
Using the equation $\rho_i^2+\rho_i\pi^*c_1(N_i)+\pi^*c_2(N_i)=0$, on $E_{\{1,2\}}$ we get:
\begin{equation*}
\begin{split}
&[S'_1\cap S'_2]_{E_{\{1,2\}}}=(\rho_1-\rho_2+\pi_{\{1,2\}}^*c_1(N_1))(\rho_1+\rho_2+\pi_{\{1,2\}}^*c_1(N_2))=\\
&\pi_{\{1,2\}}^*(-c_2(N_1)+c_2(N_2)+c_1(N_1)c_1(N_2))+\rho_1\pi_{\{1,2\}}^*c_1(N_2)+\rho_2\pi_{\{1,2\}}^*c_1(N_1)=\\
&\rho_1\pi_{\{1,2\}}^*c_1(N_2)+\rho_2\pi_{\{1,2\}}^*c_1(N_1),
\end{split}
\end{equation*}
where $\pi_{\{1,2\}}:E_{\{1,2\}}\row S_{\{1,2\}}=S_1\cap S_2$ is the projection. Here $S_{\{1,2\}}$ is a smooth connected curve, with
$\ddeg(c_1(N_i)\cdot [S_{\{1,2\}}])=0$. Because $S_{\{1,2\}}$ is connected, this implies that
$[S'_1\cap S'_2]\nump 0$ on $E_{\{1,2\}}$.

By Sublemma \ref{A23-L2-SL2}, we can substitute $S'_i$ by smooth connected $\wt{S}_i$, transversal to each other, with
anisotropic $\wt{S}_1\cap\wt{S}_2$, without changing characteristic classes in $\Ch^*(\wt{X})$. Let $\eta:\ov{X}\row\wt{X}$
be the blow-up in $\wt{S}_1$, $\wt{S}_2$, with the special divisor $V$.
Here $V$ is smooth outside an anisotropic subscheme (the intersection of components), and so, we can treat it as a single component.
$\eta^*([\wt{S}])$ is represented by the class of a subvariety $\ov{S}$ of $V$ whose characteristic classes are
$\eta^*$ of those of $\wt{S}$, and which is smooth outside an anisotropic subscheme.
Then using Statement \ref{LS-div-conn} (with $V$ treated as a single component) and flexibility of $k$, we can find an irreducible
divisor $Z$ on $\ov{X}$, smooth outside an anisotropic subscheme, and containing $\ov{S}$. Let $\mu:\hat{X}\row\ov{X}$ be the embedded
desingularization of $Z$, and $\hat{Z}$, $S''$ be the proper pre-images of $Z$ and $\ov{S}$.
Since the singularities of $Z$ are anisotropic, the map $\mu^*:\Ch^*_{k/k}(\ov{X})\stackrel{=}{\row}\Ch^*_{k/k}(\hat{X})$
is an isomorphism, and $\mu^*([\ov{S}])=[S'']$. Let $\hat{S}$ be a smooth connected variety representing $[S'']$
on the smooth connected divisor $\hat{Z}$. Since $S''$ is smooth outside an anisotropic subscheme,
the characteristic classes of $\hat{S}$ with values in $\Ch^*_{k/k}(\hat{X})$ coincide with $\mu^*$ of those of $\ov{S}$.

It remains to check that the needed characteristic classes of $\hat{S}$ are trivial. Using the fact that $E_{\{1,2\}}$ is a
$\pp^1\times\pp^1$-bundle over a curve $S_{\{1,2\}}$, and the equation $\rho_i^2+\rho_i\pi_i^*c_1(N_i)+\pi_i^*c_2(N_i)=0$ on $E_i$, we get:
\begin{equation*}
\begin{split}
&\ddeg(c_1^3(N_{\hat{S}\subset\hat{X}})\cdot[\hat{S}])=\ddeg(c_1^3(N_{S'\subset\wt{X}})\cdot[S'])=\\
&\ddeg([E_1](\rho_1-\rho_2+\pi_1^*c_1(N_1))(-\rho_2+\pi_1^*c_1(N_1))^3+
[E_2](\rho_2+\rho_1+\pi_2^*c_1(N_2))(\rho_1+\pi_2^*c_1(N_2))^3)=\\
&\ddeg([E_1]\rho_1(\pi_1^*c_1^3(N_1)+3\rho_2^2\pi_1^*c_1(N_1)-\rho_2^3)+
[E_2]\rho_2(\pi_2^*c_1^3(N_2)+3\rho_1^2\pi_2^*c_1(N_2)+\rho_1^3))=\\
&\ddeg(\pi_1^*(c_1^3(N_1)[S_1])\rho_1)+\ddeg(\pi_2^*(c_1^3(N_2)[S_2])\rho_2)-
\ddeg(\pi_{\{1,2\}}^*((2c_1(N_1)+4c_1(N_2))[S_{\{1,2\}}])\rho_1\rho_2)=\\
&\ddeg(c_1^3(N_{S\subset X})\cdot [S])-2\ddeg(c_1(N_1)\cdot [S_{\{1,2\}}])-4\ddeg(c_1(N_2)\cdot [S_{\{1,2\}}])=0,
\end{split}
\end{equation*}
Similarly,
\begin{equation*}
\begin{split}
&\ddeg(c_1c_2(N_{\hat{S}\subset\hat{X}})\cdot[\hat{S}])=\ddeg(c_1c_2(N_{S'\subset\wt{X}})\cdot[S'])=\\
&\ddeg(-[E_1]\rho_1(\rho_1-\rho_2+\pi_1^*c_1(N_1))^2(-\rho_2+\pi_1^*c_1(N_1))-
[E_2]\rho_2(\rho_2+\rho_1+\pi_2^*c_1(N_2))^2(\rho_1+\pi_2^*c_1(N_2)))=\\
&\ddeg(-[E_1]\rho_1(\rho_2^2(\pi_1^*(3c_1(N_1)+c_1(N_2))+2\rho_1)+(\rho_1+\pi_1^*c_1(N_1))^2\pi_1^*c_1(N_1))-\\
&\hspace{11mm}[E_2]\rho_2(\rho_1^2(\pi_2^*(3c_1(N_2)-c_1(N_1))+2\rho_2)+(\rho_2+\pi_2^*c_1(N_2))^2\pi_2^*c_1(N_2)))=\\
&\ddeg([E_{\{1,2\}}]\pi_{\{1,2\}}^*(2c_1(N_1)+4c_1(N_2))\rho_1\rho_2)+
\ddeg([E_1]\rho_2^2(2\rho_1\pi_1^*c_1(N_1)))+\ddeg([E_2]\rho_1^2(2\rho_2\pi_2^*c_1(N_2)))-\\
&\hspace{11mm}\ddeg([E_1]\pi_1^*c_1(N_1)\rho_1(\rho_1+\pi_1^*c_1(N_1))^2)-\ddeg([E_2]\pi_2^*c_1(N_2)\rho_2(\rho_2+\pi_2^*c_1(N_2))^2)=\\
&\ddeg([E_{\{1,2\}}]\cdot 2\pi_{\{1,2\}}^*c_1(N_2)\rho_1\rho_2)+\ddeg([E_1]\cdot\pi_1^*c_1c_2(N_1)(\rho_1+\pi_1^*c_1(N_1)))+\\
&\hspace{11mm}\ddeg([E_2]\cdot\pi_2^*c_1c_2(N_2)(\rho_2+\pi_2^*c_1(N_2)))=
\ddeg(c_1c_2(N_{S\subset X})[S])+2\cdot\ddeg(c_1(N_2)\cdot[S_{\{1,2\}}])=0.
\end{split}
\end{equation*}
Finally, for $p=2$, the fact that $c_1^2(N_{\hat{S}\subset\hat{X}})\cdot[\hat{S}]\nump 0$
follows from Sublemma \ref{A23-L2-SL5}.
\Qed
\end{proof}

\begin{sublem}
\label{A23-L2-SL5}
In the situation of Sublemma \ref{A23-L2-SL4}, on $\hat{X}$,
$$
c_1^2(N_{\hat{S}\subset\hat{X}})\cdot [\hat{S}]\nump\eps^*(-2[S_1]\cdot[S_2]+c_1^2(N_{S\subset X})\cdot [S]).
$$
\end{sublem}

\begin{proof}
In the notations of the proof of Sublemma \ref{A23-L2-SL4},
denoting $\nu=\eta\circ\mu$, and using
the fact that $E_{\{1,2\}}$ is a
$\pp^1\times\pp^1$-bundle over a curve $S_{\{1,2\}}$,  the equation $\rho_i^2+\rho_i\pi_i^*c_1(N_i)+\pi_i^*c_2(N_i)=0$ on $E_i$, and
\cite[Thm 6.7]{Fu}, we obtain:
\begin{equation*}
\begin{split}
&c_1^2(N_{\hat{S}\subset\hat{X}})\cdot[\hat{S}]=\nu^*(c_1^2(N_{S'\subset\wt{X}})\cdot[S'])=\\
&\nu^*([E_1](\rho_1-\rho_2+\pi_1^*c_1(N_1))(-\rho_2+\pi_1^*c_1(N_1))^2+
[E_2](\rho_2+\rho_1+\pi_2^*c_1(N_2))(\rho_1+\pi_2^*c_1(N_2))^2)=\\
&\nu^*([E_1](\rho_2^2(\pi_1^*(3c_1(N_1)+c_1(N_2))+\rho_1)+\rho_2(-2\pi_1^*c_1(N_1)\rho_1)+\pi_1^*c_1^2(N_1)(\pi_1^*c_1(N_1)+\rho_1))+\\
&\hspace{5mm}[E_2](\rho_1^2(\pi_2^*(3c_1(N_2)-c_1(N_1))+\rho_2)+\rho_1(2\pi_2^*c_1(N_2)\rho_2)+\pi_2^*c_1^2(N_2)(\pi_2^*c_1(N_2)+\rho_2))=\\
&\nu^*((j_{\{1,2\}})_*(\pi_{\{1,2\}}^*(-3c_1(N_1)-3c_1(N_2))\rho_2+\pi_{\{1,2\}}^*(-3c_1(N_2)+3c_1(N_1))\rho_1-2\rho_1\rho_2)+\\
&\hspace{5mm}\pi^*(c_1^2(N_1)[S_1])+\pi^*(c_1^2(N_2)[S_2]))\nump\nu^*\pi^*(c_1^2(N_{S\subset X})\cdot [S]-2[S_1]\cdot [S_2]),
\end{split}
\end{equation*}
since $c_1(N_i)\nump 0$ on $S_{\{1,2\}}$ (note, that $S_{\{1,2\}}$ is connected).
Here $j_{\{1,2\}}:E_{\{1,2\}}\row\wt{X}$ is the closed embedding.
\Qed
\end{proof}

The subvariety $\ov{S}\cup\ov{F}$ satisfies the conditions of
Sublemma \ref{A23-L2-SL4}(1). Thus, we may substitute it by a single connected component and assume that our class is represented
by the class of $T\cup G$, where $[G]=(-\rho)\cdot\rho$, for
some divisor $\rho$, and $T$ is a smooth connected subvariety
with $c_1^3(N_{T\subset X})\cdot [T]\nump 0$, $c_1c_2(N_{T\subset X})\cdot [T]\nump 0$, and for $p=2$, in addition,
$c_1^2(N_{T\subset X})\cdot [T]\nump 0$.
We may assume $T$ and $G$ transversal, with $G$ and $T\cap G$ connected. Clearly, $c_1^3(N_{G\subset X})\cdot[G]\nump 0$,
$c_1c_2(N_{G\subset X})\cdot[G]\nump 0$, and for $p=2$, $c_1^2(N_{G\subset X})\cdot[G]\nump 0$ as well.
Since $[T]+[G]\nump 0$, applying Sublemma \ref{A23-L2-SL4}(2) and (1) again, we may represent our class by $[S]$,
where $S$ is smooth connected with
$c_1^3(N_{S\subset X})\cdot [S]\nump 0$, $c_1c_2(N_{S\subset X})\cdot [S]\nump 0$, and for $p=2$, in addition,
$c_1^2(N_{S\subset X})\cdot [S]\nump 0$. Lemma \ref{A23-L2} is proven.
\Qed
\end{proof}

Now as $S$ is smooth connected with numerically trivial 
zero-dimensional Chern classes, we can move up the dimension
and make the square of the $1$-st Chern class numerically
trivial as well. This is already achieved for the prime $2$. It
remains to treat the odd primes. We will, actually, make the mentioned $c_1^2$ numerically trivial not only on $X$, but 
already on $S$ itself. This will be important for the next step.

\begin{lem}
\label{A23-L3}
We may assume that $S$ is smooth connected with $c_1^3(N_{S\subset X})\cdot [S]\nump 0$,
$c_1c_2(N_{S\subset X})\cdot [S]\nump 0$ and $c_1^2(N_{S\subset X})\cdot [S]\nump 0$ on $X$.
We also may assume that $c_1^2(N_{S\subset X})\nump 0$ on $S$.
\end{lem}

\begin{proof}
By Lemma \ref{A23-L2}, we have all the needed conditions, aside from that on $c_1^2$.
Let us first make $c_1^2(N_{S\subset X})\cdot [S]\nump 0$ on $X$.
For ${\bf\un{p=2}}$ we already have it by Lemma \ref{A23-L2}.

\noindent
${\bf\un{(p\neq 2,3)}}$\hspace{2mm}
Let $\pi:\wt{X}\row X$ be the blow-up in the smooth connected complete intersection
$R=\frac{1}{2}c_1(N_{S\subset X})\cdot\frac{1}{3}c_1(N_{S\subset X})$ (of very ample divisors on $S$).
Let $E$ be the special divisor of $\pi$, and $\rho=-[E]$. Then $\pi^*([S])=[\wt{S}]+[F]$, where $\wt{S}$ is the proper transform
of $S$ and $F$ is a smooth connected (very ample) divisor on $E$, transversal to $\wt{S}$, with connected $\wt{S}\cap F$ and
with $[F]=[E](\rho+\pi_E^*c_1)$, where
$c_l=c_l(N_{S\subset X})$. Also, $c_1(N_{\wt{S}\subset\wt{X}})=2\rho+\pi_S^*c_1$ and $c_1(N_{F\subset\wt{X}})=\pi_E^*c_1$, where
$\pi_S:\wt{S}\row S$ and $\pi_E:E\row R$ are natural projections.
Since $\rho$ satisfies the equation $\rho^2+\frac{5}{6}\pi_S^*c_1\cdot\rho+\frac{1}{6}\pi_S^*c_1^2$ on $\wt{S}$, and $\ddim(R)=1$,
we have:
\begin{equation*}
\begin{split}
&c_1^2(N_{\wt{S}\subset\wt{X}})\cdot[\wt{S}]+c_1^2(N_{F\subset\wt{X}})\cdot[F]=[\wt{S}](2\rho+\pi_S^*c_1)^2+[E](\rho+\pi_E^*c_1)\pi_E^*c_1^2=\\
&[\wt{S}](-4\cdot{\textstyle{\frac{5}{6}}}\pi_S^*c_1\cdot\rho-4\cdot{\textstyle{\frac{1}{6}}}\pi_S^*c_1^2+4\pi_S^*c_1\cdot\rho+\pi_S^*c_1^2)=
[\wt{S}]({\textstyle{\frac{2}{3}}}\pi_S^*c_1\cdot\rho+{\textstyle{\frac{1}{3}}}\pi_S^*c_1^2)\nump
[\wt{S}]\cdot{\textstyle{\frac{1}{3}}}\pi_S^*c_1^2,
\end{split}
\end{equation*}
since $-[\wt{S}]\rho$ is represented by the class of a $\pp^1$-bundle $P=\pp_R(N_{R\subset S})$ over $R$, and $c_1\nump 0$ on $R$,
as $\ddeg(c_1^3\cdot [S])=0$ and $R$ is connected.
By the same reason,
\begin{equation*}
\begin{split}
&[\wt{S}]\cdot[F]=[\wt{S}](-\rho)(\rho+\pi_E^*c_1)=
[\wt{S}]({\textstyle{\frac{5}{6}}}\pi_S^*c_1\cdot\rho+{\textstyle{\frac{1}{6}}}\pi_S^*c_1^2-\pi_S^*c_1\cdot\rho)\nump
[\wt{S}]\cdot{\textstyle{\frac{1}{6}}}\pi_S^*c_1^2.
\end{split}
\end{equation*}
At the same time, since $\rho\pi_S^*c_1\nump 0$ on $\wt{S}$, as we saw above, and $[S]c_1^3\nump 0$, by assumption,
\begin{equation*}
\begin{split}
&c_1^3(N_{\wt{S}\subset\wt{X}})\cdot[\wt{S}]=[\wt{S}](2\rho+\pi_S^*c_1)^3\nump[\wt{S}](8\rho^3+\pi_S^*c_1^3)\nump\\
&[\wt{S}]\cdot 8\rho^3=
[P]\cdot 8({\textstyle{\frac{5}{6}}}\pi_S^*c_1\cdot\rho+{\textstyle{\frac{1}{6}}}\pi_S^*c_1^2)\nump
0,\,\,\,\text{and}\\
&c_1^3(N_{F\subset\wt{X}})\cdot[F]=[E](\rho+\pi_E^*c_1)c_1^3=0,
\end{split}
\end{equation*}
again, since $\ddim(R)=1$ and $c_1\nump 0$ on $R$. Using the same arguments,
\begin{equation*}
\begin{split}
&c_1c_2(N_{\wt{S}\subset\wt{X}})\cdot[\wt{S}]=[\wt{S}](2\rho+\pi_S^*c_1)(\rho^2+\pi_S^*c_1\cdot\rho+\pi_S^*c_2)\nump\\
&\pi_S^*(c_1c_2\cdot[S])+[\wt{S}]\cdot 2\rho^3\nump\pi_S^*(c_1c_2\cdot[S])\nump 0,\,\,\,\text{and}\\
&c_1c_2(N_{F\subset\wt{X}})\cdot[F]=[E](\rho+\pi_E^*c_1)^2(-\rho)\pi_E^*c_1\nump 0.
\end{split}
\end{equation*}

Let $\eps:\hat{X}\row\wt{X}$ be the blow-up from Sublemma \ref{A23-L2-SL4} applied to $\wt{S}\cup F$. Then we get a smooth
connected subvariety $\hat{S}$ on $\hat{X}$ such that $c_1^3(N_{\hat{S}\subset\hat{X}})\cdot [\hat{S}]\nump 0$ and
$c_1c_2(N_{\hat{S}\subset\hat{X}})\cdot [\hat{S}]\nump 0$. Finally, by  Sublemma \ref{A23-L2-SL5},
\begin{equation*}
\begin{split}
&c_1^2(N_{\hat{S}\subset\hat{X}})\cdot[\hat{S}]=\eps^*(c_1^2(N_{\wt{S}\subset\wt{X}})\cdot[\wt{S}]+
c_1^2(N_{F\subset\wt{X}})\cdot[F]-2[\wt{S}]\cdot[F])\nump\\
&\eps^*([\wt{S}]\cdot({\textstyle{\frac{1}{3}}}\pi_S^*c_1^2-2\cdot{\textstyle{\frac{1}{6}}}\pi_S^*c_1^2))=0.
\end{split}
\end{equation*}
The case ${\bf{(p\neq 2,3)}}$ is done.

\noindent
${\bf\un{(p=3)}}$\hspace{2mm}
Let $d_1=c_1^2+c_2$ be the characteristic class of degree $2$ corresponding to the reduced power operation $P^1:\Ch^*\row\Ch^{*+2}$
(modulo $3$). Since $[S]\nump 0$, by Proposition \ref{AB1},
$d_1(N_{S\subset X})\cdot [S]=P^1([S])\nump 0$ as well.  On the other hand, $c_2(N_{S\subset X})\cdot [S]=[S]\cdot [S]\nump 0$.
Hence, $c_1^2(N_{S\subset X})\cdot [S]\nump 0$ too. The case ${\bf{(p=3)}}$ is done.

Thus, we managed to make $c_1^2(N_{S\subset X})\cdot [S]\nump 0$, while keeping $c_1^3(N_{S\subset X})\cdot [S]\nump 0$ and
$c_1c_2(N_{S\subset X})\cdot [S]\nump 0$. Let us now make the complete intersection
$c_1(N_{S\subset X})\cdot c_1(N_{S\subset X})$ on $S$ anisotropic.

By blowing-up $S$, we may assume that $S\subset Y$, where $Y$ is smooth connected divisor on $X$. Note, that the new
$c_1^3,c_1c_2$ and $c_1^2$ characteristic classes
are the pull-backs of the old ones, and so, are still numerically trivial. By Statement \ref{LS-div-conn} and flexibility of $k$, there
exists an irreducible and smooth outside an anisotropic subscheme (of $S$) divisor $Z$ containing $S$, such that the restriction
$\Ch^1(X)\twoheadrightarrow\Ch^1(Z\backslash S)$ is surjective. Let $\pi:\wt{X}\row X$ be an embedded desingularization of $Z$ and $S$,
with proper pre-images $\wt{Z}$ and $\wt{S}$, which are smooth connected subvarieties. Since singularities were anisotropic, the maps
$\pi^*:\Ch^*_{k/k}(X)\stackrel{=}{\row}\Ch^*_{k/k}(\wt{X})$ and
$\pi_Z^*:\Ch^*_{k/k}(Z\backslash S)\stackrel{=}{\row}\Ch^*_{k/k}(\wt{Z}\backslash\wt{S})$ are isomorphisms, and so, the restriction
$\Ch^1_{k/k}(\wt{X})\twoheadrightarrow\Ch^1_{k/k}(\wt{Z}\backslash\wt{S})$ is surjective as well.
Thus, the group $\Ch^1_{k/k}(\wt{Z})$ is generated by the image of $j^*:\Ch^1_{k/k}(\wt{X})\row\Ch^1_{k/k}(\wt{Z})$ and the class
$[\wt{S}]$.
Also, on $\wt{X}$, the classes
$c_1^3(N_{\wt{S}\subset\wt{X}})\cdot [\wt{S}]$, $c_1c_2(N_{\wt{S}\subset\wt{X}})\cdot [\wt{S}]$ and
$c_1^2(N_{\wt{S}\subset\wt{X}})\cdot [\wt{S}]$ are $\nump 0$.
Since $c_1^2(N_{\wt{S}\subset\wt{X}})\cdot [\wt{S}]\nump 0$ on $\wt{X}$, this class
will be orthogonal to the $\op{im}(j^*)$ on $\wt{Z}$. While, on $\wt{Z}$,
$$
\ddeg(c_1^2(N_{\wt{S}\subset\wt{X}})[\wt{S}]\cdot_{\wt{Z}}[\wt{S}])=
\ddeg(c_1^2(N_{\wt{S}\subset\wt{X}})c_1(N_{\wt{S}\subset\wt{Z}})[\wt{S}])=
\ddeg(c_1^3(N_{\wt{S}\subset\wt{X}})\cdot [\wt{S}])=0,
$$
since $c_1^2(N_{\wt{S}\subset\wt{X}})\cdot[\wt{S}]\nump 0$ on $\wt{X}$ and $c_1(N_{\wt{Z}\subset\wt{X}})$ is in the image of $j^*$.
Hence, $c_1^2(N_{\wt{S}\subset\wt{X}})\cdot[\wt{S}]\nump 0$ already on $\wt{Z}$.
Since this class is a complete intersection on $\wt{Z}$, the intersection of generic representatives of the respective
(very ample) linear systems is anisotropic by Statement \ref{alg-geom-Lef}. The respective subvariety $S$ is smooth connected,
and $c_1^3$ and $c_1c_2$ characteristic classes are preserved.
Since $k$ is flexible, we may assume that our varieties are defined over $k$.
Lemma \ref{A23-L3} is proven.
\Qed
\end{proof}

In order to apply Corollary \ref{LS-r2-nump-div} and finish the proof of Proposition \ref{A23}, it remains to terminate numerically the $1$-st Chern class of our $S$.

\begin{lem}
\label{A23-L4}
We may assume that $S$ is smooth connected and $c_1^m(N_{S\subset X})\cdot [S]\nump 0$, for $m\geq 0$.
\end{lem}

\begin{proof}
By Lemma \ref{A23-L3}, we may assume that $S$ is smooth connected variety with the numerically trivial $c_1^3$, $c_1c_2$ and $c_1^2$
characteristic classes. It remains to make $c_1$ numerically trivial.
We need to treat separately the case $p=2$ and that of odd primes.

\noindent
${\mathbf\un{(p\neq 2)}}$
Let $R$ be the generic representative of the (very ample) linear system $|\frac{1}{2}c_1(N_{S\subset X})|$ on $S$.
It is a smooth connected surface and, by Statement \ref{alg-geom-Lef}, we have the surjection $\Ch^1(S)\twoheadrightarrow\Ch^1(R)$.
Since $k$ is flexible, we may assume that it is defined over $k$.
Let $\pi:\wt{X}\row X$ be the blow-up at $R$,
with the special connected divisor $E$ and $\rho=-[E]$. Then $\pi^*([S])=[\wt{S}]+[F]$, where $\wt{S}$ is the proper pre-image of $S$ and
$F$ is a smooth connected divisor on $E$ transversal to $\wt{S}$, with connected $\wt{S}\cap F$ and with
$[F]=[E](\rho+\pi_E^*c_1)$, where $c_l=c_l(N_{S\subset X})$.
We have:
$$
c_1(N_{\wt{S}\subset\wt{X}})\cdot[\wt{S}]+c_1(N_{F\subset\wt{X}})\cdot [F]=[\wt{S}](2\rho+\pi_S^*c_1)+[E](\rho+\pi_E^*c_1)\pi_E^*c_1=
[E](\rho+\pi_E^*c_1)\pi_E^*c_1\nump 0,
$$
since, on $\wt{S}\cong S$, $\rho=-[R]$, and
$c_1\cdot [R]\nump 0$ on $R$, as $c_1^2(N_{S\subset X})\nump 0$ on $S$ and $\Ch^1(S)\twoheadrightarrow\Ch^1(R)$. On the other hand, on $R=\wt{S}\cap E$,
$$
[\wt{S}]\cdot[F]=[\wt{S}](-\rho)(\rho+c_1)=[R]({\textstyle{\frac{1}{2}}}c_1)\nump 0,
$$
and moreover, we may assume the intersection $\wt{S}\cap F$ to be anisotropic.
By Statement \ref{LS-codim2-single}, there exists a blow-up $\mu:\ov{X}\row\wt{X}$,
such that $\mu^*([\wt{S}\cup F])$ is represented by the class
of a smooth connected subvariety $\ov{S}$ and such that the characteristic classes of $\ov{S}$ in $\Ch^*_{k/k}$ are $\mu^*$ of the
respective classes of $\wt{S}\cup F$. In particular, the $c_1$-class of it is numerically trivial.

It remains to check $c_1^2$
and $c_1^3$ characteristic classes of $\wt{S}\cup F$. We have:
$$
c_1^2(N_{\wt{S}\subset\wt{X}})\cdot[\wt{S}]+c_1^2(N_{F\subset\wt{X}})\cdot[F]=[\wt{S}](2\rho+\pi_S^*c_1)^2+[E](\rho+\pi_E^*c_1)\pi_E^*c_1^2=
[E](\rho+\pi_E^*c_1)\pi_E^*c_1^2\nump 0
$$
on $\wt{X}$, since $(2\rho+\pi_S^*c_1)=0$ on $\wt{S}$, and $\ddeg(c_1^3(N_{S\subset X})\cdot[S])=0$ while $R$ is connected.
Similarly,
$$
c_1^3(N_{\wt{S}\subset\wt{X}})\cdot[\wt{S}]+c_1^3(N_{F\subset\wt{X}})\cdot[F]=
[\wt{S}](2\rho+\pi_S^*c_1)^3+[E](\rho+\pi_E^*c_1)\pi_E^*c_1^3=0,
$$
by the same and dimensional reasons.
The case ${\mathbf(p\neq 2)}$ is done.

\noindent
${\mathbf\un{(p=2)}}$
The characteristic class $c_1$ corresponds to the reduced power operation $P^1:\Ch^*\row\Ch^{*+1}$ (modulo $2$).
Since $[S]\nump 0$, by Proposition \ref{AB1}, $c_1(N_{S\subset X})\cdot [S]=P^1([S])\nump 0$ as well.
Lemma \ref{A23-L4} is proven.
\Qed
\end{proof}

Proposition \ref{A23} now follows from Corollary \ref{LS-r2-nump-div} and flexibility of $k$.
\Qed
\end{proof}

As Conjecture \ref{main-conj} was established for all varieties
of dimension $\leq 5$, from the existence of push-forward and
pull-back structure we obtain that isotropic Chow groups form
a quotient of the 3-rd theory of higher type associated to 
$\Ch$.

\begin{proposition}
\label{A24}
Let $k$ be a flexible field. The projection $\Ch^*\twoheadrightarrow\Ch^*_{k/k}$ factors through $\Ch^*_{(3)}$.
\end{proposition}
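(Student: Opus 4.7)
The plan is to mimic verbatim the proof of Proposition \ref{A16}, only with the dimension bump from $3$ to $5$. Recall that, by Definition \ref{def-THT}, a class $u\in\Ch^*(X)$ vanishes in $\Ch^*_{(3)}$ precisely when it can be written as $u=f_*(y\cdot g^*(v))$ for natural projections $X\stackrel{f}{\llow}X\times Q\stackrel{g}{\lrow}Q$, where $Q$ is a smooth projective variety of dimension $2\cdot 3-1=5$, $y\in\Ch^*(X\times Q)$ is arbitrary, and $v\in\Ch^*(Q)$ satisfies $v\nump 0$.

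The strategy is then the following. First I would observe that $\Ch^*_{k/k}$ has the full package of push-forwards, pull-backs, and intersection with elements of $\Ch^*$ (this is the oriented cohomology theory structure recorded right after the definition of isotropic Chow groups, and used already in Corollary \ref{A14} and Proposition \ref{A16}). Hence, writing $\ov{(-)}$ for the image in $\Ch^*_{k/k}$, we have $\ov{u}=f_*(\ov{y\cdot g^*(v)})=f_*(\ov{y}\cdot g^*(\ov{v}))\in\Ch^*_{k/k}(X)$. Since $Q$ has dimension $5$ and $v\nump 0$ on $Q$, Theorem \ref{thm-conj5-1-2}(1) yields $\ov{v}=0\in\Ch^*_{k/k}(Q)$. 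Therefore $g^*(\ov{v})=0$, so $\ov{y}\cdot g^*(\ov{v})=0$, and $\ov{u}=0\in\Ch^*_{k/k}(X)$. This shows that the surjection $\Ch^*\twoheadrightarrow\Ch^*_{k/k}$ kills the defining generators of $\kker(\Ch^*\twoheadrightarrow\Ch^*_{(3)})$, hence factors through $\Ch^*_{(3)}$.

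There is no real obstacle: the argument is purely formal once Theorem \ref{thm-conj5-1-2}(1) is in hand, and it is the direct analogue of Proposition \ref{A16} (which used dimension $\leq 3$ via Propositions \ref{A12}, \ref{A15} and Corollary \ref{A13N}). The only mild point worth spelling out is that one genuinely needs the \emph{full} case $\ddim(Q)\leq 5$ of Theorem \ref{thm-conj5-1-2}(1), which itself rests on Proposition \ref{A23}, whereas items (2) and (3) of that theorem alone would not suffice to annihilate an arbitrary cycle of arbitrary codimension on a $5$-fold.
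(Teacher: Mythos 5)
Your proposal is correct and is essentially the paper's own argument: the paper likewise notes that a class killed in $\Ch^*_{(3)}$ has the form $f_*(y\cdot g^*(v))$ with $v\nump 0$ on a $5$-fold $Q$, and then invokes the dimension $\leq 5$ cases of Conjecture \ref{main-conj} (Propositions \ref{A12}, \ref{A15}, \ref{A20}, \ref{A23} and Corollary \ref{A13N}, i.e. exactly the content of Theorem \ref{thm-conj5-1-2}(1)) to conclude $v=0\in\Ch^*_{k/k}(Q)$ and hence $u=0\in\Ch^*_{k/k}(X)$. Your remark that the full $\ddim\leq 5$ case, including Proposition \ref{A23}, is genuinely needed is accurate.
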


\begin{proof}
A class $u\in\Ch^*(X)$ is $=0\in\Ch^*_{(3)}$, if it can be presented as $f_*(y\cdot g^*(v))$, where
$X\stackrel{f}{\llow}X\times Q\stackrel{g}{\lrow}Q$ are natural projections, $Q$ is a smooth projective variety of dimension $5$,
$y\in\Ch^*(X\times Q)$ and $v\in\Ch^*(Q)$ is $\nump 0$. Then, by Proposition \ref{A12}, Corollary \ref{A13N}, Proposition \ref{A15},
Proposition \ref{A20} and Proposition \ref{A23},
$v=0\in\Ch_{k/k}^*(Q)$, and so, $u=0\in\Ch_{k/k}^*(X)$.
\Qed
\end{proof}

\section{Thick local categories}
\label{thick}

In this section we extend the definition of local motivic category to arbitrary finite coefficients $\zz/n$ and
introduce the {\it thick} versions of it which have better conservativity properties.

\begin{definition}
\label{higher-order}
Let $n\in\nn$. Let $P$ and $Q$ be smooth varieties of finite type over $k$. We say that $\hii_Q\bor{n}\hii_P$, if $P$ is $n$-isotropic
over every generic point of $Q$, while
$Q$ is $n$-anisotropic over some generic point of $P$.
\end{definition}

Let $E/k$ be some finitely generated extension and $P$ be smooth connected variety with $k(P)=E$.
Let ${\mathbf{Q}}^n$ be the disjoint union of all smooth connected varieties $Q$ of finite type with $\hii_Q\bor{n}\hii_P$ and
$$
\Upsilon_P^n:=\whii_{{\mathbf{Q}}^n}\otimes\hii_{P}.
$$
Generalizing Definition \ref{D-Iso-2}, we can define the {\it local motivic category with $\zz/n$-coefficients}.

\begin{definition}
\label{n-local-mot-cat}
The local motivic category with $\zz/n$-coefficients
$$
\dmELF{E}{k}{\zz/n}:=\Upsilon_P^n\otimes\dmkF{\zz/n}.
$$
\end{definition}

If we are interested in a $p$-localized situation,
we can define the {\it thick local motivic categories}.

\begin{definition}
\label{thick-loc-mot-cat}
The {\it $p$-local motivic category of thickness $r$ and $F$-coefficients}
$$
\dmELFr{E}{k}{F}{r}:=\Upsilon_P^{p^r}\otimes\dmkF{F}.
$$
\end{definition}

In particular, the local category with $\zz/p^r$-coefficients $\dmELF{E}{k}{\zz/p^r}$ is the $p$-local motivic category
of thickness $r$ and $\zz/p^r$-coefficients $\dmELFr{E}{k}{\zz/p^r}{r}$.

Since $\hii_Q\bor{n}\hii_P$ implies that $\hii_Q\bor{m}\hii_P$ for $m|n$, we get natural functors
$$
\dmELF{E}{k}{\zz/n}\row\dmELF{E}{k}{\zz/m}\hspace{5mm}\text{and}\hspace{5mm}
\dmELFr{E}{k}{F}{r}\row\dmELFr{E}{k}{F}{s},
$$
for any $r\geq s$ and $m|n$,
commuting with the natural functors $\ffi_E^{\zz/n}:\dmkF{\zz/n}\row\dmELF{E}{k}{\zz/n}$.

In the usual way, we can introduce {\it local geometric motives}, {\it local Chow motives} and {\it local Chow groups}.
Exactly as in Proposition \ref{D-ChowGroups} we get the description of {\it isotropic Chow groups}:

\begin{proposition}
\label{Int-Chow-groups}
$$
\CH_{k/k}(X;\zz/n)=\CH(X;\zz/n)/(n\text{-anisotropic classes}).
$$
\end{proposition}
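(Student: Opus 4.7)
The plan is to follow almost verbatim the proof of Proposition \ref{D-ChowGroups}, replacing the prime $p$ by a general natural number $n$ and ``$p$-anisotropic'' by ``$n$-anisotropic''. First I would verify that Proposition \ref{hom-iso-geom} carries over to $\zz/n$-coefficients: the same argument (using that $U$ is geometric and that $\hii_{{\mathbf{Q}}^n}\geq\hii_{Q}$ for $Q$ a finite subvariety of ${\mathbf{Q}}^n$) identifies
$$
\Hom_{\dmELF{k}{k}{\zz/n}}(U,V) \;=\; \operatornamewithlimits{colim}_{Q}\Hom_{\whii_Q\otimes\dmkF{\zz/n}}(U,V),
$$
where the colimit runs over all $n$-anisotropic varieties $Q/k$ (these form a filtered system since, for coefficients $\zz/n$ and ${\mathbf{Q}}^n$ being the disjoint union of all $n$-anisotropic connected $Q$'s, one has $\hii_{{\mathbf{Q}}^n}\neq T$ and any finite coproduct of $n$-anisotropic varieties is again $n$-anisotropic).

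Next I would apply this with $U=T(r)[2r]$ and $V=M^c(X)$. Using the triangle
$$
\hii_Q \row T \row \whii_Q \row \hii_Q[1]
$$
and the vanishing $\Hom(\hii_Q(*)[*'],M^c(X)\otimes\whii_Q)=0$ (which is \cite[Thm 2.3.2]{IMQ}, whose proof applies unchanged with $\zz/n$-coefficients since the key input \cite[Lem. 4.9]{Vo-BKMK} does not depend on the coefficients being prime), the target group becomes $\Hom_{\dmkF{\zz/n}}(T(r)[2r], M^c(X)\otimes\whii_Q)$, and by restricting to the filtered cofinal subsystem of smooth projective $n$-anisotropic $Q$ we may assume $Q$ projective.

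Then I would use the standard Postnikov-type description: $\whii_Q$ is an iterated extension of $M(Q^{\times i})[i]$, $i\geq 0$, so since $\Hom_{\dmkF{\zz/n}}(T(r)[2r],M^c(Y)[i])=0$ for any scheme $Y$ of finite type and $i>0$, one identifies
$$
\Hom_{\dmkF{\zz/n}}(T(r)[2r],M^c(X)\otimes\whii_Q)=\op{Coker}\bigl(\Ch_r(X\times Q;\zz/n)\stackrel{(\pi_Q)_*}{\lrow}\Ch_r(X;\zz/n)\bigr).
$$
Taking the colimit over all $n$-anisotropic projective $Q$ gives $\Ch_r(X;\zz/n)/I$, where $I$ is the subgroup generated by push-forwards from such $Q$'s, i.e.\ exactly the subgroup of $n$-anisotropic classes in the sense of Definition \ref{anis-class} (with $p$ replaced by $n$). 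The only step requiring any new verification is the extension of \cite[Thm 2.3.2]{IMQ} to arbitrary finite coefficients, and there the original proof goes through unchanged; I do not foresee any substantive obstacle beyond this bookkeeping.
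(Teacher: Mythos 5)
Your proposal is correct and coincides with the paper's own argument: the paper proves Proposition \ref{Int-Chow-groups} precisely by repeating the proof of Proposition \ref{D-ChowGroups} with $p$ replaced by $n$ (and the $\bor{n}$-version of Proposition \ref{hom-iso-geom}), which is exactly what you carry out. The two points you flag — directedness of the system of $n$-anisotropic varieties under disjoint union and the coefficient-independence of the semi-orthogonality input \cite[Thm 2.3.2]{IMQ} — are indeed the only things to check, and they go through as you say.
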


Similarly, for the {\it thick local Chow groups}, we have:
$$
\CH_{\{k/k\}^r}(X;F)=\CH(X;F)/(p^r\text{-anisotropic classes}).
$$

Since every $n$-anisotropic class is numerically equivalent to zero modulo $n$, we obtain the surjection:
\begin{equation}
\label{Int-proj-Num}
\CH_{k/k}(X;\zz/n)\twoheadrightarrow\CH_{Num}(X;\zz/n).
\end{equation}

\begin{question}
\label{Int-que-r}
Let $k$ be flexible. Is it true that $\CH_{k/k}(X;\zz/p^r)=\CH_{Num}(X;\zz/p^r)$?
\end{question}

Using the arguments of Proposition \ref{A12} and Corollary \ref{A13N}, we see that the answer is positive for divisors and for zero-cycles.
In particular, the projection $\CH(X;\zz/p^r)\twoheadrightarrow\CH_{k/k}(X;\zz/p^r)$ factors through $\CH_{alg}(X;\zz/p^r)$.
Also, it is easy to see that it is true for
cycles of dimension $1$, provided $p>2$, and so, the mentioned projection factors through $\CH_{(2)}(X;\zz/p^r)$, in this case.

With the increase of $r$ (and fixed $s$), the family of functors
$$
\ffi_E^{r;\zz/p^s}:\dmkF{\zz/p^s}\row\dmELFr{E}{k}{\zz/p^s}{r},
$$
for all f.g. extensions $E/k$, becomes more and more conservative. But the target categories are getting more complicated.
At the same time, the categories $\dmELF{E}{k}{\zz/p^r}$ (that is $r=s$) should be simpler, and the natural strategy to
improve conservativity is to pass to $\zz_{(p)}$-coefficients.

The local motivic category with $\zz_{(p)}$-coefficients $\dmELF{E}{k}{\zz_{(p)}}$
should be the local category of {\it $\infty$ thickness} $\dmELFr{E}{k}{\zz_{(p)}}{\infty}$.
This should be defined as a limit $\operatornamewithlimits{lim}_r\dmELFr{E}{k}{\zz_{(p)}}{r}$ of categories of finite thickness
and will be considered in a separate paper.

\section{Auxiliary results}
\label{A-r}

\subsection{Up to blow-up, Chow ring of a variety is generated by divisors}

The following result is crucial for most of our constructions.

\begin{theorem}
\label{AA1}
Let $X$ be a smooth projective variety and $y\in\CH^r(X)$. Then there exists a blow-up $\pi:\wt{X}\row X$ such that
$\pi^*(y)$ is a $\zz$-polynomial in divisor classes.
\end{theorem}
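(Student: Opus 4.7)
The plan is to proceed by induction on $\ddim X$; the base case $\ddim X = 0$ is trivial. For the inductive step, since $\CH^r(X)$ is generated over $\zz$ by classes of irreducible subvarieties, we may assume $y = [Z]$ with $Z \subset X$ irreducible of codimension $r$. The cases $r \leq 1$ are immediate, so take $r \geq 2$. By Hironaka's embedded resolution, there is a blow-up $\pi_0 \colon X_0 \row X$ with smooth centers of dimension $< \ddim X$ such that the strict transform $Z_0 \subset X_0$ is smooth, and $\pi_0^*[Z] = [Z_0] + \xi$ with $\xi$ supported on the exceptional divisors, each of which is a smooth projective variety of dimension $< \ddim X$. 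By the inductive hypothesis applied to such components (together with the lifting step discussed below), the class $\xi$ becomes polynomial in divisors on $X_0$ after further blow-ups, reducing the problem to the class $[Z_0]$.

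Now blow up $X_0$ along $Z_0$: let $\pi_1 \colon X_1 \row X_0$ have exceptional divisor $E = \pp(N_{Z_0 \subset X_0})$, with projection $g \colon E \row Z_0$ and embedding $j \colon E \hookrightarrow X_1$. Setting $\rho = c_1(\co_E(1)) = -j^*[E]$, the blow-up formula \cite[Thm. 6.7]{Fu} yields
$$
\pi_1^*[Z_0] \;=\; j_*(c_{r-1}(Q)) \;=\; \sum_{i=0}^{r-1} j_*\!\left(\rho^{i}\cdot g^*c_{r-1-i}(N_{Z_0 \subset X_0})\right),
$$
where $Q = g^*N_{Z_0 \subset X_0}/\co_E(-1)$. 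Using $\rho = -j^*[E]$ and the projection formula, this rewrites as $\sum_{i} (-1)^{i}[E]^{i}\cdot j_*(g^*c_{r-1-i}(N_{Z_0 \subset X_0}))$; since $[E]$ is a divisor on $X_1$, it suffices to express each $j_*(g^*c_j(N_{Z_0 \subset X_0}))$ as a polynomial in divisors after further blow-up.

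By the inductive hypothesis applied to $Z_0$ (of dimension $< \ddim X$), there is a blow-up $\sigma_Z \colon \wt{Z_0} \row Z_0$ making each $\sigma_Z^*c_j(N_{Z_0 \subset X_0})$ polynomial in divisors on $\wt{Z_0}$. Performing the same blow-ups (with identical centers, viewed inside $X_0$) on $X_0$ produces $\sigma_X \colon \wt{X_0} \row X_0$ in which the strict transform of $Z_0$ is precisely $\wt{Z_0}$, and the analogous formula governs the pull-back of $[Z_0]$ to the blow-up $\wt{X_1}$ of $\wt{X_0}$ along $\wt{Z_0}$, with $N_{Z_0 \subset X_0}$ replaced by $N_{\wt{Z_0} \subset \wt{X_0}}$ (related to $\sigma_Z^*N_{Z_0 \subset X_0}$ by elementary transformations along the exceptional divisors of $\sigma_Z$, which themselves restrict from divisors on $\wt{X_0}$). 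The key technical point is then to arrange that every divisor class $D$ on $\wt{Z_0}$ appearing in the polynomial expressions for these Chern classes is the restriction of a divisor class on $\wt{X_0}$: for any $D$ not so representable, pick a smooth representative $W \subset \wt{Z_0}$ (a closed subvariety of codimension $r+1 \geq 3$ in $\wt{X_0}$) and blow up along $W$; the resulting new exceptional divisor restricts to the transform of $\wt{Z_0}$ in the prescribed class, and its interaction with the already-assembled formula is governed by the projection formula $j_*(g^*i^*D') = [E]\cdot \pi_1^*D'$ for divisors $D'$ on the ambient variety. Since each $c_j$ involves only finitely many divisor classes, finitely many such auxiliary blow-ups suffice.

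The main obstacle is the coherent execution of this lifting step: one must simultaneously lift all the relevant divisor classes on $\wt{Z_0}$ (as well as on the exceptional components arising from $\pi_0$) without destroying polynomial expressions already achieved for other Chern classes, and verify that the chosen blow-ups produce the correct divisor classes on restriction with correct multiplicities. This is most naturally handled by strengthening the inductive hypothesis to a finitary form --- ``for any smooth projective $X$ and any finite collection of classes $y_1,\dots,y_m \in \CH^*(X)$, a common blow-up $\wt{X} \row X$ makes them all simultaneously polynomial in divisor classes of $\wt{X}$'' --- which accommodates the bootstrapping intrinsic to the argument and allows all the relevant blow-ups of $X_0$, $\wt{X_0}$, and $\wt{X_1}$ to be organized into a single chain.
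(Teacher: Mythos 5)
Your overall strategy (blow up along the center and use the formula $\pi^*[Z_0]=\sum_i(-1)^i[E]^i\cdot j_*(g^*c_{r-1-i}(N))$, then try to make the relevant classes on $Z_0$ into restrictions of ambient divisor classes) is reasonable, but the proof has a genuine gap exactly at the point you yourself flag as ``the main obstacle'': the lifting step is never actually carried out, and the mechanism you propose does not obviously terminate. Blowing up $\wt{X}_0$ along a smooth representative $W\subset\wt{Z}_0$ of a very ample class does make that one class restricted from the ambient variety (the new exceptional divisor meets the strict transform in $W$), but by the same blow-up formula \cite[Thm 6.7]{Fu} this operation produces a new correction term supported on $\pp(N_{W\subset\wt{X}_0})$, whose expression involves arbitrary pull-backs of classes from $W$, and it modifies $N_{\wt{Z}_0\subset\wt{X}_0}$ by terms involving higher-codimension classes pushed forward from $W$ --- none of which are polynomials in ambient divisors. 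So each auxiliary blow-up re-creates, on a new subvariety, the very problem it was meant to solve; ``governed by the projection formula'' is not an argument that this regress stops. Strengthening the induction to finitely many classes on a smooth projective variety does not help, because the difficulty is not the number of classes but that they live on subvarieties (the center, the exceptional components of $\pi_0$, the new centers $W$), and converting their push-forwards into ambient divisor polynomials is precisely the statement being proved; the same objection applies to your treatment of the class $\xi$ supported on the exceptional divisors of the embedded resolution, where the inductive hypothesis for those divisors as abstract varieties does not yield anything about their classes pushed into $X_0$.

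What is missing is a relative induction that controls this regress, and this is where the paper's actual work lies. The paper never lifts arbitrary divisor classes from the support to the ambient variety: it first supports $y$ on a smooth divisor $D$, records a ``special structure'' on the pair $(D,X)$ ($D$ an iterated projective bundle over a base $B$ with all the classes $c_1(O(1))$ restricted from $X$), and runs an induction on $\ddim(B)$ so that every correction term produced by a blow-up again lives on such a pair with a strictly smaller base; the remaining main term, a monomial in very ample divisors on $D$, is killed by a third induction using a chain of codimension-two blow-ups whose centers are preimages of the divisors $Y_k\subset Y_{k-1}$ representing the factors, terminating because the restricted class vanishes for dimension reasons. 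Your blow-ups along $W\subset\wt{Z}_0$ are close in spirit to that last construction, but without an analogue of the special structure and the decreasing-base (or decreasing-degree) bookkeeping, your proposal does not yet constitute a proof.
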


\begin{proof}
Induction on the $\ddim(X)$. Below we will denote this induction as $\op{Ind}_1$.

\noindent
{\bf\un{(${\mathbf{\op{Ind}_1}}$ base)}} $\ddim(X)=0$. Nothing to check.

\noindent
{\bf\un{(${\mathbf{\op{Ind}_1}}$ step)}} Can assume that $r>0$. Then $y$ has support on some divisor.
By blowing $X$ up we can assume that this divisor has
strict normal crossings, and by the following Lemma, can assume that $y$ has support on a smooth divisor $D$.

\begin{lem}
\label{L1-AA1}
If $y=\sum_i y_i$, and the statement is true for each $y_i$, then it is true for $y$.
\end{lem}

\begin{proof}
Suppose, for each $i$, there exists such blow-up
$\pi_i:\wt{X}_i\row X$ that $\pi_i^*(y_i)$ is a polynomial in divisorial classes. Then, by the results of Hironaka \cite{Hi}, there
exists a blow-up $\pi:\wt{X}\row X$, which covers $\pi_i$, for each $i$. Then, clearly, $\pi^*(y)$ is a polynomial in
divisorial classes.
\Qed
\end{proof}

Let now $j_D:D\row X$ be a smooth divisor and $y$ is supported on $D$.
Let us say that the pair $(D,X)$ has a {\it special structure} with the base $B$ if $D$ is a consecutive projective bundle over $B$ where the canonical
line bundles $O(1)$ of all these fibrations are the restrictions of some line bundles from $X$.
Every pair $(D,X)$ possesses a "trivial" special structure with the base $B=D$.
We will prove our statement by the induction on the $\ddim(B)$. We will denote this induction as $\op{Ind}_2$ below.

\noindent
{\bf\un{(${\mathbf{\op{Ind}_2}}$ base)}} $\ddim(B)=0$. Then $D$ is a disjoint union of consecutive projective bundles whose all canonical bundles $O(1)$
are restrictions of some line bundles from $X$. Then $\CH^*(D)$ is generated by $j_D^*(c_1(L))$, for some line bundles $L$ on $X$.
Since $y=(j_D)_*(\ov{y})$, for some $\ov{y}\in\CH^*(D)$, it is a polynomial in divisorial classes.

\noindent
{\bf\un{(${\mathbf{\op{Ind}_2}}$ step)}} We have: $y=(j_D)_*(\ov{y})$, where $\ov{y}=\sum_l\eps^*(u_l)\cdot r_l$ with $u_l\in\CH^*(B)$, $\eps:D\row B$ is the natural projection, and $r_l$ is a monomial
in $\rho_t$'s, where $\rho_t=c_1(O(1)_t)$ is the restriction of some divisorial class from $X$. By Lemma \ref{L1-AA1}, we can
assume that $\ov{y}=\eps^*(u)$, where $u\in\CH^*(B)$.

Since $\ddim(B)<\ddim(X)$, by the inductive assumption of $\op{Ind}_1$, there exists a blow-up $\tau:\wt{B}\row B$, such that
$\tau^*(u)$ is a polynomial in divisorial classes. We have a cartesian diagram of blow-ups:
$$
\xymatrix{
\wt{D} \ar[r]^{\tau_D} \ar[d]_{\wt{\eps}} &  D \ar[d]^{\eps} \\
\wt{B} \ar[r]^{\tau} &  B
}
$$
Let $\ffi:\wt{X}\row X$ be a blow-up of $X$ in the same centers as $\tau_D$, and $j_{\wt{D}}:\wt{D}\row\wt{X}$.
Then $\ffi^*((j_D)_*\eps^*(u))=(j_{\wt{D}})_*\tau_D^*(\eps^*(u))+\sum_m v_m$, where $v_m$ are supported on the components $\wt{E}_m$
of the special divisor $\wt{E}=\cup_m\wt{E}_m$ of the blow-up $\ffi$ - see \cite[Thm 6.7]{Fu} (or \cite[Prop. 5.27]{so2}).
Let $X_m$ be a variety obtained after $m$ blow-ups with the special divisor
$j_{E_m}:E_m\row X_m$ of the $m$-th blow-up and the projection $\ffi_m:\wt{X}\row X_m$ whose restriction to $\wt{E}_m$ is
the blow-up $\alpha_m:\wt{E}_m\row E_m$. Then (again by \cite[Thm 6.7]{Fu}), the image of
$$
(\ffi_m^*(j_{E_m})_*-(j_{\wt{E}_m})_*\alpha_m^*):\CH^*(E_m)\lrow\CH^*(\wt{X})
$$
has support on $\cup_{n>m}\wt{E}_n$, while the map
$$
\oplus\alpha_m^*:\oplus_m\CH^*(E_m) \lrow \CH^*(\wt{E})
$$
is surjective (since $\cup_{n\geq m}\wt{E}_n\backslash\cup_{n>m}\wt{E}_n$ is an open subvariety of $E_m$). Hence, the map
$$
\oplus\ffi_m^*(j_{E_m})_*:\oplus_m\CH^*(E_m)\lrow\CH^*(\wt{X})
$$
covers the image of $(j_{\wt{E}})_*$.

The pair $(E_m,X_m)$ has a {\it special structure} with the base of dimension smaller than $B$ (namely, the center of the $m$-th blow-up of
$\tau$), where $c_1(O(1))$ of the external projective fibration is the restriction of $[-E_m]$ from $X_m$, and all the other (internal)
canonical bundles are induced by the {\it special structure} on $(D,X)$, and so, are restrictions of some divisorial classes from $X$.
By the inductive assumption of $\op{Ind}_2$, any element in $(j_{E_m})_*\CH^*(E_m)$ is a polynomial in divisorial classes over some
blow-up $\wt{X}_m\row X_m$. By Lemma \ref{L1-AA1}, we obtain that $\sum_m v_m$ is a polynomial in divisorial classes
over some blow-up of $\wt{X}$. Hence, it remains to deal with $(j_{\wt{D}})_*\tau_D^*(\eps^*(u))$.

We know that $\tau_D^*(\eps^*(u))=\wt{\eps}^*\tau^*(u)$ is a polynomial in divisorial classes by construction. Let us denote this
element again as $\ov{y}$ and the divisor (on which it is supported) as $D$.
By Lemma \ref{L1-AA1},
we can assume that $\ov{y}$ is a monomial $x_1\cdot\ldots\cdot x_{r-1}$ in \un{very ample} divisorial classes on $D$.
We will use induction on $r$. Below it will be denoted as the $\op{Ind}_3$.

\noindent
{\bf\un{(${\mathbf{\op{Ind}_3}}$ base)}} When $r=1$ there is nothing to prove as $y=(j_D)_*(\ov{y})=[D]$.

\noindent
{\bf\un{(${\mathbf{\op{Ind}_3}}$ step)}} Consider the chain of co-dimension $1$ regular embeddings
$D\stackrel{i_1}{\llow}Y_1\stackrel{i_2}{\llow}Y_2\stackrel{i_3}{\llow}\ldots$, where $[Y_{k+1}]$ represents the restriction of $x_1$
to $Y_k$. Construct the chain of blow-ups
$\ov{X}\stackrel{\pi_1}{\llow}\ov{X}_1\stackrel{\pi_2}{\llow}\ov{X}_2\stackrel{\pi_3}{\llow}\ldots$ in co-dimension-two centers
$Z_k\row\ov{X}_{k-1}$ inductively as follows.
The special divisor $E_k$ of $\pi_k$ is a consecutive projective line fibration $E_k\stackrel{\ffi_k}{\row}Y_k$ and
$Z_{k+1}=\ffi_k^{-1}(Y_{k+1})$. In particular, $E_k\stackrel{\eps_k}{\row}Z_k$ is a projective line fibration and
$\ffi_k=\eps_1\circ\ldots\circ\eps_k$. Let ${\mathbf{i}}_k=i_1\circ\ldots\circ i_k$ and $\ov{y}_k=\ffi_k^*({\mathbf{i}}_k)^*(\ov{y})$ be the class supported on $E_k$.

Then we have a cartesian diagram
$$
\xymatrix{
E_{k+1} \ar[r]^{i_{E_{k+1}}} \ar @{}[rd]|{\square}& \ov{X}_{k+1} \\
Z_{k+1} \ar[u] \ar[r]_{\alpha} & \ov{E}_k \ar[u]_{j_{\ov{E}_k}}
}
$$
with $\ov{E}_k=E_k$ and $\ov{y}_{k+1}=\eps_{k+1}^*\alpha^*\ov{y}_k$.
By (the Chow group version of) \cite[Prop. 5.27]{so2}, we have:
$$
\pi_{k+1}^*((i_{E_k})_*\ov{y}_k)=(j_{\ov{E}_k})_*\ov{y}_k-(i_{E_{k+1}})_*\ov{y}_{k+1}.
$$
Here, by the inductive assumption of $\op{Ind}_3$, the first summand
$(j_{\ov{E}_k})_*\ov{y}_k=(j_{\ov{E}_k})_*(x_1\cdot x_2\cdot\ldots\cdot x_{r-1})=
[E_{k+1}]\cdot(j_{\ov{E}_k})_*(x_2\cdot\ldots\cdot x_{r-1})$
is expressible as a polynomial in divisorial classes over some
blow-up of $\ov{X}_{k+1}$, so (using Lemma \ref{L1-AA1}) the question about $\ov{y}_k$ supported on $E_k\row\ov{X}_k$ is reduced to
the question about $\ov{y}_{k+1}$ supported on $E_{k+1}\row\ov{X}_{k+1}$. Thus, it is sufficient to show  that
our statement is true for $\ov{y}_k$ supported on $E_k\row\ov{X}_k$, for at least, one $k$.
But for $k=\ddim(X)-r$, the class $({\mathbf{i}}_k)^*(\ov{y})$ is zero by dimensional reasons.
Hence, the ($\op{Ind}_3$ step) is proven. This implies ($\op{Ind}_2$ step) and ($\op{Ind}_1$ step).
The Theorem is proven.
\Qed
\end{proof}

\begin{corollary}
\label{AA2}
Let $X$ be a smooth projective variety and $y\in\CH^r(X)$. Then there exists a blow-up $\pi:\wt{X}\row X$, such that
$\pi^*(y)$ is represented by a linear combination of classes of smooth complete intersections of very ample divisors which are
transversal to each other. In particular, for $r>\ddim(X)/2$, it is represented by the difference of classes of two smooth disjoint
subvarieties.
\end{corollary}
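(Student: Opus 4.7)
The plan is to deduce the Corollary from Theorem \ref{AA1} by a standard Bertini/general position argument. First, apply Theorem \ref{AA1} to produce a blow-up $\pi\colon\wt{X}\row X$ such that $\pi^{*}(y)\in\CH^{r}(\wt{X})$ is a $\zz$-polynomial $P(D_{1},\dots,D_{m})$ in divisor classes $D_{i}\in\CH^{1}(\wt{X})$. Fix a very ample divisor $H$ on $\wt{X}$ and, for each $i$, choose $n_{i}\gg 0$ so that both $A_{i}:=D_{i}+n_{i}H$ and $B_{i}:=n_{i}H$ are very ample; then $D_{i}=A_{i}-B_{i}$ in $\CH^{1}(\wt{X})$. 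Substituting these expressions into $P$ and expanding, one obtains
\[
\pi^{*}(y)=\sum_{I}n_{I}\,L_{i_{1}^{(I)}}\cdot L_{i_{2}^{(I)}}\cdots L_{i_{r}^{(I)}}\in\CH^{r}(\wt{X}),
\]
where each $L_{i_{j}^{(I)}}$ is one of the very ample classes $A_{t}$ or $B_{t}$ and $n_{I}\in\zz$.

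Next, use Bertini to realize each monomial by an actual smooth complete intersection. For each index $I$, pick a generic hyperplane section $H_{i_{j}^{(I)}}$ in the linear system of $L_{i_{j}^{(I)}}$. Since the ground field has characteristic zero, by the classical Bertini theorem for very ample linear systems, a generic $r$-fold complete intersection $S_{I}:=H_{i_{1}^{(I)}}\cap\cdots\cap H_{i_{r}^{(I)}}$ is smooth of codimension $r$. Moreover, because each linear system is base-point free and we vary the representatives independently, the same argument, applied to the pairwise intersections of the intersecting divisors from different monomials, lets us choose the $H_{i_{j}^{(I)}}$'s so that the resulting complete intersections $S_{I}$ are pairwise transversal (and, when we want integer coefficients, we replace $n_{I}\cdot[S_{I}]$ by the class of $|n_{I}|$ disjoint generic representatives of $|S_{I}|$, giving $\pm$ the class of a smooth complete intersection with multiplicity $1$). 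This yields the first assertion.

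For the final clause, note that when $r>\ddim(X)/2$ each $S_{I}$ has dimension $<\ddim(X)/2$. Hence, for any two such subvarieties $S_{I}$ and $S_{J}$, the expected dimension of $S_{I}\cap S_{J}$ is negative, so by moving one within its (still very ample) linear system we may arrange $S_{I}\cap S_{J}=\emptyset$. Applying this to all pairs simultaneously, we can assume every $S_{I}$ occurring in the expansion is disjoint from every other. Separating the monomials by the sign of $n_{I}$ (and using disjoint copies to realize higher multiplicities), define $A_{+}$ and $A_{-}$ to be the disjoint unions of the complete intersections corresponding, respectively, to the positive and negative contributions; both are smooth and disjoint, and $\pi^{*}(y)=[A_{+}]-[A_{-}]$.

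The only real obstacle is the simultaneous transversality and pairwise disjointness: this is routine in characteristic zero via Bertini applied to base-point free (very ample) linear systems combined with a dimension count, but one must organize the independent choices of representatives carefully so that smoothness, transversality between different monomials, and (for the last clause) emptiness of pairwise intersections are all achieved at once. No deeper ingredient is needed.
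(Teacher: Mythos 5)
Your proposal is correct and follows exactly the route the paper intends: the Corollary is deduced from Theorem \ref{AA1} by writing each divisor class as a difference of very ample classes, expanding the polynomial, and using Bertini in characteristic zero to realize each monomial by a generic smooth complete intersection, with transversality and (for $r>\ddim(X)/2$, by the dimension count $\ddim S_I+\ddim S_J<\ddim X$) disjointness obtained from independent generic choices in base-point-free systems. No gap here; this is the standard argument the paper leaves implicit.
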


\subsection{General position results}

In this section we present various general position arguments which permit to replace
cycles by the classes of connected subvarieties and, in some cases, reduce the anisotropy of a class $[S]$
to the numerical triviality of some characteristic classes of $S$.

The following simple and well-known "Chow group shadow" of the Lefshetz theorem is one of our key ingredients.

\begin{statement}
\label{alg-geom-Lef}
Let $X$ be a scheme with a map $X\stackrel{f}{\row}\pp^n$ and
$\iota:D_{\eta}\row X$ be the generic hyperplane section of $X$ (over $k((\pp^n)^{\vee})$).
Then the pull back $\iota^*:\CH^*(X)\twoheadrightarrow\CH^*(D_{\eta})$ is surjective.
If $X$ is a smooth variety, so is $D_{\eta}$.
\end{statement}

\begin{proof}
Consider $Y\subset X\times(\pp^n)^{\vee}$ given by $Y=\{(x,H)|f(x)\in H\}$.
Then $Y$ is a projective bundle: $Y=\op{Proj}_X(V)$, where $V=W/O(-1)$ (with $(\pp^n)^{\vee}=\pp(W)$).
Let $Y_{\eta}$ be the generic fiber of the projection
$Y\row(\pp^n)^{\vee}$. This is the "generic" hyperplane section $D_{\eta}$.
By the projective bundle theorem and localization, we have surjections
$\CH^*(X)[\rho]\twoheadrightarrow\CH^*(Y)\twoheadrightarrow\CH^*(Y_{\eta})$, where $\rho=c_1(O(1))$. Since $\rho$ is the pull-back
of the class of a hyperplane on $(\pp^n)^{\vee}$, it is zero in $\CH^*(Y_{\eta})$. Thus, we get the surjection
$\CH^*(X)\twoheadrightarrow\CH^*(Y_{\eta})$. Finally, if $X$ is smooth, so is $Y$ and $Y_{\eta}$.
\Qed
\end{proof}

The next result shows that any closed subscheme of an effective Cartier divisor can be made the base set of the respective linear system (which, in turn, can be made very ample), if we modify the divisor by an 
$n$-multiple of some other divisor, for a given natural $n$.

\begin{statement}
\label{LS-Sbs}
Let $X$ be an irreducible quasi-projective variety and $n\in\nn$.
Let $E$ be an effective Cartier divisor on $X$, and $S\subset E$ be a closed subscheme.
Then there exists a very ample divisor $D$ such that the linear system $|E+n\cdot D|_S$ consists
of very ample divisors and $S$ is the base set of it.
\end{statement}

\begin{proof}
There are very ample divisors $F_1,F_2$ on $X$ such that $[E]=[F_1]-[F_2]$. Then the class $[E+nF_2]=[F_1+(n-1)F_2]=[H]$ is very ample
and defines an embedding $X\hookrightarrow\pp^N$. Let $\ov{S}$ be the closure of the image of $S$ in this embedding.
The coordinate ring of $\ov{S}$ has relations of degree $\leq m$.
In other words, the base set of the linear system $|kH|_S$ is $S$, for any $k\geq m$. Take $k=nm+1$. Then
$[kH]=[(nm+1)(E+nF_2)]=[E+n(mF_1+((n-1)m+1)F_2)]=[E+nD]$, where $D=mF_1+((n-1)m+1)F_2$ is very ample.
\Qed
\end{proof}

The following result will enable us to substitute a multi-component divisor with only $n$-anisotropic singularities, which
contains a given closed subscheme, by a single component one.
The same can be done with a collection of such divisors and
subschemes with the result having all the faces connected, if
the original collection of divisors was with strict normal crossings modulo $n$-anisotropic subvarieties.
This will be our key tool below.

\begin{statement}
\label{LS-div-conn}
Let $n\in\nn$, $X$ be a smooth projective connected variety of dimension $d$ and
$E=\cup_iE_i$ be a divisor on it with strict normal crossings
outside an $n$-anisotropic closed subscheme, with, possibly, reducible components $E_i$, and let
$S_i\subset E_i$ be some closed subschemes such that, for any subset $I$ of the set of indices, $\ddim(S_I)<d-\#(I)$, where
$S_I=\cap_{i\in I}S_i$.
Then, over some purely transcendental f.g. extension of $k$, there is a divisor $Z=\cup_iZ_i$, where $[Z_i]=[E_i]\in\CH^1(X;\zz/n)$,
$S_i\subset Z_i$, for any $I$, the variety $Z_I=\cap_{i\in I}Z_i$ is irreducible, and $Z$ has strict normal crossings outside an
$n$-anisotropic closed subscheme of $S$. Moreover, the restriction $\CH^*(X)\twoheadrightarrow\CH^*(Z_i\backslash S_i)$ is surjective.
\end{statement}

\begin{proof}
By the Statement \ref{LS-Sbs}, for any $i$, there exists a very ample divisor $D_i$, such that the linear system
$\Phi_i=|E_i+n\cdot D_i|_{S_i}$
consists of very ample divisors and has the base set $S_i$. Let $Z_i$ be the generic element of this linear system
(defined over $k(\pp(\Phi_i))$). Clearly, $[Z_i]=[E_i]\in\CH^1(X;\zz/n)$ and $S_i\subset Z_i$.
Let us show that $Z_I=\cap_{i\in I}Z_i$ is irreducible. We will prove by induction on $\#(I)$ that, for any $i\in I$, the restriction
$\CH^0(Z_{I\backslash i})\twoheadrightarrow\CH^0(Z_I)$ is surjective. The {\bf(base)} $I=\emptyset$ is empty.

\noindent
{\bf\un{(step)}} The linear system $\Phi_i$ defines an embedding of $X\backslash S_i$ into a projective space, and for any subscheme
$T\subset X$ defined over some field $L$ and $Z_i$ defined over $L(\pp(\Phi_i))$ as above, we have:
$\ddim(T\cap(Z_i\backslash S_i))\leq\ddim(T)-1$. By the Statement
\ref{alg-geom-Lef}, we have a surjection
$\CH^0(Z_{I\backslash i}\backslash Z_{I\backslash i}\cap S_i)\twoheadrightarrow\CH^0(Z_I\backslash Z_{I\backslash i}\cap S_i)$.
We know that all the components of $Z_I$ have dimensions $\geq d-\#(I)$. On the other hand, the scheme
$Z_{I\backslash i}\cap S_i$ is the union $\cup_{i\in J\subset I}Y_J$, where
$Y_J=(\cap_{j\in J}S_j)\cap(\cap_{j\in I\backslash J}(Z_j\backslash S_j))$. Since $\ddim(Y_J)\leq\ddim(S_J)-\#(I\backslash J)$,
we obtain that $\ddim(Z_{I\backslash i}\cap S_i)\leq\op{max}(\ddim(S_J)-\#(I\backslash J)|i\in J\subset I)<d-\#(I)$. Hence,
$\CH^0(Z_I)=\CH^0(Z_I\backslash Z_{I\backslash i}\cap S_i)$, and we get the surjection
$\CH^0(Z_{I\backslash i})\twoheadrightarrow\CH^0(Z_I)$. Induction step is proven.

Thus, we have the surjection $\CH^0(X)\twoheadrightarrow\CH^0(Z_I)$, and since $X$ is irreducible, so is $Z_I$.

Our system $\Phi_i$ contains $E_i+|n\cdot D_i|$.
The generic representative of $|n\cdot D_i|$ is $n$-anisotropic
(by Statement \ref{alg-geom-Lef} and the arguments from the proof of Proposition \ref{A12}).
Thus, the generic representative $G_i$ of $E_i+|n\cdot D_i|$
has only $n$-anisotropic singularities and, modulo $n$-anisotropic subvarieties, the divisor $G=\cup_iG_i$ has strict normal crossings.
Consequently, the generic representative $Z_i$ of our system
$|E_i+n\cdot D_i|_Y$ will have only $n$-anisotropic singularities too, and the divisor $Z=\cup_i Z_i$ will have strict normal crossings
modulo $n$-anisotropic subvarieties.
Indeed, we have a divisor $W$ on $X\times P$, where $P=\prod_i\pp(\Phi_i)$ parameterizes (combinations of) elements of our linear systems.
The fiber over the generic point of $P$ is $Z$, while the fiber over some special point is $G$.
Let $R$ be a $d.v.r.$ with the fraction field $K$ and residue field $\kappa$, and $W_R$ be some divisor on $X\times\op{Spec}(R)$, with
fibers $W_K$ and $W_{\kappa}$ over the generic and closed point of $\op{Spec}(R)$, respectively.
Having a closed point ${\cal T}$ of $W_K$, consider the closure of it in $W$.
We get a proper morphism $f:T\row\op{Spec}(R)$ of relative dimension
zero, whose fiber over $\op{Spec}(\kappa)$ consists of points $t_l$ with multiplicities $e_l$ - these are the specializations
of ${\cal T}$. The specialization of a singular point is singular, and specialization of a point where components are not transversal has
the same property. At the same time,
$$
[k(T):K]=\sum_l[k(t_l):\kappa]\cdot e_l.
$$
So, if $[k(T):K]$ is not divisible by $n$, then one of $[k(t_l):\kappa]$ is.
This shows that $Z$ should be a divisor with strict normal crossings at every point which is {\it not $n$-anisotropic}.
Since our linear system $\Phi_i$ is very ample on $X\backslash S_i$, the divisor $Z$ has strict normal crossings outside $S$.
By the same reason and by Statement \ref{alg-geom-Lef}, we have the surjection $\CH^*(X)\twoheadrightarrow\CH^*(Z_i\backslash S_i)$.
\Qed
\end{proof}

This permits to represent some cycles of co-dimension $2$ by single components.

Everywhere below we will denote:
$$
\Ch^*_{k/k}(X):=\CH^*(X,\zz/n)/(n-\text{anisotropic classes}),
$$
where $n$ is some natural number (which should be clear from context).

\begin{statement}
\label{LS-codim2-single}
Let $S=\cup_iS_i$ be the union of smooth connected transversal subvarieties of codimension $2$ on a smooth projective variety
and $n\in\nn$.
Suppose, that all the intersections $S_i\cap S_j$ are $n$-anisotropic. Then over some f.g. purely transcendental extension of $k$
there exists a blow-up $\mu:\ov{X}\row X$, such that
$\mu^*([S])\in\Ch^*_{k/k}(\ov{X})$ is represented by the class of a smooth connected variety
$\ov{S}$ whose characteristic classes in $\Ch^*_{k/k}$ are $\mu^*$ of the characteristic classes of $S$.
Moreover, if $S=\cup_iS_i$ and $T=\cup_jT_j$ are two subvarieties as above which are transversal to each other, then there exists
a blow-up $\mu:\ov{X}\row X$ with the properties as above for both $S$ and $T$ and such that $\ov{S}$ and $\ov{T}$ are smooth
connected, transversal to each other, and $\ov{S}\cap\ov{T}$ is connected.
\end{statement}

\begin{proof}
Let $\pi:\wt{X}\row X$ be the blow-up in all the components $S_i$ of $S$, with $E_i\stackrel{\pi_i}{\lrow}S_i$
the components of the special divisor and
$\rho_i=-[E_i]$. Then, by \cite[Prop. 6.7]{Fu}, $\pi^*([S])=[F]$, where $F=\cup_i F_i$, $F_i$ is supported on $E_i$ and
$[F_i]=[E_i](\rho_i+\pi_i^*(c_1(N_{S_i\subset X})))$. Note, that $E=\cup_iE_i$ is a divisor with strict normal crossings, with
all the intersections $E_i\cap E_j$ $n$-anisotropic. Note, that $c_1(N_{F_i\subset\wt{X}})=\pi_i^*(c_1(N_{S_i\subset X}))$
and $c_2(N_{F_i\subset\wt{X}})=\pi_i^*(c_2(N_{S_i\subset X}))$. Hence, the same is true about all other characteristic classes.

Since $E$ is smooth outside an $n$-anisotropic subscheme, by Statement \ref{LS-div-conn}
(where we consider $E$ as a single component), over
some f.g. purely transcendental extension of $k$, there is an irreducible divisor $Z$, containing $F$,
smooth outside an $n$-anisotropic closed subscheme of $F$.
Let $\eps:\ov{X}\row\wt{X}$ be an embedded desingularization of $Z$. Let $\ov{Z}$ and $\ov{F}$ be the proper pre-images of $Z$ and $F$.
Then $\eps^*:\Ch^*_{k/k}(\wt{X})\stackrel{=}{\row}\Ch^*_{k/k}(\ov{X})$ is an isomorphism and
$\eps^*([F])=[\ov{F}]\in\Ch^2_{k/k}(\ov{X})$ is supported on the smooth connected divisor $\ov{Z}$.
By adding an $n$-multiple of a very ample divisor, we can substitute $[\ov{F}]$ by a very ample divisor on $\ov{Z}$.
Let $\ov{S}$ be the generic representative of the linear system $|\ov{F}|$ on $\ov{Z}$. Then $\ov{S}$ is smooth and connected.
Since, modulo $n$-anisotropic subvarieties, $\ov{X}$ coincides with $X$ and $\ov{F}$ with $F$, the characteristic classes of $\ov{S}$ in
$\Ch^*_{k/k}(\ov{X})$ are $\eps^*$ of the respective classes of $F$.

For the pair of subvarieties $S=\sum_iS_i$ and $T=\sum_jT_j$, consider the blow-up $\pi:\wt{X}\row X$ at all the components of $S$ and $T$.
Then the special divisor $E=E_S\cup E_T$ has strict normal crossings, where $E_S$ and $E_T$ are smooth outside $n$-anisotropic subschemes.
Applying Statement \ref{LS-div-conn} to $F_S\cup F_T$ contained in $E_S\cup E_T$ (where we consider $E_S$ and $E_T$ as single components),
we obtain a divisor $Z=Z_S\cup Z_T$ containing $S\cup T$,
with strict normal crossings outside an $n$-anisotropic subscheme,
with irreducible $Z_S,Z_T$ and $Z_S\cap Z_T$. Resolving $n$-anisotropic singularities and non-transversalities of $Z$, as above,
we obtain the needed smooth connected transversal subvarieties $\ov{S}$ and $\ov{T}$ having the needed characteristic numbers.
Since $Z_S\cap Z_T$ is irreducible, the intersection $\ov{S}\cap\ov{T}$ is connected.
\Qed
\end{proof}

The next statement represents an elementary block with the help 
of which we will ``deform'' the chains of 
co-dimension $1$ embeddings of irreducible varieties.

\begin{statement}
\label{LS-gen-step}
Let $n\in\nn$, $X$ be projective irreducible variety, smooth outside an $n$-anisotropic closed subscheme,
and $S\subset Z\subset X$ be embeddings of co-dimension
$1$ of irreducible subvarieties, smooth outside $n$-anisotropic closed subschemes and such that $S$ is not $n$-anisotropic.
Then, over some f.g. purely transcendental extension of $k$,
there exists $Z'$, such that $S\subset Z'\subset X$ has the same properties, $[Z']=[Z]$ in $\Ch^1_{k/k}(X)$ and the restriction
$\Ch^*_{k/k}(X)\twoheadrightarrow\Ch^*_{k/k}(Z'\backslash S)$ is surjective.
\end{statement}

\begin{proof}
This is a particular case of Statement \ref{LS-div-conn}, aside from the fact that we permit $X$ to have anisotropic singularities.
But the same proof works.
Outside some closed $n$-anisotropic subscheme $T$ of $X$, $Z$ is a Cartier divisor and,
by the Statement \ref{LS-Sbs}, there exists a very ample divisor $D$ on $X\backslash T$, such that the linear system
$\Phi=|Z+n\cdot D|_{S\backslash S\cap T}$ on $X\backslash T$ consists of very ample divisors and has the base set $S\backslash T$.
This linear system defines an embedding
of $X\backslash (S\cup T)$ into a projective space.
Let $Z'$ be the closure in $X$ of the generic element of this linear system
(defined over $k(\pp(\Phi))$). Since $S$ is {\it not $n$-anisotropic}, $S\backslash S\cap T$ is non-empty, and so, $Z'$ contains $S$.
Clearly, $[Z']=[Z]\in\Ch^1_{k/k}(X)=\Ch^1_{k/k}(X\backslash T)$.
By Statement \ref{alg-geom-Lef}, we have a surjection $\Ch^*_{k/k}(X)\twoheadrightarrow\Ch^*_{k/k}(Z'\backslash S)$,
and $Z'$ is irreducible. The same arguments as in the proof of Statement \ref{LS-div-conn} show that $Z'$ has only $n$-anisotropic
singularities.
\Qed
\end{proof}

The previous result permits to deform the chains of co-dimension
1 embeddings in such a way that isotorpic Chow groups of a term of the new chain would be covered by those of the previous (ambient) term modulo such groups of the next (smaller) term of the original chain. Later it will enable us, subject to certain conditions, to make numerically trivial classes anisotropic. 

\begin{statement}
\label{LS-gen-constr}
Let $n\in\nn$ and
$X_r\stackrel{j_r}{\row} X_{r-1}\stackrel{j_{r-1}}{\row}\ldots\stackrel{j_2}{\row} X_1\stackrel{j_1}{\row} X_0$
be embeddings of co-dimension $1$ of irreducible subvarieties, smooth outside $n$-anisotropic closed subschemes, with
$X_r$- not $n$-anisotropic.
Then, over some f.g. purely transcendental extension of $k$, it can be complemented to a commutative diagram:
$$
\xymatrix{
X'_r\ar[r]^{j'_r} & X'_{r-1}\ar[r]^{j'_{r-1}} & X'_{r-2}\ar[r]^{j'_{r-2}} & \ldots & \ldots \ar[r]^{j'_2} & X'_1 \ar[r]^{j'_1} & X_0 \\
X_r\ar[r]^{j_r} \ar[ru]^(0.4){g_r} & X_{r-1}\ar[r]^{j_{r-1}} \ar[ru]^(0.4){g_{r-1}} & X_{r-2}\ar[r]^{j_{r-2}} \ar[ru]^(0.4){g_{r-2}}
& \ldots & \ldots \ar[r]^{j_2} \ar[ru]^-{g_2} & X_1 \ar[r]^{j_1} &
X_0 \ar@{=}[u]
}
$$
where all maps are embeddings of co-dimension $1$ of irreducible subvarieties, smooth outside $n$-anisotropic closed subschemes and
for any $i$, the map
$$
((j'_i)^*,(g_{i+1})_*):\Ch^*_{k/k}(X'_{i-1})\oplus\Ch^{*-1}_{k/k}(X_{i+1})\twoheadrightarrow\Ch^*_{k/k}(X'_i)
$$
is surjective (for $i=r$, the map $(j'_r)^*$ is surjective), and $[X'_{i+1}]=[X_{i+1}]\in\Ch^1_{k/k}(X'_i)$.
\end{statement}

\begin{proof}
This follows from the inductive application of the Statement \ref{LS-gen-step} from top to the bottom.
Finally, in the last step, we take $X'_r$ to be the closure of the generic representative of the respective (very ample)
linear system $|X_r+n\cdot D|$ without any base-set. Since $X_r$ is {\it not $n$-anisotropic}, $X'_r$ is non-empty and irreducible.
By Statement \ref{alg-geom-Lef}, we have a surjection $(j'_r)^*:\Ch^*_{k/k}(X'_{r-1})\twoheadrightarrow\Ch^*_{k/k}(X'_r)$
\Qed
\end{proof}

Note, that although, $X'_{r-1}$ is still {\it not $n$-anisotropic}, $X'_r$ may be, in principle, anisotropic.

We also have a "smooth" version of the above result which is
what we will use below.

\begin{statement}
\label{LS-gen-smooth}
Let $X_0$ be a smooth projective connected variety,
and $X_r\stackrel{j_r}{\row} X_{r-1}\stackrel{j_{r-1}}{\row}\ldots\stackrel{j_2}{\row} X_1\stackrel{j_1}{\row} X_0$
be regular embeddings of co-dimension $1$ of connected varieties, with $X_r$-not $n$-anisotropic.
Then, over some f.g. purely transcendental extension of $k$, it can be complemented to a commutative diagram:
$$
\xymatrix{
X'_r\ar[r]^{j'_r} & X'_{r-1}\ar[r]^{j'_{r-1}} & X'_{r-2}\ar[r]^{j'_{r-2}} & \ldots & \ldots \ar[r]^{j'_2} & X'_1 \ar[r]^{j'_1} &
X'_0 \ar[dd]^{\pi_0} \\
\wt{X}_r\ar[d]^{\pi_r} \ar[ru]^{g_r} \ar[r]^{\wt{j}_r} & \wt{X}_{r-1}\ar[d]^{\pi_{r-1}} \ar[ru]^{g_{r-1}} \ar[r]^{\wt{j}_{r-1}} &
\wt{X}_{r-2}\ar[d]^{\pi_{r-2}} \ar[ru]^{g_{r-2}} \ar[r]^{\wt{j}_{r-2}} & \ldots & \ldots \ar[r]^{\wt{j}_2} &
\wt{X}_1 \ar[d]^{\pi_1} \ar[ru]^{g_1} & \\
X_r\ar[r]^{j_r} & X_{r-1}\ar[r]^{j_{r-1}} & X_{r-2}\ar[r]^{j_{r-2}}
& \ldots & \ldots \ar[r]^{j_2} & X_1 \ar[r]^{j_1} &
X_0
}
$$
where upper and lower horizontal maps are regular embeddings of co-dimension $1$ of connected varieties, while
the vertical ones are blow-ups in $n$-anisotropic centers.
In particular, the maps $\pi_i^*:\Ch^*_{k/k}(X_i)\stackrel{=}{\row}\Ch^*_{k/k}(\wt{X}_i)$ are isomorphisms.
Also, for any $i$, the map
$$
((j'_i)^*,(g_{i+1})_*\pi_{i+1}^*):\Ch^*_{k/k}(X'_{i-1})\oplus\Ch^{*-1}_{k/k}(X_{i+1})
\twoheadrightarrow\Ch^*_{k/k}(X'_i)
$$
is surjective (for $i=r$, the map $(j'_r)^*$ is surjective), and $[X'_{i+1}]=(g_{i+1})_*[\wt{X}_{i+1}]\in\Ch^1_{k/k}(X'_i)$.
\end{statement}

\begin{proof}
By Statement \ref{LS-gen-constr}, we get the commutative diagram
$$
\xymatrix{
\ov{X}_r\ar[r]^{\ov{j}_r} & \ov{X}_{r-1}\ar[r]^{\ov{j}_{r-1}} & \ov{X}_{r-2}\ar[r]^{\ov{j}_{r-2}} & \ldots & \ldots \ar[r]^{\ov{j}_2} &
\ov{X}_1 \ar[r]^{\ov{j}_1} &
X_0 \\
X_r\ar[r]^{j_r} \ar[ru]^(0.4){\ov{g}_r} & X_{r-1}\ar[r]^{j_{r-1}} \ar[ru]^(0.4){\ov{g}_{r-1}} &
X_{r-2}\ar[r]^{j_{r-2}} \ar[ru]^(0.4){\ov{g}_{r-2}}
& \ldots & \ldots \ar[r]^{j_2} \ar[ru]^(0.4){\ov{g}_2} & X_1 \ar[r]^{j_1} &
X_0 \ar@{=}[u]
}
$$
where $\ov{X}_i$ are irreducible varieties smooth outside some closed proper $n$-anisotropic subschemes, the maps
$$
((\ov{j}_i)^*,(\ov{g}_{i+1})_*):\Ch^*_{k/k}(\ov{X}_{i-1})\oplus\Ch^{*-1}_{k/k}(X_{i+1})
\twoheadrightarrow\Ch^*_{k/k}(\ov{X}_i)
$$
are surjective, and $[\ov{X}_{i+1}]=[X_{i+1}]\in\Ch^1(\ov{X}_i)$.

Let $\pi_0:X'_0\row X_0$ be the embedded
desingularization of $\ov{X}_r\subset\ov{X}_{r-1}\subset\ov{X}_{r-2}\subset\ldots\subset\ov{X}_1\subset\ov{X}_0$,
and $\eps_i:X'_i\row\ov{X}_i$ be the proper pre-images (with $\eps_0=\pi_0$). Since special divisors are $n$-anisotropic, we have
isomorphisms $\eps_i^*:\Ch^*_{k/k}(\ov{X}_i)\stackrel{=}{\row}\Ch^*_{k/k}(\wt{X}_i)$.
Noting that $X_i$ is {\it not $n$-anisotropic}, by blowing $X_i$ at $n$-anisotropic centers,
we may resolve the indeterminacies of the maps
$X_i\stackrel{\ov{g}_i}{\row}\ov{X}_{i-1}\stackrel{\eps_{i-1}^{-1}}{\dashrightarrow}X'_{i-1}$ and
$X_i\stackrel{j_i}{\row}X_{i-1}\stackrel{\pi_{i-1}^{-1}}{\dashrightarrow}\wt{X}_{i-1}$
and obtain commutative squares
$$
\xymatrix{
\wt{X}_{i-1} \ar[d]_{\pi_{i-1}} & \wt{X}_i \ar[d]_{\pi_i} \ar[r]^{g_i} \ar[l]_{\wt{j}_i}& X'_{i-1} \ar[d]^{\eps_{i-1}} \\
X_{i-1} & X_i \ar[r]^{\ov{g}_i} \ar[l]_{j_i} & \ov{X}_{i-1}
}
$$
and the needed commutative diagram.
Since the maps $\pi_i^*:\Ch^*_{k/k}(X_i)\stackrel{=}{\row}\Ch^*_{k/k}(X'_i)$ are isomorphisms, the maps
$$
((j'_i)^*,(g_{i+1})_*\pi_{i+1}^*):\Ch^*_{k/k}(X'_{i-1})\oplus\Ch^{*-1}_{k/k}(X_{i+1})
\twoheadrightarrow\Ch^*_{k/k}(X'_i)
$$
are surjective (for $i=r$, the map $(j'_r)^*$ is surjective), and $[X'_{i+1}]=(g_{i+1})_*[\wt{X}_{i+1}]\in\Ch^1_{k/k}(X'_i)$.
Finally, since $[\ov{X}_i]=(\ov{g}_i)_*[X_i]\in\Ch^1_{k/k}(\ov{X}_{i-1})$, we obtain that
$[X'_i]=(g_i)_*[\wt{X}_i]\in\Ch^1_{k/k}(X'_{i-1})$.
\Qed
\end{proof}

In the next key statement,
applying the above result repeatedly, we will deform a given
chain of codimension $1$ regular embeddings keeping the classes 
of all the subvarieties (of the chain) 
in $\Ch_{k/k}^*(X)$
unchanged, but making the image of $\Ch_{k/k}^*(X_r)$ (the smallest subvariety) in $\Ch_{k/k}^*(X)$  a 
submodule generated by monomials in the $1$-st
Chern classes of normal bundles of the (original) chain.
After that, to make $X_r$ anisotropic, it will remain only 
to eliminate the mentioned monomials numerically.

Let $\vec{l}=(l_2,\ldots,l_r)$ be a vector of non-negative integers.
We say that $\vec{l}$ is $i$-{\it{good}}, if there exists an $i+1\leq s\leq r+1$ such that
$l_k>0$ for $i+1\leq k<s$, while $l_k=0$, for $k\geq s$. Any 
$i$-good vector is $(i+1)$-good and every vector is $r$-good,
so we get a filtration.

\begin{statement}
\label{LS-porozhd}
Let $X_r\stackrel{j_r}{\row} X_{r-1}\stackrel{j_{r-1}}{\row}\ldots\stackrel{j_2}{\row} X_1\stackrel{j_1}{\row} X_0$
be regular embeddings of co-dimension $1$ of smooth connected varieties.
Then over some f.g. purely transcendental extension there exists a blow-up in $n$-anisotropic centers $\hat{X}_0\row X_0$
and a similar sequence of embeddings
$\hat{X}_r\stackrel{\hat{j}_r}{\row} \hat{X}_{r-1}\stackrel{\hat{j}_{r-1}}{\row}\ldots\stackrel{\hat{j}_2}{\row}
\hat{X}_1\stackrel{\hat{j}_1}{\row} \hat{X}_0$, where $[\hat{X}_r]=[X_r]\in\Ch^*_{k/k}(X_0)$
and the image of the restriction $f_j^*:\Ch^*_{k/k}(\hat{X}_j)\row\Ch^*_{k/k}(\hat{X}_r)$ as a
$\Ch^*_{k/k}(X_0)=\Ch^*_{k/k}(\hat{X}_0)$-module is generated
by monomials $\hat{c}^{\vec{l}}=\prod_{i=2}^rc_1^{l_i}(\hat{N}_i)$, for $j$-good $\vec{l}$,
where $\hat{N}_i=N_{\hat{X}_i\subset\hat{X}_{i-1}}$, and 
the image of the map $(f_0)_*f_j^*:\Ch^*_{k/k}(\hat{X}_j)\row\Ch^{*+r}_{k/k}(\hat{X}_0)$ as a $\Ch^*_{k/k}(X_0)$-module is generated
by elements $c^{\vec{l}}\cdot [X_r]=\prod_{i=2}^rc_1^{l_i}(N_i)\cdot [X_r]$, 
where $N_i=N_{X_i\subset X_{i-1}}$ and $\vec{l}$ runs over all $j$-good vectors.
(here $f_i:\hat{X}_r\row\hat{X}_i$ is the embedding).
In particular, the image of $(f_0)_*:\Ch^*_{k/k}(\hat{X}_r)\row\Ch^{*+r}_{k/k}(\hat{X}_0)$ as a $\Ch^*_{k/k}(X_0)$-module is generated
by elements $c^{\vec{l}}\cdot [X_r]$, where $\vec{l}$ runs through all vectors.
\end{statement}

\begin{proof}
Let us denote the original sequence as
$X^0_r\stackrel{j^0_r}{\row} X^0_{r-1}\stackrel{j^0_{r-1}}{\row}\ldots\stackrel{j^0_2}{\row} X^0_1\stackrel{j^0_1}{\row} X^0_0$.
Either $X^0_r$ is $n$-anisotropic, in which case there is nothing to prove, or we can produce a diagram as in
Statement \ref{LS-gen-smooth}.
We can iterate this process as long as the variety $X^m_r$ is {\it not $n$-anisotropic} and obtain diagrams:
$$
\xymatrix{
X^{m+1}_r\ar[r]^{j^{m+1}_r} & X^{m+1}_{r-1}\ar[r]^{j^{m+1}_{r-1}} & X^{m+1}_{r-2}\ar[r]^{j^{m+1}_{r-2}} &
\ldots & \ldots \ar[r]^{j^{m+1}_2} & X^{m+1}_1 \ar[r]^{j^{m+1}_1} & X^{m+1}_0 \ar[dd]^{\pi^m_0} \\
\wt{X}^m_r\ar[d]^{\pi^m_r} \ar[ru]^{g^{m+1}_r} \ar[r]^{\wt{j}^m_r} & \wt{X}^m_{r-1}\ar[d]^{\pi^m_{r-1}} \ar[ru]^{g^{m+1}_{r-1}} \ar[r]^{\wt{j}^m_{r-1}} &
\wt{X}^m_{r-2}\ar[d]^{\pi^m_{r-2}} \ar[ru]^{g^{m+1}_{r-2}} \ar[r]^{\wt{j}^m_{r-2}} & \ldots & \ldots \ar[r]^{\wt{j}^m_2} &
\wt{X}^m_1 \ar[d]^{\pi^m_1} \ar[ru]^{g^{m+1}_1} & \\
X^m_r\ar[r]^{j^m_r} & X^m_{r-1}\ar[r]^{j^m_{r-1}} & X^m_{r-2}\ar[r]^{j^m_{r-2}}
& \ldots & \ldots \ar[r]^{j^m_2} & X^m_1 \ar[r]^{j^m_1} &
X^m_0
}
$$
These induce maps on isotropic Chow groups:
$$
\xymatrix{
\Ch^*_{k/k}(X^{m+1}_r) & \Ch^*_{k/k}(X^{m+1}_{r-1}) \ar[l]_{\alpha^{m+1}_{r}} &
\Ch^*_{k/k}(X^{m+1}_{r-2}) \ar[l]_-{\alpha^{m+1}_{r-1}} &
\ldots \ar[l]_-{\alpha^{m+1}_{r-2}} & \Ch^*_{k/k}(X^{m+1}_1) \ar[l]_-{\alpha^{m+1}_2} &
\Ch^*_{k/k}(X^{m+1}_0)  \ar[l]_-{\alpha^{m+1}_1} \ar@{=}[d] \\
\Ch^*_{k/k}(X^m_r) \ar[ru]^(0.4){\beta^{m+1}_r}_(0.7){[1]} &
\Ch^*_{k/k}(X^m_{r-1})\ar[l]_{\alpha^m_{r}} \ar[ru]^(0.35){\beta^{m+1}_{r-1}}_(0.7){[1]} &
\Ch^*_{k/k}(X^m_{r-2})\ar[l]_-{\alpha^m_{r-1}} \ar[ru]^(0.35){\beta^{m+1}_{r-2}}_(0.7){[1]}
& \ldots \ar[l]_-{\alpha^m_{r-2}} & \Ch^*_{k/k}(X^m_1) \ar[l]_-{\alpha^m_2} \ar[ru]^(0.4){\beta^{m+1}_1}_(0.7){[1]} &
\Ch^*_{k/k}(X^m_0) \ar[l]_-{\alpha^m_1}
}
$$
where $\alpha^l_i=(j^l_i)^*$, $\beta^l_i=(g^l_i)_*(\pi^{l-1}_i)^*$ and $\beta$-maps shift the codimension by $(+1)$.
The maps
$$
(\alpha^l_i,\beta^l_{i+1}):\Ch^*_{k/k}(X^l_{i-1})\oplus\Ch^*_{k/k}(X^{l-1}_{i+1})\twoheadrightarrow\Ch^*_{k/k}(X^l_i)
$$
are surjective. Either at some stage we will get an $n$-anisotropic $X^m_r$, in which case we are done, or we can iterate the process
$q=\ddim(X)+1$ times. Set $\hat{X}_i=X^q_i$, etc. Then $\Ch^*_{k/k}(\hat{X}_i)$ is generated by the elements of the form
$\omega(x)$, where $\omega$ is a composition of $\alpha$'s and $\beta$'s and $x\in\Ch^*_{k/k}(X_0)$. Here we are using the fact
that the number of $\beta$'s in such a composition can't be more than $\ddim(X_i)$ (as each $\beta$ increases the codimension by $1$),
and so, the chain has to start with $X_0$. We also have maps
$\gamma^l_i=(j^l_i)_*$ and
$\delta^l_i=(\pi^{l-1}_i)_*(g^l_i)^*$ fitting commutative diagrams (recall, that $(\pi^m_i)^*$ and $(\pi^m_i)_*$ are isomorphisms)
$$
\xymatrix{
\Ch^*_{k/k}(X^{m+1}_i) \ar[r]^-{\gamma^{m+1}_{i}} &
\Ch^*_{k/k}(X^{m+1}_{i-1}) \\
\Ch^*_{k/k}(X^m_{i+1}) \ar[r]^-{\gamma^m_{i+1}} \ar[u]^-{\beta^{m+1}_{i+1}} &
\Ch^*_{k/k}(X^m_{i}) \ar[u]^-{\beta^{m+1}_{i}}
}
\hspace{3mm}\text{and}\hspace{3mm}
\xymatrix{
\Ch^*_{k/k}(X^{m+1}_i) \ar[d]_-{\delta^{m+1}_{i+1}} &
\Ch^*_{k/k}(X^{m+1}_{i-1}) \ar[l]_-{\alpha^{m+1}_{i}} \ar[d]_-{\delta^{m+1}_{i}}\\
\Ch^*_{k/k}(X^m_{i+1}) &
\Ch^*_{k/k}(X^m_{i})\ar[l]_-{\alpha^m_{i+1}}
}
$$
Note, that
\begin{equation*}
\begin{split}
&\beta^{m+1}_i\delta^{m+1}_i(u)=u\cdot c_1(N_{\wt{X}^m_i\row X^{m+1}_{i-1}})=u\cdot c_1(N_{X^{m+1}_i\subset X^{m+1}_{i-1}})=
\gamma^{m+1}_i\alpha^{m+1}_i(u)\hspace{5mm}\text{and}\\
&\delta^{m+1}_i\beta^{m+1}_i(v)=v\cdot c_1(N_{\wt{X}^m_i\row X^{m+1}_{i-1}})=v\cdot c_1(N_{X^m_i\subset X^m_{i-1}})=
\alpha^m_i\gamma^m_i(v)
\end{split}
\end{equation*}
Using these relations, one can reduce $\omega(x)$ to the form $\theta(x)$, where $\theta$ is a combination of $\alpha^q_i$'s
and $\gamma^q_j$'s. The restriction of such an element to $\Ch^*_{k/k}(X^q_r)$ is $f_0^*(x)$ times a monomial in
$c_1(N_{X^q_i\subset X^q_{i-1}})=c_1(\hat{N}_i)$'s, where each factor $c_1(\hat{N}_i)$ corresponds to a loop
$\alpha^q_i\gamma^q_i$ in $\theta$. Thus, the image $f^*_j:\Ch^*_{k/k}(\hat{X}_j)\row\Ch^*_{k/k}(\hat{X}_r)$
as a $\Ch^*_{k/k}(X_0)$-module is generated by monomials in $c_1(\hat{N}_i)$'s. Since such a monomial corresponds to a closed path
from $X^q_j$ to itself, these will be exactly $j$-{\it{good}} monomials.
Also, we need to observe that $c_1(\hat{N}_1)=c_1(N_{\hat{X}_1\subset\hat{X}_0})$ is the restriction of a class from $\hat{X}_0$.
Finally, from the same relations,
we get that $(f_0)_*\hat{c}^{\vec{n}}=\gamma^q_1\ldots\gamma^q_r\hat{c}^{\vec{n}}=\gamma^0_1\ldots\gamma^0_rc^{\vec{n}}=
c^{\vec{n}}\cdot [X_r]$. Hence, the image of $(f_0)_*(f_j)^*:\Ch^*_{k/k}(\hat{X}_j)\row
\Ch^{*+r}_{k/k}(X_0)$ as a
$\Ch^*_{k/k}(X_0)$-module is generated by elements $c^{\vec{l}}\cdot [X_r]$, where $\vec{l}$ runs over
all $j$-{\it good} vectors.
\Qed
\end{proof}

With the previous result in hands we get a practical tool
ensuring the anisotropy of Chow group elements.

\begin{corollary}
\label{LS-nump-anis}
Let $X_r\stackrel{j_r}{\row} X_{r-1}\stackrel{j_{r-1}}{\row}\ldots\stackrel{j_2}{\row} X_1\stackrel{j_1}{\row} X_0$
be regular embeddings of co-dimension $1$ of smooth connected varieties with $N_i=N_{X_i\subset X_{i-1}}$.
Suppose, that $c^{\vec{l}}\cdot [X_r]\numeq 0$ on $X_0$, for all monomials in $c_1(N_i)$'s, $i\geq 2$.
Then, over some f.g. purely
transcendental extension, $[X_r]=0\in\Ch^r_{k/k}(X_0)$.
\end{corollary}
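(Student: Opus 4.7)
The plan is to apply Statement \ref{LS-porozhd} and then deduce from the numerical hypothesis that the deformed terminal variety $\hat{X}_r$ must be $n$-anisotropic, whence the conclusion follows immediately. More precisely, invoking Statement \ref{LS-porozhd} for the given chain produces, over some finitely generated purely transcendental extension, a new chain
$$
\hat{X}_r\stackrel{\hat{j}_r}{\lrow}\hat{X}_{r-1}\stackrel{\hat{j}_{r-1}}{\lrow}\cdots\stackrel{\hat{j}_1}{\lrow}\hat{X}_0
$$
of regular codimension-one embeddings of smooth connected varieties, together with a blow-up $\hat{X}_0\row X_0$ in $n$-anisotropic centers (which induces an isomorphism on $\Ch^*_{k/k}$), such that $[\hat{X}_r]=[X_r]$ in $\Ch^r_{k/k}(X_0)$ and the image of the push-forward
$$
(f_0)_*\colon\Ch^*_{k/k}(\hat{X}_r)\lrow\Ch^{*+r}_{k/k}(\hat{X}_0)=\Ch^{*+r}_{k/k}(X_0)
$$
is generated, as a $\Ch^*_{k/k}(X_0)$-module, by the classes $c^{\vec{l}}\cdot[X_r]$, where $\vec{l}=(l_2,\ldots,l_r)$ runs over all tuples of non-negative integers.

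Next I would combine the numerical hypothesis with the projection formula to deduce anisotropy of $\hat{X}_r$. By assumption, each generator $c^{\vec{l}}\cdot[X_r]$ is $\numeq 0$ on $X_0$; since the set of classes that are $\numeq 0$ is an ideal of $\Ch^*_{k/k}(X_0)$ with respect to the intersection product (a formal consequence of the defining degree pairing), the entire image of $(f_0)_*$ consists of classes $\numeq 0$ on $\hat{X}_0$. Now for any top-dimensional class $z\in\Ch^{\ddim(\hat{X}_r)}_{k/k}(\hat{X}_r)$ we have $(f_0)_*(z)\in\Ch^{\ddim(\hat{X}_0)}_{k/k}(\hat{X}_0)$ with $\ddeg((f_0)_*z)=\ddeg(z)$, and the former degree vanishes modulo $n$ by the previous sentence. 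Hence every zero-cycle on $\hat{X}_r$ with $\zz/n$-coefficients has degree divisible by $n$, so $\hat{X}_r$ is $n$-anisotropic.

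Finally, anisotropy of $\hat{X}_r$ means that $[\hat{X}_r]=(f_0)_*(1_{\hat{X}_r})$ is an anisotropic class on $\hat{X}_0$, so $[\hat{X}_r]=0$ in $\Ch^r_{k/k}(\hat{X}_0)$; combined with the identification $[\hat{X}_r]=[X_r]$ in $\Ch^r_{k/k}(X_0)$ provided by Statement \ref{LS-porozhd}, this yields $[X_r]=0\in\Ch^r_{k/k}(X_0)$ as claimed. The substantive work is entirely contained in Statement \ref{LS-porozhd}; the corollary is then a short formal consequence. The only point deserving care is the propagation of the numerical vanishing from the explicit generators $c^{\vec{l}}\cdot[X_r]$ to the full image of $(f_0)_*$, which becomes automatic once one observes that the $\numeq 0$ classes form a $\Ch^*_{k/k}(X_0)$-submodule.
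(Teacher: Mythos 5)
Your proposal is correct and follows essentially the same route as the paper: invoke Statement \ref{LS-porozhd}, note that the numerically trivial generators $c^{\vec{l}}\cdot[X_r]$ force every pushed-forward zero-cycle from $\hat{X}_r$ to have degree divisible by $n$, conclude that $\hat{X}_r$ is $n$-anisotropic, and hence that $[\hat{X}_r]=[X_r]$ vanishes in $\Ch^r_{k/k}(X_0)$. Your explicit remarks that the $\numeq 0$ classes form a $\Ch^*_{k/k}(X_0)$-submodule and that push-forward preserves degrees simply spell out what the paper leaves implicit.
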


\begin{proof}
By Statement \ref{LS-porozhd}, over some f.g. purely transcendental extension of $k$, there exists a blow-up with $n$-anisotropic centers
$\pi:\hat{X}\row X$ and a sequence $\hat{X}_r\stackrel{\hat{j}_r}{\row} \hat{X}_{r-1}\stackrel{\hat{j}_{r-1}}{\row}\ldots\stackrel{\hat{j}_2}{\row}
\hat{X}_1\stackrel{\hat{j}_1}{\row} \hat{X}_0$ of regular embeddings of smooth
connected projective varieties, such that
$[\hat{X}_r]=[X_r]\in\Ch^r_{k/k}(\hat{X}_0)=\Ch^r_{k/k}(X_0)$ and
the image of the map
$(f_0)_*:\Ch^*_{k/k}(\hat{X}_r)\row\Ch^{*'}_{k/k}(\hat{X}_0)$
as a module over $\Ch^*_{k/k}(X_0)$ is generated by $c^{\vec{l}}\cdot [X_r]$, for all 
$r$-good (=all) $\vec{l}$, where $f_0:\hat{X}_r\row\hat{X}_0$.
Since all these classes are $\numeq 0$
on $\hat{X}_0$, the $0$-dimensional component of our image is zero. This means that $\hat{X}_r$ is anisotropic and so, 
$[\hat{X}_r]=0\in\Ch^{r}_{k/k}(\hat{X}_0)$.
\Qed
\end{proof}

In the case $r=2$, we get the following:

\begin{corollary}
\label{LS-r2-nump-div}
Let $S\subset X$ be a regular embedding of codimension $2$ of smooth connected projective varieties.
Suppose, $c_1^m(N_{S\subset X})\cdot [S]\numeq 0$ on $X$, for any $m\geq 0$.
Then, over some f.g. purely transcendental extension, $[S]=0\in\Ch^2_{k/k}(X)$.
\end{corollary}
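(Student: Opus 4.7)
The plan is to reduce the claim to Corollary~\ref{LS-nump-anis} with $r=2$ by realizing $S\subset X$ as the bottom of a chain $S\hookrightarrow X_{1}\hookrightarrow X_{0}:=X$ of codimension-$1$ regular embeddings of smooth connected projective varieties. One may freely assume that $S$ is not $n$-anisotropic, since otherwise $[S]=0\in\Ch^{2}_{k/k}(X)$ is already clear.

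To build $X_{1}$, first pick any effective divisor $E$ on $X$ containing $S$ (for instance a general section of $I_{S}(mH)$ for a very ample $H$ and $m\gg 0$), and apply Statement~\ref{LS-div-conn} with the single component $E_{1}=E$, $S_{1}=S$; the dimensional condition $\ddim(S)=d-2<d-1$ is automatic. This produces, over some finitely generated purely transcendental extension of $k$, an irreducible divisor $Z\subset X$ containing $S$ and smooth outside an $n$-anisotropic closed subscheme of $S$. Hironaka's embedded resolution $\pi:\wt{X}\to X$ of the singularities of $Z$ (and of $S$) has all centers inside that anisotropic locus, hence $n$-anisotropic; the proper transforms $\wt{Z}$ and $\wt{S}\cong S$ are smooth connected with $\wt{S}\subset\wt{Z}$. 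Since all blow-up centers are $n$-anisotropic, $\pi^{*}:\Ch^{*}_{k/k}(X)\xrightarrow{\cong}\Ch^{*}_{k/k}(\wt{X})$ is an isomorphism identifying $[S]$ with $[\wt{S}]$ and preserving the numerical-triviality hypothesis. Relabelling $\wt{X}\rightsquigarrow X$, $\wt{Z}\rightsquigarrow X_{1}$, $\wt{S}\rightsquigarrow S$ gives the desired chain.

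It remains to verify the hypothesis of Corollary~\ref{LS-nump-anis}: that $c_{1}^{m}(N_{S\subset X_{1}})\cdot[S]\numeq 0$ on $X$ for every $m\ge 0$. Write $N=N_{S\subset X}$, $M=N_{S\subset X_{1}}$, and let $\iota:S\hookrightarrow X$ denote the embedding. The short exact sequence of normal bundles for $S\subset X_{1}\subset X$ yields $c_{1}(M)=c_{1}(N)-\iota^{*}[X_{1}]$ in $\Ch^{1}(S)$, so by binomial expansion and the projection formula,
$$
c_{1}^{m}(M)\cdot[S]
\;=\; \iota_{*}\bigl((c_{1}(N)-\iota^{*}[X_{1}])^{m}\bigr)
\;=\; \sum_{a+b=m}(-1)^{b}\binom{m}{a}\,[X_{1}]^{b}\cdot\bigl(c_{1}^{a}(N)\cdot[S]\bigr)\text{ in }\Ch^{*}(X).
$$
Each $c_{1}^{a}(N)\cdot[S]$ is $\numeq 0$ on $X$ by hypothesis, and numerical triviality is preserved under multiplication by any class defined on $X$; thus every summand, and hence $c_{1}^{m}(M)\cdot[S]$ itself, is $\numeq 0$ on $X$. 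Applying Corollary~\ref{LS-nump-anis} to the chain $S\subset X_{1}\subset X$ now gives $[S]=0\in\Ch^{2}_{k/k}(X)$ after a further finitely generated purely transcendental extension.

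The main obstacle is the first step --- producing a smooth connected divisor through $S$ without altering the isotropic Chow groups of the ambient variety --- which is precisely what Statement~\ref{LS-div-conn} combined with Hironaka resolution in $n$-anisotropic centers accomplishes. The cohomological translation of the hypothesis, and the final invocation of Corollary~\ref{LS-nump-anis}, are then formal.
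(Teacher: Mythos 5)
Your reduction to Corollary \ref{LS-nump-anis} with $r=2$, and your verification of its hypothesis --- writing $c_1(N_{S\subset X_1})=c_1(N_{S\subset X})-[X_1]|_S$, expanding binomially and using the projection formula together with the fact that numerical triviality is preserved under multiplication by classes restricted from the ambient variety --- is exactly the paper's concluding observation. The gap is in your first step, the construction of a smooth connected divisor through $S$. Statement \ref{LS-div-conn} requires the input divisor $E$ to have strict normal crossings (for a single component: to be smooth) outside an $n$-anisotropic closed subscheme, and a general member of $|I_S(mH)|$ need not satisfy this: for $\ddim(X)\geq 4$ its singular locus is in general a nonempty subset of $S$ with no reason to be anisotropic, and in fact $S$ may lie on no smooth divisor at all (a smooth non-complete-intersection surface in $\pp^4$, e.g.\ the Veronese surface, lies on no smooth hypersurface, by Grothendieck--Lefschetz applied to a would-be smooth hypersurface containing it). Without that hypothesis the specialization argument inside Statement \ref{LS-div-conn} no longer shows that the generic member $Z$ of $|E+n\cdot D|_S$ has only $n$-anisotropic singularities; your embedded resolution then has non-anisotropic centers, $\pi^*$ is no longer an isomorphism on $\Ch^*_{k/k}$, and the transport of $[S]$ and of the numerical triviality of its characteristic numbers across the modification is lost (the correction terms supported on the exceptional divisors are no longer anisotropic, and $[\wt{S}]$ need no longer represent $[S]$).

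The paper sidesteps this entirely by blowing up $X$ along $S$ itself: the exceptional divisor $E=\pp_S(N_{S\subset X})$ is automatically a smooth connected divisor, $\pi^*[S]$ is represented on it by a smooth connected divisor $S'$ (after adding an $n$-multiple of a very ample class), and $c_1(N_{S'\subset\wt{X}})$ is the pull-back of $c_1(N_{S\subset X})$, so the classes $c_1^m(N_{S'\subset\wt{X}})\cdot[S']$ remain numerically trivial; the triple $S'\subset E\subset\wt{X}$ is then fed into Corollary \ref{LS-nump-anis} exactly as you do, using that $c_1(N_{E\subset\wt{X}})$ is restricted from $\wt{X}$. Your argument becomes correct once you replace your first step by this blow-up (or otherwise first produce a divisor through $S$ having only $n$-anisotropic singularities before invoking Statement \ref{LS-div-conn}); as written, the first step can fail.
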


\begin{proof}
By blowing $X$ at $S$ we may assume that $S$ is contained in a smooth connected divisor $Z$. Note, that the "new" characteristic classes
of $S$ are pull-backs of the "old" ones, and so, are numerically trivial as well. We obtain the triple $S\row Z\row X$.
Our statement now is a particular case of Corollary \ref{LS-nump-anis}, where
it remains to observe that $c_1(N_{S\subset X})=c_1(N_{S\subset Z})+c_1(N_{Z\subset X})$, where the second
summand is defined on $X$.
\Qed
\end{proof}

\bigskip

\begin{itemize}
\item[address:] {\small School of Mathematical Sciences, University of Nottingham, University Park, Nottingham, NG7 2RD, UK}
\item[email:] {\small\ttfamily alexander.vishik@nottingham.ac.uk}
\end{itemize}


\begin{thebibliography}{10}

\bibitem{BQ}
  T.\,Bachmann, {\it On the Invertibility of Motives of Affine Quadrics},
   Doc. Math., {\bf 22} (2017), 363-395.

\bibitem{Br}
  P.Brosnan, {\it Steenrod operations in Chow theory}, Trans. Amer. Math.
   Soc. {\bf 355} (2003), no.5, 1869-1903.

\bibitem{CiDe}
  D.-C.\,Cisinski, F.D\'eglise, {\it Local and stable homological algebra in Grothendieck abelian categories}, Homology, Homotopy and Appl.
   {\bf 11} (2009), no.1, 219-260.

\bibitem{C-TL}
  J.-L.\,Colliot-Th\'el\`ene, M.\,Levine, {\it Une version du th\'eor\`eme d'Amer et Brumer pour les z\'ero-cycles}, Preprint;
  available at: arxiv.org/abs/0911.4644

\bibitem{Fu}
  W.Fulton, {\it Intersection Theory}, Ergebnisse der Mathematik und ihrer
   Grenzgebiete (3) {\bf 2}, Springer-Verlag, Berlin-New York 1984.

\bibitem{RNCT}
  S.\,Gille, A.\,Vishik, {\it Rost nilpotence and free theories}, Documenta Math. {\bf 23} (2018), 1635-1657.

\bibitem{Hi}
  H.\,Hironaka, {\it Resolution of singularities of an algebraic variety over a field of characteristic zero. I,II}, Ann. of Math., (2) {\bf 79} (1964), 109-203; ibid. (2) {\bf 79} (1964), 205-326.

\bibitem{HoIz}
  D.\,Hoffmann, O.\,Izhboldin, {\it Embeddability of quadratic forms in Pfister forms}, Indag. Mathem., N.S. (2) {\bf 11} (2000), 219-237.

\bibitem{LM}
  M.\,Levine, F.\,Morel, {\it Algebraic cobordism}, Springer Monographs in
    Mathematics, Springer-Verlag, 2007.

\bibitem{OVV}
   D.\,Orlov, A.\,Vishik, V.\,Voevodsky,\
    {\it An exact sequence for $K^M_*/2$ with applications to quadratic
    forms}, Annals of Math., {\bf 165} (2007) No.1, 1-13.

\bibitem{PS}
  I.\,Panin, A.\,Smirnov, {\it Push-forwards in oriented cohomology theories of algebraic varieties},
    K-theory preprint archive, 459, 2000; available at: {\ttfamily http://www.math.uiuc.edu/K-theory/0459/}

\bibitem{Ro98}
   M.\ Rost, {\it The motive of a Pfister form}, Preprint 1998;\\
   available at: {\ttfamily www.math.uni-bielefeld.de/\~{}rost/data/motive.pdf}.

\bibitem{SoVo}
   C.\,Soule, C.\,Voisin,\ {\it Torsion cohomology classes and algebraic cycles on complex projective manifolds},
    Adv. Math., {\bf 198} (2005), n.1, 107-127.

\bibitem{IMQ}
  A.\,Vishik,\ {\it Integral Motives of Quadrics},
    MPIM-preprint, 1998 (13), 1-82;\\
    available at: {\ttfamily www.mpim-bonn.mpg.de/node/263}

\bibitem{kerM}
  A.\,Vishik, {\it On the kernels in Milnor's K-theory under function field extensions}, MPIM Preprint, March 30, 1998; available at:
   {\ttfamily www.mpim-bonn.mpg.de/node/263}

\bibitem{so2}
  A.Vishik, {\it Symmetric operations in Algebraic Cobordism},
    Adv. Math. {\bf 213} (2007), 489-552.

\bibitem{SU}
  A.\,Vishik, {\it Stable and Unstable Operations in Algebraic Cobordism},
    Ann. Scient. de l'Ecole Norm. Sup., $4^e$ s\'erie, {\bf 52} (2019), 561-630.

\bibitem{PicQ}
  A.\,Vishik, {\it Affine quadrics and the Picard group of the motivic category}, 
  Compositio Math., {\bf 155} (2019), 1500-1520.

\bibitem{VoMo}
  V.\,Voevodsky, {\it Triangulated categories of motives over a field},
   in {\sl Cycles, Transfers and motivic homology theories} (by
   E.M.Friedlander, A.Suslin and V.Voevodsky), Princeton University Press,
   2000.

\bibitem{Vo-BKMK}
  V.\,Voevodsky, {\it Bloch-Kato conjecture for $\zz/2$-coefficients and algebraic Morava K-theories}, Preprint, 1995.\ \
   available at: {\ttfamily www.math.uiuc.edu/K-theory/0076}

\bibitem{VoOP}
  V.\,Voevodsky, {\it Reduced power operations in motivic cohomology},
   Publ. Math. IHES {\bf 98} (2003), 1-57.

\bibitem{Voe03}
  V.\,Voevodsky, {\it Motivic cohomology with $\zz/2$-coefficients}, Publ. Math.
  IHES {\bf 98} (2003), 59-104.

\bibitem{Yap}
  N.\,Yagita, {\it Applications of Atiah--Hirzebruch spectral sequences for motivic
   cobordisms}, Proc. London Math. Soc. {\bf 90} (2005), 783-816.

\end{thebibliography}
\end{document}